\theoremstyle{plain}
\newtheorem{theorem}{Theorem}[section]
\newtheorem{lemma}[theorem]{Lemma}
\newtheorem{corollary}[theorem]{Corollary}
\newtheorem{proposition}[theorem]{Proposition}
\theoremstyle{definition}
\newtheorem{definition}[theorem]{Definition}
\newtheorem{example}[theorem]{Example}
\newtheorem{remark}[theorem]{Remark}
\theoremstyle{remark}
\newcounter{zahl}
\def\theenumi{(\alph{enumi})}
\def\p@enumii{\theenumi}
\newcommand{\DS}{\displaystyle}
\newcommand{\TS}{\textstyle}
\newcommand{\SC}{\scriptstyle}
\newcommand{\SSC}{\scriptscriptstyle}
\DeclareMathOperator{\Aut}{Aut}
\DeclareMathOperator{\Cov}{Cov}
\DeclareMathOperator{\Ext}{Ext}
\DeclareMathOperator{\Frob}{Frob}
\DeclareMathOperator{\Gal}{Gal}
\DeclareMathOperator{\GL}{GL}
\DeclareMathOperator{\Koh}{H}
\DeclareMathOperator{\Hom}{Hom}
\DeclareMathOperator{\Id}{Id}
\DeclareMathOperator{\Ind}{Ind}
\DeclareMathOperator{\Int}{Int}
\DeclareMathOperator{\Isom}{Isom}
\DeclareMathOperator{\Lie}{Lie}
\DeclareMathOperator{\PGL}{PGL}
\DeclareMathOperator{\Quot}{Frac}
\DeclareMathOperator{\Rep}{Rep}
\DeclareMathOperator{\SL}{SL}
\DeclareMathOperator{\SpBerk}{{\tt BSpec}}
\DeclareMathOperator{\Spm}{Sp}
\DeclareMathOperator{\Spec}{Spec}
\DeclareMathOperator{\Spf}{Spf}
\DeclareMathOperator{\Var}{V}
\newcommand{\ad}{{\rm ad}}
\newcommand{\alg}{{\rm alg}}
\newcommand{\an}{{\rm an}}
\DeclareMathOperator{\charakt}{char}
\DeclareMathOperator{\coker}{coker}
\newcommand{\cont}{{\rm cont}}
\newcommand{\cris}{{\rm cris}}
\newcommand{\dom}{{\rm dom}}
\DeclareMathOperator{\diag}{diag}
\newcommand{\dR}{{\rm dR}}
\newcommand{\et}{{\rm \acute{e}t}}
\newcommand{\fpqc}{{\it fpqc}}
\newcommand{\fppf}{{\it fppf}}
\DeclareMathOperator{\id}{\,id}
\DeclareMathOperator{\im}{im}
\renewcommand{\mod}{{\rm\;mod\;}}
\DeclareMathOperator{\ord}{ord}
\newcommand{\opp}{{\rm opp}}
\newcommand{\rig}{{\rm rig}}
\DeclareMathOperator{\rk}{rk}
\newcommand{\sep}{{\rm sep}}
\newcommand{\univ}{{\rm univ}}
\renewcommand{\phi}{\varphi}
\renewcommand{\epsilon}{\varepsilon}
\newcommand{\BOne} {{\mathchoice{\hbox{\rm1\kern-2.7pt l\kern.9pt}}
                              {\hbox{\rm1\kern-2.7pt l\kern.9pt}}
                              {\hbox{\scriptsize\rm1\kern-2.3pt l\kern.4pt}}
                              {\hbox{\scriptsize\rm1\kern-2.4pt l\kern.5pt}}}}
\newcommand{\BA}{{\mathbb{A}}}
\newcommand{\BD}{{\mathbb{D}}}
\newcommand{\BF}{{\mathbb{F}}}
\newcommand{\BG}{{\mathbb{G}}}
\newcommand{\BH}{{\mathbb{H}}}
\newcommand{\BM}{{\mathbb{M}}}
\newcommand{\BN}{{\mathbb{N}}}
\newcommand{\BP}{{\mathbb{P}}}
\newcommand{\BQ}{{\mathbb{Q}}}
\newcommand{\BR}{{\mathbb{R}}}
\newcommand{\BZ}{{\mathbb{Z}}}
\newcommand{\bB}{{\mathbf{B}}}
\newcommand{\CC}{{\cal{C}}}
\newcommand{\CE}{{\cal{E}}}
\newcommand{\CF}{{\cal{F}}}
\newcommand{\CG}{{\cal{G}}}
\newcommand{\CH}{{\cal{H}}}
\newcommand{\CI}{{\cal{I}}}
\newcommand{\CM}{{\cal{M}}}
\newcommand{\CN}{{\cal{N}}}
\newcommand{\CO}{{\cal{O}}}
\newcommand{\CQ}{{\cal{Q}}}
\newcommand{\CS}{{\cal{S}}}
\newcommand{\CT}{{\cal{T}}}
\newcommand{\CV}{{\cal{V}}}
\newcommand{\CX}{{\cal{X}}}
\newcommand{\CY}{{\cal{Y}}}
\newcommand{\scrG}{{\mathscr{G}}}
\newcommand{\scrS}{{\mathscr{S}}}
\newcommand{\FG}{{\mathfrak{G}}}
\newcommand{\Fa}{{\mathfrak{a}}}
\newcommand{\Fb}{{\mathfrak{b}}}
\newcommand{\Fm}{{\mathfrak{m}}}
\newcommand{\Fp}{{\mathfrak{p}}}
\newcommand{\Fq}{{\mathfrak{q}}}
\let\setminus\smallsetminus
\newcommand{\es}{\enspace}
\newcommand{\open}{^\circ}
\newcommand{\dual}{^{\SSC\lor}}
\newcommand{\mal}{^{\SSC\times}}
\newcommand{\fdot}{{\,{\SSC\bullet}\,}}
\newcommand{\ul}[1]{{\underline{#1}}}
\newcommand{\ol}[1]{{\overline{#1}}}
\newcommand{\wh}[1]{{\widehat{#1}}}
\newcommand{\wt}[1]{{\widetilde{#1}}}
\DeclareMathOperator{\whtimes}{\mathchoice
            {\wh{\raisebox{0ex}[0ex]{$\DS\times$}}}
            {\wh{\raisebox{0ex}[0ex]{$\TS\times$}}}
            {\wh{\raisebox{0ex}[0ex]{$\SC\times$}}}
            {\wh{\raisebox{0ex}[0ex]{$\SSC\times$}}}}
\newcommand{\invlim}[1][]{\ifthenelse{\equal{#1}{}}
{\DS \lim_{\longleftarrow}}
{\DS \lim_{\underset{#1}{\longleftarrow}}}
}
\newcommand{\dirlim}[1][]{\ifthenelse{\equal{#1}{}}
{\DS \lim_{\longrightarrow}}
{\DS \lim_{\underset{#1}{\longrightarrow}}}
}
\newcommand{\dbl}{{\mathchoice{\mbox{\rm [\hspace{-0.15em}[}}
                              {\mbox{\rm [\hspace{-0.15em}[}}
                              {\mbox{\scriptsize\rm [\hspace{-0.15em}[}}
                              {\mbox{\tiny\rm [\hspace{-0.15em}[}}}}
\newcommand{\dbr}{{\mathchoice{\mbox{\rm ]\hspace{-0.15em}]}}
                              {\mbox{\rm ]\hspace{-0.15em}]}}
                              {\mbox{\scriptsize\rm ]\hspace{-0.15em}]}}
                              {\mbox{\tiny\rm ]\hspace{-0.15em}]}}}}
\newcommand{\dpl}{{\mathchoice{\mbox{\rm (\hspace{-0.15em}(}}
                              {\mbox{\rm (\hspace{-0.15em}(}}
                              {\mbox{\scriptsize\rm (\hspace{-0.15em}(}}
                              {\mbox{\tiny\rm (\hspace{-0.15em}(}}}}
\newcommand{\dpr}{{\mathchoice{\mbox{\rm )\hspace{-0.15em})}}
                              {\mbox{\rm )\hspace{-0.15em})}}
                              {\mbox{\scriptsize\rm )\hspace{-0.15em})}}
                              {\mbox{\tiny\rm )\hspace{-0.15em})}}}}
\newcounter{commentcounter}
\def\?{\ 
{\bf\color{red}???}\ 
\immediate\write16{}
\immediate\write16{Warning: There was still a question mark . . . }
\immediate\write16{}}
\long\def\forget#1{}
\def\longto{\longrightarrow}
\def\into{\hookrightarrow}
\def\onto{\twoheadrightarrow}
\def\isoto{\stackrel{}{\mbox{\hspace{1mm}\raisebox{+1.4mm}{$\SC\sim$}\hspace{-3.5mm}$\longrightarrow$}}}
\def\leftmapsto{\gets\joinrel\shortmid}
\newbox\mybox
\def\arrover#1{\mathrel{
       \setbox\mybox=\hbox spread 1.4em{\hfil$\scriptstyle#1$\hfil}
       \vbox{\offinterlineskip\copy\mybox
             \hbox to\wd\mybox{\rightarrowfill}}}}
\newcommand{\ancon}[1][]{{\mathchoice
           {\TS\langle\frac{z}{\zeta^{#1}},z^{-1}\}}
           {\TS\langle\frac{z}{\zeta^{#1}},z^{-1}\}}
           {\SC\langle\frac{z}{\zeta^{#1}},z^{-1}\}}
           {\SSC\langle\frac{z}{\zeta^{#1}},z^{-1}\}}}}
\newcommand{\tminus}{t_{\SSC -}}
\newcommand{\tplus}{t_{\SSC +}}
\newcommand{\tplusminus}{t}
\DeclareMathOperator{\QIsog}{QIsog}
\DeclareMathOperator{\Sets}{\CS \!{\it ets}}
\DeclareMathOperator{\Gr}{Gr}
\newcommand{\Flag}{\CF\ell}
\def\olB{{\,\overline{\!B}}}
\def\olE{{\,\overline{\!E}}}
\def\ulD{{\underline{D\!}\,}}
\def\ulM{{\underline{M\!}\,}}
\def\ulN{{\underline{N\!}\,}}
\def\olM{{\,\overline{\!M}}}
\def\olS{{\,\overline{\!S}}}
\def\olT{{\,\overline{\!T}}}
\def\olY{{\overline{Y}}}
\def\ulCE{{\underline{\CE\!}\,}}
\def\ulCF{{\underline{\CF\!}\,}}
\def\olCS{{\,\overline{\!\scrS}}}
\def\ulCT{{\underline{\CT\!}\,}}
\def\ulCV{{\underline{\CV\!}\,}}
\def\olDelta{{\,\overline{\!\Delta\!}\,}}
\def\s{\sigma^\ast}
\newcommand{\Test}{X}
\DeclareMathOperator{\Triv}{Triv}
\def\onto{\mbox{$\kern2pt\to\kern-8pt\to\kern2pt$}}
\newcommand{\BaseFld}{\BF}
\newcommand{\OtherField}{k}
\DeclareMathOperator{\Nilp}{\CN\!\it ilp}
\newcommand{\Darst}{\rho}
\def\Mod{{\rm Mod}}
\def\FMod{{\rm FMod}}
\newcommand{\ALoc}{A\mbox{-}\ul{\rm Loc}}
\newcommand{\BLoc}{\BF_q\dbl z\dbr\mbox{-}\ul{\rm Loc}}
\newcommand{\PLoc}{\BF_q\dpl z\dpr\mbox{-}\ul{\rm Loc}}
\newcommand{\HeckeTower}{{\breve\CE}}
\newcommand{\RZ}{{\CM}_{\ul\BG_0}^{\raisebox{-0.7ex}{$\SC\hat{Z}^{-1}$}}}
\newcommand{\breveRZ}{{\breve\CM}_{\ul\BG_0}^{\raisebox{-0.7ex}{$\SC\hat{Z}^{-1}$}}}
\newcommand{\Trivializer}{f}
\begin{document}


\author{Urs Hartl and Eva Viehmann}

\title{The generic fiber of moduli spaces of bounded local $G$-shtukas
}
\date{}

\maketitle

\begin{abstract}
Moduli spaces of bounded local $G$-shtukas are a group-theoretic generalization of the function field analog of Rapoport and Zink's moduli spaces of $p$-divisible groups. In this article we generalize some very prominent concepts in the theory of Rapoport-Zink spaces to our setting. More precisely, we define period spaces, as well as the period map from a moduli space of bounded local $G$-shtukas to the corresponding period space, and we determine the image of the period map. Furthermore, we define a tower of coverings of the generic fiber of the moduli space that is equipped with a Hecke action and an action of a suitable automorphism group. Finally we consider the $\ell$-adic cohomology of these towers.\\

Les espaces de modules de $G$-chtoucas locaux born\'es sont une g\'en\'eralisation des espaces de modules de groupes $p$-divisibles de Rapoport-Zink, au cas d'un corps de fonctions local, pour des groupes plus g\'en\'eraux et des copoids pas n\'ecessairement minuscules. Dans cet article nous d\'efinissons les espaces de p\'eriodes et l'application de p\'eriodes associ\'es \`a un tel espace, et nous calculons son image. Nous \'etudions la tour au-dessus de la fibre g\'en\'erique de l'espace de modules, \'equip\'ee d'une action de Hecke ainsi que d'une action d'un groupe d'automorphismes. Enfin, nous d\'efinissons la cohomologie $\ell$-adique de ces tours.

\noindent
{\it Mathematics Subject Classification (2000)\/}: 
20G25   
(11G09, 
14L05,  
14M15)  
\end{abstract}

\bigskip

%
%

\section{Introduction}
\setcounter{equation}{0}

Towers of moduli spaces of $p$-divisible groups with additional structure as defined by Drinfeld \cite{Drinfeld76} and Rapoport and Zink \cite{RZ} have become a central topic in the study of the geometric realization of local Langlands correspondences. These towers consist of covering spaces of the generic fiber of moduli spaces of $p$-divisible groups with EL or PEL structure. They carry a Hecke action and an action of an associated automorphism group of the defining $p$-divisible group with extra structure, and possess a period morphism to a $p$-adic period space. Recently, generalizations of these moduli spaces to groups of unramified Hodge type (instead of PEL type) have been defined by Kim \cite{KimRZHodge} and Howard and Pappas \cite{HoP}. Conjecturally, in all these cases, the cohomology of the tower realizes local Langlands correspondences. Several cases of these conjectures have been shown so far, compare for example \cite{Fargues}, \cite{Chen}. However, in general, still very little is known.

In the present paper we define the analogous towers, cohomology groups and period spaces in the function field case, and study their basic properties. This generalizes Drinfeld's work \cite{Drinfeld76}. We thus provide the foundations for a similar theory as the one initiated by Drinfeld, Rapoport and Zink. It is conceivable that the cohomology of our towers likewise realizes local Langlands correspondences. For Drinfeld's towers \cite{Drinfeld76} this was conjectured by Carayol~\cite{Carayol90} and proved by Boyer~\cite{Boyer99} and Hausberger~\cite{Hausberger} building on work of Laumon, Rapoport and Stuhler~\cite{LRS}. One major difference in our context is that instead of being restricted to groups of PEL or Hodge type, there is a natural and group-theoretic way to define moduli spaces of local $G$-shtukas for any reductive group $G$. Furthermore, one can also define more general boundedness conditions than minuscule bounds which would be the direct analog of the number field situation.

To give an overview of their definition let $\BF_q$ be a finite field with $q$ elements, let $\BaseFld$ be a fixed algebraic closure of $\BF_q$, and let $\BF_q\dbl z\dbr$ and $\BF_q\dbl\zeta\dbr$ be the power series rings over $\BF_q$ in the (independent) variables $z$, resp.\ $\zeta$. As base schemes we will consider the category $\Nilp_{\BF_q\dbl\zeta\dbr}$ consisting of schemes over $\Spec\BF_q\dbl\zeta\dbr$ on which $\zeta$ is locally nilpotent. Let $G$ be a parahoric group scheme over $\Spec\BF_q\dbl z\dbr$ in the sense of \cite[D\'efinition 5.2.6]{BT72} and \cite{AnhangPR} with connected reductive generic fiber. (One may ask whether the assumptions on $G$ can be relaxed, but we crucially use the ind-projectivity of $\Flag_G$ in the central Propositions~\ref{PropBound} and \ref{PropLiftOfIsog}; see the beginning of Section~\ref{sec2} for more explanations.)

Let $S\in \Nilp_{\BF_q\dbl\zeta\dbr}$ and let $H$ be a sheaf of groups on $S$ for the \fpqc{} topology. By a (\emph{right}) \emph{$H$-torsor} on $S$ we mean a sheaf $\CH$ for the \fpqc{} topology on $S$ together with a (right) action of the sheaf $H$ such that $\CH$ is isomorphic to $H$ on an \fpqc{} covering of $S$. Here $H$ is viewed as an $H$-torsor by right multiplication. Let $LG$ and $L^+G$ be the loop group and the group of positive loops associated with $G$; compare Section \ref{sec2}. Let $\CG$ be an $L^+G$-torsor on $S$. Via the inclusion of sheaves $L^+G\subset LG$ we can associate an $LG$-torsor $L\CG$ with $\CG$. Also for an $LG$-torsor $\CG$ on $S$ we denote by $\s\CG$ the pullback of $\CG$ under the $q$-Frobenius morphism $\sigma:=\Frob_q\colon S\to S$.

\begin{definition}\label{DefLocSht}
A \emph{local $G$-shtuka} over some $S\in\Nilp_{\BF_q\dbl\zeta\dbr}$ is a pair $\ul\CG = (\CG,\tau_\CG)$ consisting of an $L^+G$-torsor $\CG$ on $S$ and an isomorphism of the associated $LG$-torsors $\tau_\CG\colon\sigma^\ast L\CG \isoto L\CG$.

A \emph{quasi-isogeny} $g\colon(\CG'\!,\tau_{\CG'})\to(\CG,\tau_\CG)$ between local $G$-shtukas over $S$ is an isomorphism $g\colon L\CG'\isoto L\CG$ of the associated $LG$-torsors with $g\circ\tau_{\CG'}=\tau_\CG\circ\s g$.
\end{definition}

Local $G$-shtukas were introduced and studied in \cite{HV1,HV2} in the case where $G$ is a constant split reductive group over $\BF_q$, and in \cite{GL,HartlPSp} for $G=\GL_r$. The general case was considered in \cite{AH_Local}. For a local $G$-shtuka $\ul\CG$ over $S$ there exists an \emph{\'etale covering} $S'\to S$ and a trivialization $\ul\CG\times_S S'\cong\bigl((L^+G)_{S'},b\s\bigr)$ with $b\in LG(S')$; see \cite[Proposition~2.2(c)]{HV1} and \cite[Proposition~2.4]{AH_Local}. 

Note that we may view $\Spf\BF_q\dbl\zeta\dbr$ as an ind-scheme. By $\Flag_G$ we denote the affine flag variety of $G$ over $\BF_q$, compare Section \ref{sec2}. We may form the fiber product $\wh\Flag_G:=\Flag_G\whtimes_{\BF_q}\Spf\BF_q\dbl\zeta\dbr$ in the category of ind-schemes. By \cite[Theorem~4.4]{AH_Local} it represents the functor on $\Nilp_{\BF_q\dbl\zeta\dbr}$ with
\begin{eqnarray}\label{EqFunctorFlag}
\wh\Flag_G(S) & = & \Bigl\{\,\text{Isomorphism classes of pairs }(\CG,\delta)\text{ where }\CG \text{ is an $L^+G$-torsor on $S$ and} \nonumber \\
&&\qquad \delta\colon L\CG\isoto LG_S\text{ is an isomorphism of the associated $LG$-torsors}\,\Bigr\}\,.
\end{eqnarray}

We consider local $G$-shtukas that satisfy an additional boundedness condition. Similarly to \cite[\S\,4.2]{AH_Local} we introduce the notion of a bound $\hat Z$ and its reflex ring $R_{\hat Z}$, which is a finite extension of $\BF_q\dbl\zeta\dbr$. Our bounds are defined as closed ind-subschemes $\hat Z\subset \wh\Flag_{G,R}=\Flag_G\whtimes_{\BF_q}\Spf R$ satisfying certain additional properties. Here $R$ is a finite extension of $\BF_q\dbl\zeta\dbr$. In particular, we allow more general bounds than usual, in the sense that they do not have to correspond directly to some coweight $\mu$ (that in the classical context even had to be minuscule). We consider local $G$-shtukas $\ul\CG=(\CG,\tau_\CG)$ over schemes in $\Nilp_{R_{\hat Z}}$ such that the singularities of the morphism $\tau_\CG^{-1}$ are bounded by $\hat Z$, compare Definition \ref{DefBDLocal}. In this case we say that \emph{$\ul\CG$ is bounded by $\hat{Z}^{-1}$}; see Remark~\ref{RemBdLocal}(a) for a comment on this terminology. These bounded local $G$-shtukas can be seen as the function field analogs of $p$-divisible groups with extra structure. We write $R_{\hat Z}=\kappa\dbl\xi\dbr$, let $E:=E_{\hat Z}:=\kappa\dpl\xi\dpr$ be its fraction field, and let $\breve R_{\hat Z}=\BaseFld\dbl\xi\dbr$ and $\breve E:=\breve E_{\hat Z}:=\BaseFld\dpl\xi\dpr$ be the completions of the maximal unramified extensions. 

One can then consider the usual Rapoport-Zink type moduli space representing the following functor: Let $\ul{\BG}_0$ be a local $G$-shtuka over $\BaseFld$, and consider the functor $\breveRZ\colon (\Nilp_{\breve R_{\hat Z}})^o \;\longto\; \Sets$, 
\begin{eqnarray*}
S&\longmapsto & \Bigl\{\,\text{Isomorphism classes of }(\ul\CG,\bar\delta)\colon\text{ where }\ul{\CG}\text{ is a local $G$-shtuka over $S$ }\\ 
&&~~~~ \text{bounded by $\hat{Z}^{-1}$ and }\bar{\delta}\colon  \ul{\CG}_{\bar{S}}\to \ul{\BG}_{0,\bar{S}}~\text{is a quasi-isogeny  over $\bar{S}$ }\Bigr\}.
\end{eqnarray*}
Here $\bar{S}:=\Var_S(\zeta)$ is the zero locus of $\zeta$ in $S$. The functor $\breveRZ$ is ind-representable by a formal scheme over $\Spf\breve R_{\hat Z}$ that is locally formally of finite type and separated; see \cite[Theorem~4.18]{AH_Local}. The group $J=\QIsog_{\BaseFld}(\ul{\BG}_0)$ of self-quasi-isogenies of $\ul{\BG}_0$ acts on $\breveRZ$ via $g\colon(\ul\CG,\bar\delta)\mapsto(\ul\CG,g\circ\bar\delta)$ for $g\in\QIsog_{\BaseFld}(\ul{\BG}_0)$. 

We consider the generic fiber $(\breveRZ)^{\an}$ of this moduli space, as a strictly $\breve E$-analytic space in the sense of Berkovich~\cite{Berkovich1,Berkovich2}. Using the fully faithful functors \cite[\S\,1.6]{Berkovich2} and \cite[(1.1.11)]{Huber96} from strictly $\breve E$-analytic spaces to rigid analytic spaces over $\breve E$, respectively from rigid analytic spaces to Huber's analytic adic spaces, many of the results below can be formulated likewise in terms of rigid analytic, respectively analytic adic spaces. However, as we want to use \'etale fundamental groups and local systems on these spaces, we prefer in this work the Berkovich point of view for which such a theory exists in the literature.

As in \cite[Definition~3.5]{AH_Local} we consider the rational (dual) Tate module of the universal local $G$-shtuka over each connected component $Y$ of $(\breveRZ)^{\an}$, see Section \ref{SectTower}. It is a tensor functor 
$$\check V_{\ul{\CG},\fdot}:\Rep_{\BF_q\dpl z\dpr}G\rightarrow \Rep^\cont_{\BF_q\dpl z\dpr }\bigl(\pi_1^\et(Y,\bar s)\bigr).$$
Here $\Rep_{\BF_q\dpl z\dpr}G$ denotes the Tannakian category of $\BF_q\dpl z\dpr $-rational representations of $G$, and $\Rep^\cont_{\BF_q\dpl z\dpr }\bigl(\pi_1^\et(Y,\bar s)\bigr)$ denotes the category of finite-dimensional $\BF_q\dpl z\dpr $-vector spaces with a continuous action of de Jong's \cite[\S\,2]{dJ95a} \'etale fundamental group $\pi_1^\et(Y,\bar s)$ where $\bar s$ is a fixed base point in the given component. 

Trivializing the rational Tate module up to the action of $K$ for each compact open subgroup $K\subset G\bigl(\BF_q\dpl z\dpr\bigr)$ we obtain a tower $({\breve\CM}^K)_K$ of analytic spaces. Each of the spaces is equipped with an action of the group $J=\QIsog_{\BaseFld}(\ul{\BG}_0)$ that is induced by the action on the moduli space $\breveRZ$ itself. Furthermore, the group $G\bigl(\BF_q\dpl z\dpr\bigr)$ acts vertically on the tower via Hecke operators, i.e. for $g\in G\bigl(\BF_q\dpl z\dpr\bigr)$ we have compatible isomorphisms $g:{\breve\CM}^K\isoto {\breve\CM}^{g^{-1}Kg}$.

In the last section we consider the $\ell$-adic cohomology with compact support of the spaces ${\breve\CM}^K$ and their limit over $K$ together with induced actions of $J$, of $G\bigl(\BF_q\dpl z\dpr\bigr)$, and of the Weil group $W_E$. We provide basic finiteness properties of these cohomology groups and representations. Note that Tate modules, towers of moduli spaces of local $G$-shtukas and their cohomology are also considered in a similar, but slightly different context by Neupert in \cite{N16}. There, the relation to moduli spaces of global $G$-shtukas and their cohomology is studied.

Besides this construction of the tower of moduli spaces, our second main topic is the definition of the associated period space and the properties of the period morphism. Period spaces are strictly $\BF_q\dpl\zeta\dpr$-analytic spaces in the sense of Berkovich~\cite{Berkovich1,Berkovich2}. Since we allow more general bounds than those associated with minuscule coweights, these period spaces have to be defined as subspaces of an affine Grassmannian instead of a (classical) flag variety. To define them we consider the group scheme $G\times_{\BF_q\dbl z\dbr}\Spec \BF_q\dpl\zeta\dpr\dbl z-\zeta\dbr$ under the homomorphism $\BF_q\dbl z\dbr\to \BF_q\dpl\zeta\dpr\dbl z-\zeta\dbr,\,z\mapsto z=\zeta+(z-\zeta)$. Note that as this induces an inclusion $\BF_q\dpl z\dpr\to \BF_q\dpl\zeta\dpr\dbl z-\zeta\dbr$, this group is reductive. The associated affine Grassmannian $\Gr_G^{\bB_\dR}$ is the sheaf of sets for the \fpqc{} topology on $\Spec \BF_q\dpl\zeta\dpr$ associated with the presheaf
\begin{equation}\label{EqH_G}
X \;\longmapsto\; G\bigl(\CO_X\dpl z-\zeta\dpr\bigr)/G\bigl(\CO_X\dbl z-\zeta\dbr\bigr).
\end{equation}
$\Gr_G^{\bB_\dR}$ is an ind-scheme over $\Spec\BF_q\dpl\zeta\dpr$, that is ind-projective by \cite[Theorem~1.4]{PR2} and \cite[Theorem~A]{Richarz13}. Here, the notation $\bB_\dR$ refers to the fact that if $C$ is the completion of an algebraic closure of $\BF_q\dpl\zeta\dpr$, then $C\dpl z-\zeta\dpr$ is the function field analog of Fontaine's $p$-adic period field $\bB_\dR,$ compare \cite[\S\,2.9]{HartlDict}.

For our fixed bound $\hat Z$ we call the associated $E$-analytic space $\CH_{G,\hat Z}^\an:=\hat Z^{\an}$ the \emph{space of Hodge-Pink $G$-structures bounded by $\hat Z$}. It is the $E$-analytic space associated with a projective variety $\CH_{G,\hat Z}$ over $E=E_{\hat Z}$ by Proposition~\ref{PropBound}\ref{PropBound_D} and is a closed subscheme of $\Gr_G^{\bB_\dR}\otimes_{\BF_q\dpl\zeta\dpr} E$. Let $\ul{\BG}_0$ be the local $G$-shtuka over $\BaseFld$ from above and fix a trivialization $\ul{\BG}_0\cong(L^+G_\BaseFld,b\s)$, where $b\in LG(\BaseFld)$ represents the Frobenius morphism. The period space $\breve\CH_{G,\hat Z,b}^{wa}$ is then defined as the set of all $\gamma\in \breve\CH_{G,\hat Z}^\an:=\CH_{G,\hat Z}^\an\widehat\otimes_{E}\breve E$ such that $(b,\gamma)$ is weakly admissible. For the usual condition of weak admissibility (checked on all representations of $G$) we refer to Definition~\ref{DefWA}. Likewise one defines the admissible locus $\breve\CH_{G,\hat Z,b}^{a}$ in $\breve\CH_{G,\hat Z}^\an$ as the subset over which the universal $\sigma$-bundle has slope zero. In Theorem~\ref{ThmWAOpen} we show that $\breve\CH_{G,\hat Z,b}^{wa}$ and $\breve\CH_{G,\hat Z,b}^{a}$ are open paracompact strictly $\breve E$-analytic subspaces of $\breve\CH_{G,\hat Z}^\an$.

We prove that there is an \'etale period morphism
$$\breve\pi\colon(\breveRZ)^\an\;\longto\;\breve\CH_{G,\hat{Z},b}^a.$$ Very roughly, it is defined as follows: Consider the filtration on the universal local $G$-shtuka on $(\breveRZ)^\an$ induced by the image of the inverse of the universal Frobenius morphism $\tau_{\ul{\mathcal{G}}^{\rm univ}}$. This filtration is the function field analog of the Hodge filtration on the de Rham cohomology and is bounded by $\hat Z$. Using the universal quasi-isogeny, one can associate with it a natural filtration on the base change of $\ul{\BG}_0$ that is bounded by $\hat Z$. Strictly speaking, we carry out this construction rather with the \emph{Hodge-Pink $G$-structure} instead of the filtration; see Definition~\ref{DefHPG-Structure} and Remark~\ref{RemDIsTensorFunctor}(a). The reason for this is again that as we allow non-minuscule bounds, the Hodge-Pink $G$-structure contains more information than just the Hodge filtration. The former yields a point of $\breve\CH_{G,\hat{Z},b}^a$. This period morphism also induces compatible period morphisms for all elements ${\breve\CM}^K$ of the tower of coverings. In Theorem \ref{MainThm}\ref{MainThm_1} we show that the image of the period morphism is equal to a suitable union of connected components of $\breve\CH_{G,\hat{Z},b}^a$. 

There is an analogy between the theory of local $G$-shtukas and the theory of $p$-divisible groups \cite[\S\,3.9]{HartlDict}. In this sense, our results have natural counterparts in the theory of $p$-divisible groups, for particular cases by \cite{HartlRZ}, in general by Scholze and Weinstein \cite{ScholzeWeinstein,ScholzeBerkeley} using the Fargues-Fontaine curve \cite{FarguesFontaine}. One main difference is that in the function field case, the flag variety $\Flag_G$ is an honest ind-scheme. Also the Fargues-Fontaine curve is replaced by its role model, the Hartl-Pink curve~\cite{HP}. This allows us to consider non-minuscule Hodge-Pink structures and to work without Scholze’s theory of diamonds. One feature of our theory is the group-theoretic approach that makes the results automatically functorial in the group $G$; see Remarks~\ref{RemFunctoriality1}, \ref{RemFunctoriality2} and \ref{RemFunctoriality3}. Interestingly, the proofs for local $G$-shtukas in this work had to be largely different from the techniques used for $p$-divisible groups, and are technically quite involved.

\bigskip

\noindent{\it Acknowledgments.} We would like to thank the anonymous referee for careful reading and many good comments. We further thank Johannes Ansch\"utz, Bhargav Bhatt, Ofer Gabber, Tom Haines, Jack Hall, Jochen Heinloth, Roland Huber, Eike Lau, Brandon Levin, Stephan Neupert, Timo Richarz, Daniel Sch{\"a}ppi, Peter Scholze and Torsten Wedhorn for helpful discussions, and Johannes Ansch\"utz for pointing out an error in an earlier proof of Theorem~\ref{MainThm}\ref{MainThm_1}. The first author acknowledges support of the DFG (German Research Foundation) in form of SFB 878, Project-ID 427320536 -- SFB 1442, and Germany's Excellence Strategy EXC 2044--390685587 ``Mathematics M\"unster: Dynamics--Geometry--Structure''. The second author was partially supported by ERC starting grant 277889 ``Moduli spaces of local $G$-shtukas''.

\tableofcontents

\section{Bounded local $G$-shtukas}\label{sec2}
\setcounter{equation}{0}

Recall that we fixed a parahoric group scheme $G$ over $\Spec \BF_q\dbl z\dbr$. 

For an $\BF_q$-scheme $S$ we let $\CO_S\dbl z\dbr$ be the sheaf of $\CO_S$-algebras on $S$ for the \fpqc{} topology whose ring of sections on an $S$-scheme $Y$ is the ring of power series $\CO_S\dbl z\dbr(Y):=\Gamma(Y,\CO_Y)\dbl z\dbr$. This is indeed a sheaf being the countable direct product of $\CO_S$. A sheaf $M$ of $\CO_S\dbl z\dbr$-modules on $S$ that is finite free \fpqc{}-locally on $S$ is already finite free Zariski-locally on $S$ by \cite[Proposition~2.3]{HV1}. We call those modules \emph{locally free sheaves of $\CO_S\dbl z\dbr$-modules}. Let $\CO_S\dpl z\dpr$ be the \fpqc{} sheaf of $\CO_S$-algebras on $S$ associated with the presheaf $Y\mapsto\Gamma(Y,\CO_Y)\dbl z\dbr[\frac{1}{z}]$. If $Y$ is quasi-compact then $\CO_S\dpl z\dpr(Y)=\Gamma(Y,\CO_Y)\dbl z\dbr[\frac{1}{z}]$ by \cite[Tag 009F]{StacksProject}. The \emph{group of positive loops associated with $G$} is the infinite-dimensional affine group scheme $L^+G$ over $\BF_q$ whose $S$-valued points are $L^+G(S):=G\bigl(\CO_S\dbl z\dbr(S)\bigr)=G\bigl(\Gamma(S,\CO_S)\dbl z\dbr)$. The \emph{group of loops associated with $G$} is the ind-group-scheme $LG$ over $\BF_q$ that represents the \fpqc{} sheaf of groups $S\longmapsto LG(S):=G\bigl(\CO_S\dpl z\dpr(S)\bigr)$. A good reference for the theory of ind-schemes is \cite[\S\,7.11]{BeilinsonDrinfeld}. The \emph{affine flag variety $\Flag_G$ associated with $G$} is the \fpqc{} sheaf associated with the presheaf
$$
S\;\longmapsto\; LG(S)/L^+G(S)\;=\;G\bigl(\CO_S\dpl z \dpr(S) \bigr)/G\bigl(\CO_S\dbl z\dbr(S)\bigr)
$$ 
on the category of $\BF_q$-schemes. Pappas and Rapoport~\cite[Theorem~1.4]{PR2} and Richarz \cite[Theorem~A]{Richarz13} show that $\Flag_G$ is represented by an ind-scheme that is ind-projective over $\BF_q$, and that the natural morphism $LG\to\Flag_G$ admits sections locally for the \'etale topology. We crucially use the ind-projectivity of $\Flag_G$ in Propositions~\ref{PropBound} and \ref{PropLiftOfIsog}. By \cite[Theorem~0.1]{PR2}, after base change to $\BaseFld$ the connected components of $LG\wh\otimes_{\BF_q}\BaseFld$ and $\Flag_G\wh\otimes_{\BF_q}\BaseFld$ are in canonical bijection to the coinvariants $\pi_1(G)_I$. Here $\pi_1(G)$ is Borovoi's fundamental group \cite[Chapter~1]{Borovoi98} defined as $\pi_1(G):=X_*(T)/\text{(coroot lattice)}$ for a maximal torus $T$ of $G_{\BF_q\dpl z\dpr^\sep}$. Moreover, $\pi_1(G)_{I}$ denotes the group of coinvariants under the inertia subgroup $I$ of $\Gamma=\Gal\bigl(\BF_q\dpl z\dpr^\sep\!/\BF_q\dpl z\dpr\bigr)$. The bijection $\pi_0(\Flag_G\wh\otimes_{\BF_q}\BaseFld)=\pi_0(LG\wh\otimes_{\BF_q}\BaseFld)\cong \pi_1(G)_I$ is induced by the Kottwitz homomorphism $\kappa_G\colon LG(\BaseFld)=G(\BaseFld\dpl z\dpr)\rightarrow \pi_1(G)_{I}$ (introduced by Kottwitz in \cite{Kottwitz97}, for the reformulation used here compare \cite[2.a.2]{PR2}). It induces a bijection between the set $\pi_0(LG)=\pi_0(\Flag_G)$ and the set of $\langle \sigma\rangle$-orbits in $\pi_1(G)_I$ by \cite[Lemma~2.2.6]{N16}, a set that is in general no longer a group.

\begin{remark}\label{remcompgsv}
We will define bounds on local $G$-shtukas as (equivalence classes of) certain ind-subschemes of $\wh\Flag_{G,R}:=\Flag_G\whtimes_{\BF_q}\Spf R$ where $R$ is a finite extension of $\BF_q\dbl\zeta\dbr$. In order to define and consider also the generic fiber of the associated moduli spaces, one needs to bound the singularities with respect to $z-\zeta$ of the local $G$-shtukas. In particular, our definition is more restrictive than the one in \cite[Definitions~4.5 and 4.8]{AH_Local}. To encode this condition in our notion of bounds, we have to compare $\wh\Flag_{G,R}$ to the following closed ind-subschemes associated with a representation of $G$. 
 
If $A=\BF_q\dbl z\dbr$ or $A=\BF_q\dpl z\dpr$ we let $\Rep_{A}G$ denote the category of (algebraic) representations of $G$ on finite free $A$-modules. Here, we consider representations $\Darst\colon G\to\SL_r$ over $\BF_q\dbl z\dbr$ and the induced functor $\CG\mapsto\Darst_*\CG$ from $L^+G$-torsors to $L^+\SL_r$-torsors, which in turn yields a morphism $\Darst_*\colon\wh\Flag_G\to\wh\Flag_{\SL_r}$. Here, $\wh\Flag_{G}:=\Flag_{G,\BF_q\dbl\zeta\dbr}$. The category of $L^+\SL_r$-torsors on $S$ is equivalent to the category of pairs $(M,\alpha)$, where $M$ is a finite locally free $\CO_S\dbl z\dbr$-module of rank $r$ on $S$ and $\alpha\colon\wedge^r_{\CO_S\dbl z\dbr} M\isoto\CO_S\dbl z\dbr$ is an isomorphism of $\CO_S\dbl z\dbr$-modules, with isomorphisms as morphisms. We denote the $\CO_S\dbl z\dbr$-module associated with an $L^+\SL_r$-torsor $\CS$ by $M(\CS)$. For example, $M\bigl((L^+\SL_r)_S\bigr)=\CO_S\dbl z\dbr^{\oplus r}$. For a positive integer $n$ we consider the closed ind-subscheme of $\wh\Flag_{\SL_r}$ given by
\begin{eqnarray}\label{EqBDSL}
\wh\Flag^{(n)}_{\SL_r}(S) & := & \Bigl\{\,\bigl(\CS,\,\delta\colon L\CS\isoto (L\SL_r)_S\bigr)\in\wh\Flag_{\SL_r}(S)\colon \text{ for all }j=1,\ldots,r\text{ we have } \nonumber \\
& & \TS\bigwedge^j_{\CO_S\dbl z\dbr}M(\delta)\bigl(M(\CS)\bigr)\;\subset\;(z-\zeta)^{n(j^2-jr)}\cdot\bigwedge^j_{\CO_S\dbl z\dbr} M\bigl((L^+\SL_r)_S\bigr)\,\Bigr\}\,.
\end{eqnarray}
It is a $\zeta$-adic formal scheme over $\Spf\BF_q\dbl\zeta\dbr$ by \cite[Proposition~5.5]{HV1}; see Example~\ref{ExConstantG} for more explanations. Note that the compatibility with the isomorphism $\alpha\colon\wedge^r M(\CS)\isoto\CO_S\dbl z\dbr$ is equivalent to the assertion that the inclusion of the exterior powers in \eqref{EqBDSL} is an equality for $j=r$, because $\bigl(\CS,\,\delta\bigr)\in\wh\Flag_{\SL_r}(S)$ implies $\wedge^r M(\delta)=\alpha$. We then require the bounds to factor through some  $\wh\Flag^{(n)}_{\SL_r}$, cf. Condition \ref{DefBDLocal_A4}.

A different way to formulate such a condition would be to use the isomorphism
\begin{equation}\label{eqcompbdr}
\lim_{\longrightarrow_n} \wh\Flag_G \times_{\wh\Flag_{\SL_r}} \wh\Flag^{(n)}_{\SL_r}\cong {\rm Gr}(G_X,X) \times_X \Spec R
\end{equation}
 where the right hand side is the BD-Grassmannian associated with $G$ of \cite[Definition~3.3]{Richarz13}. Here we use that $G$ extends by \cite[Lemma~3.1]{Richarz13} to a smooth affine,  group scheme $G_X$ on a smooth connected curve $X$ over $\BF_q$ on which $\BF_q\dbl z\dbr$ is identified with the completion of the local ring at a point $x\in X$. The map $\Spec R\to X$ comes from the inclusion $\BF_q\dbl z\dbr\into R, z\mapsto\zeta$. The isomorphism~\eqref{eqcompbdr} also induces a comparison between specific bounds with global Schubert varieties of \cite{Richarz13}, compare Example \ref{exboundmu}. 
\end{remark}

We now define bounds by requiring minimal conditions needed to obtain the results of this article. In Remark \ref{DefBDLocal'} we will discuss further conditions that seem reasonable to impose, but that we do not need to assume in this article. We will then also describe more explicitly which bounds can arise, and in Examples~\ref{exboundmu} and \ref{ExConstantG} we will give a more specific class of bounds that depend on cocharacters of the generic fiber of $G$.

\begin{definition}\label{DefBDLocal}
\begin{enumerate}
\item We fix an algebraic closure $\BF_q\dpl\zeta\dpr^\alg$ of $\BF_q\dpl\zeta\dpr$, and consider pairs $(R,\hat Z_R)$, where $R/\BF_q\dbl\zeta\dbr$ is a finite extension of discrete valuation rings such that $R\subset\BF_q\dpl\zeta\dpr^\alg$, and where $\hat Z_R\subset \wh\Flag_{G,R}:=\Flag_G\whtimes_{\BF_q}\Spf R$ is a closed ind-subscheme. Two such pairs $(R,\hat Z_R)$ and $(R'\!,\hat Z'_{R'})$ are \emph{equivalent} if for some finite extension of discrete valuation rings $\wt R/\BF_q\dbl\zeta\dbr$ with $R,R'\subset \wt R$ the two closed ind-subschemes $\hat{Z}_R\whtimes_{\Spf R}\Spf\wt R$ and $\hat{Z}'_{R'}\whtimes_{\Spf R'}\Spf\wt R$ of $\wh\Flag_{G,\wt R}$ are equal. By \cite[Remark~4.6]{AH_Local} this then holds for all such rings $\wt R$.
\item \label{DefBDLocal_A}
A \emph{bound} is an equivalence class $\hat Z:=[(R,\hat Z_R)]$ of pairs $(R,\hat Z_R)$ as above satisfying the following properties.
\begin{enumerate}
\item \label{DefBDLocal_A1}
All $\hat{Z}_R\subset\wh{\Flag}_{G,R}$ are stable under the left $L^+G$-action.
\item \label{DefBDLocal_A2}
The \emph{special fiber} $Z_R:=\hat{Z}_R\whtimes_{\Spf R}\Spec \kappa_R$ is a  quasi-compact subscheme of $\Flag_G\whtimes_{\BF_q}\kappa_R$ where $\kappa_R$ is the residue field of $R$. (By \cite[Remark~4.10]{AH_Local} this implies that the $\hat Z_R$ are formal schemes in the sense of \cite[I$_{\rm new}$, \S\,10]{EGA}.)
\item \label{DefBDLocal_A3}
$\hat Z_R$ is a $\zeta$-adic formal scheme over $\Spf R$.
\item \label{DefBDLocal_A4}
There is a faithful representation $\Darst\colon G\into\SL_r$ over $\BF_q\dbl z\dbr$ and a positive integer $n$ such that all the induced morphisms $\Darst_*\colon\hat{Z}_R\to\wh\Flag_{\SL_r,R}$ factor through $\wh\Flag^{(n)}_{\SL_r,R}$.
\item \label{DefBDLocal_A11} 
Let $\hat{Z}_R^\an$ be the strictly $R[\tfrac{1}{\zeta}]$-analytic space associated with $\hat{Z}_R$. By Proposition~\ref{PropBound}\ref{PropBound_D} there is a closed subscheme $\hat Z_E$ of the affine Grassmannian $\Gr_G^{\bB_\dR}\times_{\BF_q\dpl\zeta\dpr}\Spec E_{\hat Z}$ from \eqref{EqH_G}, such that $\hat{Z}_R^\an$ arises by base change to $R[\tfrac{1}{\zeta}]$ from the strictly $E_{\hat Z}$-analytic space $(\hat Z_E)^\an$ associated with $\hat Z_E$. Then we require that $\hat Z_E$, and hence also all the $\hat{Z}_R^\an$ are invariant under the left multiplication of $G\bigl(\fdot\dbl z-\zeta\dbr\bigr)$  on $\Gr_G^{\bB_\dR}$.
\end{enumerate}

\item \label{DefBDLocal_B}
The \emph{reflex ring} $R_{\hat Z}$ of a bound $\hat Z=[(R,\hat Z_R)]$ is the intersection of the fixed field of $\{\gamma\in\Aut_{\BF_q\dbl\zeta\dbr}(\BF_q\dpl\zeta\dpr^\alg)\colon \gamma(\hat{Z})=\hat{Z}\,\}$ in $\BF_q\dpl\zeta\dpr^\alg$ with all the finite extensions $R\subset\BF_q\dpl\zeta\dpr^\alg$ of $\BF_q\dbl\zeta\dbr$ over which a representative $\hat{Z}_R$ of $\hat{Z}$ exists. We write $R_{\hat Z}=\kappa\dbl\xi\dbr$ and call its fraction field $E:=E_{\hat Z}=\kappa\dpl\xi\dpr$ the \emph{reflex field} of $\hat{Z}$. We let $\breve R_{\hat Z}:=\BaseFld\dbl\xi\dbr$ and $\breve E:=\breve E_{\hat Z}:=\BaseFld\dpl\xi\dpr$ be the completions of their maximal unramified extensions, where $\BaseFld$ is an algebraic closure of the finite field $\kappa$.
 
\item \label{DefBDLocal_C}
Let $\hat Z=[(R,\hat Z_R)]$ be a bound with reflex ring $R_{\hat Z}$. Let $\CG$ and $\CG'$ be $L^+G$-torsors over a scheme $S\in \Nilp_{R_{\hat Z}}$ and let $\delta\colon L\CG\isoto L\CG'$ be an isomorphism of the associated $LG$-torsors. We consider an \'etale covering $S'\to S$ over which trivializations $\alpha\colon\CG\isoto(L^+G)_{S'}$ and $\alpha'\colon\CG'\isoto(L^+G)_{S'}$ exist. Then the automorphism $\alpha'\circ\delta\circ\alpha^{-1}$ of $(LG)_{S'}$ corresponds to a morphism $S'\to LG\whtimes_{\BF_q}\Spf R_{\hat Z}$. We say that $\delta$ is \emph{bounded by $\hat{Z}$} if for every such trivialization and for every finite extension $R$ of $\BF_q\dbl\zeta\dbr$ over which a representative $\hat Z_R$ of $\hat Z$ exists the induced morphism
\[
S'\whtimes_{R_{\hat Z}}\Spf R\longto LG\whtimes_{\BF_q}\Spf R\longto \wh{\Flag}_{G,R}
\]
factors through $\hat{Z}_R$. Furthermore, we say that a local $G$-shtuka $\ul\CG=(\CG, \tau_\CG)$ is \emph{bounded by $\hat{Z}$} if $\tau_\CG$ is bounded by $\hat{Z}$, and, even more importantly, that $\ul\CG$ is \emph{bounded by $\hat{Z}^{-1}$} if the \emph{inverse} $\tau_\CG^{-1}$ of its Frobenius is bounded by $\hat{Z}$, compare the remark below. 
\end{enumerate}
\end{definition}

Let us explain the conditions of this definition in more detail.

\begin{remark}\label{RemBdLocal}
(a) The definition of a bound in Definition~\ref{DefBDLocal}\ref{DefBDLocal_A} is more restrictive than the one in \cite[Definition~4.8]{AH_Local} where only conditions \ref{DefBDLocal_A1} and \ref{DefBDLocal_A2} were required. The reason is that in \cite{AH_Local} the content of Proposition~\ref{PropBound} was not needed and the $R[\tfrac{1}{\zeta}]$-analytic spaces $(\hat{Z}_R)^\an$ were not considered.

In this article we will mainly consider local $G$-shtukas that are bounded by $\hat{Z}^{-1}$. This definition coincides with the notion of boundedness from \cite[Definition~4.8(b)]{AH_Local} in the following way. If $\hat{Z}$ is a bound in the sense of \cite[Definition~4.8]{AH_Local}, like for example our bound $\hat Z$, then by Lemma~\ref{LemmaInverseBound}, there is a bound $\hat{Z}^{-1}$ in the sense of \cite[Definition~4.8]{AH_Local} and $\tau_\CG^{-1}$ is bounded by $\hat{Z}$ if and only if $\tau_\CG$ is bounded by $\hat{Z}^{-1}$. 

\medskip\noindent 
(b) The reflex ring  in Definition~\ref{DefBDLocal}\ref{DefBDLocal_B} is always the ring of integers of a finite extension of $\BF_q\dpl\zeta\dpr$. For a detailed explanation of the definition of the reflex ring and a comparison with the number field case see \cite[Remark~4.7]{AH_Local}.  We do not know whether in general $\hat Z$ has a representative over the reflex ring. In contrast, the equivalence class of the $Z_R:=\hat{Z}_R\whtimes_{\Spf R}\Spec \kappa_R$ always has a representative $Z\subset\Flag_G\whtimes_{\BF_q}\Spec\kappa$ over the residue field $\kappa$ of the reflex ring $R_{\hat Z}$, because the Galois descent for closed ind-subschemes of $\Flag_G$ is effective. We call $Z$ the \emph{special fiber} of $\hat Z$. It is a projective scheme over $\kappa$ by \cite[Lemma~5.4]{HV1} because $\Flag_G$ is ind-projective.

\medskip\noindent 
(c) The condition of Definition~\ref{DefBDLocal}\ref{DefBDLocal_C} is satisfied for \emph{all} trivializations $\alpha$ and $\alpha'$ and for \emph{all} such finite extensions $R$ of $\BF_q\dbl\zeta\dbr$ if and only if it is satisfied for \emph{one} trivialization and for \emph{one} such finite extension. Indeed, by the $L^+G$-invariance of $\hat Z$ the definition is independent of the trivializations. That one finite extension suffices follows from \cite[Remark~4.6]{AH_Local}.

\medskip\noindent 
(d) At first glance one might think that conditions \ref{DefBDLocal_A1} and \ref{DefBDLocal_A11} of Definition~\ref{DefBDLocal} are related. However, in Example \ref{ExNotInvariant} we show that we really need to impose both of them.
\end{remark}

Before we discuss properties of bounds we recall the following well-known lemma.

\begin{lemma}\label{LemmaClosedImmersion}
Let $f\colon X\to Y$ be a morphism of locally noetherian adic formal schemes. Then $f$ is a closed immersion in the sense of \cite[I$_{\rm new}$, Definition~10.14.2]{EGA} if and only if $f$ is adic and an ind-closed immersion of ind-schemes.
\end{lemma}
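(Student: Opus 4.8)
The plan is to prove the two implications separately, treating each condition locally on $Y$ and $X$. First I would recall the relevant definitions: an ind-closed immersion of ind-schemes means that for each of the defining subschemes $X_n$ (say $X = \varinjlim X_n$ with $X_n$ the closed subscheme cut out by the $n$-th power of an ideal of definition) the composite $X_n \to X \to Y$ factors through a closed immersion into some $Y_m$; while a closed immersion in the sense of \cite[I$_{\rm new}$, Definition~10.14.2]{EGA} means that $f$ is adic and that, locally, $X = \Spf(B)$, $Y = \Spf(A)$ with $B = A/I$ for a closed ideal $I\subset A$ (equivalently $\CO_X = \CO_Y/\CI$ for a coherent ideal $\CI$ on the adic formal scheme $Y$, with $f$ a homeomorphism onto a closed subset). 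Adicity is explicitly part of both conditions, so the real content is the equivalence of ``ind-closed immersion of ind-schemes'' with ``locally of the form $\Spf(A/I)$'' once adicity is assumed.

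For the forward direction, assume $f$ is a closed immersion of locally noetherian adic formal schemes. Since $Y$ is adic, fix an ideal of definition $\CJ\subset\CO_Y$; then $Y = \varinjlim Y_n$ where $Y_n$ is the ordinary scheme $(|Y|, \CO_Y/\CJ^{n+1})$, and this exhibits $Y$ as an ind-scheme. Because $f$ is adic, $f^*\CJ\cdot\CO_X$ is an ideal of definition of $X$, so $X_n := (|X|,\CO_X/(f^*\CJ)^{n+1}\CO_X)$ gives $X = \varinjlim X_n$. Working locally with $A$ noetherian adic, $I = \ker(A\to B)$ closed hence (by noetherianness and \cite[Tag 009F]{StacksProject}-type closedness for $I$-adic topologies) equal to its closure, we get $B/J^{n+1}B = (A/I)/(J^{n+1}(A/I)) = A/(I + J^{n+1})$, which is a quotient of $A/J^{n+1} = \CO_{Y_n}$-sections. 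Hence each $X_n\to Y_n$ is a genuine closed immersion of noetherian schemes, so $X_n\to Y$ factors as $X_n\hookrightarrow Y_n\hookrightarrow Y$ through a closed immersion; this is exactly an ind-closed immersion of ind-schemes.

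For the converse, assume $f$ is adic and an ind-closed immersion of ind-schemes. Fix an ideal of definition $\CJ$ on $Y$ and, using adicity, the corresponding ideal of definition on $X$ as above, giving compatible presentations $X=\varinjlim X_n$, $Y=\varinjlim Y_n$. The ind-closed-immersion hypothesis says each $X_n\to Y$ factors through a closed immersion into some $Y_{m(n)}$; reindexing (and using that the $Y_n$ form a cofinal system with surjective transition maps on structure sheaves) one checks $X_n\to Y_n$ is a closed immersion of schemes for every $n$. Passing to affines $Y_n = \Spec(A/\CJ^{n+1})$, this yields surjections $A/\CJ^{n+1}\onto B_n := \CO_{X_n}$ compatible in $n$, with kernels $I_n$. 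Set $I := \varprojlim I_n \subset A$; then $I$ is a closed ideal, $A/I \to \varprojlim (A/\CJ^{n+1})/I_n = \varprojlim B_n = B$ is an isomorphism by the Mittag-Leffler / completeness argument for noetherian adic rings (the $I_n$ form a surjective system, so $\varprojlim^1$ vanishes), and the $\CJ$-adic and $(\CJ + I)/I$-adic topologies on $B$ agree since $f$ is adic. Therefore $\CO_X = \CO_Y/\CI$ locally with $\CI$ coherent, and $|X|\to|Y|$ is a closed embedding of topological spaces, which is the \cite{EGA} notion of closed immersion.

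The main obstacle I anticipate is the bookkeeping in the converse direction: the ind-scheme presentation coming from an ``ind-closed immersion'' need not a priori be the $\CJ$-adic one, so one must show adicity forces the two ind-structures to be cofinal and identify the limit ideal $I$ correctly, invoking the noetherian hypothesis to guarantee closedness of $I$ and vanishing of the relevant $\varprojlim^1$ so that $A/I \isoto B$. The forward direction and the purely topological statements are routine; it is this compatibility of ideals of definition with the ind-structure, under the adicity assumption, that carries the weight of the lemma.
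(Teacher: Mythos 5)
Your argument is correct and follows essentially the same route as the paper's: both directions proceed by fixing an ideal of definition on $Y$, using adicity to get compatible truncations $X_n\into Y_n$, and then recovering the defining ideal on $X$ as the inverse limit of the truncated ideals. The only difference is cosmetic — the paper packages the converse at the level of ideal sheaves and invokes \cite[I$_{\rm new}$, Theorem~10.10.2]{EGA} for coherence of $\invlim\Fa_n$, whereas you work locally with noetherian adic rings and argue $A/I\isoto B$ via surjectivity of the system $(I_n)$ (which follows from $X_n=X_{n+1}\times_{Y_{n+1}}Y_n$) and the resulting vanishing of $\varprojlim^1$.
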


\begin{proof}
By definition $f$ is a closed immersion if and only if there is a covering of $Y$ by open affine formal subschemes $\Spf B$ such that $X\times_Y\Spf B\cong\Spf B/\Fa$ for an ideal $\Fa\subset B$. In particular, if $I\subset B$ is a finitely generated ideal of definition of $\Spf B$ then $I\cdot B/\Fa$ is an ideal of definition of $\Spf B/\Fa$ and so $f$ is adic. Moreover, $\Spf B=\invlim\Spec B/I^n$ and $\Spf B/\Fa=\invlim\Spec B/(\Fa+I^n)$ and so $f$ is an ind-closed immersion of ind-schemes.

To prove the converse let $\CI\subset\CO_Y$ be an ideal sheaf of definition. Since $f$ is adic, $\CI\cdot\CO_X$ is an ideal sheaf of definition of $X$. That $f$ is an ind-closed immersion means that $X_n:=(X,\CO_X/\CI^n)\into Y_n:=(Y,\CO_Y/\CI^n)$ is a closed immersion of schemes. So there is a sheaf of ideals $\Fa_n\subset\CO_Y/\CI^n$ defining $X_n$. Moreover, $\Fa_n=\Fa_{n+1}\cdot\CO_Y/\CI^n$, because $X_n=X_{n+1}\times_{Y_{n+1}}Y_n$. Let $\Fa:=\invlim\Fa_n\subset\invlim\CO_Y/\CI^n=\CO_Y$. Since $Y$ is locally noetherian $\Fa$ is a coherent sheaf of $\CO_Y$-modules by \cite[I$_{\rm new}$, Theorem~10.10.2]{EGA}. Then $X_n=(X,\bigl(\CO_Y/(\Fa+\CI^n)\bigr)|_X)$ and $X=\bigl(X,(\CO_Y/\Fa)|_X\bigr)$. This proves that $f$ is a closed immersion in the sense of \cite[I$_{\rm new}$, Definition~10.14.2]{EGA}.
\end{proof}

\begin{remark}
Without the assumption that $f$ is adic the conclusion of the lemma is false as the following example shows. Let $Y=\dirlim Y_n$ with $Y_n=\Spec\BF_q\dbl\zeta\dbr[x]/(\zeta^n)$ and $X=\dirlim X_n$ with $X_n=\Spec\BF_q\dbl\zeta\dbr[x]/(\zeta,x)^n$. Then $X=\Spf\BF_q\dbl\zeta,x\dbr\to Y=\Spf\BF_q\dbl\zeta\dbr\langle x\rangle$, with the notation of \eqref{EqFormalTateAlgebra}, is an ind-closed immersion of ind-schemes, but not a closed immersion of formal schemes.
\end{remark}

In the next proposition we associate with a bound $\hat Z$ a strictly $\BF_q\dpl\zeta\dpr$-analytic space in the sense of Berkovich~\cite{Berkovich1,Berkovich2}. On the category of $\BF_q\dpl\zeta\dpr$-analytic spaces we consider the \'etale topology; see \cite[\S\,4.1]{Berkovich2}.

\begin{proposition}\label{PropBound}
Let $\hat Z=[(R,\hat Z_R)]$ be a bound with reflex ring $R_{\hat{Z}}$, and let $E:=E_{\hat Z}:=R_{\hat Z}[\tfrac{1}{\zeta}]$ be its field of fractions. We only assume that $Z$ satisfies conditions \ref{DefBDLocal_A1} -- \ref{DefBDLocal_A4} from Definition~\ref{DefBDLocal}, but not condition \ref{DefBDLocal_A11}, whose formulation uses the results of the present proposition.
\begin{enumerate}
\item \label{PropBound_A}
Then for every representation $\Darst\colon G\to\SL_{r}$ over $\BF_q\dbl z\dbr$ there is a positive integer $n$ such that all the induced morphisms $\Darst_*\colon\hat{Z}_R\into\wh\Flag_{G,R}\to\wh\Flag_{\SL_{r},R}$ factor through $\wh\Flag^{(n)}_{\SL_{r},R}$.
\item \label{PropBound_C}
$\wh\Flag^{(n)}_{\SL_{r}}$ is a $\zeta$-adic formal scheme over $\Spf\BF_q\dbl\zeta\dbr$. It is the $\zeta$-adic completion of a projective scheme over $\Spec\BF_q\dbl\zeta\dbr$ that we also denote by $\wh\Flag^{(n)}_{\SL_{r}}$. The corresponding strictly $\BF_q\dpl\zeta\dpr$-analytic space $\bigl(\wh\Flag^{(n)}_{\SL_{r}}\bigr)^\an$ is the analytification of the projective scheme $\wh\Flag^{(n)}_{\SL_{r}}\times_{\BF_q\dbl\zeta\dbr}\Spec\BF_q\dpl\zeta\dpr$ over $\Spec\BF_q\dpl\zeta\dpr$ that represents the sheaf of sets for the \'etale topology associated with the presheaf
\begin{eqnarray}\label{EqFlSLrAn}
X & \longmapsto & \Bigl\{\;g\mod\SL_r\bigl(\CO_X\dbl z-\zeta\dbr\bigr)\;\in\;\SL_r\bigl(\CO_X\dpl z-\zeta\dpr\bigr)/\SL_r\bigl(\CO_X\dbl z-\zeta\dbr\bigr)\colon\\
& & \quad\text{all $j\times j$-minors of $g$ lie in $(z-\zeta)^{n(j^2-jr)}\CO_X\dbl z-\zeta\dbr$ for all $j$ }\Bigr\}. \nonumber
\end{eqnarray}
The scheme $\wh\Flag^{(n)}_{\SL_{r}}\times_{\BF_q\dbl\zeta\dbr}\Spec\BF_q\dpl\zeta\dpr$ is a closed subscheme of the affine Grassmannian $\Gr_{\SL_r}^{\bB_\dR}$ from \eqref{EqH_G}.
\item \label{PropBound_B}
If $n$ and $\Darst$ are as in \ref{PropBound_A} such that $\Darst$ is faithful with quasi-affine quotient $\SL_{r}/G$ then all $\Darst_*\colon\hat{Z}_R\into\wh{\Flag}_{\SL_{r},R}^{(n)}$ are closed immersions of formal schemes over $\Spf R$ in the sense of \cite[I$_{\rm new}$, Definition~10.14.2]{EGA}.
\item \label{PropBound_D}
All $\hat{Z}_R$ are $\zeta$-adic formal schemes, projective over $\Spf R$. All their associated $R[\tfrac{1}{\zeta}]$-analytic spaces $(\hat{Z}_R)^\an$ arise by base change to $R[\tfrac{1}{\zeta}]$ from one strictly $E_{\hat Z}$-analytic space $\hat Z^\an:=(\hat Z_E)^\an$ associated with a projective scheme $\hat Z_E$ over $\Spec E_{\hat Z}$. The latter is a closed subscheme of the affine Grassmannian $\Gr_G^{\bB_\dR}\times_{\BF_q\dpl\zeta\dpr}\Spec E_{\hat Z}$ from \eqref{EqH_G}.
\end{enumerate}
\end{proposition}

\noindent
{\it Remark.} The proof of statements \ref{PropBound_B} and \ref{PropBound_D} uses the ind-projectivity of the affine flag variety $\Flag_G$. If $G$ is not parahoric, it is not clear to us whether $\Darst_*\colon\hat{Z}_R\into\wh{\Flag}_{\SL_{r},R}^{(n)}$ is a locally closed immersion of $\zeta$-adic formal schemes.

\begin{proof}[{Proof of Proposition~\ref{PropBound}}]
\ref{PropBound_A} Let $\Darst'\colon G\into\SL_{r'}$ and $n'$ be the representation and the integer from Definition~\ref{DefBDLocal}\ref{DefBDLocal_A} for which all $\Darst'_*\colon\hat{Z}_R\to\wh\Flag_{\SL_{r'},R}$ factor through $\wh\Flag^{(n')}_{\SL_{r'},R}$. Let $\wh{L\SL}_{r'\!,R}:=L\SL_{r'}\whtimes_{\BF_q}\Spf R$ and define $\wh{L\SL}_{r'\!,R}^{(n')}:=\wh{L\SL}_{r'\!,R}\whtimes_{\wh\Flag_{\SL_{r'},R}}\wh\Flag_{\SL_{r'},R}^{(n')}$. Then $\wh{L\SL}_{r'\!,R}^{(n')}(S)\;=\;$
\[
\bigl\{\,g\in \wh{L\SL}_{r'\!,R}(S)\colon \text{ all $j\times j$-minors of $g$ lie in $(z-\zeta)^{n'(j^2-j{r'})}\CO_S(S)\dbl z\dbr\es\forall\;j=1,\ldots,{r'}$}\,\bigr\}.
\]
This implies that $\wh{L\SL}_{r'\!,R}^{(n')}$ is an infinite-dimensional affine formal scheme over $\Spf R$. Thus its closed subscheme $\wh{LG}^{(n')}_R:=(LG\whtimes_{\BF_q}\Spf R)\whtimes_{\wh{L\SL}_{r'\!,R}}\wh{L\SL}_{r'\!,R}^{(n')}$ is also affine. By \cite[Remark~4.10]{AH_Local} the ind-schemes $\hat{Z}_R$ are in fact formal schemes over $\Spf R$ in the sense of \cite[I$_{\rm new}$]{EGA}. Since the morphism $LG\to\Flag_G$ has sections \'etale locally there is an \'etale covering of formal schemes $\hat{Z}_R'\to\hat{Z}_R$ such that the morphism $\hat{Z}_R'\to\wh\Flag_{G,R}$ factors through $\wh{LG}^{(n')}_R$. Let $\Spf A\subset\hat{Z}_R'$ be an affine open formal subscheme with $\Spf A=\dirlim \Spec A_i$ for some $A_i$. The induced compatible collection of morphisms $\Spec A_i\to\wh{LG}^{(n')}_R\into\wh{L\SL}_{r'\!,R}^{(n')}$ corresponds to a compatible collection of ring homomorphisms $\CO(\wh{L\SL}_{r'\!,R}^{(n')})\onto\CO(\wh{LG}^{(n')}_R)\to A_i$ and thus to a homomorphism $\CO(\wh{L\SL}_{r'\!,R}^{(n')})\onto\CO(\wh{LG}^{(n')}_R)\to A$. We view the latter as an element $b\in\SL_{r'}\bigl(A\dbl z\dbr[\tfrac{1}{z-\zeta}]\bigr)\cap\bigl((z-\zeta)^{-n'({r'}-1)}A\dbl z\dbr\bigr)^{r'\times r'}$. It actually lies in $G\bigl(A\dbl z\dbr[\tfrac{1}{z-\zeta}]\bigr)$, because the closed ind-subscheme $LG\into L\SL_{r'}$ is defined by the equations that applied to the entries of a matrix in $\SL_{r'}$ cut out the closed subgroup $\Darst'\colon G\into\SL_{r'}$.

If now $\Darst\colon G\to\SL_{r}$ is any representation over $\BF_q\dbl z\dbr$ then we claim that there is a positive integer $n$ that only depends on $\Darst$ and $n'$ such that $\Darst(b)\in\SL_{r}\bigl(A\dbl z\dbr[\tfrac{1}{z-\zeta}]\bigr)$ and all $j\times j$ minors of $\Darst(b)$ lie in $(z-\zeta)^{n(j^2-jr)}A\dbl z\dbr$ for all $j$. Indeed, equality for $j=r$ always holds as the image of $\Darst$ is in $\SL_{r}$. For the other $j$ we realize $\Darst$ as a subquotient of $\bigoplus_{i=1}^{i_0} (\Darst')^{\otimes l_i}\otimes (\Darst'{}\dual)^{\otimes m_i}$ for suitable $i_0$, $l_i$ and $m_i$. Then it is enough to show the claim for this direct sum. Here we can bound all minors by bounds only depending on $n'$, $i_0$, and the $l_i$ and $m_i$. The claim follows. Thus $\Darst_*\colon\hat{Z}_R\to\wh\Flag_{\SL_{r},R}$ factors through $\wh\Flag^{(n)}_{\SL_{r},R}$ because the equations defining the closed ind-subscheme $\wh\Flag^{(n)}_{\SL_{r},R}\subset\wh\Flag_{\SL_{r},R}$ vanish on the \'etale covering $\hat{Z}'_R\to\hat{Z}_R$. 

\medskip\noindent
\ref{PropBound_C} To show that $\wh\Flag^{(n)}_{\SL_r}(S)$ is projective over $\Spf\BF_q\dbl\zeta\dbr$ we use the equivalence between $L^+\SL_r$-torsors over $S$ and pairs $(M,\alpha)$ where $M$ is a locally free $\CO_S\dbl z\dbr$-module on $S$ and $\alpha\colon\wedge^r M\isoto\CO_S\dbl z\dbr$ is an isomorphism of $\CO_S\dbl z\dbr$-modules. Under this equivalence and using that $\CO_S\dbl z\dbr=\CO_S\dbl z-\zeta\dbr$ for all $S\in\Nilp_{\BF_q\dbl\zeta\dbr}$, we may identify $\wh\Flag^{(n)}_{\SL_r}(S)$ with the set
\begin{eqnarray}\label{EqPropBound_C-Proof}
& \Bigl\{\text{ locally free $\CO_S\dbl z-\zeta\dbr$-submodules $M\subset(z-\zeta)^{-n(r-1)}\CO_S\dbl z-\zeta\dbr^{\oplus r}$ such that for all $j=1,\ldots,r$ } \nonumber \\
& \TS\bigwedge^j_{\CO_S\dbl z-\zeta\dbr}M\;\subset\;(z-\zeta)^{n(j^2-jr)}\cdot\bigwedge^j_{\CO_S\dbl z-\zeta\dbr} \CO_S\dbl z-\zeta\dbr^{\oplus r}\text{ with equality for $j=r$ }\Bigr\}\,.
\end{eqnarray}
Note that the quotient $(z-\zeta)^{-n(r-1)}\CO_S\dbl z-\zeta\dbr^{\oplus r}/M$ is finite locally free as $\CO_S$-module by \cite[Lemma~4.3]{HV1}. From Cramer's rule (e.g.~\cite[III.8.6, Formulas (21) and (22)]{BourbakiAlgebra}) one sees that the above condition for $j=r-1$ implies that $M\supset(z-\zeta)^{n(r-1)}\CO_S\dbl z-\zeta\dbr^{\oplus r}$. By considering the image $\olM$ of $(z-\zeta)^{n(r-1)}M$ in $\bigl(\CO_S\dbl z-\zeta\dbr/(z-\zeta)^{2n(r-1)}\bigr)^{\oplus r}$ and using arguments similar to \cite[Lemma~2.7]{HartlHellmann}, see also \cite[Proposition~2.4.6]{SchauchDiss}, we obtain a closed embedding of $\wh\Flag_{\SL_r}^{(n)}$ into the formal $\zeta$-adic completion $$({\rm Quot}_{\CO^r\,|\,\Spec\BF_q\dbl\zeta\dbr[z]/(z-\zeta)^{2n(r-1)}\,|\,\Spec\BF_q\dbl\zeta\dbr})\times_{\Spec\BF_q\dbl\zeta\dbr}\Spf\BF_q\dbl\zeta\dbr$$ of Grothendieck's Quot-scheme whose points over a $\Spec\BF_q\dbl\zeta\dbr$-scheme $S$ are 
\begin{eqnarray*}
& & ({\rm Quot}_{\CO^r\,|\,\Spec\BF_q\dbl\zeta\dbr[z]/(z-\zeta)^{2n(r-1)}\,|\,\Spec\BF_q\dbl\zeta\dbr})(S)= \\[2mm]
& & \Bigl\{\text{ finitely presented $\CO_S[z]/(z-\zeta)^{2n(r-1)}$-submodules $\olM\subset\bigl(\CO_S[z]/(z-\zeta)^{2n(r-1)}\bigr)^{\oplus r}$} \\
& & \qquad\qquad\qquad\qquad \text{ whose quotient is finite locally free over $\CO_S$ }\Bigr\}\,;
\end{eqnarray*}
see \cite[n$\open$221, Theorem 3.1]{FGA} or \cite[Theorem 2.6]{AltmanKleiman}. By the projectivity of the Quot-scheme, $\wh\Flag_{\SL_r}^{(n)}$ is projective over $\Spf\BF_q\dbl\zeta\dbr$. From \eqref{EqPropBound_C-Proof} also the description of the sheaf represented by $\bigl(\wh\Flag^{(n)}_{\SL_r}\bigr)^\an$ follows.

\medskip\noindent
\ref{PropBound_B} If $\Darst\colon G\into\SL_{r}$ is a faithful representation with quasi-affine quotient $\SL_{r}/\Darst(G)$ then Pappas and Rapoport~\cite[Theorem~1.4]{PR2} show that the induced morphism $\Darst_*\colon\Flag_G\into\Flag_{\SL_{r}}$ is a locally ind-closed immersion of ind-schemes. Since $\Flag_G$ is ind-proper by \cite[Theorem~A]{Richarz13} this is even an ind-closed immersion. Since $\hat Z_R$ is a $\zeta$-adic formal scheme by  Definition~\ref{DefBDLocal}\ref{DefBDLocal_A3} all $\Darst_*\colon\hat{Z}_R\into\wh{\Flag}_{\SL_{r},R}^{(n)}$ are adic ind-closed immersions of formal schemes over $\Spf R$, and hence closed immersions by Lemma~\ref{LemmaClosedImmersion}.

\medskip\noindent
\ref{PropBound_D} By \cite[Proposition~1.3]{PR2} there is a faithful representation $\Darst\colon G\into\SL_{r}$ as in \ref{PropBound_A} with quasi-affine quotient $\SL_{r}/\Darst(G)$. Therefore all $\hat Z_R$ are projective over $\Spf R$ by \ref{PropBound_C} and \ref{PropBound_B}, and the associated strictly $R[\tfrac{1}{\zeta}]$-analytic space $(\hat Z_R)^\an$ is Zariski-closed in the projective $R[\tfrac{1}{\zeta}]$-analytic space $(\wh\Flag_{\SL_{r},R}^{(n)})^\an$. By analytic GAGA \cite[Theorem~2.8]{LuboFRG}, $(\hat Z_R)^\an$ is the analytification of a closed subscheme $\hat Z_{R[\frac{1}{\zeta}]}$ of the projective scheme $\wh\Flag^{(n)}_{\SL_{r}}\times_{\BF_q\dbl\zeta\dbr}\Spec R[\tfrac{1}{\zeta}]$. By \cite[Remark~4.7(f)]{AH_Local} there is an $R$ over which a representative $\hat Z_R$ exists, such that $R[\tfrac{1}{\zeta}]$ is Galois over $E_{\hat Z}$ and $\hat Z_R$ is invariant under $\Gal(R[\tfrac{1}{\zeta}]/E_{\hat Z})$. Since the Galois descent for projective $E_{\hat Z}$-schemes is effective by \cite[Chapitre~VIII, Corollaire~7.7]{SGA1}, the scheme $\hat Z_{R[\frac{1}{\zeta}]}$ and its analytification $(\hat Z_R)^\an$ descend to a projective scheme $\hat Z_E$ over $\Spec E_{\hat Z}$ and its associated strictly $E_{\hat Z}$-analytic space $(\hat Z_E)^\an$.

By our proof of \ref{PropBound_A} above there is an \'etale covering of $\hat{Z}_R$ formed by formal schemes $\Spf A$ on which a lift of the inclusion $\hat Z_R\into\wh\Flag_{G,R}$ to a point in $G\bigl(A\dbl z\dbr[\tfrac{1}{z-\zeta}]\bigr)$ exists, that is unique up to multiplication by $G(A\dbl z\dbr)$ on the right. Under the map $A\dbl z\dbr\into A[\tfrac{1}{\zeta}]\dbl z-\zeta\dbr$, $z\mapsto z=\zeta+(z-\zeta)$ this point gives rise to a point in $G\bigl(A[\tfrac{1}{\zeta}]\dpl z-\zeta\dpr\bigr)$. The latter induces a morphism $\Spec A[\tfrac{1}{\zeta}]\to \Gr_G^{\bB_\dR}\times_{\BF_q\dpl\zeta\dpr}\Spec R[\tfrac{1}{\zeta}]$ and we consider its analytification $(\Spf A)^\an\to \bigl(\Gr_G^{\bB_\dR}\times_{\BF_q\dpl\zeta\dpr}\Spec R[\tfrac{1}{\zeta}]\bigr)^\an$, that now descends to a morphism $(\hat{Z}_R)^\an\to \bigl(\Gr_G^{\bB_\dR}\times_{\BF_q\dpl\zeta\dpr}\Spec R[\tfrac{1}{\zeta}]\bigr)^\an$. By Galois descent it descends further to $(\hat Z_E)^\an$ and provides the closed immersion into $\Gr_G^{\bB_\dR}\times_{\BF_q\dpl\zeta\dpr}\Spec E_{\hat Z}$.
\end{proof}

\begin{example}\label{exboundmu}
We describe bounds that arise from a conjugacy class of coweights. Let $E_0$ be a finite field extension of $\mathbb{F}_q\dpl\zeta\dpr$ and let $\mu\colon\BG_{m,E_0}\to G_{E_0}:=G\times_{\BF_q\dbl z\dbr,z\mapsto\zeta}{E_0}$ be a coweight over ${E_0}$ of the generic fiber of $G$. Since the following construction only depends on the conjugacy class of $\mu$ we may assume that ${E_0}$ is separable over $\BF_q\dpl\zeta\dpr$ by \cite[\S\,8.11, Corollary~1]{Borel91}. Recall the affine Grassmannian $\Gr_G^{\bB_\dR}$ from \eqref{EqH_G}. We first define $\hat{Z}_{\preceq\mu,{E_0}}$ as the scheme theoretic closure in $\Gr_{G,{E_0}}^{\bB_\dR}:=\Gr_G^{\bB_\dR}\times_{\BF_q\dpl\zeta\dpr}\Spec {E_0}$ of 
\[
G({E_0}\dbl z-\zeta\dbr)\cdot\mu(z-\zeta)\cdot G({E_0}\dbl z-\zeta\dbr)\,\big/\,G({E_0}\dbl z-\zeta\dbr)\;\subset\;\Gr_{G,{E_0}}^{\bB_\dR}.
\]
That is, $\hat{Z}_{\preceq\mu,{E_0}}$ is the (reduced) closed Schubert variety associated with $\mu$. Choose a faithful representation $\Darst\colon G\into\SL_{r}$ over $\BF_q\dbl z\dbr$. Then there is a positive integer $n$ such that the induced morphisms $\Darst_*\colon\hat{Z}_{\preceq\mu,{E_0}}\into\Gr_{G,{E_0}}^{\bB_\dR}\into\Gr_{\SL_r,{E_0}}^{\bB_\dR}$ factors through the closed subscheme $\wh\Flag^{(n)}_{\SL_{r},{E_0}}:=\wh\Flag^{(n)}_{\SL_{r}}\times_{\BF_q\dbl\zeta\dbr}\Spec {E_0}\subset\Gr_{\SL_r,{E_0}}^{\bB_\dR}$ from \eqref{EqFlSLrAn} which is projective over $\Spec {E_0}$. We let $R$ be the integral closure of $\BF_q\dbl\zeta\dbr$ in ${E_0}$, and we let $\hat{Z}_R$ be a closed subscheme of the projective $R$-scheme
\begin{equation}\label{EqBDGrass}
\wh\Flag_{G,R}\underset{\wh\Flag_{\SL_r,R}}{\times} (\wh\Flag^{(n)}_{\SL_{r}}\times_{\BF_q\dbl\zeta\dbr}\Spec R)
\end{equation}
with $\hat{Z}_R\times_R\Spec {E_0}=\hat{Z}_{\preceq\mu,{E_0}}$. For example, one could take $\hat{Z}_R$ as the reduced closure of $\hat{Z}_{\preceq\mu,{E_0}}$, which is flat over $\BF_q\dbl\zeta\dbr$, and which we call $\hat{Z}_{\preceq\mu,R}$. In this case, it coincides with the global Schubert variety of \cite[Definition~3.5]{Richarz13}, compare Remark~\ref{remcompgsv}. However, this is not the only possible choice for $\hat{Z}_R$. 

By our assumption that $G$ is parahoric, $\wh\Flag_{G,R}$ is ind-projective by \cite[Theorem~A]{Richarz13} and the schemes \eqref{EqBDGrass} and $\hat{Z}_R$ are indeed projective over $\Spec R$. Therefore, the formal $\zeta$-adic completion of $\hat{Z}_R$ defines a bound $\hat{Z}$ whose associated strictly ${E_0}$-analytic space $(\hat{Z}_R)^\an$ arises as the analytification of $\hat{Z}_{E_0}$. If $\hat{Z}_R=\hat{Z}_{\preceq\mu,R}$ we denote the associated bound by $\hat{Z}_{\preceq\mu}$.

\medskip\noindent
{\it Claim.} The reflex field $E_{\hat{Z}_{\preceq\mu}}$ of $\hat{Z}_{\preceq\mu}$ is equal to the \emph{reflex field $E_\mu$ of the conjugacy class of $\mu$}, and is, in particular, separable over $\BF_q\dpl\zeta\dpr$. 

\medskip
Indeed, the field $E_\mu$ is defined as the fixed field in a separable closure ${E_0}^\sep$ of the group
\begin{equation}\label{EqReflexmu}
\bigl\{\,\gamma\in\Gal\bigl({E_0}^\sep\!/\BF_q\dpl\zeta\dpr\bigr)\colon \exists\;g\in G({E_0}^\sep)\text{ with } {}^\gamma\!\mu = \Int_g\circ\mu\,\bigr\}\,.
\end{equation}
The field $E_\mu$ is contained in ${E_0}$ and more generally, in every field over which a representative of the conjugacy class of $\mu$ exists, but these inclusions may be strict. To show that $E_{\hat{Z}_{\preceq\mu}}\subset E_\mu$ we note that $E_{\hat{Z}_{\preceq\mu}}\subset {E_0}\subset {E_0}^\sep$ and that every element of the group \eqref{EqReflexmu} satisfies $\gamma(\hat{Z}_{\preceq\mu})=\hat{Z}_{\preceq\mu}$, because $\hat{Z}_{\preceq\mu}$ is defined as the closure of $\hat{Z}_{\preceq\mu,{E_0}}$ and ${}^\gamma\!\mu(z-\zeta)=g\cdot\mu(z-\zeta)\cdot g^{-1}$ implies $\gamma(\hat{Z}_{\preceq\mu,{E_0}})=\hat{Z}_{\preceq\mu,{E_0}}$. For the opposite inclusion $E_\mu\subset E_{\hat{Z}_{\preceq\mu}}$ note that every $\gamma\in\Gal\bigl({E_0}^\sep\!/\BF_q\dpl\zeta\dpr\bigr)$ with $\gamma(\hat{Z}_{\preceq\mu})=\hat{Z}_{\preceq\mu}$ satisfies $\hat{Z}_{\preceq{}^\gamma\!\mu,{E_0}}=\gamma(\hat{Z}_{\preceq\mu,{E_0}})=\hat{Z}_{\preceq\mu,{E_0}}$. Since the Schubert varieties in $\Gr_{G,{E_0}^\sep}^{\bB_\dR}$ are in bijection with the $G({E_0}^\sep)$-conjugacy classes of cocharacters $\BG_{m,{E_0}^\sep}\to G_{{E_0}^\sep}$ we conclude that ${}^\gamma\!\mu$ is conjugate to $\mu$, and $\gamma$ lies in the group \eqref{EqReflexmu}. Therefore, $E_\mu\subset E_{\hat{Z}_{\preceq\mu}}$, and our claim is proved.

We describe the generic fiber of $\hat{Z}$. Let $L$ be the completion of an algebraic closure of ${E_0}$. Choose a maximal torus $T$ of $G_L$ through which $\mu$ factors, a Borel subgroup $B\supset T$ with respect to which $\mu$ is dominant, and consider all dominant $\mu'\in X_*(T)_\dom$ with $\mu'\preceq\mu$ in the Bruhat order. Then the $L$-valued points of $\hat Z^\an$ lie in the union of the $G(L\dbl z-\zeta\dbr)$-cosets
\[
\bigcup_{\mu'\preceq\mu}G(L\dbl z-\zeta\dbr)\cdot\mu'(z-\zeta)\cdot G(L\dbl z-\zeta\dbr)\,\big/\,G(L\dbl z-\zeta\dbr)\,.
\]
Compare also Remark~\ref{DefBDLocal'}\ref{DefBDLocal_A6}.

The special fiber of $\hat{Z}_{\preceq\mu}$ is discussed in \cite[p.~3739 ff.]{Richarz13} as a certain union of Schubert varieties.

Finally, since $\hat{Z}_{\preceq\mu}$ is defined as the closure of $\hat{Z}_{\preceq\mu,{E_0}}$, the bound $(\hat{Z}_{\preceq\mu})^{-1}$ from Lemma~\ref{LemmaInverseBound} equals $\hat{Z}_{\preceq(-\mu)}$ where the cocharacter $-\mu\colon\BG_{m,{E_0}}\to G_{E_0}$ is obtained from $\mu$ by precomposing with the inversion on $\BG_{m,{E_0}}$.
\end{example}

\begin{example}\label{ExConstantG}
We explain the relation of boundedness in Definition~\ref{DefBDLocal} to the definition from \cite{HV1}. Consider a split reductive group $G_0$ over $\BF_q$, and set $G:=G_0\times_{\BF_q}\Spec\BF_q\dbl z\dbr$. Let $T\subset G_0$ be a maximal split torus over $\BF_q$. Let $B$ be a Borel subgroup containing $T$ and $\olB$ its opposite Borel. Let $\mu\in X_*(T)_\dom$ be a coweight that is dominant with respect to $B$. In \cite[Definition~3.5]{HV1} we define ``boundedness by $(\mu,z-\zeta)$'' as follows. We consider a finite generating system $\Lambda$ of the monoid of dominant weights $X^*(T)_\dom$, and for all $\lambda\in\Lambda$ the Weyl module $V_\lambda:=\bigl(\Ind_\olB^{G_0}(-\lambda)_\dom\bigr)\dual$. Here $(-\lambda)_\dom$ is the dominant representative in the Weyl group orbit of $-\lambda$. Let $\CG$ and $\CG'$ be $L^+G$-torsors over a scheme $S\in \Nilp_{\BF_q\dbl\zeta\dbr}$ and let $\delta\colon L\CG\isoto L\CG'$ be an isomorphism of the associated $LG$-torsors. For the representation $\Darst_\lambda\colon G\to\GL(V_\lambda)$ in $\Rep_{\BF_q\dbl z\dbr}G$ we consider the sheaves of $\CO_{S}\dbl z\dbr$-modules $\Darst_{\lambda*}\CG$ and $\Darst_{\lambda*}\CG'$ associated with the $L^+G$-torsors $\CG$ and $\CG'$ over $S$. The isomorphism $\delta$ induces an isomorphism $\Darst_{\lambda*}\delta\colon\Darst_{\lambda*}\CG\otimes_{\CO_S\dbl z\dbr}\CO_S\dpl z\dpr\isoto\Darst_{\lambda*}\CG'\otimes_{\CO_S\dbl z\dbr}\CO_S\dpl z\dpr$. After choosing trivializations of $\CG$ and $\CG'$ over an \'etale covering $S'\to S$, the isomorphism $\delta$ is given by multiplication with an element $\delta_{S'}\in LG(S')$. The latter corresponds to a morphism $S'\to LG$. Let $LG_{\mu^\#}\subset LG$ and $\CF_{\mu^\#}\subset\wh\Flag_G$ be the connected components corresponding to the image $\mu^{\#}\in\pi_1(G)_I=\pi_1(G)=\pi_1(G_0)=\pi_0(\wh\Flag_G)=\pi_0(LG)$ of $\mu$. According to \cite[Definition~3.5]{HV1} ``$\delta$ is bounded by $(\mu,z-\zeta)$'' if
\begin{itemize}
\item the morphism $S'\to LG$ factors through $LG_{\mu^\#}$ and 
\item $\Darst_{\lambda*}\delta\,(\Darst_{\lambda*}\CG)\;\subset\;(z-\zeta)\;^{-\langle\,(-\lambda)_\dom,\mu\rangle}\cdot\Darst_{\lambda*}\CG'$ for all $\lambda\in\Lambda$.
\end{itemize}
In terms of Definition~\ref{DefBDLocal} this can be described as follows. Consider the ind-scheme
\[
Y_\lambda\;:=\;L\GL(V_\lambda)\whtimes_{\BF_q}\Spf\BF_q\dbl\zeta\dbr\;=\;\dirlim Y_{\lambda,m}\qquad \text{for}\qquad Y_{\lambda,m}\;:=\;L\GL(V_\lambda)\times_{\BF_q}\Spec\BF_q\dbl\zeta\dbr/(\zeta^m)
\]
and let $M_\lambda\in L\GL(V_\lambda)(Y_\lambda)$ be the universal element over $Y_{\lambda}$. Since $\CO_{Y_{\lambda,m}}\dbl z\dbr[z^{-1}]=\CO_{Y_{\lambda,m}}\dbl z-\zeta\dbr[\tfrac{1}{z-\zeta}]$ we can write $M_\lambda=(M_{\lambda,m})_m$ with
\[
M_{\lambda,m}\;\in\;L\GL(V_\lambda)(Y_{\lambda,m})\;=\;\GL(V_\lambda)\bigl(\CO_{Y_{\lambda,m}}\dbl z\dbr[z^{-1}]\bigr)\;=\;\GL(V_\lambda)\bigl(\CO_{Y_{\lambda,m}}\dbl z-\zeta\dbr[\tfrac{1}{z-\zeta}]\bigr)\,.
\]
Let $\olY_\lambda\subset Y_{\lambda}$ be the closed ind-subscheme where the matrix $(z-\zeta)^{\langle\,(-\lambda)_\dom,\mu\rangle}M_\lambda$ has entries in $\CO_{Y_\lambda}\dbl z-\zeta\dbr=\CO_{Y_{\lambda,m}}\dbl z\dbr$ for all $m$, and let $\olY'_\lambda\subset Y_{\lambda}$ be the closed ind-subscheme where the matrix $(z-\zeta)^{\langle\,(-\lambda)_\dom,\mu\rangle}M_\lambda^{-1}$ has entries in $\CO_{Y_{\lambda,m}}\dbl z-\zeta\dbr=\CO_{Y_{\lambda,m}}\dbl z\dbr$ for all $m$. Set
\begin{alignat*}{2}
& \hat Z^{{\SSC\rm Weyl}}_{\preceq\mu,\lambda} && \;:=\; \olY_\lambda/(L^+\GL(V_\lambda)\whtimes_{\BF_q}\Spf\BF_q\dbl\zeta\dbr)\;\subset\;\wh\Flag_{\GL(V_\lambda)} \qquad\text{and} \\[2mm]
& \hat Z^{{\SSC\rm Weyl},-1}_{\preceq\mu,\lambda} && \;:=\; \olY'_\lambda/(L^+\GL(V_\lambda)\whtimes_{\BF_q}\Spf\BF_q\dbl\zeta\dbr)\;\subset\;\wh\Flag_{\GL(V_\lambda)}\,.
\end{alignat*}
Write $\Lambda=\{\lambda_1,\ldots,\lambda_m\}$ and for each $\lambda_i$ consider the morphism $\Darst_{\lambda_i*}\colon \wh\Flag_G\to\wh\Flag_{\GL(V_{\lambda_i})}$ induced from $\Darst_{\lambda_i}$. Let $\hat Z^{{\SSC\rm Weyl}}_{G_0,\preceq\mu}\subset\CF_{\mu^\#}$ be the base change of the closed ind-subscheme
$$
\hat Z^{{\SSC\rm Weyl}}_{\preceq\mu,\lambda_1}\whtimes_{\Spf\BF_q\dbl\zeta\dbr}\ldots\whtimes_{\Spf\BF_q\dbl\zeta\dbr}\hat Z^{{\SSC\rm Weyl}}_{\preceq\mu,\lambda_m}
$$
under the morphism $\prod_i\Darst_{\lambda_i*}\colon \CF_{\mu^\#}\to\wh\Flag_{\GL(V_{\lambda_1})}\whtimes_{\BF_q\dbl\zeta\dbr}\ldots\whtimes_{\BF_q\dbl\zeta\dbr}\wh\Flag_{\GL(V_{\lambda_m})}$. Likewise, let $\hat Z^{{\SSC\rm Weyl},-1}_{G_0,\preceq\mu}\subset\CF_{-\mu^\#}$ be the base change of the closed ind-subscheme
$$
\hat Z^{{\SSC\rm Weyl},-1}_{\preceq\mu,\lambda_1}\whtimes_{\Spf\BF_q\dbl\zeta\dbr}\ldots\whtimes_{\Spf\BF_q\dbl\zeta\dbr}\hat Z^{{\SSC\rm Weyl},-1}_{\preceq\mu,\lambda_m}
$$
under the morphism $\prod_i\Darst_{\lambda_i*}\colon \CF_{-\mu^\#}\to\wh\Flag_{\GL(V_{\lambda_1})}\whtimes_{\BF_q\dbl\zeta\dbr}\ldots\whtimes_{\BF_q\dbl\zeta\dbr}\wh\Flag_{\GL(V_{\lambda_m})}$. The ind-subscheme $\hat Z^{{\SSC\rm Weyl},-1}_{G_0,\preceq\mu}\subset\CF_{-\mu^\#}$ was denoted $\wh\Gr^{\preceq(\mu,z-\zeta)}$ in \cite[Definition~5.5]{HV1}. We will show that both $\hat Z^{{\SSC\rm Weyl}}_{G_0,\preceq\mu}$ and $\hat Z^{{\SSC\rm Weyl},-1}_{G_0,\preceq\mu}$ define bounds in the sense of Definition~\ref{DefBDLocal}\ref{DefBDLocal_A} with reflex ring $\BF_q\dbl\zeta\dbr$ and are representatives defined over the reflex ring. In terms of Lemma~\ref{LemmaInverseBound} they satisfy $\hat Z^{{\SSC\rm Weyl},-1}_{G_0,\preceq\mu}=\bigl(\hat Z^{{\SSC\rm Weyl}}_{G_0,\preceq\mu}\bigr)^{-1}$ and $\hat Z^{{\SSC\rm Weyl}}_{G_0,\preceq\mu}=\bigl(\hat Z^{{\SSC\rm Weyl},-1}_{G_0,\preceq\mu}\bigr)^{-1}$. Conditions~\ref{DefBDLocal_A1}, respectively \ref{DefBDLocal_A11} are satisfied because all the $\hat Z^{{\SSC\rm Weyl}}_{\preceq\mu,\lambda}$ and $\hat Z^{{\SSC\rm Weyl},-1}_{\preceq\mu,\lambda}$ are invariant by multiplication on the left with $L^+G$, respectively with $G\bigl(\fdot\dbl z-\zeta\dbr\bigr)$. Conditions~\ref{DefBDLocal_A2} and \ref{DefBDLocal_A3} for $\hat Z^{{\SSC\rm Weyl},-1}_{G_0,\preceq\mu}$ follow from \cite[Proposition~5.5]{HV1}, and for $\hat Z^{{\SSC\rm Weyl}}_{G_0,\preceq\mu}$ from Lemma~\ref{LemmaInverseBound}. This proposition also says that the underlying reduced subscheme of $\hat Z^{{\SSC\rm Weyl},-1}_{G_0,\preceq\mu}$ is the closed Schubert variety associated with $(-\mu)_\dom$ in $\Flag_G$ and the underlying reduced subscheme of $\hat Z^{{\SSC\rm Weyl}}_{G_0,\preceq\mu}$ is the closed Schubert variety associated with $\mu$. If $V:=\bigoplus_{\lambda\in\Lambda}V_\lambda$ and $\Darst$ is the representation of $G_0$ on $V\oplus(\det V)\dual$ that is faithful by \cite[Proposition~3.14]{HV1} and factors through $\SL_{1+\dim V}$, then $\hat Z^{{\SSC\rm Weyl}}_{G_0,\preceq\mu}$ is contained in $\wh\Flag^{(n)}_{\SL_{1+\dim V}}$ for $n=\max\bigl\{\langle (-\lambda)_\dom,\mu\rangle\colon\lambda\in\Lambda\bigr\}$, that is, Condition~\ref{DefBDLocal_A4} holds for $\hat Z^{{\SSC\rm Weyl}}_{G_0,\preceq\mu}$. Then it also holds for $\hat Z^{{\SSC\rm Weyl},-1}_{G_0,\preceq\mu}$ by Lemma~\ref{LemmaInverseBound}.

Now $\delta$ ``is bounded by $(\mu,z-\zeta)$'' in the sense of \cite[Definition~3.5]{HV1} if and only if the morphism $S'\to LG\whtimes_{\BF_q}\Spf\BF_q\dbl\zeta\dbr\onto\wh\Flag_G$ given by $\delta_{S'}\in LG(S')$ factors through $\hat Z^{{\SSC\rm Weyl}}_{G_0,\preceq\mu}$, that is if and only if $\delta$ is bounded by $\hat Z^{{\SSC\rm Weyl}}_{G_0,\preceq\mu}$ in the sense of Definition~\ref{DefBDLocal}\ref{DefBDLocal_C}. This is the case if and only if the morphism $S'\to LG\whtimes_{\BF_q}\Spf\BF_q\dbl\zeta\dbr\onto\wh\Flag_G$ given by $\delta_{S'}^{-1}\in LG(S')$ factors through $\hat Z^{{\SSC\rm Weyl},-1}_{G_0,\preceq\mu}$, that is if and only if $\delta^{-1}$ is bounded by $\hat Z^{{\SSC\rm Weyl},-1}_{G_0,\preceq\mu}$. In particular, a local $G$-shtuka is bounded by $\hat Z^{{\SSC\rm Weyl}}_{G_0,\preceq\mu}$ in the sense of Definition~\ref{DefBDLocal}\ref{DefBDLocal_C} if and only if it is ``bounded by $(\mu,z-\zeta)$''. 

For the constant split group $G=G_0\times_{\BF_q}\Spec\BF_q\dbl z\dbr$ the double cosets occurring in the description of the sets of closed points of the generic and of the special fiber of bounds are both parametrized by the set $X_*(T)_{\dom}$, and for our bound $\hat Z^{{\SSC\rm Weyl}}_{G_0,\preceq\mu}$, both the generic fiber and the special fiber correspond to the union of all double cosets for $\mu'\preceq \mu$. That is, the reduced generic, respectively the reduced special fiber of $\hat Z^{{\SSC\rm Weyl}}_{G_0,\preceq\mu}$ equals the  closed Schubert variety associated with $\mu$ in $\Gr_{G}^{\bB_\dR}$, respectively in $\Flag_G$. So the underlying reduced structure of $\hat Z^{{\SSC\rm Weyl}}_{G_0,\preceq\mu}$ coincides with the bound $\hat Z_{\preceq\mu}$ defined in Example~\ref{exboundmu} above, and in terms of Remark~\ref{DefBDLocal'} condition \ref{DefBDLocal_A5} and $N_0=N_\an$ from \ref{DefBDLocal_A6} are satisfied. The nilpotent structure as discussed in Remark~\ref{DefBDLocal'}\ref{DefBDLocal_A7} is in general not so clear in this case. For example, if $G_0=\GL_r$ and $\mu=2n\rho\dual$ where $2\rho\dual=(r-1,\ldots,1-r)$ is the sum of the positive coroots of $G_0$, then $(-\mu)_\dom=\mu$ and all bounds $\hat Z^{{\SSC\rm Weyl}}_{\GL_r,\preceq\mu}, \hat Z^{{\SSC\rm Weyl},-1}_{\GL_r,\preceq\mu},\hat Z^{{\SSC\rm Weyl}}_{\GL_r,\preceq(-\mu)_\dom}\subset\wh\Flag_{\GL_r}$ are equal to $\wh\Flag_{\SL_r}^{(n)}\subset\wh\Flag_{\SL_r}\subset\wh\Flag_{\GL_r}$ by \cite[Lemma~4.3]{HV1}. Note however, that in general $\bigl(\hat Z^{{\SSC\rm Weyl}}_{G_0,\preceq\mu}\bigr)^{-1}=\hat Z^{{\SSC\rm Weyl},-1}_{G_0,\preceq\mu}$ coincides with $\hat Z^{{\SSC\rm Weyl}}_{G_0,\preceq(-\mu)_\dom}$ only on the underlying reduced structure, but not in the nilpotent structure; see Example~\ref{Ex-mu_dom}.
\end{example}

\begin{example}\label{ExNotInvariant}
We give an example of an ind-scheme $\hat Z$ satisfying all properties of a bound in Definition~\ref{DefBDLocal} except for \ref{DefBDLocal_A11} to show that this condition is not implied by \ref{DefBDLocal_A1}. Let $\hat Z\subset\wh\Flag^{(1)}_{\SL_2}$ be the ind-closure of $\hat Y\subset\wh\Flag^{(1)}_{\SL_2}$ given by
\[
\hat Y(B)\;:=\;L^+\SL_2(B)\cdot\bigl\{\left(\begin{smallmatrix} a & b \\ c & d \end{smallmatrix}\right)\in L\SL_2(B)\colon a,b,d\in B\dbl z\dbr, c\in \tfrac{\zeta}{z-\zeta}B\dbl z\dbr\,\bigr\}\cdot L^+\SL_2(B)/L^+\SL_2(B)
\]
for any $\BF_q\dbl\zeta\dbr$-algebra $B$. This satisfies all conditions in the definition of bounds except for possibly \ref{DefBDLocal_A11}. Notice further that the special fiber of $\hat Z$ consists just of the one point $L^+\SL_2/L^+\SL_2$. 

We have that $x=\left(\begin{smallmatrix} 1 & 0 \\ \tfrac{\zeta}{z-\zeta} & 1 \end{smallmatrix}\right)$ and $y=\left(\begin{smallmatrix} z-\zeta & 0 \\ 0 & \tfrac{1}{z-\zeta} \end{smallmatrix}\right)$ are elements of $L\SL_2(B)$ for all $B$ such that $\zeta$ is nilpotent in $B$. Therefore, $x\in\hat Z(\Spf \BF_q\dbl\zeta\dbr)$ as one can see by reducing modulo $(\zeta^i)$ for all $i$. However, already considering the reduction modulo $\zeta$ shows that $y$ is not an element of $\hat Z(\BF_q\dbl\zeta\dbr)$. On the other hand, $$y\;=\;\left(\begin{smallmatrix} 1 & -\zeta^{-1}(z-\zeta) \\ 0 & 1 \end{smallmatrix}\right)\cdot x\cdot\left(\begin{smallmatrix} z-\zeta & \zeta^{-1} \\ -\zeta & 0 \end{smallmatrix}\right)\;\in\; \SL_2(\BF_q\dpl\zeta\dpr\dbl z-\zeta\dbr)\cdot x\cdot\SL_2(\BF_q\dpl\zeta\dpr\dbl z-\zeta\dbr).$$ This shows that $\hat Z^\an$ is not invariant under multiplication with $\SL_2(\fdot\dbl z-\zeta\dbr)$ on the left.
\end{example}

We need condition \ref{DefBDLocal_A11} of Definition~\ref{DefBDLocal} mainly in form of the following lemma.

\begin{lemma}\label{LemBoundConjInv}
Let $\hat Z$ be a bound. Then $\hat Z^{\an}\otimes_{E_{\hat Z}}\breve E_{\hat Z}$ is invariant under left and right multiplication  with $LG(\BaseFld)$.  
\end{lemma}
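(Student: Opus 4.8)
The plan is to reduce the claimed $LG(\BaseFld)$-invariance of $\hat Z^{\an}\otimes_{E_{\hat Z}}\breve E_{\hat Z}$ to the two invariance properties already built into the definition of a bound: the $L^+G$-invariance of $\hat Z$ from Definition~\ref{DefBDLocal}\ref{DefBDLocal_A1}, and the invariance under left multiplication by $G(\fdot\dbl z-\zeta\dbr)$ on $\Gr_G^{\bB_\dR}$ from Definition~\ref{DefBDLocal}\ref{DefBDLocal_A11}. Recall that by Proposition~\ref{PropBound}\ref{PropBound_D} the space $\hat Z^{\an}$ is the analytic space of a closed subscheme $\hat Z_E \subset \Gr_G^{\bB_\dR}\times_{\BF_q\dpl\zeta\dpr}\Spec E_{\hat Z}$, so after base change to $\breve E$ it sits inside $\Gr_G^{\bB_\dR}\wh\otimes_{\BF_q\dpl\zeta\dpr}\breve E$. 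Thus the whole question takes place inside this $\bB_\dR$-affine Grassmannian, where $LG = G(\fdot\dpl z\dpr)$ acts through the inclusion $\BF_q\dpl z\dpr \into \BF_q\dpl\zeta\dpr\dbl z-\zeta\dbr$ described in the introduction.

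First I would make the action precise. On the $\breve E$-analytic space $\Gr_G^{\bB_\dR}\wh\otimes\breve E$ there is a left action of $G\bigl(\fdot\dpl z-\zeta\dpr\bigr)$ and a right action of $G\bigl(\fdot\dbl z-\zeta\dbr\bigr)$ (the latter being trivial modulo the defining quotient, but relevant on representatives). The group $LG(\BaseFld) = G\bigl(\BaseFld\dpl z\dpr\bigr)$ maps into $G\bigl(\breve E\dbl z-\zeta\dbr\bigr)$ via $z \mapsto \zeta + (z-\zeta)$, since $\zeta$ is a unit in $\breve E$ and $z-\zeta$ is topologically nilpotent there; indeed $\BaseFld\dpl z\dpr \into \BaseFld\dpl\zeta\dpr\dbl z-\zeta\dbr = \breve E\dbl z-\zeta\dbr$. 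So every element of $LG(\BaseFld)$ lands in $G\bigl(\breve E\dbl z-\zeta\dbr\bigr)$, which acts on $\Gr_G^{\bB_\dR}$ both on the left (inside $G(\fdot\dpl z-\zeta\dpr)$) and on the right. For the \emph{right} action: since $G\bigl(\breve E\dbl z-\zeta\dbr\bigr)$ is exactly the subgroup modded out in the presheaf \eqref{EqH_G}, right multiplication by it is the \emph{identity} on $\Gr_G^{\bB_\dR}$, hence trivially preserves $\hat Z^{\an}\otimes\breve E$. For the \emph{left} action of $g \in LG(\BaseFld)$: its image lies in $G\bigl(\breve E\dbl z-\zeta\dbr\bigr)\subset G\bigl(\fdot\dpl z-\zeta\dpr\bigr)$, and Definition~\ref{DefBDLocal}\ref{DefBDLocal_A11} says precisely that $\hat Z_R^{\an}$ — hence $\hat Z^{\an}$ and its base change $\hat Z^{\an}\otimes_{E_{\hat Z}}\breve E_{\hat Z}$ — is invariant under left multiplication by all of $G\bigl(\fdot\dbl z-\zeta\dbr\bigr)$ on $\Gr_G^{\bB_\dR}$. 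This handles both one-sided invariances directly.

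The remaining point is the compatibility needed to go from the $L^+G$-invariance (which is a statement about the \emph{formal} scheme $\hat Z_R$ inside $\wh\Flag_{G,R}$, i.e.\ about loops in $z$) to the invariance of $\hat Z^{\an}$ inside $\Gr_G^{\bB_\dR}$ (a statement about loops in $z-\zeta$). Here I would invoke the construction in the proof of Proposition~\ref{PropBound}\ref{PropBound_D}: étale-locally on $\hat Z_R$ one has lifts to $G\bigl(A\dbl z\dbr[\tfrac{1}{z-\zeta}]\bigr)$, and under $A\dbl z\dbr \into A[\tfrac{1}{\zeta}]\dbl z-\zeta\dbr$ these give the universal section of $\Gr_G^{\bB_\dR}$ over $(\hat Z_R)^{\an}$. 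Left multiplication of such a lift by an element $b \in LG(\BaseFld) \subset G\bigl(A\dbl z\dbr[\tfrac{1}{z-\zeta}]\bigr)$ (note $LG(\BaseFld)$ really does map into this ring, as $\BaseFld\dpl z\dpr \subset A\dbl z\dbr[\tfrac{1}{z-\zeta}]$ once $\zeta$ is inverted and $z-\zeta$ is a unit times $z$ modulo topological nilpotents — more carefully, $\BaseFld\dpl z\dpr\to A[\tfrac1\zeta]\dpl z-\zeta\dpr$ and one checks the image of the lift times $b$ still represents a point of $(\hat Z_R)^{\an}$ using the $L^+G$-stability of $\hat Z_R$ together with the fact that $b$, read in the $z$-adic picture, is an element of $LG(\BaseFld)$ acting by a Hecke-type modification that is absorbed by $\hat Z_R$'s $L^+G$-orbit structure) — and then pass to the $(z-\zeta)$-adic picture. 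The honest way: use that the two actions of $LG(\BaseFld)$ — the one coming from the $z$-adic flag-variety side and the one on $\Gr_G^{\bB_\dR}$ from the $(z-\zeta)$-adic side — are intertwined by the change-of-variables map $\wh\Flag_G^{\an} \to \Gr_G^{\bB_\dR,\an}$ of Proposition~\ref{PropBound}, so $L^+G$-invariance of $\hat Z_R$ on the source transports to invariance under $L^+G$'s image, and that image together with $G(\fdot\dbl z-\zeta\dbr)$ generates (an étale-local neighborhood of) the $LG(\BaseFld)$-action.

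I expect the main obstacle to be exactly this last transport-of-structure step: one must be careful that the change of variables $z = \zeta+(z-\zeta)$ does not literally identify the $z$-loop group with the $(z-\zeta)$-loop group (it identifies $\CO_X\dbl z\dbr/(z-\zeta)^N$ with $\CO_X\dbl z-\zeta\dbr/(z-\zeta)^N$ only after truncation, as used in Proposition~\ref{PropBound}\ref{PropBound_C}), so the argument has to be phrased entirely within $\Gr_G^{\bB_\dR}$ using the universal section, and the only genuine inputs are (i) right-invariance under $G(\fdot\dbl z-\zeta\dbr)$ is automatic, (ii) left-invariance under $G(\fdot\dbl z-\zeta\dbr)$ is condition~\ref{DefBDLocal_A11}, and (iii) $LG(\BaseFld) = G(\BaseFld\dpl z\dpr)$ maps, via $z\mapsto\zeta+(z-\zeta)$, into $G(\breve E\dbl z-\zeta\dbr)$ because $\breve E\dbl z-\zeta\dbr$ contains $\BaseFld\dpl z\dpr$. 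Once (iii) is checked cleanly — which is elementary but is the crux — invariance under both left and right multiplication by $LG(\BaseFld)$ follows from (i) and (ii), and the lemma is proved.
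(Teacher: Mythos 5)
Your middle paragraph is exactly the paper's proof: $LG(\BaseFld)=G\bigl(\BaseFld\dpl z\dpr\bigr)$ maps into $G\bigl(\breve E\dbl z-\zeta\dbr\bigr)$ under $z\mapsto\zeta+(z-\zeta)$ because $\zeta$ is a unit there, and then right invariance of $\hat Z^{\an}\otimes_{E_{\hat Z}}\breve E_{\hat Z}$ is automatic from the quotient description of $\Gr_G^{\bB_\dR}$ while left invariance is precisely condition~\ref{DefBDLocal_A11}. Your third paragraph about transporting the $L^+G$-invariance~\ref{DefBDLocal_A1} from the $z$-adic formal picture to the $(z-\zeta)$-adic analytic picture is a red herring and is not needed at all (as your own concluding list (i)--(iii) already makes clear), since once $LG(\BaseFld)$ is seen to land in $G\bigl(\fdot\dbl z-\zeta\dbr\bigr)$ nothing further is required.
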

Here, as always, we use the morphism $\BaseFld\dpl z\dpr\to \CO_X\dbl z-\zeta\dbr,\,z\mapsto z=\zeta+(z-\zeta)$ for an $\breve E_{\hat Z}$-scheme $X$ and the induced homomorphism $LG(\BaseFld)=G\bigl(\BaseFld\dpl z\dpr\bigr)\to G\bigl(\CO_X\dbl z-\zeta\dbr\bigr)$ to define the actions. 
\begin{proof}
This follows directly from the right invariance of $\hat Z^{\an}$ as a subscheme of the affine Grassmannian, and the left invariance imposed in condition \ref{DefBDLocal_A11} of Definition~\ref{DefBDLocal}.
\end{proof}

\begin{remark}\label{DefBDLocal'} 
We discuss some possible additional assumptions on bounds.
\begin{enumerate}
\item \label{DefBDLocal_A5}
The set $\pi_0(\wh\Flag_{G,R})$ coincides with the set of $\Gamma$-orbits in $\pi_1(G)_I$ by \cite[Lemma~2.2.6]{N16}. Every bound is a disjoint union of its intersections with the various connected components of $\wh\Flag_{G,R}$, and thus we also obtain similar decompositions of base schemes for bounded local $G$-shtukas, and later of the corresponding moduli spaces. If one wants to consider only one of these disjoint parts at a time, one has to assume that there is a $\Gamma$-orbit of elements $\xi\in\pi_1(G)_I$ such that the $\hat Z_R$ are contained in the corresponding connected component of $\wh\Flag_{G,R}$. 

Also compare the non-emptiness condition of Theorem \ref{ThmWAOpen} that is in terms of $\pi_1(G)_{\Gamma}$ instead of $\pi_1(G)_{I}/\Gamma$. The natural projection map $\pi_1(G)_{I}/\Gamma\rightarrow\pi_1(G)_{\Gamma}$ is surjective but in general not injective. Thus there may be several connected components of a given bound that lead to non-empty parts of the period domain.

\item \label{DefBDLocal_A6}
If one wants to compare properties of the generic and the special fiber of a moduli space of local $G$-shtukas bounded by $\hat Z^{-1}$ (as defined in Section \ref{secrz}) it might be useful to consider bounds $\hat Z$ satisfying certain flatness or extension properties. 

Assumptions \ref{DefBDLocal_A1} and \ref{DefBDLocal_A11} of Definition~\ref{DefBDLocal} imply that the closed points of the special fiber $Z$ from Remark~\ref{RemBdLocal}(b), resp.\ of the analytic space $\hat Z^\an$ of $\hat Z$ from Proposition~\ref{PropBound}\ref{PropBound_D}, consist of the points of a finite set $N_0$ of $L^+G$-cosets, resp.~a finite set $N_{\an}$ of $G\bigl(\fdot\dbl z-\zeta\dbr\bigr)$-cosets. The former cosets are parametrized by some quotient $\bar W$ of the extended affine Weyl group $\widetilde W$ of $G$ with respect to a chosen maximal torus $T$ of $G_{\BF_q\dpl z\dpr}$. Let $L_0\supset\BF_q\dpl\zeta\dpr$ be a finite separable field extension over which $T_{L_0}:=T\times_{\BF_q\dpl z\dpr,z\mapsto\zeta}\Spec L_0$ (and therefore also $G_{L_0}$) splits. Because $G\bigl(L_0\dbl z-\zeta\dbr\bigr)\subset G\bigl(L_0\dpl z-\zeta\dpr\bigr)$ is a hyperspecial maximal bounded open subgroup, every $G\bigl(L_0\dbl z-\zeta\dbr\bigr)$-coset is of the form $G\bigl(L_0\dbl z-\zeta\dbr\bigr)\mu(z-\zeta)G\bigl(L_0\dbl z-\zeta\dbr\bigr)\big/G\bigl(L_0\dbl z-\zeta\dbr\bigr)$ for a uniquely determined $\mu\in X_*(T_{L_0})_{\dom}= X_*(T)_{\dom}$ where dominance is with respect to some chosen Borel subgroup.  It would thus be interesting to see if such flatness conditions can be formulated in terms of the associated sets $N_0\subset \bar W$ and $N_{\an}\subset X_*(T)_{\dom}$. However, for the present paper we do not need any such condition. 

\item \label{DefBDLocal_A7} Given the discussion above, one might ask if bounds should just be defined by the corresponding sets of double cosets of their geometric points. Even under assumptions as discussed above, our definition gives some more freedom in the sense that the nilpotent structure of the bound may still vary.
\end{enumerate}
\end{remark}

For the sake of completeness we want to add the following lemma that was used in Remark~\ref{RemBdLocal}(a).

\begin{lemma}\label{LemmaInverseBound}
Let $\hat{Z}=[(R,\hat Z_R)]$ be a bound in the sense of \cite[Definition~4.8]{AH_Local}, that is, satisfying conditions \ref{DefBDLocal_A1} and \ref{DefBDLocal_A2} from Definition~\ref{DefBDLocal}. Consider the subsheaves $\hat{Z}_R^{-1}\subset\wh\Flag_{G,R}$ defined on schemes $S$ in $\Nilp_R$ as the subset $\hat{Z}_R^{-1}(S)$ of $\wh\Flag_{G,R}(S)$ given by 
\begin{eqnarray*}
 & \Bigl\{\;x\in\wh\Flag_{G,R}(S)\colon\text{ there is an \'etale covering $f\colon S'\to S$ and an element $g\in LG(S')$} \\
& \quad \text{ such that }f^*x = g\cdot L^+G(S')\text{ in }\wh\Flag_{G,R}(S')\text{ and } g^{-1}\cdot L^+G(S')\in\hat{Z}_R(S')\;\Bigr\}\,.
\end{eqnarray*}
Then $\hat Z^{-1}=[(R,\hat Z_R^{-1})]$ is a bound in the sense of \cite[Definition~4.8]{AH_Local} with the same reflex ring $R_{\hat Z}$ as $\hat{Z}$. If in addition, $\hat{Z}$ satisfies condition \ref{DefBDLocal_A3} and \ref{DefBDLocal_A4} from Definition~\ref{DefBDLocal}, then the same is true for $\hat{Z}^{-1}$. Moreover, for two $L^+G$-torsors $\CG$ and $\CG'$ over a scheme $S\in \Nilp_{R_{\hat Z}}$, an isomorphism $\delta\colon L\CG\isoto L\CG'$ between the associated $LG$-torsors is bounded by $\hat{Z}$ if and only if $\delta^{-1}$ is bounded by $\hat{Z}^{-1}$.
\end{lemma}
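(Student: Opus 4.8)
The plan is to verify the four defining properties of a bound for $\hat Z^{-1}=[(R,\hat Z_R^{-1})]$ and then to establish the compatibility statement about $\delta$ and $\delta^{-1}$. The starting observation is that the inversion map $\iota\colon LG\to LG,\ g\mapsto g^{-1}$ descends to a well-defined isomorphism of ind-schemes $\wh\Flag_{G,R}\to\wh\Flag_{G,R}$ (more precisely, it identifies the left-quotient $LG/L^+G$ with the right-quotient $L^+G\backslash LG$ and then one uses the standard identification of these two affine flag varieties via $g\mapsto g^{-1}$), and that by construction $\hat Z_R^{-1}$ is the image of $\hat Z_R$ under this isomorphism, sheafified for the \'etale topology. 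Since $\hat Z_R$ is a closed ind-subscheme (indeed a formal scheme, by \cite[Remark~4.10]{AH_Local}) and $\iota$ is an automorphism, $\hat Z_R^{-1}$ is again a closed ind-subscheme; its formation is compatible with base change in $R$ because that of $\hat Z_R$ is, so the equivalence class $[(R,\hat Z_R^{-1})]$ is well-defined.

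Next I would check the two axioms \ref{DefBDLocal_A1} and \ref{DefBDLocal_A2} of \cite[Definition~4.8]{AH_Local}. For the left $L^+G$-invariance of $\hat Z_R^{-1}$: if $x=g\cdot L^+G$ lies in $\hat Z_R^{-1}$, i.e. $g^{-1}\cdot L^+G\in\hat Z_R$, and $h\in L^+G$, then $hx=hg\cdot L^+G$ and $(hg)^{-1}\cdot L^+G=g^{-1}h^{-1}\cdot L^+G=g^{-1}\cdot L^+G$ (since $h^{-1}\in L^+G$), which again lies in $\hat Z_R$; so $hx\in\hat Z_R^{-1}$. For the quasi-compactness of the special fiber: under $\iota$ the special fiber $Z_R^{-1}$ is the image of $Z_R$ under the automorphism of $\Flag_G\whtimes_{\BF_q}\Spec\kappa_R$ induced by $\iota$, hence quasi-compact because $Z_R$ is. This already shows $\hat Z^{-1}$ is a bound in the sense of \cite[Definition~4.8]{AH_Local}. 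That the reflex ring is unchanged follows because $\gamma(\hat Z_R)=\hat Z_R$ if and only if $\gamma(\hat Z_R^{-1})=\hat Z_R^{-1}$ for $\gamma\in\Aut_{\BF_q\dbl\zeta\dbr}(\BF_q\dpl\zeta\dpr^\alg)$ (Galois acts on the coefficients and commutes with $\iota$), and likewise the set of finite extensions $R$ admitting a representative is the same for the two bounds; so the intersection defining $R_{\hat Z}$ in Definition~\ref{DefBDLocal}\ref{DefBDLocal_B} coincides.

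Finally I would treat the statement about $\delta$ and $\delta^{-1}$. Given $L^+G$-torsors $\CG,\CG'$ over $S\in\Nilp_{R_{\hat Z}}$ and $\delta\colon L\CG\isoto L\CG'$, pick an \'etale covering $S'\to S$ with trivializations $\alpha\colon\CG\isoto(L^+G)_{S'}$, $\alpha'\colon\CG'\isoto(L^+G)_{S'}$; then $\alpha'\circ\delta\circ\alpha^{-1}$ corresponds to a morphism $S'\to LG\whtimes_{\BF_q}\Spf R_{\hat Z}$, say given by $b\in LG(S')$, and $\delta$ is bounded by $\hat Z$ iff (for one/every such choice and one/every $R$) the induced map $S'\to\wh\Flag_{G,R}$, i.e. the class $b\cdot L^+G$, factors through $\hat Z_R$. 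Now $\delta^{-1}\colon L\CG'\isoto L\CG$ is trivialized by $\alpha,\alpha'$ as $\alpha\circ\delta^{-1}\circ\alpha'^{-1}=(\alpha'\circ\delta\circ\alpha^{-1})^{-1}$, which is represented by $b^{-1}$; so $\delta^{-1}$ is bounded by $\hat Z^{-1}$ iff $b^{-1}\cdot L^+G\in\hat Z_R^{-1}(S')$, which by the very definition of $\hat Z_R^{-1}$ (taking $g=b^{-1}$, so that $g^{-1}\cdot L^+G=b\cdot L^+G$) is equivalent to $b\cdot L^+G\in\hat Z_R(S')$, i.e. to $\delta$ being bounded by $\hat Z$. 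Since the condition of Definition~\ref{DefBDLocal}\ref{DefBDLocal_C} is independent of the chosen trivialization and the chosen $R$ (Remark~\ref{RemBdLocal}(c) and \cite[Remark~4.6]{AH_Local}, which apply verbatim to $\hat Z^{-1}$ since it too is $L^+G$-invariant), this equivalence is the assertion. The only mildly delicate point — the ``main obstacle'' — is being careful that the passage from $\hat Z_R$ to $\hat Z_R^{-1}$ is genuinely a closed ind-immersion and not merely a morphism of ind-sheaves; this is handled by the observation above that $\iota$ is an isomorphism of the ind-scheme $\wh\Flag_{G,R}$ and that \'etale sheafification of the image of a closed ind-subscheme under an isomorphism is again a closed ind-subscheme (using \'etale descent for closed immersions).
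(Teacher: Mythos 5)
The first step of your proof is not correct, and the error propagates through the claims about representability, closedness, and quasi-compactness of the special fiber. The inversion $g\mapsto g^{-1}$ does not descend to an automorphism of $\wh\Flag_{G,R}=LG/L^+G$. If $g_1L^+G=g_2L^+G$, say $g_1=g_2h$ with $h\in L^+G$, then $g_1^{-1}L^+G=h^{-1}g_2^{-1}L^+G$, which is \emph{not} equal to $g_2^{-1}L^+G$ in general; inversion only gives a well-defined map $LG/L^+G\to L^+G\backslash LG$. Your parenthetical fix — ``then one uses the standard identification of these two affine flag varieties via $g\mapsto g^{-1}$'' — is circular: the only natural identification $L^+G\backslash LG\cong LG/L^+G$ is itself inversion, so composing it with the first map gives the identity, not a nontrivial automorphism. (For $G=\GL_r$ one could try composing with a transpose automorphism instead, but that would send $\hat Z_R$ to a transposed bound, not to $\hat Z_R^{-1}$, and in any case does not exist canonically for general parahoric $G$.) Consequently $\hat Z_R^{-1}$ is \emph{not} the image of $\hat Z_R$ under any isomorphism of $\wh\Flag_{G,R}$; it is instead defined via a choice of lift $g\in LG$, with the left $L^+G$-invariance of $\hat Z_R$ needed merely to make the condition on $g^{-1}L^+G$ well-posed. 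Your ``main obstacle'' is therefore genuinely unresolved.

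The paper's proof handles exactly this issue with a more careful argument: after fixing a faithful $\rho\colon G\hookrightarrow\SL_r$ and writing $\wh\Flag_{G,R}$ as $\dirlim X_n$ with $X_n$ cut out by the $\wh\Flag^{(n)}_{\SL_r}$ condition, it forms the $L^+G$-torsor $Y_n\to X_n$ (the preimage of $X_n$ in $LG$), observes via Cramer's rule that inversion preserves $Y_n$ and identifies $X_n=Y_n/L^+G$ with the opposite quotient $L^+G\backslash Y_n$, and then shows by fpqc descent (using that $Y_n\to L^+G\backslash Y_n$ is affine faithfully flat) that $L^+G\backslash Z_n\hookrightarrow L^+G\backslash Y_n$ is a closed immersion, where $Z_n$ is the preimage of $\hat Z_R$ in $Y_n$. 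This is the content you would need to supply in place of the nonexistent automorphism. Similarly, the quasi-compactness of the special fiber $Z_R^{-1}$ is deduced in the paper from the surjection $L^+G\times S'\twoheadrightarrow Z_R^{-1}$ for a quasi-compact \'etale cover $S'$ of $Z_R$ lifted to $LG$, not from an automorphism of the flag variety. Your final paragraph, verifying that $\delta$ is bounded by $\hat Z$ if and only if $\delta^{-1}$ is bounded by $\hat Z^{-1}$ and that the reflex rings agree, is correct and matches the paper, but it sits on top of an unproven representability claim.
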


We do not know whether the same is true for condition \ref{DefBDLocal_A11} from Definition~\ref{DefBDLocal} in general. It is true, however, for the bounds in Example~\ref{exboundmu}.

\begin{proof}[Proof of Lemma~\ref{LemmaInverseBound}]
The subset is well defined, because $\hat{Z}_R$ is by Definition~\ref{DefBDLocal}\ref{DefBDLocal_A1} invariant under multiplication with $L^+G$ on the left. We fix a faithful representation $\Darst\colon G\into\SL_r$ and consider the induced morphism $\rho_*\colon\Flag_G\into\Flag_{\SL_r}$. The ind-scheme structure on $\Flag_{G,R}$ is given as the inductive limit of the schemes 
\[
X_n\;:=\;X_{n,n}\quad\text{with}\quad X_{n,m}\;:=\;\wh\Flag_G\,\whtimes_{\wh\Flag_{\SL_r}}\,\wh\Flag^{(n)}_{\SL_r}\,\whtimes_{\BF_q\dbl\zeta\dbr}\,\Spec R/(\zeta^m),
\]
where $\wh\Flag^{(n)}_{\SL_r}$ was defined in \eqref{EqBDSL}. We must show that $\hat{Z}_R^{-1}\times_{\wh\Flag_{G,R}}X_n$ is representable by a closed subscheme of $X_n$. Let $Y_n:=X_n\whtimes_{\wh\Flag_{G,R}}\bigl(LG\whtimes_{\BF_q}\Spec R/(\zeta^n)\bigr)$. Since $X_n\whtimes_{\wh\Flag_{G,R}}\hat{Z}_R\subset X_n$ is a closed subscheme, the base change $Z_n:=Y_n\whtimes_{X_n}X_n\whtimes_{\wh\Flag_{G,R}}\hat{Z}_R$ is a closed subscheme of $Y_n$, on which $L^+G_{R/(\zeta^n)}:=L^+G\whtimes_{\BF_q}\Spec R/(\zeta^n)$ acts by multiplication on the left. By Cramer's rule (e.g.~\cite[III.8.6, Formulas (21) and (22)]{BourbakiAlgebra}) the inversion $g\mapsto g^{-1}$ on $LG$ induces isomorphisms of $X_n$ with the quotient sheaf $L^+G_{R/(\zeta^n)}\backslash Y_n$ and of $\hat{Z}_R^{-1}\times_{\wh\Flag_{G,R}}X_n$ with $L^+G_{R/(\zeta^n)}\backslash Z_n$. Therefore it suffices to show that the morphism $L^+G_{R/(\zeta^n)}\backslash Z_n\to L^+G_{R/(\zeta^n)}\backslash Y_n$ of sheaves is representable by a closed immersion. The latter follows by \fpqc{} descent \cite[\S\,6.1, Theorem~6]{BLR} and \cite[IV$_2$, Proposition~2.7.1]{EGA} from the fact that the diagram
\[
\xymatrix @C=0.4pc @R=0.4pc {
Z_n \ar@{^{ (}->}[rr] \ar[dd] & & Y_n \ar[dd] \\
& \TS\qed\quad & \\
L^+G_{R/(\zeta^n)}\backslash Z_n \ar[rr] & & L^+G_{R/(\zeta^n)}\backslash Y_n
}
\]
is cartesian and the right vertical arrow is an affine faithfully flat morphism of schemes. This proves that $\hat{Z}^{-1}_R\subset \wh\Flag_{G,R}$ is representable by a closed ind-subscheme. By construction it satisfies the invariance under left multiplication by $L^+G$ from Definition~\ref{DefBDLocal}\ref{DefBDLocal_A1}.

To show that $\hat{Z}^{-1}_R$ satisfies Definition~\ref{DefBDLocal}\ref{DefBDLocal_A2} let $S'$ be an \'etale covering of the special fiber $Z_R$ of $\hat{Z}_R$, such that the closed immersion $Z_R\into\wh\Flag_{G,R}\whtimes_R\Spec\kappa_R$ lifts to a morphism $S'\to LG\whtimes_{\BF_q}\Spec\kappa_R$, which we view as an element $g^{-1}\in LG(S')$. Since $Z_R$ is quasi-compact, we may choose $S'$ to be quasi-compact. Then the element $g\cdot L^+G(S')$ in $\wh\Flag_{G,R}(S')$ corresponds to a morphism $S'\to \wh\Flag_{G,R}\whtimes_R\Spec\kappa_R$ that factors through $Z_R^{-1}$. The morphism $L^+G\whtimes_{\BF_q}S'\onto Z_R^{-1}$, $(h,g)\mapsto hg\cdot L^+G$ is a surjective morphism of ind-schemes. Therefore $Z_R^{-1}$ is a quasi-compact scheme.

If $\hat{Z}$ satisfies conditions \ref{DefBDLocal_A3} and \ref{DefBDLocal_A4} from Definition~\ref{DefBDLocal} for some $\Darst\colon G\into\SL_r$ and some $n$, then also $\hat{Z}^{-1}$ satisfies \ref{DefBDLocal_A4} for the same $\Darst$ and $n$ by Cramer's rule (e.g.~\cite[III.8.6, Formulas (21) and (22)]{BourbakiAlgebra}). Then $\hat{Z}_R^{-1}=\lim\limits_{\longto m} \hat{Z}_R^{-1}\times_{\wh\Flag_{G,R}}X_{n,m}$ for constant $n$ and variable $m$. In particular, $\hat{Z}_R^{-1}=\lim\limits_{\longto m} \hat{Z}_R^{-1}\times_R\Spec R/(\zeta^m)$ is $\zeta$-adic, that is, satisfies condition \ref{DefBDLocal_A3}.

The equality of reflex rings follows from the fact that $\gamma(\hat{Z})=\hat{Z}$ if and only if $\gamma(\hat{Z}^{-1})=\hat{Z}^{-1}$ for $\gamma\in\Aut_{\BF_q\dbl\zeta\dbr}(\BF_q\dpl\zeta\dpr^\alg)$. Finally the statement about the boundedness of $\delta$ and $\delta^{-1}$ is clear from the definition of the $\hat{Z}_R^{-1}$.
\end{proof}

\begin{example}\label{Ex-mu_dom}
We revert to Example~\ref{ExConstantG}. If $G_0=\GL_r$ or $G_0=\SL_r$ then $$\hat{Z}^{\SSC\rm Weyl}_{G_0,\,\preceq\mu}\;=\;(\hat{Z}^{{\SSC\rm Weyl},-1}_{G_0,\,\preceq\mu})^{-1}\;\stackrel{!}{=}\;\hat{Z}^{{\SSC\rm Weyl},-1}_{G_0,\preceq(-\mu)_\dom}$$ by Cramer's rule (e.g.~\cite[III.8.6, Formulas (21) and (22)]{BourbakiAlgebra}). However, this is not true for general $G_0$. For example let $G_0=\PGL_2$ and $\charakt(\BF_q)=2$. We choose $\Lambda$ to consist of the only positive root $\alpha$. The corresponding Weyl module $V_\alpha$ is the dual of the adjoint representation. With respect to the decomposition $V_\alpha=(\Lie T\oplus\Lie U_\alpha\oplus\Lie U_{-\alpha})\dual$ it is given by 
\[
\Darst_\alpha\colon\PGL_2\to\GL(V_\alpha)\;,\quad g=\left(\begin{matrix}a&b\\c&d\end{matrix}\right)\mapsto\left(\begin{array}{ccc}
1&\frac{ac}{\det g}&\frac{bd}{\det g}\\[1mm]
0&\frac{a^2}{\det g}&\frac{b^2}{\det g}\\[1mm]
0&\frac{c^2}{\det g}&\frac{d^2}{\det g}\end{array}\right).
\]
We let $\mu\in X_*(T)_\dom$ be the dominant coweight with $\mu(a)=\left(\begin{smallmatrix} a & 0 \\ 0 & 1 \end{smallmatrix}\right)$. Then $(-\mu)_\dom=\mu$ and $\langle\mu,\alpha\rangle=1$. Over the $\BF_q\dbl\zeta\dbr/(\zeta)$-algebra $B=\BF_q[\epsilon]/(\epsilon^2)$ the element $g=\left(\begin{smallmatrix} 1 & \frac{\epsilon}{z} \\ 0 & z \end{smallmatrix}\right)\in LG(B)=\PGL_2\bigl(B\dpl z\dpr\bigr)$ lies in $\hat{Z}^{{\SSC\rm Weyl},-1}_{\PGL_2,\,\preceq\mu}$, but $g^{-1}=\left(\begin{smallmatrix} z & -\frac{\epsilon}{z} \\ 0 & 1 \end{smallmatrix}\right)$ does not belong to $\hat{Z}^{{\SSC\rm Weyl},-1}_{\PGL_2,\,\preceq\mu}$ because
\[
\Darst_\alpha(g)= \left(\begin{array}{ccc}
1&0&\frac{\epsilon}{z}\\[1mm]
0&\frac{1}{z}&0\\[1mm]
0&0&z\end{array}\right)
\qquad\text{and}\qquad
\Darst_\alpha(g^{-1})= \left(\begin{array}{ccc}
1&0&-\frac{\epsilon}{z^2}\\[1mm]
0&z&0\\[1mm]
0&0&\frac{1}{z}\end{array}\right).
\]
So $\hat{Z}^{\SSC\rm Weyl}_{\PGL_2,\,\preceq\mu}\ne\hat{Z}^{{\SSC\rm Weyl},-1}_{\PGL_2,\preceq(-\mu)_\dom}$ in this case. Note that nevertheless, the underlying topological spaces of these two bounds coincide by Remark~\ref{DefBDLocal'}\ref{DefBDLocal_A6}. So the difference lies in the nilpotent structure. 
\end{example}

\section{Rapoport-Zink spaces for bounded local $G$-shtukas}\label{secrz}
\setcounter{equation}{0}

To recall the definition of Rapoport-Zink spaces for local $G$-shtukas let $\ul{\BG}_0$ be a local $G$-shtuka over $\BaseFld$. Since $\BaseFld$ has no non-trivial \'etale coverings, we may fix a trivialization $\ul\BG_0\cong\bigl((L^+G)_\BaseFld,b\s\bigr)$ where $b\in LG(\BaseFld)$ represents the Frobenius morphism. In all that follows we may replace $\ul\BG_0$ by a quasi-isogenous local $G$-shtuka $\ul\BG'_0\cong\bigl((L^+G)_\BaseFld,b'\s\bigr)$. In terms of the trivializations this means that there is an $h\in LG(\BaseFld)$ with $b'=h^{-1}b\s(h)$. In this case we say that $b$ and $b'$ are \emph{$\sigma$-conjugate under $LG(\BaseFld)$} or \emph{$LG(\BaseFld)$-$\sigma$-conjugate}. We write $[b]$ for the $LG(\BaseFld)$-$\sigma$-conjugacy class of $b$.

\begin{definition}\label{DefRZSpace}
Let $\hat{Z}=[(R,\hat Z_R)]$ be a bound with reflex ring $R_{\hat Z}=\kappa\dbl\xi\dbr$, set $\breve R_{\hat Z}:=\BaseFld\dbl\xi\dbr$, and consider the functor\\[2mm]
$\breveRZ\colon (\Nilp_{\breve R_{\hat Z}})^o \;\longto \;\Sets$
\begin{eqnarray*}
S&\longmapsto & \Bigl\{\,\text{Isomorphism classes of }(\ul\CG,\bar\delta)\colon\text{ where }\ul{\CG}\text{ is a local $G$-shtuka over $S$ }\\ 
&&~~~~ \text{bounded by $\hat{Z}^{-1}$ and }\bar{\delta}\colon  \ul{\CG}_{\bar{S}}\to \ul{\BG}_{0,\bar{S}}~\text{is a quasi-isogeny  over $\bar{S}$ }\Bigr\}
\end{eqnarray*}
Here $\bar{S}:=\Var_S(\zeta)$ is the zero locus of $\zeta$ in $S$. 

The group $\QIsog_{\BaseFld}(\ul{\BG}_0)$ of quasi-isogenies of $\ul{\BG}_0$ acts on the functor $\breveRZ$ via $j\colon(\ul\CG,\bar\delta)\mapsto(\ul\CG,j\circ\bar\delta)$ for $j\in\QIsog_{\BaseFld}(\ul{\BG}_0)$.
\end{definition}

\begin{remark}\label{RemGroupJ}
Since $\ul\BG_0\cong\bigl((L^+G)_\BaseFld,b\s\bigr)$ we can identify $\QIsog_\BaseFld(\ul\BG_0)\cong J_b\bigl(\BF_q\dpl z\dpr\bigr)$ where $J_b$ is the connected algebraic group over $\BF_q\dpl z\dpr$ that is defined by its functor of points that assigns to an $\BF_q\dpl z\dpr$-algebra $A$ the group
\begin{equation}\label{EqJ}
J_b(A):=J_b^G(A):=\big\{g \in G(A\otimes_{\BF_q\dpl z\dpr} {\BaseFld\dpl z\dpr})\colon g^{-1}\,b\,\s(g)=b\big\}\,;
\end{equation}
see \cite[Remark~4.16]{AH_Local}.
\end{remark}
\begin{remark}\label{defweildes}
As in the arithmetic case (compare \cite[3.48]{RZ}) we have a Weil descent datum $\alpha$ on the functor $\breveRZ$. To define it, let $S\in \Nilp_{\breve R_{\hat Z}}$ and let $f\colon S\rightarrow \Spf\breve R_{\hat Z}$ and $\bar f\colon\bar S\rightarrow \Spec\breve R_{\hat Z}/(\zeta)$ be the structure morphisms. Let $\phi$ be the Frobenius of $\breve R_{\hat Z}=\BaseFld\dbl\xi\dbr$ over $R_{\hat Z}=\kappa\dbl\xi\dbr$ and let $S_\phi$ be the scheme $S$ together with the structure morphism $\phi\circ f\colon S\rightarrow \Spf\breve R_{\hat Z}$. If $\#\kappa=q^e$ the inclusion $\BaseFld\into\breve R_{\hat Z}$ is equivariant for the action of $\phi$ on $\breve R_{\hat Z}$ and the action of $\sigma^e$ on $\BaseFld$. To define $\alpha\colon\breveRZ(S)\isoto(\phi^*\breveRZ)(S):=\breveRZ(S_{\phi})$ let $(\ul\CG,\bar\delta)$ be a point in the first set. To make the definition more clear we write $\ul{\BG}_{0,\bar{S}}=\bar f^*(\ul{\BG}_0)$. Then we set $\alpha(\ul\CG,\bar\delta)=(\ul\CG,\bar f^*(\tau_{\BG_0}^e)^{-1}\circ\bar\delta)$, i.e. we replace the quasi-isogeny $\bar\delta$ by the composite $\ul{\CG}_{\bar{S}}\to \bar f^*(\ul{\BG}_0)\to\bar f^*\sigma^{e*}(\ul{\BG}_0)=(\bar\phi\bar f)^*(\ul{\BG}_0)$. Although this Weil descent datum is in general not effective, $\breveRZ$ always descends to a finite unramified extension of $\breve R_{\hat Z}$; see Remark~\ref{RemKottwitz}(b).
\end{remark}

\begin{remark}\label{RemRVConj4.16}
By its definition, $\breveRZ$ only depends on the triple $(G,[b],\hat{Z})$, where $[b]$ is the $LG(\BaseFld)$-$\sigma$-conjugacy class of $b$ corresponding to the isogeny class of $\ul\BG_0\cong\bigl((L^+G)_\BaseFld,b\s\bigr)$. In the arithmetic analog the corresponding independence of the Rapoport-Zink spaces on additional choices made in their definition is far from obvious. It was conjectured in \cite[Conjecture~4.16]{RV} and proved in \cite[Corollary~23.4.3 and thereafter]{ScholzeBerkeley}.
\end{remark}

As in Remark~\ref{RemBdLocal}(a) and (b) we let $\hat Z^{-1}$ be the bound from Lemma~\ref{LemmaInverseBound}, and we let $Z^{-1}$ be the special fiber of $\hat Z^{-1}$ over $\kappa$. We define the associated \emph{affine Deligne-Lusztig variety} as the reduced closed ind-subscheme $X_{Z^{-1}}(b)\subset\Flag_G$ whose $K$-valued points (for any field extension $K$ of $\BaseFld$) are given by
\begin{equation}\label{defadlv}
X_{Z^{-1}}(b)(K):=\big\{ g\in \Flag_G(K)\colon g^{-1}\,b\,\s(g) \in Z^{-1}(K)\big\}.
\end{equation}
In \cite[Theorem~4.18 and Corollary~4.26]{AH_Local} the following theorem was proved.

\begin{theorem} \label{ThmRRZSp}
The functor $\breveRZ\colon (\Nilp_{\breve R_{\hat Z}})^o\to\Sets$ from Definition~\ref{DefRZSpace} is ind-repre\-sen\-ta\-ble by a formal scheme over $\Spf \breve R_{\hat Z}$ that is locally formally of finite type and separated. It is an ind-closed ind-subscheme of $\Flag_G\whtimes_{\BF_q}\Spf\breve R_{\hat Z}$. Its underlying reduced subscheme equals $X_{Z^{-1}}(b)$, which is a scheme locally of finite type and separated over $\BaseFld$, all of whose irreducible components are projective.
\end{theorem}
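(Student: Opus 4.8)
The plan is to realize $\breveRZ$ as an explicit ind-closed ind-subscheme of $\Flag_G\whtimes_{\BF_q}\Spf\breve R_{\hat Z}$ and then to read off the finiteness assertions from the ind-projectivity of the affine flag variety together with the quasi-compactness of the bound $\hat Z$. The first ingredient I would establish is the \emph{rigidity of quasi-isogenies}: for $S\in\Nilp_{\breve R_{\hat Z}}$ the closed subscheme $\bar S=\Var_S(\zeta)$ is a locally nilpotent thickening of $S$, so working on quasi-compact pieces with $\zeta^N=0$ and lifting successively along $\Var_S(\zeta^i)\hookrightarrow\Var_S(\zeta^{i+1})$ one shows, as in \cite[Proposition~2.11]{HV1}, that restriction induces a bijection $\QIsog_S(\ul\CG,\ul\BG_{0,S})\isoto\QIsog_{\bar S}(\ul\CG_{\bar S},\ul\BG_{0,\bar S})$ for every local $G$-shtuka $\ul\CG$ over $S$ (here $\ul\BG_{0,S}$ is defined since the inclusion $\BaseFld\subset\breve R_{\hat Z}$ makes $S$ an $\BaseFld$-scheme). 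Hence a point of $\breveRZ(S)$ is the same datum as a pair $(\ul\CG,\delta)$ with $\ul\CG=(\CG,\tau_\CG)$ a local $G$-shtuka over $S$ bounded by $\hat Z^{-1}$ and $\delta\colon L\CG\isoto L\ul\BG_{0,S}$ a quasi-isogeny over all of $S$.

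Next I would identify this with a subfunctor of $\Flag_G\whtimes_{\BF_q}\Spf\breve R_{\hat Z}$: forgetting $\tau_\CG$, the pair $(\CG,\delta)$ is by \eqref{EqFunctorFlag} exactly a point of $\Flag_G(S)$; the quasi-isogeny relation forces $\tau_\CG$ to be determined by $\delta$ and the Frobenius $b\s$ of $\ul\BG_{0,S}$, and conversely every $(\CG,\delta)$ produces such a $\tau_\CG$, so the only remaining constraint is boundedness of the resulting $\ul\CG$ by $\hat Z^{-1}$. To see that this cuts out an ind-closed subfunctor I would work \'etale-locally on $\Flag_G$, where $LG\to\Flag_G$ has a section $g$; there $g\mapsto g^{-1}b\,\s(g)$ is a morphism of ind-schemes, the induced Frobenius $\tau_\CG$ is represented by $g^{-1}b\,\s(g)$, and by Definition~\ref{DefBDLocal}\ref{DefBDLocal_C} together with Lemma~\ref{LemmaInverseBound} the boundedness of $\ul\CG$ by $\hat Z^{-1}$ is the pullback of the closed condition $g^{-1}b\,\s(g)\in\hat Z^{-1}_R$ (which is independent of the auxiliary $R$ by Remark~\ref{RemBdLocal}(c)), hence closed. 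Since $\hat Z^{-1}_R$ is stable under left multiplication by $L^+G$ (Definition~\ref{DefBDLocal}\ref{DefBDLocal_A1} and Lemma~\ref{LemmaInverseBound}) and Frobenius preserves $L^+G$, this closed condition is independent of the chosen section and descends along the $L^+G$-torsor $LG\to\Flag_G$ to an ind-closed ind-subscheme of $\Flag_G\whtimes_{\BF_q}\Spf\breve R_{\hat Z}$, which is $\breveRZ$. This at once yields ind-representability by a formal scheme, the ind-closed embedding into $\Flag_G\whtimes_{\BF_q}\Spf\breve R_{\hat Z}$, and separatedness over $\Spf\breve R_{\hat Z}$ (as $\Flag_G$ is ind-proper by \cite[Theorem~1.4]{PR2} and \cite[Theorem~A]{Richarz13}); the formal scheme is $\xi$-adic by construction.

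Finally I would compute the underlying reduced subscheme and deduce the remaining finiteness. Passing to the reduction kills $\xi$, the bound degenerates to its special fiber $Z^{-1}\subset\Flag_G\whtimes_{\BF_q}\BaseFld$, and the defining condition becomes $g^{-1}b\,\s(g)\in Z^{-1}$; comparison with \eqref{defadlv} gives $(\breveRZ)_{\red}=X_{Z^{-1}}(b)$. Since $Z^{-1}$ is quasi-compact (Definition~\ref{DefBDLocal}\ref{DefBDLocal_A2}) it is projective over $\BaseFld$ (Remark~\ref{RemBdLocal}(b)) and hence contained in a single projective Schubert subscheme of $\Flag_G$; writing $\Flag_G$ as the ascending union of its projective Schubert varieties $\Flag_G^{\le w}$, the restriction of $g\mapsto g^{-1}b\,\s(g)$ to each $\Flag_G^{\le w}$ lands in some $\Flag_G^{\le w'}$ and is proper, so that $X_{Z^{-1}}(b)\cap\Flag_G^{\le w}$ is closed in $\Flag_G^{\le w}$, hence projective over $\BaseFld$, for every $w$. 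The main obstacle is to upgrade the presentation $X_{Z^{-1}}(b)=\bigcup_w\bigl(X_{Z^{-1}}(b)\cap\Flag_G^{\le w}\bigr)$ to the assertion that $X_{Z^{-1}}(b)$ is a genuine \emph{scheme}, locally of finite type and separated over $\BaseFld$, with every irreducible component contained in a single $\Flag_G^{\le w}$ and therefore projective: this needs a boundedness estimate controlling, around any given point, which Schubert strata can meet $X_{Z^{-1}}(b)$ in terms of $b$ and $Z^{-1}$, so that the cover by the finite-type pieces is locally finite, together with the action of $J=\QIsog_\BaseFld(\ul\BG_0)$ to organize the in general infinitely many components; this is precisely the content of \cite[Theorem~4.18 and Corollary~4.26]{AH_Local}. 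Granting it, $\breveRZ$ is locally formally of finite type, being $\xi$-adic with reduction $X_{Z^{-1}}(b)$ locally of finite type over $\BaseFld$, and its separatedness has already been established.
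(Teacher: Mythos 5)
The paper does not actually prove this theorem: the sentence immediately preceding it reads ``In \cite[Theorem~4.18 and Corollary~4.26]{AH_Local} the following theorem was proved,'' so the statement is simply imported from \cite{AH_Local}. Your proposal reconstructs a plausible proof along the lines of that reference, and the architecture is sound: rigidity of quasi-isogenies to promote $\bar\delta$ to $\delta$ over all of $S$; the observation via \eqref{EqFunctorFlag} that $(\CG,\delta)$ is a point of $\Flag_G\whtimes_{\BF_q}\Spf\breve R_{\hat Z}$ from which $\tau_\CG=\delta^{-1}\circ\tau_{\BG_0}\circ\s\delta$ is recovered; closedness of the boundedness condition, checked on an \'etale section of $LG\to\Flag_G$ using the $L^+G$-invariance from Definition~\ref{DefBDLocal}\ref{DefBDLocal_A1} and Lemma~\ref{LemmaInverseBound} and the independence from Remark~\ref{RemBdLocal}(c); identification of the reduction with \eqref{defadlv}; and deferral of the locally-of-finite-type and projectivity-of-components assertions to \cite[Theorem~4.18 and Corollary~4.26]{AH_Local}, which is exactly the citation the paper itself makes, so this is not a circularity but the appropriate place to stop. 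Two small corrections: rigidity of quasi-isogenies should be cited as \cite[Proposition~2.11]{AH_Local} (not \cite{HV1}, whose Proposition~2.11 is Hilbert 90 for loop groups), and the special fiber $Z^{-1}$ of the bound lives over the residue field $\kappa$ of the reflex ring and must be base-changed to $\BaseFld$ before appearing in \eqref{defadlv}. You might also spell out more carefully why the ind-closed ind-subscheme is $\xi$-adic and locally noetherian (not merely that its reduction is locally of finite type), since ``locally formally of finite type'' in the sense the paper uses requires all three; in \cite{AH_Local} this comes out of the boundedness and the ind-projectivity of $\Flag_G$. These are minor; the proposal is essentially correct.
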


 The formal scheme representing $\breveRZ$ is called a \emph{Rapoport-Zink space for bounded local $G$-shtukas}. Recall that a formal scheme over $\breve R_{\hat Z}$ in the sense of \cite[I$_{\rm new}$, 10]{EGA} is called \emph{locally formally of finite type} if it is locally noetherian and adic and its reduced subscheme is locally of finite type over $\BaseFld$.

\begin{remark}\label{RemKottwitz}
(a) $LG(\BaseFld)$-$\sigma$-conjugacy classes in $LG(\BaseFld)$ are in bijection to isogeny classes of local $G$-shtukas over $\BaseFld$. To classify them, Kottwitz associates with every element $b\in LG(\BaseFld)$ a slope homomorphism $\nu_b\colon \BD_{\BaseFld\dpl z\dpr}\to G_{\BaseFld\dpl z\dpr}$, called \emph{Newton point} (or \emph{Newton polygon}) of $b$; see \cite[4.2]{Kottwitz85}. Here $\BD$ is the diagonalizable pro-algebraic group over $\BaseFld\dpl z\dpr$ with character group $\BQ$. The slope homomorphism is characterized by assigning the slope filtration of 
\[
\bigl(V\otimes_{\BF_q\dpl z\dpr}{\BaseFld\dpl z\dpr},\,\Darst(b)\s\bigr)
\]
to any $(V,\Darst)$ in $\Rep_{\BF_q\dpl z\dpr}G$; see \cite[Section 4]{Kottwitz85}. Furthermore, he shows that $\sigma$-conjuga\-ting $b$ amounts to conjugating $\nu_b$ by the corresponding element. One can thus associate with the $LG(\BaseFld)$-$\sigma$-conjugacy class $[b]$ a well-defined $G\bigl(\BaseFld\dpl z\dpr\bigr)$-conjugacy class $\{\nu_b\}$ in $\Hom(\BD_{\BaseFld\dpl z\dpr},G_{\BaseFld\dpl z\dpr})$, which moreover is invariant under $\sigma$ by \cite[4.4]{Kottwitz85}.

The second important invariant of $[b]$ is defined as follows. Consider again the Kottwitz homomorphism $\kappa_G\colon LG(\BaseFld)\rightarrow \pi_1(G)_{I}$ as explained in \cite[2.a.2]{PR2}. We compose $\kappa_G$ with the projection to $\pi_1(G)_{\Gamma}$. This then yields a well-defined map (again denoted $\kappa_G$) from the set of $\sigma$-conjugacy classes $B(G)$ to $\pi_1(G)_{\Gamma}$ (see \cite{Kottwitz97}). Together, $\{\nu_b\}$ and $\kappa_G([b])$ determine $[b]$ uniquely. We denote by $[b]^{\#}$ the images of $\kappa_G(b)$ in $\pi_1(G)_{\Gamma}$ and in $\pi_1(G)_{\Gamma,\BQ}:=\pi_1(G)_{\Gamma}\otimes_\BZ\BQ$. The latter equals the image of the conjugacy class $\{\nu_b\}$ in $\pi_1(G)_{\Gamma,\BQ}$; see \cite[Theorem~1.15(iii)]{RapoportRichartz}.

\medskip\noindent
(b) Since $\BaseFld$ is algebraically closed and the generic fiber of $G$ is connected reductive we may replace $b$ by $h^{-1}b\s(h)$ and assume that $b\in LG(\BaseFld)$ satisfies a \emph{decency equation for a positive integer $s$}, that is, $s\nu_b:\BD_{\BaseFld\dpl z\dpr}\to G_{\BaseFld\dpl z\dpr}$ factors through $\BG_m$, and
\begin{equation}\label{EqDecency}
(b\sigma)^s\;=\;s\nu_b(z)\,\sigma^s\qquad\text{in}\quad LG(\BaseFld)\rtimes \langle\sigma\rangle;
\end{equation}
see \cite[Section 4]{Kottwitz85}. Let $\BF_{q^s}\subset\BaseFld$ be the finite field extension of $\BF_q$ of degree $s$. Then $b\in LG(\BF_{q^s})$, $\nu_b$ is defined over $\BF_{q^s}\dpl z\dpr$ (\cite[Corollary 1.9]{RZ}) and $J_b\times_{\BF_q\dpl z\dpr}\BF_{q^s}\dpl z\dpr$ is an inner form of the centralizer of the 1-parameter subgroup $s\nu_b$ of $G$, a Levi subgroup of $G_{\BF_{q^s}\dpl z\dpr}$; see \cite[Corollary 1.14]{RZ}. In particular $J_b\bigl(\BF_q\dpl z\dpr\bigr)\subset G\bigl(\BF_{q^s}\dpl z\dpr\bigr)= LG(\BF_{q^s})$. Moreover, in this case $\breveRZ$ descends by \cite[Theorem~4.18]{AH_Local} to a formal scheme locally formally of finite type over $(\BF_{q^s}\cdot\kappa)\dbl\xi\dbr$ where $\BF_{q^s}\cdot\kappa$ is the compositum inside $\BaseFld$.

\medskip\noindent
(c) If more generally we start with a local $G$-shtuka $\ul\BG_0$ over any field $\OtherField$ in $\Nilp_{ R_{\hat Z}}$ we can define the Rapoport-Zink functor $\RZ$ as in Definition~\ref{DefRZSpace} on the category $\Nilp_{\OtherField\dbl\xi\dbr}$. Then $\ul\BG_0\otimes_\OtherField \OtherField^\alg$ is trivial and decent, and hence $\RZ\otimes_\OtherField \OtherField^\alg$ is ind-representable by a formal scheme locally formally of finite type over $\OtherField^\alg\dbl\xi\dbr$. By an unpublished result of Eike Lau on Galois descent of formal schemes locally formally of finite type, already $\RZ$ is ind-representable by a formal scheme locally formally of finite type over $\OtherField\dbl\xi\dbr$. However, we will not use this in the rest of this work.
\end{remark}

\begin{remark}\label{RemFunctoriality1}
The constructions described above are functorial with respect to the group scheme $G$. To explain this let $\epsilon\colon G\to G'$ be a morphism of parahoric group schemes over $\BF_q\dbl z\dbr$. Important examples are closed immersions, epimorphisms or the change of the parahoric model, that is morphisms that are generically isomorphisms. Then $\epsilon$ induces a functor
\[
\epsilon_*\colon\; \{\,\text{local $G$-shtukas}\,\}\;\longto\;\{\,\text{local $G'$-shtukas}\,\}\,,\quad\ul\CG\;\longmapsto\;\epsilon_*\ul\CG\;:=\;\ul\CG\overset{G}{\times} G'\,.
\]
The morphism $\epsilon$ also induces morphisms $\epsilon\colon L^+G\to L^+G'$ and $\epsilon\colon LG\to LG'$ and $\epsilon\colon \Flag_G\to \Flag_{G'}$. We say that $\epsilon$ is \emph{compatible} with two given bounds $\hat{Z}=[(R,\hat{Z}_R)]$ and $\hat{Z}'=[(R,\hat{Z}'_R)]$ with $\hat{Z}_R\subset\wh\Flag_{G,R}$ and $\hat{Z}'_R\subset\wh\Flag_{G'\!,R}$ if $\epsilon(\hat{Z}_R)\subset\hat{Z}'_R$ for all suitable $R$. If $\epsilon$ is compatible with $\hat{Z}$ and $\hat{Z}'$ and $\ul\CG$ is bounded by $\hat{Z}^{-1}$ then $\epsilon_*\ul\CG$ is bounded by $(\hat{Z}')^{-1}$.

Let $\ul\BG_0\cong\bigl((L^+G)_\BaseFld,b\s\bigr)$ be a local $G$-shtuka over $\BaseFld$ with $b\in LG(\BaseFld)$. Then $\ul\BG'_0:=\epsilon_*\ul\BG_0\cong\bigl((L^+G')_\BaseFld,b'\s\bigr)$ with $b'=\epsilon(b)$. Kottwitz's classification of isogeny classes is functorial in the sense that $\nu_{b'}=\epsilon\circ\nu_b$ and $\epsilon\circ\kappa_G=\kappa_{G'}\circ\epsilon$ for the induced $\Gamma$-equivariant morphism $\epsilon\colon\pi_1(G)\to\pi_1(G')$. We also obtain a morphism of Rapoport-Zink spaces
\begin{equation}\label{EqFunctRZ}
\epsilon_*\colon\;\breveRZ\;\longto\;{\breve\CM}_{\ul\BG'_0}^{\raisebox{-0.7ex}{$\SC\hat{Z}'{}^{-1}$}},\quad(\ul\CG,\bar\delta)\;\longmapsto\;(\epsilon_*\ul\CG,\epsilon_*\bar\delta)\,,
\end{equation}
that is equivariant for the group $J_b^G$ that acts on the target via the morphism of algebraic groups over $\BF_q\dpl z\dpr$
\begin{equation}\label{EqFunctJ}
J_b^G\;\longto\;J_{b'}^{G'}\,,\quad g\;\longmapsto\;\epsilon(g)\,.
\end{equation}
\end{remark}

\section{Period spaces for bounded local $G$-shtukas}
\setcounter{equation}{0}

In this section we construct period spaces. These will be strictly $\BF_q\dpl\zeta\dpr$-analytic spaces in the sense of Berkovich~\cite{Berkovich1,Berkovich2}. We equip the category of $\BF_q\dpl\zeta\dpr$-schemes and the category of $\BF_q\dpl\zeta\dpr$-analytic spaces with the \'etale topology; see \cite[\S\,4.1]{Berkovich2}. Recall the group scheme $G\times_{\BF_q\dbl z\dbr}\Spec \BF_q\dpl\zeta\dpr\dbl z-\zeta\dbr$, which is reductive because $\BF_q\dbl z\dbr\to \BF_q\dpl\zeta\dpr\dbl z-\zeta\dbr,\,z\mapsto z=\zeta+(z-\zeta)$ factors through $\BF_q\dpl z\dpr$, and recall its affine Grassmannian $\Gr_G^{\bB_\dR}$ from \eqref{EqH_G}. For $G=\GL_r$, Hilbert 90 for loop groups \cite[Proposition~2.3]{HV1} shows that
\[
\Gr_{\GL_r}^{\bB_\dR}(L)\;=\;\GL_r\bigl(L\dpl z-\zeta\dpr\bigr)\big/\GL_r\bigl(L\dbl z-\zeta\dbr\bigr)
\]
for any field extension $L/\BF_q\dpl\zeta\dpr$.

Again for all $G$ as above, the morphism of sheaves of sets on $\Spec \BF_q\dpl\zeta\dpr$ $$G\bigl(\CO_X\dpl z-\zeta\dpr\bigr)\to \Gr_G^{\bB_\dR}(X)$$ admits local sections for the \'etale topology. By \cite[Proposition~13.1.1]{Springer09} there is a finite separable field extension $L_0\supset\BF_q\dpl\zeta\dpr$ such that $G_{L_0}:=G\otimes_{\BF_q\dbl z\dbr,z\mapsto\zeta}L_0$ splits. Therefore the group $G\otimes_{\BF_q\dbl z\dbr}\BF_q\dpl\zeta\dpr\dpl z-\zeta\dpr$ over $\BF_q\dpl\zeta\dpr\dpl z-\zeta\dpr$ is unramified. Thus the inertia group of $\Gal\bigl(\BF_q\dpl\zeta\dpr\dpl z-\zeta\dpr^\sep\!/\BF_q\dpl\zeta\dpr\dpl z-\zeta\dpr\bigr)$ acts trivially on $\pi_1(G)$ and the connected components of $\Gr_G^{\bB_\dR}\wh\otimes_{\BF_q\dpl\zeta\dpr}\BF_q\dpl\zeta\dpr^\alg$ are in canonical bijection with $\pi_1(G)$ by \cite[Theorem~5.1]{PR2}. For every field extension $L\subset \BF_q\dpl\zeta\dpr^\alg$ of $\BF_q\dpl\zeta\dpr$ we then obtain from \cite[Lemma~2.2.6]{N16} that
\begin{equation}\label{EqPi0OfHG}
\pi_0\bigl(\Gr_G^{\bB_\dR}\wh\otimes_{\BF_q\dpl z\dpr}L\bigr) \;\cong \; \pi_1(G)/\Gal(L^\sep\!/L)
\end{equation}
where the quotient is the \emph{set} of $\Gal(L^\sep\!/L)$-orbits. It has a natural projection to the group of coinvariants $\pi_1(G)_{\Gamma_L}$. In particular $\pi_0\bigl(\Gr_G^{\bB_\dR}\wh\otimes_{\BF_q\dpl z\dpr} L_0\bigr)=\pi_1(G)$.

\begin{definition}\label{DefHPG-Structure}
Let $X$ be an $\BF_q\dpl\zeta\dpr$-scheme or a strictly $\BF_q\dpl\zeta\dpr$-analytic space. A \emph{Hodge-Pink $G$-structure} over $X$ is an element $\gamma$ in $\Gr_G^{\bB_\dR}(X)$. For a Hodge-Pink $G$-structure $\gamma\in\Gr_G^{\bB_\dR}(L)$ with values in a field $L$ we let $\gamma^\#\in\pi_1(G)_\Gamma$ be its image under the projection $\pi_0(\Gr_G^{\bB_\dR})\onto\pi_1(G)_\Gamma$ induced by \eqref{EqPi0OfHG}.

A \emph{Hodge-Pink structure of rank $r$ over $X$} is a sheaf $\Fq$ on $X$ of $\CO_X\dbl z-\zeta\dbr$-submodules of $\CO_X\dpl z-\zeta\dpr^r$ that is finitely generated as $\CO_X\dbl z-\zeta\dbr$-module, is a direct summand as $\CO_X$-module, and satisfies $\CO_X\dpl z-\zeta\dpr\cdot\Fq=\CO_X\dpl z-\zeta\dpr^r$. 
\end{definition}

\begin{remark}\label{RemHPStr}
By \cite[Proposition~2.2.5]{SchauchDiss} a Hodge-Pink structure $\Fq$ of rank $r$ over $X$ is Zariski-locally on $X$ free of rank $r$ as $\CO_X\dbl z-\zeta\dbr$-module. In particular, it is of the form $\Fq=\gamma\cdot\CO_X\dbl z-\zeta\dbr^r\subset\CO_X\dpl z-\zeta\dpr^r$ for a uniquely determined Hodge-Pink $\GL_r$-structure $\gamma\in\Gr_{\GL_r}^{\bB_\dR}(X)$ over $X$. This yields an equivalence between Hodge-Pink $\GL_r$-structures and Hodge-Pink structures of rank $r$ over $X$.
\end{remark}

\medskip

To define the notion of weak admissibility recall from \cite[Definitions~3.5.1 and 3.5.2]{HartlKim} that a \emph{$z$-isocrystal over $\BaseFld$} is a pair $(D,\tau_D)$ consisting of a finite-dimensional $\BaseFld\dpl z\dpr$-vector space $D$ and an $\BaseFld\dpl z\dpr$-isomorphism $\tau_D\colon\sigma^*D\isoto D$. A \emph{Hodge-Pink structure} on $(D,\tau_D)$ over a field extension $L$ of $\BaseFld\dpl\zeta\dpr$ is a free $L\dbl z-\zeta\dbr$-submodule $\Fq_D\subset D\otimes_{\BaseFld\dpl z\dpr}L\dpl z-\zeta\dpr$ of full rank. Here, as always, we use the homomorphism $\BaseFld\dpl z\dpr\into \BaseFld\dpl\zeta\dpr\dbl z-\zeta\dbr,\,z\mapsto z=\zeta+(z-\zeta)$. 

\begin{definition}\label{DefWA}
Assume that $\BaseFld\dpl\zeta\dpr\subset L$ and let $b\in LG(\BaseFld)$ and $\gamma\in\Gr_G^{\bB_\dR}(L)$.
\begin{enumerate}
\item 
Let $\Darst\colon G_{\BF_q\dpl z\dpr}\to\GL_{r,\BF_q\dpl z\dpr}$ be in $\Rep_{\BF_q\dpl z\dpr}G$ and set $V=\BF_q\dpl z\dpr^{\oplus r}$, the representation space. We consider the elements $\Darst(b)\in\GL_r(\BaseFld\dpl z\dpr)$ and $\Darst(\gamma)\in\Gr_{\GL_r}^{\bB_\dR}(L)=\GL_r\bigl(L\dpl z-\zeta\dpr\bigr)/\GL_r\bigl(L\dbl z-\zeta\dbr\bigr)$. Then we define the \emph{$z$-isocrystal} 
\[
\ulD_b(V,\Darst)\;:=\;(D,\tau_D)\;:=\;\bigl(V\otimes_{\BF_q\dpl z\dpr}\BaseFld\dpl z\dpr,\,\Darst(\s b)\s\bigr)
\]
over $\BaseFld$ and the \emph{Hodge-Pink structure}
\[
\Fq_D(V)\;:=\;\Darst(\gamma)\cdot V\otimes_{\BF_q\dpl z\dpr}L\dbl z-\zeta\dbr\;\subset\; V\otimes_{\BF_q\dpl z\dpr}L\dpl z-\zeta\dpr\;=\;D\otimes_{\BaseFld\dpl z\dpr}L\dpl z-\zeta\dpr
\]
on it over $L$. We set $\ulD_{b,\gamma}(V):=\bigl(V\otimes_{\BF_q\dpl z\dpr}\BaseFld\dpl z\dpr,\,\Darst(\s b)\s,\,\Fq_D(V)\bigr)$.
\item 
Let $\ulD=(D,\tau_D,\Fq_D)$ be a $z$-isocrystal over $\BaseFld$ with Hodge-Pink structure over $L$ and let $\det\tau_D$ be the determinant of the matrix representing $\tau_D$ with respect to an $\BaseFld\dpl z\dpr$-basis of $D$. The $z$-adic valuation $t_N(\ulD):=\ord_z(\det\tau_D)$ is independent of this basis and is called the \emph{Newton degree of $\ulD$}. The integer $t_H(\ulD)$ with $\wedge^r\Fq_D=(z-\zeta)^{-t_H(\ulD)}\wedge^r\Fp_D$ is called the \emph{Hodge degree of $\ulD$}, where $\Fp_D:=D\otimes_{\BaseFld\dpl z\dpr}L\dbl z-\zeta\dbr$.

In particular, we have $t_N(\ulD_{b,\gamma}(V))=\ord_z(\det\Darst(\s b))=\ord_z(\det\Darst(b))$ and $t_H(\ulD_{b,\gamma}(V))=-\ord_{z-\zeta}(\det\Darst(\gamma))$.
\item \label{DefWA_C}
We say that $\ulD$ is \emph{weakly admissible} if $t_H(\ulD)=t_N(\ulD)$ and the following equivalent conditions are satisfied (compare \cite[Definition~2.2.4]{HartlPSp})

\smallskip\noindent
$\bullet\es t_H(\ulD')\leq t_N(\ulD')$ for every strict subobject
\begin{equation}\label{EqUlD'}
\ulD'\,=\,\bigl(D'\!,\,\tau_D|_{\s D'},\,\Fq_D\cap D'\otimes_{\BaseFld\dpl z\dpr}L\dpl z-\zeta\dpr\bigr)
\end{equation}
of $\ulD$, where $D'\subset D$ is a $\tau_D$-stable $\BaseFld\dpl z\dpr$-subspace,

\smallskip\noindent
$\bullet\es t_H(\ulD'')\geq t_N(\ulD'')$ for every strict quotient object $\ulD''=(D''\!,\tau_{D''},\Fq_{D''})$ of $\ulD$, where $f\colon D\onto D''$ is a $\tau_D$-stable $\BaseFld\dpl z\dpr$-quotient space and $\Fq_{D''}=(f\otimes\id)(\Fq_D)$.
\end{enumerate}
\end{definition}

\begin{remark}\label{RemDIsTensorFunctor}
(a) The Hodge-Pink structure $\Fq_D$ on $(D,\tau_D)$ induces a decreasing \emph{Hodge-Pink filtration} on $D_L:=D\otimes_{\BaseFld\dpl z\dpr,z\mapsto\zeta}L=\Fp_D/(z-\zeta)\Fp_D$ given by
\[
Fil^i D_L\;:=\;\bigl((z-\zeta)^i\Fq_D\cap\Fp_D\bigr)\big/\bigl((z-\zeta)^i\Fq_D\cap(z-\zeta)\Fp_D\bigr)\;=\;\im\bigl((z-\zeta)^i\Fq_D\cap\Fp_D\to D_L\bigr).
\]
If $(D,\tau_D,Fil^\bullet D_L)$ is weakly admissible in the sense of Fontaine, see \cite[D\'efinition~4.1.4]{Fontaine79}, then $(D,\tau_D,\Fq_D)$ is weakly admissible, but the converse is false in general. An example is $D=\BaseFld\dpl z\dpr^2$, $\tau_D=z^{-1}$, $\Fq_D=\binom{z-\zeta}{1}L\dbl z-\zeta\dbr+(z-\zeta)^2\Fp_D$ and $D'=\binom{0}{1}\BaseFld\dpl z\dpr$. This is due to the facts that our Hodge slope $t_H(D,\tau_D,\Fq_D)$ equals Fontaine's Hodge slope $t_H(D,\tau_D,Fil^\bullet D_L):=$ $\sum_{i\in\BZ}i\cdot\dim_L Fil^i D_L/Fil^{i+1}D_L$, see \cite[p.~1290 before Definition~2.2.4]{HartlPSp}, and that the subspace $Fil^iD'_L$ of $D'_L$ induced by $\ulD'$ from \eqref{EqUlD'} is (in general strictly) contained in $D'_L\cap Fil^iD_L$.

\medskip\noindent
(b) We obtain an $\BF_q\dpl z\dpr$-linear tensor functor $\ulD_{b,\gamma}\colon V\mapsto\ulD_{b,\gamma}(V)$. Namely, if $f\colon (V,\Darst)\to (V'\!,\Darst')$ is a morphism in $\Rep_{\BF_q\dpl z\dpr}G$ and $(D,\tau_D,\Fq_D)=\ulD_{b,\gamma}(V)$ and $(D'\!,\tau_{D'},\Fq_{D'})=\ulD_{b,\gamma}(V')$, then $\sigma^*f=f$ and so $f\circ\tau_D=(f\circ\Darst(\s b))\s=(\Darst'(\s b)\circ f)\s=(\Darst'(\s b)\circ\s f)\s=\tau_{D'}\circ\sigma^*f$ and $f(\Fq_D)\subset\Fq_{D'}$. Furthermore, the compatibility with tensor products
\[
(D,\tau_D,\Fq_D)\otimes(D'\!,\tau_{D'},\Fq_{D'})\;:=\;(D\otimes_{\BaseFld\dpl z\dpr}D'\!,\,\tau_D\otimes\tau_{D'},\,\Fq_D\otimes_{L\dbl z-\zeta\dbr}\Fq_{D'})
\]
is clear.
\end{remark}

If $L$ is a \emph{finite} field extension of $\BaseFld\dpl\zeta\dpr$ then ``weakly admissible implies admissible'' by the analog \cite[Theorem~2.5.3]{HartlPSp} of the theorem of Colmez and Fontaine~\cite[Th\'eor\`eme~A]{CF}. More precisely, when $\ulD_{b,\gamma}(V)$ is weakly admissible then it is even \emph{admissible}, that is it arises from a local shtuka over $\CO_L$ via the analog $\BH$ of Fontaine's mysterious functor; see \cite[\S\,2.3]{HartlPSp} or Theorem~\ref{MainThm}. In contrast, if $L$ is algebraically closed, weakly admissible does \emph{not} imply admissible. In general there is a criterion for admissibility in terms of $\sigma$-bundles over the analog of the Robba ring as follows. 

We consider field extensions $L$ of $\BF_q\dpl\zeta\dpr$ equipped with an absolute value $|\,.\,|\colon L\to\BR_{\ge0}$ extending the $\zeta$-adic absolute value on $\BF_q\dpl\zeta\dpr$ such that $L$ is complete with respect to $|\,.\,|$. We call such an $L$ a \emph{complete valued field extension of $\BF_q\dpl\zeta\dpr$}, and we let $\ol L:=\wh{L^\alg}$ be the completion of an algebraic closure of $L$. For a rational number $s>0$ we define the ring
\[
L\ancon[s]\;:=\;\bigl\{\;{\TS\sum\limits_{i=-\infty}^\infty} b_i z^i\colon b_i\in L,\, |b_i|\,|\zeta|^{s'i}\to0\es(i\to\pm\infty)\text{ for all }s'\ge s\;\bigr\}\,.
\]
It equals the ring of rigid analytic functions on the punctured disc $\{0<|z|\le|\zeta|^s\}$ over $L$ of radius $|\zeta|^s$. The ring $L\ancon[s]$ is the function field analog of the Robba ring; see \cite[\S\,2.8]{HartlDict}. It contains the element
\begin{equation} \label{EqTMinus}
\TS\tminus\;:=\;\prod\limits_{i\in\BN_0}\bigl(1-{\tfrac{\zeta^{q^i}}{z}}\bigr)\,,\quad\text{that satisfies}\quad\tminus\;=\;(1-\tfrac{\zeta}{z})\cdot\s(\tminus)\,.
\end{equation}

\begin{definition}\label{DefSigmaModule}
Let $s\in\BQ$ satisfy $1>s>\frac{1}{q}$. A \emph{$\sigma$-bundle} (\emph{on $\{0<|z|\le|\zeta|^s\}$}) \emph{over $L$} is a pair $\ulCF=(\CF,\tau_\CF)$ consisting of a locally free $L\ancon[s]$-module $\CF$ of finite rank together with an isomorphism $\tau_\CF\colon\sigma^\ast \CF\isoto \iota^\ast \CF$ of $L\ancon[qs]$-modules, where $\sigma^*\CF:=\CF\otimes_{L\ancon[s],\,\sigma}\,L\ancon[qs]$ and $\iota^*\CF:=\CF\otimes_{L\ancon[s],\,\iota}\,L\ancon[qs]$ for the natural inclusion $\iota\colon L\ancon[s]\into L\ancon[qs],\,\sum_ib_iz^i\mapsto\sum_ib_iz^i$.
The abelian group $\Hom_\sigma(\ulCF,\ulCF')$ of \emph{morphisms} between two $\sigma$-bundles $\ulCF=(\CF,\tau_\CF)$ and $\ulCF'=(\CF'\!,\tau_{\CF'})$ consists of all $L\ancon[s]$-homomorphisms $f\colon \CF\to \CF'$ that satisfy $\tau_{\CF'}\circ\sigma^\ast f=\iota^\ast f\circ \tau_\CF$. 
\end{definition}

The category of $\sigma$-bundles over $L$ is an $\BF_q\dpl z\dpr$-linear rigid additive tensor category with unit object $\ul\CO(0):=(L\ancon[s],\tau_{\CO(0)}=\id)$.

\begin{example}\label{ExSigmaModule}
For $d\in\BZ$ we define the $\sigma$-bundle $\ul\CO(d)$ over $L$ as the pair $(L\ancon[s],\tau_{\CO(d)}=z^{-d}\cdot\id)$. For more examples let $d$ and $n$ be relatively prime integers with $n>0$. Consider the matrix
\begin{equation*}\label{EqMatrixA_dn}
A_{d,n} := \left( \raisebox{4.9ex}{$
\xymatrix @C=0pc @R=0pc {
0 \ar@{.}[rrr]\ar@{.}[drdrdrdr] & & & 0 & z^{-d}\\
1\ar@{.}[drdrdr]  & & &  & 0 \ar@{.}[ddd]\\
0 \ar@{.}[dd]\ar@{.}[drdr] & & & &  \\
& & & & \\
0 \ar@{.}[rr] & & 0 & 1 & 0\\
}$}
\right)\es\in\es\GL_n\bigl(L\ancon[s]\bigr).
\end{equation*}
Let $\ulCF_{d,n}=\bigl(L\ancon[s]^n,\tau_{\CF_{d,n}}=A_{d,n}\bigr)$. It is a $\sigma$-bundle of rank $n$ over $L$. As a special case if $n=1$ we obtain $\ulCF_{d,1}=\ul\CO(d)$. 
\end{example}

\begin{proposition}[{\cite[Theorem 11.1 and Corollary 11.8]{HP}}]\label{PropHP}
If $L$ is algebraically closed (and complete) every $\sigma$-bundle $\ulCF$ is isomorphic to a direct sum $\bigoplus_{i}\ulCF_{d_i,n_i}$ for uniquely determined pairs $(d_i,n_i)$ up to permutation with $\gcd(d_i,n_i)=1$. One has $\wedge^n\ulCF\cong\ulCF_{d,1}=\ul\CO(d)$ where $n=\rk\ulCF=\sum_i n_i$ and $d=\sum_i d_i$. One calls $d$ the \emph{degree of $\ulCF$}.
\end{proposition}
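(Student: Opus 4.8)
The assertion is the $\sigma$-bundle analogue of the Dieudonn\'e--Manin classification, and although it is quoted here from \cite{HP}, I sketch how one would prove it in that spirit.

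\textbf{Rank one and the determinant.} First treat rank-one $\sigma$-bundles. Trivialising $\CF\cong L\ancon[s]$, the map $\tau_\CF$ becomes multiplication by a unit $u\in\bigl(L\ancon[qs]\bigr)^\times$, and two units give isomorphic $\sigma$-bundles exactly when they differ by a factor $\sigma(w)\,\iota(w)^{-1}$ with $w\in\bigl(L\ancon[s]\bigr)^\times$. Since $L$ is algebraically closed and complete, $u$ has a Weierstrass-type factorisation on the punctured disc, its zeros and poles being recorded by a Newton polygon; as $\sigma$ contracts the disc of radius $|\zeta|^{qs}$ into the one of radius $|\zeta|^s$, every zero or pole off $z=0$ can be absorbed into a factor $\sigma(w)\,\iota(w)^{-1}$, leaving $u\sim z^{-d}$ with $d$ the slope of $u$ at $0$. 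Hence every rank-one $\sigma$-bundle is $\ul\CO(d)$ for a unique $d\in\BZ$. Multiplicativity of $\tau$ under $\wedge^{\rk}$ then gives, once the general statement is known, $\wedge^n\ulCF\cong\bigotimes_i\wedge^{n_i}\ulCF_{d_i,n_i}\cong\bigotimes_i\ul\CO(d_i)\cong\ul\CO(\sum_i d_i)$, which is the degree formula with $d=\sum_i d_i$, $n=\sum_i n_i$.

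\textbf{Morphisms and extensions between the standard bundles.} The technical heart is to compute $\Hom_\sigma(\ulCF_{d,n},\ulCF_{d',n'})$ and $\Ext^1_\sigma(\ulCF_{d,n},\ulCF_{d',n'})$. Unwinding definitions, a morphism $\ul\CO(d)\to\ul\CO(d')$ is an $f\in L\ancon[s]$ solving a $\sigma$-linear equation of the shape $\sigma(f)=z^{\,d'-d}f$; such nonzero $f$ exist precisely when the slopes satisfy the appropriate inequality, the solutions being built from the transcendental product $\tminus$ of \eqref{EqTMinus} and its $z$-twists, and the analogous statement for $\ulCF_{d,n}$ holds with $d/n$ in place of $d$. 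Dually one shows $\Ext^1_\sigma(\ulCF_{d,n},\ulCF_{d',n'})=0$ exactly when $d/n\le d'/n'$; this reduces to surjectivity of operators $f\mapsto\sigma(f)-Af$ on finite free $L\ancon[s]$-modules in the relevant slope range, a Dwork/Frobenius-descent statement resting on completeness of $L$ and the contracting property of $\sigma$. In particular, over algebraically closed $L$ the subcategory of $\sigma$-bundles of a fixed slope $\lambda=d/n$ is semisimple, with $\End_\sigma(\ulCF_{d,n})$ a central division algebra of degree $n^2$ over $\BF_q\dpl z\dpr$.

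\textbf{Existence and uniqueness.} Given an arbitrary $\sigma$-bundle $\ulCF$ of rank $n$, argue by induction on $n$. Pick $\lambda=d/m$ maximal among slopes of standard bundles admitting a nonzero map into $\ulCF$; the image of such a map, or rather its saturation, is a sub-$\sigma$-bundle $\ulCF'$, and a cokernel/saturation lemma over the B\'ezout domain $L\ancon[s]$ --- keeping track that $\tau$ carries the radius-$qs$ pullbacks of sub and quotient to one another --- shows $\ulCF'\cong\ulCF_{d,m}^{\oplus k}$ for some $k$, that $\ulCF'$ is saturated, and that $\ulCF/\ulCF'$ is again a $\sigma$-bundle, of rank $<n$ and with all slopes $<\lambda$. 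The induction hypothesis decomposes $\ulCF/\ulCF'$, and since each of its summands has slope $<\lambda$, the extension $0\to\ulCF'\to\ulCF\to\ulCF/\ulCF'\to0$ splits by the $\Ext^1$-vanishing above, proving existence. For uniqueness, the multiset $\{d_i/n_i\}$ is the Harder--Narasimhan polygon of $\ulCF$, which is intrinsic, and within a fixed slope the multiplicity of $\ulCF_{d,n}$ is recovered from $\Hom_\sigma$ using that the relevant endomorphism algebra is a division algebra; so the pairs $(d_i,n_i)$ with $\gcd(d_i,n_i)=1$ are determined up to permutation.

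\textbf{Main obstacle.} The hardest part is the $\Hom$/$\Ext^1$ computation together with the saturation lemma: one must show that images and cokernels of morphisms of $\sigma$-bundles stay locally free and inherit $\sigma$-bundle structures, and that the twisted-linear operators above are surjective in the correct slope range. Over $L\ancon[s]$ --- which is not a principal ideal domain but only an elementary-divisor (B\'ezout) domain, and across which $\sigma$ changes the radius of convergence --- this needs genuine analytic input (Newton-polygon bookkeeping plus a convergence/contraction estimate) and is what pushes the result beyond formal homological algebra.
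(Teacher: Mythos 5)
The paper gives no proof of this proposition: it is quoted directly from \cite[Theorem 11.1 and Corollary 11.8]{HP}, so there is no in-paper argument to compare against. Your sketch follows the Dieudonn\'e--Manin/Harder--Narasimhan framework that does underlie \cite{HP}, and the outline is sound, but two steps you treat as routine carry real content and deserve to be flagged as such. First, to ``pick $\lambda=d/m$ maximal'' you must first prove that the set of slopes $d/m$ with $\Hom_\sigma(\ulCF_{d,m},\ulCF)\ne 0$ is bounded above; this is not automatic and is typically obtained by running the rank-one classification on exterior powers $\wedge^k\ulCF$ to bound degrees of sub-objects. Second, the $\Ext^1$-vanishing you state --- $\Ext^1_\sigma(\ulCF_{d,n},\ulCF_{d',n'})=0$ for $d/n\le d'/n'$ --- is the direction that is needed for the splitting step (extensions of a lower-slope quotient by a higher-slope sub must split), but it is not a formal dual of the $\Hom$ statement: one has to show surjectivity of twisted operators $f\mapsto\sigma(f)-Af$ on finite free $L\ancon[s]$-modules, using the contraction $\{|z|\le|\zeta|^{qs}\}\subset\{|z|\le|\zeta|^s\}$ and completeness of $L$. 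You correctly identify the saturation/cokernel lemma over the non-noetherian B\'ezout domain $L\ancon[s]$ (keeping track of $\tau$ across the change of radius) as the main obstacle. Granting those three points, which are exactly the analytic core of \cite{HP}, your argument reconstructs the theorem along the intended lines.
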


\begin{proposition}[{\cite[Proposition~8.5]{HP}}]\label{Prop0.7}
If $L$ is algebraically closed (and complete) then $\Hom_\sigma(\ulCF_{d,n},\,\ulCF_{d'\!,n'})\ne(0)$ if and only if $d/n\le d'/n'$.
\end{proposition}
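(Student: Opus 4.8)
The plan is to pass, via rigidity of the tensor category of $\sigma$-bundles, to a statement about global $\sigma$-invariant sections of a single $\sigma$-bundle, to reduce it — using the classification of Proposition~\ref{PropHP}, available because $L$ is algebraically closed and complete — to the rank-one objects $\ul\CO(d)$, and to settle that case by an explicit Laurent-series computation in $L\ancon[s]$. Concretely, since the category of $\sigma$-bundles over $L$ is rigid tensor with unit $\ul\CO(0)$, there is a canonical identification
\[
\Hom_\sigma(\ulCF_{d,n},\ulCF_{d',n'})\;\cong\;\Hom_\sigma\bigl(\ul\CO(0),\,\ulCF_{d,n}\dual\otimes\ulCF_{d',n'}\bigr)\,.
\]
Put $\ul{\CG}:=\ulCF_{d,n}\dual\otimes\ulCF_{d',n'}$. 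By Proposition~\ref{PropHP} we may write $\ul{\CG}\cong\bigoplus_i\ulCF_{e_i,m_i}$ with $\gcd(e_i,m_i)=1$ and $m_i>0$, and since $\Hom_\sigma(\ul\CO(0),-)$ is additive in direct sums it suffices to prove: \emph{(a)} every slope $e_i/m_i$ equals $\lambda:=d'/n'-d/n$; and \emph{(b)} for coprime integers $e,m$ with $m>0$ one has $\Hom_\sigma(\ul\CO(0),\ulCF_{e,m})\ne(0)$ if and only if $e\ge0$.

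For \emph{(a)}, the key observation is that the defining matrix $A_{a,b}$ from Example~\ref{ExSigmaModule} has entries in $\BF_q[z,z^{-1}]$, hence is fixed by $\sigma$, and its companion shape gives $A_{a,b}^{\,b}=z^{-a}\,\id$; consequently the $N$-fold Frobenius iterate of $\ulCF_{a,b}$ has matrix $A_{a,b}^{\,N}$, which equals $z^{-aN/b}\,\id$ as soon as $b\mid N$ and is non-scalar otherwise (because $A_{a,b}^{\,r}$ is non-scalar for $0<r<b$). Taking $N:=nn'$ and applying this to the two factors of $\ul{\CG}$ — dualizing one of them, which replaces $z^{-aN/b}$ by its inverse — yields $\tau_{\ul{\CG}}^{(N)}=z^{\,dn'-d'n}\,\id$. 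Restricting this scalar identity to the summand $\ulCF_{e_i,m_i}$ forces $A_{e_i,m_i}^{\,N}$ to be scalar, hence $m_i\mid N$ and then $-e_iN/m_i=dn'-d'n$, i.e.\ $e_i/m_i=\lambda$ for every $i$.

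For \emph{(b)}, writing the equation $A_{e,m}\,\sigma(v)=v$ for a column $v=(v_1,\dots,v_m)^{\mathrm T}\in L\ancon[s]^m$ and using the companion shape of $A_{e,m}$ reduces it to $v_j=\sigma^{\,j-1}(v_1)$ for $j=1,\dots,m$ together with the single relation $\sigma^m(v_1)=z^{\,e}v_1$; writing $v_1=\sum_i b_iz^i$ this becomes the recursion $b_i^{\,q^m}=b_{i-e}$. The case $e=0$ forces $m=1$ and is trivial. If $e\ge1$ I would exhibit a nonzero solution by setting $b_i=0$ for $i\notin e\BZ$, choosing $b_0\in L$ with $0<|b_0|<1$, and putting $b_{te}:=b_0^{\,q^{-mt}}$ (using that $L$ is perfect, so the required $q^{mt}$-th roots exist and are unique); then $|b_{te}|\to1$ as $t\to+\infty$ whereas $|b_{te}|$ decays doubly-exponentially as $t\to-\infty$, so $|b_{te}|\,|\zeta|^{\,s'te}\to0$ for every $s'\ge s$ and $v_1\in L\ancon[s]\setminus\{0\}$. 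If $e\le-1$, running the recursion towards $+\infty$ — where $|b_{i+k|e|}|=|b_i|^{\,q^{mk}}$ — forces $|b_i|\le1$ for all $i$, and then running it towards $-\infty$ — where $b_i=b_{i-k|e|}^{\,q^{mk}}$ while convergence forces $|b_{i-k|e|}|\le|\zeta|^{\,s'k|e|}$ for $k\gg0$ — forces $|b_i|=0$ for all $i$; thus $\Hom_\sigma(\ul\CO(0),\ulCF_{e,m})=(0)$. Combining \emph{(a)} and \emph{(b)}: $\Hom_\sigma(\ulCF_{d,n},\ulCF_{d',n'})\ne(0)$ if and only if some summand $\ulCF_{e_i,m_i}$ of $\ul{\CG}$ has $e_i\ge0$, if and only if $\lambda\ge0$ (all slopes equalling $\lambda$ and all $m_i>0$), if and only if $d/n\le d'/n'$.

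The step I expect to be the main obstacle is the convergence bookkeeping in \emph{(b)}: one has to check that the recursively defined Laurent series really does lie in the Robba-type ring $L\ancon[s]$ when $e\ge0$ — both of its tails must decay against \emph{every} weight $|\zeta|^{\,s'i}$ with $s'\ge s$ — and, conversely, that the growth constraints imposed at the two ends of the expansion are jointly unsatisfiable when $e<0$. The remaining ingredients — rigidity of the tensor category, the matrix identity $A_{a,b}^{\,b}=z^{-a}\,\id$, and the ensuing slope arithmetic — are formal once Proposition~\ref{PropHP} is granted.
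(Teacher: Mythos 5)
Your overall plan — reduce by rigidity to $\Hom_\sigma\bigl(\ul\CO(0),\ulCF_{d,n}\dual\otimes\ulCF_{d',n'}\bigr)$, decompose the target with Proposition~\ref{PropHP}, and settle $\Hom_\sigma\bigl(\ul\CO(0),\ulCF_{e,m}\bigr)$ by an explicit Laurent-series computation — is a sensible route, and step~\emph{(b)} is in fact fully correct, including the convergence bookkeeping you worried about: the recursion $b_i^{\,q^m}=b_{i-e}$, the explicit solution $b_{te}=b_0^{\,q^{-mt}}$ for $e\ge1$, and the two-ended vanishing argument for $e\le-1$ all check out. The genuine gap is in step~\emph{(a)}, not \emph{(b)}. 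The identity $\tau_{\ul\CG}^{(N)}=z^{dn'-d'n}\cdot\mathrm{id}$ holds \emph{in the tensor-product basis}. But $\tau^{(N)}_{\ul\CG}$ is $\sigma^N$-semilinear, so its matrix changes under a change of basis by $\sigma^N$-conjugation, not ordinary conjugation: if $P$ is the matrix of an isomorphism $\ul\CG\cong\bigoplus_i\ulCF_{e_i,m_i}$, then in the direct-sum basis the $N$-fold iterate has matrix $\bigoplus_i A_{e_i,m_i}^{\,N}=z^{dn'-d'n}\,P\,\sigma^N(P)^{-1}$, and the right-hand side is not literally scalar. So ``restricting the scalar identity to the summand'' does not force $A_{e_i,m_i}^{\,N}$ to be scalar, and neither $m_i\mid N$ nor $e_i/m_i=\lambda$ follows from what you have written.

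The assertion that all slopes $e_i/m_i$ equal $\lambda=d'/n'-d/n$ is true, but it needs an input you do not invoke. What your Frobenius-iterate computation legitimately gives is that $\ul\CG$, regarded as a $\sigma^N$-bundle, is isomorphic to the rank-$nn'$ sum of copies of the rank-one $\sigma^N$-bundle with Frobenius $z^{dn'-d'n}$. To pass from this to a constraint on the $\sigma$-decomposition of $\ul\CG$ you need the uniqueness part of the Dieudonn\'e--Manin-type classification \emph{for $\sigma^N$-bundles} (the analogue of Proposition~\ref{PropHP} with $q$ replaced by $q^N$), together with the computation that $\ulCF_{e,m}$ viewed as a $\sigma^N$-bundle is isoclinic of $\sigma^N$-slope $eN/m$. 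Alternatively, one can invoke compatibility of the slope (Harder--Narasimhan) filtration with tensor products to see that $\ulCF_{d,n}\dual\otimes\ulCF_{d',n'}$ is isoclinic of slope $\lambda$. Either of these closes the gap, but as written the key line ``forces $A_{e_i,m_i}^N$ to be scalar'' rests on treating a semilinear conjugation as a linear one. Note, by the way, that this proposition is imported by the paper from Hartl--Pink and has no internal proof to compare against.
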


Let $\ulD=(D,\tau_D,\Fq_D)$ be a $z$-isocrystal over $\BaseFld$ with a Hodge-Pink structure over a complete valued field extension $L$ of $\BaseFld\dpl\zeta\dpr$. To define the $\sigma$-bundles associated with $\ulD$ we first define the $\sigma$-bundle
\[
\CE\;:=\;\CE(\ulD)\;:=\;D\otimes_{\BaseFld\dpl z\dpr}L\ancon[s]\,,\quad \tau_\CE\;:=\;\tau_D\otimes\id\,,\quad\ulCE(\ulD)\;=\;(\CE,\tau_\CE)
\]
over $L$. Then $\CE(\ulD)\otimes L\dbl z-\zeta\dbr=\Fp_D:=D\otimes_{\BaseFld\dpl z\dpr}L\dbl z-\zeta\dbr$ and the Hodge-Pink structure $\Fq_D\subset\CE\otimes L\dbl z-\zeta\dbr[\frac{1}{z-\zeta}]$ defines a $\sigma$-bundle $\ulCF(\ulD)$ over $L$ that is a modification of $\ulCE(\ulD)$ at $z=\zeta^{q^i}$ for $i\in\BN_0$ as follows.
Consider the isomorphism $\eta_i:=\bigl(\tau_\CE\circ\ldots\circ(\sigma^{i-1})^\ast \tau_\CE\bigr)\otimes\id$
\[
\eta_i\colon\;(\sigma^i)^\ast\bigl(\Fp_D[{\TS\frac{1}{z-\zeta}}]\bigr)\;=\; \bigl((\sigma^i)^\ast\CE\bigr)\otimes L\dbl z-\zeta^{q^i}\dbr[{\TS\frac{1}{z-\zeta^{q^i}}}]\;\isoto\;\CE\otimes L\dbl z-\zeta^{q^i}\dbr[{\TS\frac{1}{z-\zeta^{q^i}}}]\,.
\]
We define $\CF=\CF(\ulD)$ as the $L\ancon[s]$-submodule of $\CE[\tfrac{1}{\tminus}]$ that coincides with $\CE$ outside $z=\zeta^{q^i}$ for $i\in\BN_0$ and at $z=\zeta^{q^i}$ satisfies $\CF\otimes L\dbl z-\zeta^{q^i}\dbr\;=\;\eta_i(\sigma^{i\ast}\Fq_D)$, that is
\begin{equation}\label{EqSheafCF}
\CF\;:=\;\bigl\{\,m\in\CE[\tfrac{1}{\tminus}]\colon\,\eta_i^{-1}(m)\in\sigma^{i*}\Fq_D\text{ for }i\in\BN_0\,\bigr\}\,. 
\end{equation}
This can equivalently be viewed as the global sections over $\{0<|z|\le|\zeta|^s\}$ of the sheaf $\wt\CF$ obtained as the modification of the sheaf associated with $\CE$ at the discrete set $\{z=\zeta^{q^i}:i\in\BN_0\}$ according to the rule given in \eqref{EqSheafCF}.

By construction $\tau_\CE$ induces on $\CF$ the structure of a $\sigma$-bundle $\ulCF(\ulD)=(\CF,\tau_\CF)$ over $L$, and $\ulCF(\ulD)$ is the unique $\sigma$-subbundle of $\ulCE[\tfrac{1}{\tminus}]$ that coincides with $\ulCE$ outside $z=\zeta^{q^i}$ for $i\in\BN_0$ and satisfies $\CF\otimes L\dbl z-\zeta\dbr\;=\;\Fq_D$. This characterization implies that the assignment $\ulD\mapsto\bigl(\ulCE(\ulD),\,\ulCF(\ulD)\bigr)$ is an $\BF_q\dpl z\dpr$-linear tensor functor.

\begin{definition}\label{DefSigmaModuleOfD}
The \emph{pair of $\sigma$-bundles associated with $\ulD$} is the pair $\bigl(\ulCE(\ulD),\,\ulCF(\ulD)\bigr)$ constructed above. 

The $z$-isocrystal with Hodge-Pink structure $\ulD$ is said to be \emph{admissible} if $\ulCF(\ulD)\otimes_{L\ancon[s]}\ol L\ancon[s]\cong (\ulCF_{0,1})^{\oplus\dim D}\otimes_{L\ancon[s]}\ol L\ancon[s]$.

In the notation of Definition~\ref{DefWA} let $\ulD=\ulD_{b,\gamma}(V)$ for a representation $\Darst\colon G\to\GL(V)$ in $\Rep_{\BF_q\dpl z\dpr}G$ and write $\bigl(\ulCE_{b,\gamma}(V),\,\ulCF_{b,\gamma}(V)\bigr):=\bigl(\ulCE(\ulD),\,\ulCF(\ulD)\bigr)$.
\end{definition}

As a motivation for this definition note that $\ulD$ is admissible if and only if it arises from a local $\GL_r$-shtuka over $\CO_L$ by \cite[Theorem~2.4.7 and Definition~2.3.3]{HartlPSp}.

\begin{proposition}[{\cite[Lemma~2.4.5]{HartlPSp}}]\label{PropDeg}
For every $z$-isocrystal with Hodge-Pink structure $\ulD$ over $L$ the degree (defined in Proposition~\ref{PropHP}) satisfies $\deg\ulCF(\ulD)\;=\;t_H(\ulD)-t_N(\ulD)$ and $\deg\ulCE(\ulD)\;=\;-t_N(\ulD)$.
\end{proposition}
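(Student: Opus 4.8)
The plan is to reduce everything to the well-known behaviour of degrees of modifications of $\sigma$-bundles. Recall that $\ulCF(\ulD)$ is obtained from $\ulCE(\ulD)=D\otimes_{\BaseFld\dpl z\dpr}L\ancon[s]$ by a modification supported at the discrete set $\{z=\zeta^{q^i}:i\in\BN_0\}$, and that the degree of a $\sigma$-bundle over $\ol L$ is characterized (by Proposition~\ref{PropHP}) as the degree of its top exterior power, i.e. by the integer $e$ with $\wedge^{\mathrm{rk}}\ulCF\cong\ul\CO(e)$. Since the formation of $\ulCE(\ulD)$, $\ulCF(\ulD)$ and of top exterior powers are all tensor-functorial (as noted right before Definition~\ref{DefSigmaModuleOfD}), one has $\wedge^r\ulCE(\ulD)=\ulCE(\wedge^r\ulD)$ and $\wedge^r\ulCF(\ulD)=\ulCF(\wedge^r\ulD)$ where $r=\dim_{\BaseFld\dpl z\dpr}D$ and $\wedge^r\ulD=(\wedge^r D,\,\wedge^r\tau_D,\,\wedge^r\Fq_D)$. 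So it suffices to treat the rank-one case, which is exactly what makes the two Newton/Hodge degrees land on the nose.

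First I would compute $\deg\ulCE(\ulD)$. For $\ulD$ of rank one, $D=\BaseFld\dpl z\dpr\cdot e$ and $\tau_D$ is multiplication by some $u\in\BaseFld\dpl z\dpr^\times$ with $\ord_z(u)=t_N(\ulD)$; writing $u=z^{t_N}\cdot(\text{unit})$ and observing that a unit in $\BaseFld\dpl z\dpr$ becomes, after a suitable change of $L\ancon[s]$-basis, the identity (one can absorb it by solving $\tau$-semilinearly — or simply cite that $\ulCE(\ulD)$ depends only on the isogeny class of $(D,\tau_D)$, and over $\ol L$ every rank-one $z$-isocrystal of slope $t_N$ is isomorphic to $\ul\CO(-t_N)$, since $\tau_{\CO(-t_N)}=z^{t_N}$), we get $\ulCE(\ulD)\otimes\ol L\ancon[s]\cong\ul\CO(-t_N(\ulD))$, hence $\deg\ulCE(\ulD)=-t_N(\ulD)$. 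For general rank this gives $\wedge^r\ulCE(\ulD)=\ulCE(\wedge^r\ulD)\cong\ul\CO(-t_N(\wedge^r\ulD))$, and $t_N(\wedge^r\ulD)=\ord_z(\det\tau_D)=t_N(\ulD)$ by Definition~\ref{DefWA}(2), so $\deg\ulCE(\ulD)=-t_N(\ulD)$.

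Next I would handle the modification. By \eqref{EqSheafCF}, $\ulCF(\ulD)$ is the $\sigma$-subsheaf of $\ulCE(\ulD)[\tfrac1{\tminus}]$ with $\CF\otimes L\dbl z-\zeta\dbr=\Fq_D$ at $z=\zeta$ and with the $\sigma$-translated conditions at $z=\zeta^{q^i}$. Passing to $\wedge^r$, the same description shows $\wedge^r\CF\otimes L\dbl z-\zeta\dbr=\wedge^r\Fq_D=(z-\zeta)^{-t_H(\ulD)}\wedge^r\Fp_D$ by the definition of $t_H$; equivalently, near $z=\zeta$ the sheaf underlying $\wedge^r\ulCF(\ulD)$ is $(z-\zeta)^{-t_H(\ulD)}$ times that of $\wedge^r\ulCE(\ulD)$, and the $\sigma$-structure then forces the analogous local shape $(z-\zeta^{q^i})^{-t_H(\ulD)}$ at every $z=\zeta^{q^i}$ (this is precisely the content of $\eta_i$ intertwining the conditions). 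Therefore $\wedge^r\ulCF(\ulD)$ is obtained from $\wedge^r\ulCE(\ulD)\cong\ul\CO(-t_N(\ulD))$ by the standard $\sigma$-equivariant modification raising the degree by $t_H(\ulD)$: concretely, $\ul\CO(-t_N)\cdot\tminus^{-t_H}$ with its induced $\tau$ is $\ul\CO(t_H-t_N)$, using the relation $\tminus=(1-\tfrac\zeta z)\s(\tminus)$ from \eqref{EqTMinus}, which shows multiplication by $\tminus^{-1}$ twists $\tau$ by $(1-\tfrac\zeta z)$, i.e. changes $z^{?}$ by one power of $(z-\zeta)$ on the nose, hence increases the degree by $1$ per factor of $\tminus^{-1}$. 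Thus $\deg\ulCF(\ulD)=\deg\wedge^r\ulCF(\ulD)=-t_N(\ulD)+t_H(\ulD)$, as claimed.

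The main obstacle is the careful bookkeeping at the infinitely many points $z=\zeta^{q^i}$: one must verify that the rank-one modification really is the $\tminus$-twist and not something with an extra unit, i.e. that the $\eta_i$ correctly propagate the single condition at $z=\zeta$ to all of its $\sigma$-conjugates, so that the modification is by the principal divisor $\sum_{i\ge0}t_H\cdot[\zeta^{q^i}]$ which is exactly $\ord$ of $\tminus^{-t_H}$. Once one grants this — and it is built into the very definition \eqref{EqSheafCF} of $\ulCF(\ulD)$ as a $\sigma$-subbundle, together with the uniqueness statement just above Definition~\ref{DefSigmaModuleOfD} — the degree count is immediate from Proposition~\ref{PropHP} (degree $=$ degree of the top exterior power) and the identity $\deg(\ulCF\otimes\ul\CO(1))=\deg\ulCF+\mathrm{rk}\,\ulCF$ specialized to rank one. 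I would therefore organize the write-up as: (i) reduce to rank one via tensor-functoriality of $\wedge^r$; (ii) identify $\ulCE(\wedge^r\ulD)\cong\ul\CO(-t_N)$; (iii) identify $\wedge^r\ulCF(\ulD)$ as the $\tminus^{-t_H}$-twist and read off its degree via \eqref{EqTMinus}.
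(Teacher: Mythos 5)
Your overall strategy — reduce to rank one via $\wedge^r$-functoriality of $\ulCE$ and $\ulCF$, identify $\wedge^r\ulCE(\ulD)\cong\ul\CO(-t_N)$ over $\ol L$ by Dieudonn\'e--Manin, and then read off the degree of $\wedge^r\ulCF(\ulD)$ as a $\tminus^{-t_H}$-modification — is sound, and steps (i) and (ii) are essentially correct. Note that the paper itself gives no proof of this proposition (it simply cites \cite[Lemma~2.4.5]{HartlPSp}), so you must carry the whole argument yourself.

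The genuine gap is in step (iii). You correctly compute, from $\tminus=(1-\tfrac{\zeta}{z})\s\tminus$, that $\s(\tminus^{-1})/\tminus^{-1}=1-\tfrac{\zeta}{z}$, so in the basis $e'=\tminus^{-t_H}e$ the new Frobenius is $\bigl(1-\tfrac{\zeta}{z}\bigr)^{t_H}z^{t_N}=(z-\zeta)^{t_H}z^{t_N-t_H}$. But the leap from there to ``this is $\ul\CO(t_H-t_N)$'' is not immediate, and the justification you give — ``changes $z^?$ by one power of $(z-\zeta)$ on the nose, hence increases the degree by $1$'' — has the logic backwards. In fact $(z-\zeta)^{t_H}z^{t_N-t_H}$ factors as $(z-\zeta)^{t_H}\cdot z^{-(t_H-t_N)}$, and the degree $+(t_H-t_N)$ comes entirely from the $z^{-(t_H-t_N)}$-part (which is literally $\ul\CO(t_H-t_N)$), while the factor $(z-\zeta)^{t_H}$ must be shown to contribute degree \emph{zero}. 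Concretely, you need the nontrivial fact that the rank-one $\sigma$-bundle $\bigl(\ol L\ancon[s],\,z-\zeta\bigr)$ is isomorphic to $\ul\CO(0)$. The trivializing morphism is the element $\tplus$ from Remark~\ref{RemGlobalSigmaModule}, which satisfies $\s(\tplus)=(z-\zeta)\tplus$; and for $\tplus$ to be an \emph{isomorphism} one needs it to be a unit in $\ol L\ancon[s]$, i.e., to have no zeros in $\{0<|z|\le|\zeta|^s\}$. This is exactly where the radius constraint $s>\tfrac{1}{q}$ of Definition~\ref{DefSigmaModule} enters: the zeros of $\tplus$ sit at $|z|=|\zeta|^{1/q^i}$ for $i\ge 1$, all strictly larger than $|\zeta|^s$. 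Without establishing $\deg\bigl(\ol L\ancon[s],z-\zeta\bigr)=0$ (or citing an equivalent modification/Riemann--Roch formula for $\sigma$-bundles, which itself needs this input), the degree count in step (iii) is unjustified, even though the final answer $\deg\ulCF(\ulD)=t_H(\ulD)-t_N(\ulD)$ is correct.
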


\begin{corollary}\label{CorA=>WA}
If $\ulD_{b,\gamma}(V)$ is admissible then $\ulD_{b,\gamma}(V)$ is weakly admissible.
\end{corollary}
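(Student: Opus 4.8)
The plan is to derive weak admissibility of $\ulD:=\ulD_{b,\gamma}(V)=(D,\tau_D,\Fq_D)$ directly from the degree formula of Proposition~\ref{PropDeg} together with the classification of $\sigma$-bundles over an algebraically closed complete field. Throughout put $n:=\dim_{\BaseFld\dpl z\dpr}D$.

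First I would establish the equality $t_H(\ulD)=t_N(\ulD)$ required in Definition~\ref{DefWA}\ref{DefWA_C}. Admissibility says that $\ulCF(\ulD)\otimes_{L\ancon[s]}\ol L\ancon[s]\cong(\ulCF_{0,1})^{\oplus n}$, whose $n$-th exterior power is $\ul\CO(0)$, so this $\sigma$-bundle has degree $0$ in the sense of Proposition~\ref{PropHP}. Since the degree of a $\sigma$-bundle is unchanged under the base field extension $L\subset\ol L$, we get $\deg\ulCF(\ulD)=0$, and Proposition~\ref{PropDeg} then gives $t_H(\ulD)-t_N(\ulD)=0$.

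Next I would treat the subobject inequality. Let $\ulD'=\bigl(D',\,\tau_D|_{\s D'},\,\Fq_D\cap D'\otimes_{\BaseFld\dpl z\dpr}L\dpl z-\zeta\dpr\bigr)$ be a strict subobject as in \eqref{EqUlD'}, with $D'\subset D$ a $\tau_D$-stable $\BaseFld\dpl z\dpr$-subspace. The inclusion $D'\into D$ induces an inclusion $\ulCE(\ulD')\into\ulCE(\ulD)$ with locally free cokernel, and because the Hodge-Pink structure of $\ulD'$ is by construction the full intersection $\Fq_D\cap(D'\otimes L\dpl z-\zeta\dpr)$, the corresponding modification realizes $\ulCF(\ulD')$ as a saturated $\sigma$-subbundle of $\ulCF(\ulD)$; alternatively one may apply the tensor functor $\ulD\mapsto\ulCF(\ulD)$ to the monomorphism $\ulD'\into\ulD$ and pass to the saturation of the image, which only increases the degree. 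After $\otimes_{L\ancon[s]}\ol L\ancon[s]$ we obtain a $\sigma$-subbundle of $(\ulCF_{0,1})^{\oplus n}$. Writing $\ulCF(\ulD')\otimes_{L\ancon[s]}\ol L\ancon[s]\cong\bigoplus_i\ulCF_{d_i,n_i}$ with $\gcd(d_i,n_i)=1$ as in Proposition~\ref{PropHP}, each summand is split off the subbundle and hence injects into $(\ulCF_{0,1})^{\oplus n}$, so it maps nontrivially to one of the $n$ factors $\ulCF_{0,1}=\ul\CO(0)$; Proposition~\ref{Prop0.7} then forces $d_i/n_i\le 0$. Summing over $i$ gives $\deg\ulCF(\ulD')=\sum_i d_i\le 0$, and Proposition~\ref{PropDeg} applied to $\ulD'$ yields $t_H(\ulD')-t_N(\ulD')=\deg\ulCF(\ulD')\le 0$, which is precisely the first bulleted condition in Definition~\ref{DefWA}\ref{DefWA_C}. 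Together with $t_H(\ulD)=t_N(\ulD)$ this proves that $\ulD$ is weakly admissible.

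The point requiring the most care, and which I regard as the main obstacle, is the compatibility claim that $\ulCF(\ulD')$ embeds as a $\sigma$-subbundle of $\ulCF(\ulD)$ with its degree still computed by Proposition~\ref{PropDeg}, i.e.\ equal to $t_H(\ulD')-t_N(\ulD')$. This uses that $\ulCE(\ulD')\into\ulCE(\ulD)$ has locally free cokernel, that the intersection defining $\Fq_{D'}$ makes the modification at each point $z=\zeta^{q^i}$ saturated inside that of $\ulD$, and that $\deg$, $t_H$ and $t_N$ are insensitive to replacing $L$ by $\ol L$. Granting this, the remainder is a direct application of Propositions~\ref{PropHP}, \ref{Prop0.7} and \ref{PropDeg}.
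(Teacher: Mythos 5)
Your proof is correct and follows the same route as the paper: deduce $t_H(\ulD)=t_N(\ulD)$ from $\deg\ulCF(\ulD)=0$ via Proposition~\ref{PropDeg}, view $\ulCF(\ulD')\subset\ulCF(\ulD)$ as a $\sigma$-subbundle of $(\ulCF_{0,1})^{\oplus n}$ after base change to $\ol L$, and combine Propositions~\ref{PropHP}, \ref{Prop0.7} and \ref{PropDeg} to get $t_H(\ulD')-t_N(\ulD')=\sum_i d_i\le0$. The extra worry about saturatedness of $\ulCF(\ulD')$ inside $\ulCF(\ulD)$ is not actually needed — Proposition~\ref{Prop0.7} bounds the slopes of any sub-$\sigma$-bundle from above, saturated or not — but the inclusion $\ulCF(\ulD')\subset\ulCF(\ulD)$ itself is indeed the key compatibility, and your justification of it (via $\Fq_{D'}=\Fq_D\cap D'\otimes L\dpl z-\zeta\dpr$) matches what the paper implicitly assumes.
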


\begin{proof}
If $\ulD:=\ulD_{b,\gamma}(V)$ is admissible then $\ulCF(\ulD)\otimes_{L\ancon[s]}\ol L\ancon[s]\cong(\ulCF_{0,1})^{\oplus\dim V}$ and therefore $t_H(\ulD)-t_N(\ulD)=\deg(\ulCF(\ulD))=0$.  If $\ulD'\subset\ulD$ is a strict subobject then $\ulCF(\ulD')\subset\ulCF(\ulD)$ is a $\sigma$-sub-bundle. It satisfies $\ulCF(\ulD')\otimes_{L\ancon[s]}\ol L\ancon[s]\cong\bigoplus_i\ulCF_{d_i,n_i}$ for some $d_i,n_i$ by Proposition~\ref{PropHP}. By Proposition~\ref{Prop0.7} all $d_i\le0$ and hence $t_H(\ulD')-t_N(\ulD')\;=\;\deg\ulCF(\ulD')\;=\;\sum_i d_i\le0$.
\end{proof}

\begin{lemma}\label{LemmaKottwitzCond}
Let $\nu_b\colon \BD_{\BaseFld\dpl z\dpr}\to G_{\BaseFld\dpl z\dpr}$ be the Newton point associated with $b$; see Remark~\ref{RemKottwitz}(a). Let $\Darst\colon G\to\GL(V)$ be a representation in $\Rep_{\BF_q\dpl z\dpr}G$. Then under the canonical identifications $\pi_1\bigl(\GL(V)\bigr)_\Gamma=\pi_1\bigl(\GL(V)\bigr)=\BZ$ and $\Hom(\BD_{\BaseFld\dpl z\dpr},\BG_m)=\BQ$ we have
\[
\TS\Darst_*(\gamma^\#) \;=\;t_H\bigl(\ulD_{b,\gamma}(V)\bigr)\qquad\text{and}\qquad \det_V\circ\Darst\circ\nu_b\;=\;t_N\bigl(\ulD_{b,\gamma}(V)\bigr)\,.
\]
In particular, the images $[b]^{\#}$ and $\gamma^{\#}$ of $\nu_b$ and $\gamma$ in $\pi_1(G)_{{\Gamma},\BQ}:=\pi_1(G)_{\Gamma}\otimes_\BZ\BQ$ coincide if and only if $t_N\bigl(\ulD_{b,\gamma}(V)\bigr)=t_H\bigl(\ulD_{b,\gamma}(V)\bigr)$ for all $V\in\Rep_{\BF_q\dpl z\dpr}G$. 
\end{lemma}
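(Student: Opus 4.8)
\emph{Proof plan.} I would first establish the two displayed equalities, and then deduce the final assertion by a duality argument between $\pi_1(G)_{\Gamma,\BQ}$ and the rational character group of $G_{\BF_q\dpl z\dpr}$. For $\Darst_*(\gamma^\#)=t_H\bigl(\ulD_{b,\gamma}(V)\bigr)$, the idea is functoriality of the component (Kottwitz) homomorphism: applying $\Darst$ to the bijections \eqref{EqPi0OfHG} for $G$ and for $\GL(V)$ yields a commutative square showing that $\Darst_*(\gamma^\#)$ is the image $(\Darst(\gamma))^\#$ of the Hodge--Pink $\GL(V)$-structure $\Darst(\gamma)\in\Gr_{\GL(V)}^{\bB_\dR}(L)$ in $\pi_1(\GL(V))_\Gamma=\pi_1(\GL(V))=\BZ$. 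Under the identification of the latter with $\BZ$ by the determinant (normalised to be compatible with \eqref{EqPi0OfHG}), this class is the relative degree of the lattice $\Darst(\gamma)\cdot\bigl(V\otimes_{\BF_q\dpl z\dpr}L\dbl z-\zeta\dbr\bigr)$ inside $V\otimes_{\BF_q\dpl z\dpr}L\dpl z-\zeta\dpr$, i.e.\ $-\ord_{z-\zeta}\bigl(\det\Darst(\gamma)\bigr)$; since $\Fq_D(V)=\Darst(\gamma)\cdot\bigl(V\otimes L\dbl z-\zeta\dbr\bigr)$, this equals $t_H\bigl(\ulD_{b,\gamma}(V)\bigr)$ by Definition~\ref{DefWA}.

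For $\det_V\circ\Darst\circ\nu_b=t_N\bigl(\ulD_{b,\gamma}(V)\bigr)$, the plan is to use the defining property of the Newton point from Remark~\ref{RemKottwitz}(a): $\Darst\circ\nu_b$ is the slope homomorphism of the $z$-isocrystal $\bigl(V\otimes_{\BF_q\dpl z\dpr}\BaseFld\dpl z\dpr,\,\Darst(b)\s\bigr)$, so that composing with $\det_V$ returns the total slope of this isocrystal, namely $\ord_z\bigl(\det\Darst(b)\bigr)$. Since $\ord_z$ is $\sigma$-invariant this coincides with $\ord_z\bigl(\det\Darst(\s b)\bigr)=t_N\bigl(\ulD_{b,\gamma}(V)\bigr)$ by Definition~\ref{DefWA}. (Alternatively this follows from Proposition~\ref{PropDeg} applied to $\ulCE\bigl(\ulD_{b,\gamma}(V)\bigr)$, whose degree is $-t_N$.)

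It remains to deduce the last statement from these two equalities. The key point is that $\Darst_*\colon\pi_1(G)_\Gamma\to\pi_1(\GL(V))_\Gamma=\BZ$ is nothing but pairing with the character $\chi_V:=\det_V\circ\Darst\in X^*\bigl(G_{\BF_q\dpl z\dpr}\bigr)$: on a cocharacter $\mu\in X_*(T)$ (for a maximal torus $T$ of $G_{\BF_q\dpl z\dpr}$) one computes $\Darst_*(\mu)=\sum_w\langle w,\mu\rangle=\langle\chi_V,\mu\rangle$, where $w$ runs over the weights of $V$ with multiplicities, and since $\chi_V$ is $\Gamma$-invariant this descends to $\pi_1(G)_\Gamma$. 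Similarly, by Remark~\ref{RemKottwitz}(a) the element $[b]^\#\in\pi_1(G)_{\Gamma,\BQ}$ is the image of $\nu_b$, whence $\det_V\circ\Darst\circ\nu_b=\langle\chi_V,[b]^\#\rangle$. Using the two equalities above, the condition ``$t_N\bigl(\ulD_{b,\gamma}(V)\bigr)=t_H\bigl(\ulD_{b,\gamma}(V)\bigr)$ for all $V$'' therefore translates into ``$\langle\chi_V,\gamma^\#\rangle=\langle\chi_V,[b]^\#\rangle$ for all $V$''. Since every $\chi\in X^*\bigl(G_{\BF_q\dpl z\dpr}\bigr)$ arises as $\chi_V$ for $V$ the corresponding one-dimensional representation, and since the pairing identifies $X^*\bigl(G_{\BF_q\dpl z\dpr}\bigr)_\BQ$ with the $\BQ$-linear dual of $\pi_1(G)_{\Gamma,\BQ}$ (the characters orthogonal to the coroot lattice are precisely the $W$-invariant ones, and $\Hom(M,\BQ)^\Gamma=\Hom(M_\Gamma,\BQ)$ for a finitely generated $\BZ[\Gamma]$-module $M$ after $\otimes_\BZ\BQ$), this holds for all $V$ if and only if $\gamma^\#=[b]^\#$ in $\pi_1(G)_{\Gamma,\BQ}$. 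I expect the most delicate point to be exactly this last one: matching $\Darst_*$ on component groups with pairing against $\chi_V$ (with the correct sign), keeping the various (co)invariance conventions straight ($\pi_1(G)_\Gamma$, the $\Gal(L^\sep/L)$-orbits appearing in \eqref{EqPi0OfHG}, and the passage to $\pi_1(G)_{\Gamma,\BQ}$), and verifying that the characters $\det_V\circ\Darst$ span $X^*(G_{\BF_q\dpl z\dpr})_\BQ$.
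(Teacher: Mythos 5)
Your proof is correct and follows essentially the same route as the paper: functoriality of the Kottwitz map for the $t_H$ identity, the slope-homomorphism characterisation of $\nu_b$ for $t_N$, and duality between $\pi_1(G)_{\Gamma,\BQ}$ and the rational character group for the final equivalence. The only cosmetic difference is that the paper routes the duality argument explicitly through $G_{\rm ab}$ and the isomorphism $\pi_1(G)_{\Gamma,\BQ}\cong X_*(G_{\rm ab})_{\Gamma,\BQ}$, whereas you work directly with $X^*\bigl(G_{\BF_q\dpl z\dpr}\bigr)_\BQ$ and the pairing $\langle\chi_V,\cdot\rangle$; since $X^*(G)=X^*(G_{\rm ab})$, these are the same argument.
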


\begin{proof}
Since $\s b=b(\s b) \s b^{-1}$, that is $\s b$ and $b$ are $\sigma$-conjugate via $b$, their Newton points $\nu_{\s b}$ and $\nu_b$ are conjugate via $b$. So it suffices to show that $\det_V\circ\Darst\circ\nu_{\s b}=t_N\bigl(\ulD_{b,\gamma}(V)\bigr)$. The latter follows from the construction of $\nu_{\s b}$ in \cite[\S\,4.2]{Kottwitz85}. The statement about $t_H$ follows from the fact that $\Darst_*(\gamma^\#)=\Darst(\gamma)^\#=-\ord_{z-\zeta}\bigl(\det\Darst(\gamma))$ under the identification $\pi_0(\Gr_{\GL(V)}^{\bB_\dR})=\pi_1(\GL(V))_{\Gamma}\cong\BZ$. If $[b]^{\#}=\gamma^{\#}$ holds in $\pi_1(G)_{\Gamma,\BQ}$ then $\Darst_*([b]^\#)=\Darst_*(\gamma^\#)$ in $\pi_1(\GL(V))_{\Gamma,\BQ}\cong\BQ$. Under the last isomorphism we have $\Darst_*([b]^\#)=(\Darst\circ\nu_b)^\#=\det_V\circ\Darst\circ\nu_b$. This proves one direction of the last assertion. For the other direction we use the isomorphism $\pi_1(G)_{\Gamma,\BQ}\cong  \pi_1(G_{{\rm ab }})_{\Gamma,\BQ}\cong  X_*(G_{{\rm ab }})_{\Gamma,\BQ}$ where $G_{{\rm ab }}$ denotes the maximal abelian quotient of $G$ (compare \cite[Theorem~1.15(ii)]{RapoportRichartz}). Now assume that $[b]^{\#}\neq\gamma^{\#}$. Then there is a homomorphism $\phi\colon\pi_1(G)_{{\Gamma},\BQ}\rightarrow \BQ$ of $\BQ$-vector spaces such that $\phi([b]^{\#})\neq\phi(\gamma^{\#})$. We have ${\rm Hom}(\pi_1(G)_{{\Gamma},\BQ}, \BQ)\cong X^*(G_{{\rm ab }})^{\Gamma}_{\BQ}$, thus a non-zero integral multiple of $\phi$ induces a morphism $\Darst\colon G\rightarrow G_{{\rm ab }}\rightarrow \BG_m$ over $\BF_q\dpl z\dpr^\sep$ that is ${\Gamma}$-invariant and therefore defined over $\BF_q\dpl z\dpr$. For this representation $\Darst$ we then have $t_N\bigl(\ulD_{b,\gamma}(V)\bigr)\neq t_H\bigl(\ulD_{b,\gamma}(V)\bigr)$.
\end{proof}

\begin{definition} \label{DefWAPair}
We say that the pair $(b,\gamma)\in LG(\BaseFld)\times\Gr_G^{\bB_\dR}(L)$ is (\emph{weakly}) \emph{admissible} if $[b]^{\#}=\gamma^{\#}$ in $\pi_1(G)_{\Gamma,\BQ}$ and one of the following equivalent conditions holds:
\begin{enumerate}
\item 
$\ulD_{b,\gamma}(V)$ is (weakly) admissible for every representation $V$ in $\Rep_{\BF_q\dpl z\dpr}G$,
\item\label{DefWAPair_b} 
$\ulD_{b,\gamma}(V)$ is (weakly) admissible for some faithful representation $V$ in $\Rep_{\BF_q\dpl z\dpr}G$.
\end{enumerate}
In addition, $(b,\gamma)$ is \emph{neutral} if $[b]^{\#}=\gamma^{\#}$ in $\pi_1(G)_\Gamma$ already without tensoring with $\BQ$.
\end{definition}

\begin{remark}
If $\ulD_{b,\gamma}(V)$ is (weakly) admissible for every representation $V$ in $\Rep_{\BF_q\dpl z\dpr}G$, then $[b]^{\#}=\gamma^{\#}$ automatically holds in $\pi_1(G)_{\Gamma,\BQ}$ by Lemma \ref{LemmaKottwitzCond}.

In the analogous situation in mixed characteristic, the condition $[b]^{\#}=\gamma^{\#}$ also follows from \ref{DefWAPair_b}, due to the fact that in that case every $W$ as in the beginning of the following proof is even a direct summand of $U$.
\end{remark}
\begin{proof}[Proof of the equivalence in Definition~\ref{DefWAPair}]
Clearly (a) implies (b). For the converse fix a faithful representation $V$. Then every $\BF_q\dpl z\dpr$-rational representation $W$ of $G_{\BF_q\dpl z\dpr}$ is a subquotient of $U:=\bigoplus_{i=1}^r V^{\otimes l_i}\otimes (V\dual)^{\otimes m_i}$ for suitable $r$, $l_i$ and $m_i$. If $\ulD_{b,\gamma}(V)$ is weakly admissible then this also holds for $\ulD_{b,\gamma}(U)$ by \cite[Theorem~2.2.5]{HartlPSp}. Likewise, if $\ulD_{b,\gamma}(V)$ is admissible we use the compatibility of the functor $V\mapsto\ulCF_{b,\gamma}(V)$ with direct sums, tensor products and duals to compute
\begin{eqnarray*}
\ulCF_{b,\gamma}(U)\otimes_{L\ancon[s]}\ol L\ancon[s] & \cong & \bigoplus_{i=1}^r \ulCF_{b,\gamma}(V)^{\otimes l_i}\otimes (\ulCF_{b,\gamma}(V)\dual)^{\otimes m_i}\otimes_{L\ancon[s]}\ol L\ancon[s] \\   
& \cong & \bigoplus_{i=1}^r (\ulCF_{0,1}{}^{\oplus\dim V})^{\otimes l_i}\otimes ((\ulCF_{0,1}{}^{\oplus\dim V})\dual)^{\otimes m_i}\otimes_{L\ancon[s]}\ol L\ancon[s] \\   
& \cong & \ulCF_{0,1}{}^{\oplus\dim U}\otimes_{L\ancon[s]}\ol L\ancon[s]\,.
\end{eqnarray*}
So if $\ulD_{b,\gamma}(V)$ is admissible also $\ulD_{b,\gamma}(U)$ is. Therefore it suffices to show that (weak) admissibility is preserved under passage to sub-representations and quotient representations.

By Lemma~\ref{LemmaKottwitzCond} the condition $[b]^{\#}=\gamma^{\#}$ in $\pi_1(G)_{\Gamma,\BQ}$ implies $t_N\bigl(\ulD_{b,\gamma}(W)\bigr)\,=\,t_H\bigl(\ulD_{b,\gamma}(W)\bigr)$ for all representations $W$. Now let $U$ be a representation such that $\ulD_{b,\gamma}(U)$ is weakly admissible. Then the equivalent conditions from Definition~\ref{DefWA}\ref{DefWA_C} show that for every sub-representation or quotient representation $W$ of $U$ also $\ulD_{b,\gamma}(W)$ is weakly admissible.

If $\ulD_{b,\gamma}(U)$ is actually admissible then $\ulCF_{b,\gamma}(U)\otimes_{L\ancon[s]}\ol L\ancon[s]\cong\ulCF_{0,1}{}^{\oplus\dim U}$. If $W\subset U$ is a sub-representation then the $\sigma$-sub-bundle $\ulCF_{b,\gamma}(W)\subset\ulCF_{b,\gamma}(U)$ satisfies $\ulCF_{b,\gamma}(W)\otimes_{L\ancon[s]}\ol L\ancon[s]\cong\bigoplus_i\ulCF_{d_i,n_i}$ for some $d_i,n_i$ by Proposition~\ref{PropHP}. By Proposition~\ref{Prop0.7} all $d_i\leq0$. Since
\[
\sum_i d_i\es=\es\deg\ulCF_{b,\gamma}(W)\es=\es t_H\bigl(\ulD_{b,\gamma}(W)\bigr)- t_N\bigl(\ulD_{b,\gamma}(W)\bigr)\es=\es0
\]
by Proposition~\ref{PropDeg}, all $d_i$ must be zero and $\ulCF_{b,\gamma}(W)$ is admissible.
Dually if $W$ is a quotient representation of $U$, the $\sigma$-quotient-bundle $\ulCF_{b,\gamma}(W)$ of $\ulCF_{b,\gamma}(U)$ satisfies $\ulCF_{b,\gamma}(W)\otimes_{L\ancon[s]}\ol L\ancon[s]\cong\bigoplus_i\ulCF_{d_i,n_i}$ for some $d_i,n_i$ with $d_i\ge0$ by Proposition~\ref{Prop0.7}. Again $\deg\ulCF_{b,\gamma}(W)=0$ implies $d_i=0$ and $\ulCF_{b,\gamma}(W)$ is admissible.
\end{proof}

\begin{remark}\label{RemAcceptable}
Let $b\in LG(\BaseFld)$. Let $L_0$ be a finite field extension of $\BF_q\dpl\zeta\dpr$ for which $G_{L_0}:=G\times_{\BF_q\dbl z\dbr,z\mapsto\zeta}L_0$ is split. Let $T$ be a maximal split torus of $G_{L_0}$ that contains the image of the Newton point $\nu_b\colon \BD_{\BaseFld\dpl z\dpr}\to G_{\BaseFld\dpl z\dpr}$; see Remark~\ref{RemKottwitz}(a). We may view $\nu_b$ as an element of $X_*(T)_\BQ:=X_*(T)\otimes_\BZ \BQ$. Let $L$ be a complete valued field extension of the completion of the maximal unramified extension $\breve L_0$ of $L_0$ and let $\gamma\in\Gr_G^{\bB_\dR}(L)$. By the Cartan decomposition there is a unique dominant cocharacter $\mu_{\gamma}\in X_*(T)$ called the \emph{Hodge point of $\gamma$} such that 
\[
\gamma\;\in\; G(L\dbl z-\zeta\dbr)\cdot\mu_\gamma(z-\zeta)\cdot G(L\dbl z-\zeta\dbr)/G(L\dbl z-\zeta\dbr)\;\subset\;\Gr_G^{\bB_\dR}(L).
\]
If $(b,\gamma)$ is weakly admissible then  $\nu_b\preceq\mu_{\gamma}$ (see \cite[Theorem~9.5.10]{DOR}, which is for the arithmetic context, but gives a proof that can directly be translated to our situation), in other words $([b],\{\mu_\gamma\})$ is acceptable in the sense of \cite[Definition~2.5]{RV}. The converse of this is not true. However, if $\mu\in X_*(T)$ is dominant with $\nu_b\preceq\mu$, then one can show that there exists a cocharacter $\Int_g\circ\mu\colon\BG_{m,L}\to G_L$ with $g\in G(L)$ for a finite extension $L\supset\breve L_0$, that induces a weakly admissible Hodge-Pink filtration on $\ulD_b(V)$ for all $V$. Indeed, this can be shown in the same way as the arithmetic counterpart, compare \cite[Theorem~9.5.10]{DOR}. Then by Remark~\ref{RemDIsTensorFunctor}(a), $\ulD_{b,\gamma}(V)$ is weakly admissible (and even admissible) for every Hodge-Pink $G$-structure $\gamma\in\Gr_G^{\bB_\dR}(L)$ that induces $\Int_g\circ\mu$, like for example $\gamma=g\cdot\mu(z-\zeta)$. For more details and references in the arithmetic context compare also the discussion in \cite[Section~2.2 and Proposition~3.1]{RV}.
\end{remark}

\medskip

We next want to define period spaces in the bounded situation. Let $\hat{Z}=[(R,\hat Z_R)]$ be a bound as in Definition~\ref{DefBDLocal} with reflex ring $R_{\hat Z}=\kappa\dbl\xi\dbr$ and set $E:=E_{\hat Z}=\kappa\dpl\xi\dpr$ and $\breve E:=\BaseFld\dpl\xi\dpr$. By Proposition~\ref{PropBound}\ref{PropBound_D} the associated strictly $R[\tfrac{1}{\zeta}]$-analytic spaces $\hat{Z}_R^\an$ arise by base change to $R[\tfrac{1}{\zeta}]$ from a strictly $E_{\hat Z}$-analytic space $\hat Z^\an$ associated with a projective scheme $\hat{Z}_E$ over $\Spec E_{\hat Z}$, which is a closed subscheme of the affine Grassmannian $\Gr_{G,E}^{\bB_\dR}:=\Gr_G^{\bB_\dR}\times_{\BF_q\dpl\zeta\dpr}\Spec E_{\hat Z}$. 

\begin{definition}\label{DefPeriodSp}
We call $\CH_{G,\hat{Z}}:=\hat{Z}_E$ the \emph{space of Hodge-Pink $G$-structures bounded by $\hat{Z}$} and set $\breve\CH_{G,\hat{Z}}:=\CH_{G,\hat{Z}}\times_{E_{\hat Z}}\Spec\breve E$. Let $\ul\BG_0=\bigl((L^+G)_\BaseFld,b\s\bigr)$ be a local $G$-shtuka over $\BaseFld$. We define the \emph{period spaces of} (\!\emph{weakly}) \emph{admissible Hodge-Pink $G$-structures on $\ul\BG_0$ bounded by $\hat{Z}$} as
\begin{eqnarray*}
\breve\CH_{G,\hat{Z},b}^{wa} & := & \bigl\{\,\gamma\in\breve\CH_{G,\hat{Z}}^\an\colon\text{ the pair $(b,\gamma)$ is weakly admissible }\bigr\}\,,\\[2mm]
\breve\CH_{G,\hat{Z},b}^{a} & := & \bigl\{\,\gamma\in\breve\CH_{G,\hat{Z}}^\an\colon\text{ the pair $(b,\gamma)$ is admissible }\bigr\}\quad\text{and}\\[2mm]
\breve\CH_{G,\hat{Z},b}^{na} & := & \bigl\{\,\gamma\in\breve\CH_{G,\hat{Z}}^\an\colon\text{ the pair $(b,\gamma)$ is admissible and neutral }\bigr\}\,.
\end{eqnarray*}
$\breve\CH_{G,\hat{Z},b}^{na}$ equals the intersection of $\breve\CH_{G,\hat{Z},b}^{a}$ with the union of those connected components of $\breve\CH_{G,\hat Z}$ that map to $[b]^\#\in\pi_1(G)_\Gamma$ under the map $\pi_0(\breve\CH_{G,\hat Z})\to\pi_0(\Gr_G^{\bB_\dR})\onto\pi_1(G)_\Gamma$ induced by \eqref{EqPi0OfHG}. In particular, $\breve\CH_{G,\hat{Z},b}^{na}$ is a union of connected components of $\breve\CH_{G,\hat{Z},b}^{a}$. The period spaces only depend on $b$ and on the generic fibers $G_{\BF_q\dpl z\dpr}$ and $\hat{Z}_E$ of $G$ and $\hat{Z}$.
\end{definition}

\begin{remark} \label{RemJActsOnPeriodSp}
If $\breve E\subset L$ then the homomorphism $\BaseFld\dpl z\dpr\to L\dbl z-\zeta\dbr,\,z\mapsto z=\zeta+(z-\zeta)$ induces a homomorphism $LG(\BaseFld)=G\bigl(\BaseFld\dpl z\dpr\bigr)\to G\bigl(L\dbl z-\zeta\dbr\bigr)$. Thus if $b'=g\,b\,\s(g^{-1})$ for some $g\in LG(\BaseFld)$ one can check that $\gamma\mapsto \s(g)\cdot\gamma=:\gamma'$ maps $\breve\CH_{G,\hat{Z},b}^{wa}$ isomorphically onto $\breve\CH_{G,\hat{Z},b'}^{wa}$ (and likewise for $\breve\CH_{G,\hat{Z},b'}^{a}$ and $\breve\CH_{G,\hat{Z},b'}^{na}$), because $\s(g)$ maps $\breve\CH_{G,\hat Z}^\an$ to itself by Lemma \ref{LemBoundConjInv}, and induces isomorphisms $\ulD_{b'\!,\gamma'}(V)\cong\ulD_{b,\gamma}(V)$ and $\ulCF_{b'\!,\gamma'}(V)\cong\ulCF_{b,\gamma}(V)$. In particular $\breve\CH_{G,\hat{Z},b}^{wa}$, $\breve\CH_{G,\hat{Z},b}^{a}$ and $\breve\CH_{G,\hat{Z},b}^{na}$ are invariant under the group $J_b\bigl(\BF_q\dpl z\dpr\bigr)\;=\;\QIsog_\BaseFld\bigl((L^+G)_\BaseFld,b\s\bigr)$ from \eqref{EqJ}.
\end{remark}
\begin{proposition}\label{PropAorWA}
The space $\breve\CH_{G,\hat{Z},b}^{a}$ is contained in $\breve\CH_{G,\hat{Z},b}^{wa}$ with $\breve\CH_{G,\hat{Z},b}^{a}(L)=\breve\CH_{G,\hat{Z},b}^{wa}(L)$ for all complete valued field extensions $L/\breve E$ satisfying the following condition: \enspace
Let $\ol L$ be the completion of an algebraic closure of $L$, let $\bar\ell\subset\ol L$ be a subfield isomorphic to the residue field of $\ol L$ under the residue map $\CO_{\ol L}\onto\CO_{\ol L}/\Fm_{\ol L}$, and let $\wt L$ be the closure of the compositum $\bar\ell L$ inside $\ol L$. (In particular, if the residue field of $L$ is perfect, then $\wt L$ is the completion of the maximal unramified extension of $L$.) The condition is that $\wt L$ does not contain an element $a$ with $0<|a|<1$ such that all the $q$-power roots of $a$ also lie in $\wt L$. 
\end{proposition}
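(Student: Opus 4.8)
The inclusion $\breve\CH_{G,\hat Z,b}^a\subseteq\breve\CH_{G,\hat Z,b}^{wa}$, and with it the inclusion ``$\subseteq$'' of $L$-rational points, is immediate from Corollary~\ref{CorA=>WA}. Both loci are defined pointwise on $\breve\CH_{G,\hat Z}^\an$: for a point $\gamma$ one passes to the complete residue field $L'$ at $\gamma$, regards $\gamma$ as an element of $\Gr_G^{\bB_\dR}(L')$, and tests admissibility, resp.\ weak admissibility, of $(b,\gamma)$ in the sense of Definition~\ref{DefWAPair}. If $(b,\gamma)$ is admissible then $[b]^\#=\gamma^\#$ in $\pi_1(G)_{\Gamma,\BQ}$ and every $\ulD_{b,\gamma}(V)$, $V\in\Rep_{\BF_q\dpl z\dpr}G$, is admissible, hence weakly admissible by Corollary~\ref{CorA=>WA}; so $(b,\gamma)$ is weakly admissible and $\gamma\in\breve\CH_{G,\hat Z,b}^{wa}$.

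For the reverse inclusion on $L$-rational points, fix $L$ as in the proposition and $\gamma\in\breve\CH_{G,\hat Z,b}^{wa}(L)$. The condition $[b]^\#=\gamma^\#$ in $\pi_1(G)_{\Gamma,\BQ}$ is part of this hypothesis, and by the equivalence in Definition~\ref{DefWAPair} both (weak) admissibility of the pair $(b,\gamma)$ can be read off from one faithful representation $V$ of $G_{\BF_q\dpl z\dpr}$ (one exists by \cite[Proposition~1.3]{PR2}). So it suffices to prove: \emph{if $L$ satisfies the stated condition and the $z$-isocrystal with Hodge--Pink structure $\ulD:=\ulD_{b,\gamma}(V)$ is weakly admissible, then $\ulD$ is admissible.} This is the function field analogue of the theorem of Colmez and Fontaine; \cite[Theorem~2.5.3]{HartlPSp} gives it for $L/\BaseFld\dpl\zeta\dpr$ finite, and the plan is to verify that its proof applies to every $L$ satisfying the displayed condition, that condition being exactly the hypothesis the proof requires.

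Concretely I would argue with the $\sigma$-bundle $\ulCF:=\ulCF(\ulD)$ over $L$ from Definition~\ref{DefSigmaModuleOfD}. By Proposition~\ref{PropDeg} together with the equality $t_H(\ulD)=t_N(\ulD)$ contained in weak admissibility one has $\deg\ulCF=0$, hence by Proposition~\ref{PropHP} $\ulCF\otimes_{L\ancon[s]}\ol L\ancon[s]\cong\bigoplus_i\ulCF_{d_i,n_i}$ with $\sum_i d_i=0$, and $\ulD$ is admissible exactly when all $(d_i,n_i)=(0,1)$. If this fails the Harder--Narasimhan/slope filtration of $\ulCF\otimes\ol L\ancon[s]$ is nontrivial, giving a canonical destabilizing step — either a sub-$\sigma$-bundle of strictly positive degree or a quotient $\sigma$-bundle of strictly negative degree. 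The key point, and the one place the hypothesis on $\wt L$ is used, is that this slope filtration descends to $L\ancon[s]$: the condition that $\wt L$ contains no $a$ with $0<|a|<1$ all of whose $q$-power roots lie in $\wt L$ rules out nontrivial ``isotrivial'' $\sigma$-bundles (those trivial over $\ol L$ but not over $L$), which is exactly the obstruction to descending the slope decomposition — a descent that is genuinely problematic for imperfect $L$, since $\ol L/L$ need not be Galois. Once the slope filtration of $\ulCF(\ulD)$ is available over $L$, one shows, following \cite{HartlPSp} and using that $\ulD\mapsto\bigl(\ulCE(\ulD),\ulCF(\ulD)\bigr)$ is an exact tensor functor while the modification producing $\ulCF(\ulD)$ from $\ulCE(\ulD)$ is concentrated at $z=\zeta^{q^i}$ for $i\in\BN_0$, that its first step is of the form $\ulCF(\ulD')$ for a strict subobject $\ulD'=(D',\,\tau_D|_{\s D'},\,\Fq_D\cap D'\otimes_{\BaseFld\dpl z\dpr}L\dpl z-\zeta\dpr)$ as in \eqref{EqUlD'} (or dually a quotient object). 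Then Proposition~\ref{PropDeg} gives $t_H(\ulD')-t_N(\ulD')=\deg\ulCF(\ulD')>0$, contradicting weak admissibility by Definition~\ref{DefWA}\ref{DefWA_C}. Hence all slopes vanish, $\ulD$ is admissible, and $\gamma\in\breve\CH_{G,\hat Z,b}^a(L)$.

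I expect the main obstacle to be precisely this descent of the slope filtration from $\ol L\ancon[s]$ to $L\ancon[s]$. It is the only step that can fail in general — for $L$ algebraically closed it does fail, in accordance with the remark before Definition~\ref{DefSigmaModule} that weak admissibility then does not imply admissibility — and the arithmetic condition on $\wt L$ is tailored to guarantee it. Everything else (the reduction to a faithful representation, the identification of the slope filtration of $\ulCF(\ulD)$ with subobjects of the shape \eqref{EqUlD'}, and the resulting slope inequality) runs as in \cite{HartlPSp}.
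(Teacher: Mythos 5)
Your first paragraph (inclusion via Corollary~\ref{CorA=>WA}) and your reduction to a single faithful representation via Definition~\ref{DefWAPair}\ref{DefWAPair_b} match the paper's argument. The substantive difference is in the rest: the paper's proof of the equality is a one-line citation, ``The equality $\breve\CH_{G,\hat{Z},b}^{a}(L)=\breve\CH_{G,\hat{Z},b}^{wa}(L)$ for the mentioned fields was proved in \cite[Theorem~2.5.3]{HartlPSp},'' because that theorem is already stated and proved exactly under the hypothesis on $\wt L$ appearing in the proposition (compare the pointer in Remark~\ref{RemAorWA}(a) to \cite[Condition~(2.3) on page~1294]{HartlPSp}). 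You have read the remark preceding Definition~\ref{DefSigmaModule} — ``If $L$ is a \emph{finite} field extension ... then weakly admissible implies admissible by \cite[Theorem~2.5.3]{HartlPSp}'' — as saying the cited theorem is only available for finite $L$ and then proposed to extend its proof; that sentence is merely recording a prominent special case, and no extension is needed. Your sketch of the extension — $\deg\ulCF=0$, a putative nontrivial slope filtration over $\ol L\ancon[s]$, descent to $L\ancon[s]$ enabled by the condition on $\wt L$, and a contradiction with weak admissibility via Proposition~\ref{PropDeg} once the first step is identified with some $\ulCF(\ulD')$ — is a reasonable outline of what the cited proof does, and your diagnosis of the role of the condition (ruling out the isotrivial $\sigma$-bundles that obstruct descent, which do occur when $L$ is algebraically closed) is correct in spirit. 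But it is carrying out work that the paper delegates entirely to the citation, and the hardest step in your outline (that the first HN step of $\ulCF(\ulD)$ really arises from a strict subobject $\ulD'$ of the shape \eqref{EqUlD'}) is the crux of \cite[Theorem~2.5.3]{HartlPSp} and is only gestured at here. So: correct route, same key references, but an unnecessary detour caused by a misreading of the scope of the cited theorem.
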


\begin{proof}
The inclusion $\breve\CH_{G,\hat{Z},b}^{a}\subset\breve\CH_{G,\hat{Z},b}^{wa}$ follows from Corollary~\ref{CorA=>WA}. The equality $\breve\CH_{G,\hat{Z},b}^{a}(L)=\breve\CH_{G,\hat{Z},b}^{wa}(L)$ for the mentioned fields was proved in \cite[Theorem~2.5.3]{HartlPSp}.
\end{proof}

\begin{remark}\label{RemAorWA}
(a) The condition of the proposition, and hence $\breve\CH_{G,\hat{Z},b}^{a}(L)=\breve\CH_{G,\hat{Z},b}^{wa}(L)$ holds, if the value group of $L$ does not contain a non-zero element that is arbitrarily often divisible by $q$. This is due to the fact that the value groups of $L$ and $\wt L$ coincide. In particular, this is the case if $L$ is a finite field extension of $\breve E$, or more generally if $L$ is discretely valued, or even if the value group of $L$ is finitely generated. See \cite[Condition~(2.3) on page 1294]{HartlPSp} for further discussion of this condition.

\smallskip\noindent
(b) If $L$ violates the condition, for example if $L$ is algebraically closed and complete, it can happen that $\breve\CH_{G,\hat{Z},b}^{a}(L)\subsetneq\breve\CH_{G,\hat{Z},b}^{wa}(L)$. Examples in the case $G=\GL_r$ were given in \cite[Example~3.3.2]{HartlPSp}.
\end{remark}

\begin{theorem}\label{ThmWAOpen}
The period space $\breve\CH_{G,\hat{Z},b}^{wa}$ and the admissible locus $\breve\CH_{G,\hat{Z},b}^a$ are open paracompact strictly $\breve E$-analytic subspaces of $\breve\CH_{G,\hat{Z}}^\an$. The intersections of any connected component of $\breve\CH_{G,\hat{Z}}^\an$ with $\breve\CH_{G,\hat{Z},b}^{wa}$ and $\breve\CH_{G,\hat{Z},b}^a$ are arcwise connected. In the terminology of Remarks~\ref{DefBDLocal'}\ref{DefBDLocal_A6} and \ref{RemAcceptable} the spaces $\breve\CH_{G,\hat{Z},b}^{wa}$ and $\breve\CH_{G,\hat{Z},b}^a$ intersect after base change to $\breve L_0$ precisely those connected components of $\Gr_{G,\breve L_0}^{\bB_\dR}$ whose image in $\pi_1(G)=\pi_0(\Gr_{G,\breve L_0}^{\bB_\dR})$ (using \eqref{EqPi0OfHG}) is of the form $\mu^{\#}\in\pi_1(G)$ for a $\mu\in N_\an$ with $\nu_b\preceq\mu$.
\end{theorem}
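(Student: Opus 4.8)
The plan is to reduce the two analytic assertions to the case $G=\GL_r$ treated in \cite{HartlPSp}, to read off the list of connected components from Remark~\ref{RemAcceptable}, and to obtain arcwise connectedness by deleting a countable family of ``thin'' Zariski-closed subsets from each component. First I would fix, as in Proposition~\ref{PropBound}\ref{PropBound_D}, a faithful representation $\Darst\colon G\into\GL_r=\GL(V)$ over $\BF_q\dbl z\dbr$ with quasi-affine quotient; by Proposition~\ref{PropBound}\ref{PropBound_B} and \ref{PropBound_D} the induced map $\Darst_*\colon\breve\CH_{G,\hat Z}^\an\to\Gr_{\GL_r}^{\bB_\dR,\an}\wh\otimes_{\BF_q\dpl\zeta\dpr}\breve E$ is a closed immersion. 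By Definition~\ref{DefWAPair}\ref{DefWAPair_b} a point $\gamma$ with values in a complete valued field $L\supseteq\breve E$ is (weakly) admissible if and only if its image in $\pi_0(\Gr_G^{\bB_\dR})$ maps to $[b]^\#$ in $\pi_1(G)_{\Gamma,\BQ}$ \emph{and} the $\GL_r$-Hodge--Pink structure $\Darst(\gamma)$ on the fixed $z$-isocrystal $\bigl(V\otimes_{\BF_q\dpl z\dpr}\BaseFld\dpl z\dpr,\,\Darst(\s b)\s\bigr)$ is (weakly) admissible in the sense of \cite{HartlPSp}. The first condition carves out a union of connected components, which is open and closed; the second is the preimage under $\Darst_*$ of the corresponding locus for $\GL_r$. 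Since in \cite[\S\,3]{HartlPSp} the weakly admissible and the admissible loci for $\GL_r$ are shown to be open paracompact strictly analytic subspaces, their preimage along the closed immersion $\Darst_*$ is a closed subspace of a paracompact space, hence paracompact, and intersecting with the open-and-closed union of components preserves openness, paracompactness and strictness. This gives the first assertion, with $\breve\CH_{G,\hat Z,b}^a\subseteq\breve\CH_{G,\hat Z,b}^{wa}$ coming from Corollary~\ref{CorA=>WA}.

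For the list of components I would pass to $\breve L_0$, over which $G$ splits, so that $\pi_0(\Gr_{G,\breve L_0}^{\bB_\dR})=\pi_1(G)$ by \eqref{EqPi0OfHG}. By condition~\ref{DefBDLocal_A11} of Definition~\ref{DefBDLocal} together with Lemma~\ref{LemBoundConjInv} the space $\hat Z^\an$ is a union of $G\bigl(\fdot\dbl z-\zeta\dbr\bigr)$-double cosets $\mu(z-\zeta)$, so in the terminology of Remark~\ref{DefBDLocal'}\ref{DefBDLocal_A6} every geometric point $\gamma$ of $\breve\CH_{G,\hat Z,\breve L_0}^\an$ has a Hodge point $\mu_\gamma\in N_\an\subset X_*(T)_\dom$ and lies in the component $\mu_\gamma^\#\in\pi_1(G)$. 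If $(b,\gamma)$ is weakly admissible then $\nu_b\preceq\mu_\gamma$ by the equal-characteristic translation of \cite[Theorem~9.5.10]{DOR} recalled in Remark~\ref{RemAcceptable}; hence $\breve\CH_{G,\hat Z,b}^{wa}\otimes_{\breve E}\breve L_0$, and a fortiori $\breve\CH_{G,\hat Z,b}^a\otimes_{\breve E}\breve L_0$, meet only components $\mu^\#$ with $\mu\in N_\an$, $\nu_b\preceq\mu$. Conversely, given such a $\mu$, Remark~\ref{RemAcceptable} provides $g\in G(L)$ over a finite extension $L\supseteq\breve L_0$ for which $\gamma:=g\cdot\mu(z-\zeta)\in\Gr^\mu(L)$ makes $(b,\gamma)$ admissible; since $\mu\in N_\an$ this $\gamma$ lies in $\breve\CH_{G,\hat Z}^\an$ and in the component $\mu^\#$. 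So $\breve\CH_{G,\hat Z,b}^a$, hence also $\breve\CH_{G,\hat Z,b}^{wa}$, meets after base change to $\breve L_0$ exactly the components of the asserted form.

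For arcwise connectedness, let $C$ be a connected component of $\breve\CH_{G,\hat Z}^\an$ meeting $\breve\CH_{G,\hat Z,b}^{wa}$ (otherwise the intersection is empty); on $C$ the condition $[b]^\#=\gamma^\#$ holds, so $t_H\bigl(\ulD_{b,\gamma}(V)\bigr)=t_N\bigl(\ulD_{b,\gamma}(V)\bigr)$ by Lemma~\ref{LemmaKottwitzCond}, and by Definition~\ref{DefWA}\ref{DefWA_C} the complement $C\setminus\breve\CH_{G,\hat Z,b}^{wa}$ is the union, over the $\tau_D$-stable subspaces $0\ne D'\subsetneq D:=V\otimes_{\BF_q\dpl z\dpr}\BaseFld\dpl z\dpr$, of the loci $Z_{D'}:=\{\gamma\in C\colon t_H(\ulD'_\gamma)>t_N(\ulD')\}$, where $\ulD'_\gamma$ carries the Hodge--Pink structure $\bigl(\Darst(\gamma)\cdot V\otimes L\dbl z-\zeta\dbr\bigr)\cap\bigl(D'\otimes L\dpl z-\zeta\dpr\bigr)$. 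Each $Z_{D'}$ is Zariski-closed in $C$ because $\gamma\mapsto t_H(\ulD'_\gamma)$ jumps up only on Zariski-closed subsets; since $\hat Z$ is a bound, $t_H(\ulD'_\gamma)$ is bounded above in terms of $\dim_{\BaseFld\dpl z\dpr}D'$ and $N_\an$ alone while $t_N(\ulD')$ is intrinsic, so only finitely many pairs $\bigl(\dim D',t_N(\ulD')\bigr)$ contribute a nonempty $Z_{D'}$, and as $\BaseFld$ is the algebraic closure of a finite field there are only countably many relevant $D'$. Using the sufficiency direction of Remark~\ref{RemAcceptable} and the fact that $\nu_b\preceq\mu$ forces $\nu_b\preceq\mu'$ for every $\mu'\in N_\an$ with $\mu'\preceq\mu$, one checks that $\breve\CH_{G,\hat Z,b}^{wa}$ is dense in $C^\an$ and no $Z_{D'}$ contains an irreducible component of $C$; deleting a countable family of proper Zariski-closed subsets from the analytification of a connected projective $\breve E$-variety leaves it connected (reduce to an irreducible component and argue as in \cite[Ch.~9]{DOR} and \cite[\S\,3]{HartlPSp}), and a connected paracompact strictly $\breve E$-analytic space is arcwise connected by \cite{Berkovich1}. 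The same scheme applies to $\breve\CH_{G,\hat Z,b}^a$: on $C$ one has $\deg\ulCF_{b,\gamma}(V)=0$ by Proposition~\ref{PropDeg} and Lemma~\ref{LemmaKottwitzCond}, and the non-admissible locus in $C$ is the (again countable) union of the Zariski-closed loci where the Harder--Narasimhan polygon of $\ulCF_{b,\gamma}(V)$ is non-trivial, none of which contains an irreducible component of $C$ since $C$ meets $\breve\CH_{G,\hat Z,b}^a$.

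The hard part is this last step. The two non-formal inputs are the semicontinuity statements — that $\gamma\mapsto t_H(\ulD'_\gamma)$, respectively the Newton/Harder--Narasimhan polygon of $\ulCF_{b,\gamma}(V)$, jumps only along Zariski-closed subsets — and the verification that on a component meeting the relevant locus these ``bad'' subsets are nowhere dense, which is exactly where Remark~\ref{RemAcceptable} and the combinatorics of the Schubert stratification by $N_\an$ are needed; both are available for $\GL_r$ from \cite{HartlPSp}, and the work above is the Tannakian reduction to that case together with the bookkeeping of connected components via $\pi_1(G)$.
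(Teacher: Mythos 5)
Your general strategy mirrors the paper's — reduce the analytic statements to $\GL_r$ via a faithful representation, use Remark~\ref{RemAcceptable} and \eqref{EqPi0OfHG} to read off the list of connected components, and deduce arcwise connectedness by the countable-union-of-Zariski-closed-subsets argument from \cite[Theorem~3.2.5]{HartlPSp}. The component-list step and the connectedness step are essentially the paper's proof.

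However, there is a genuine gap in the openness step. You assert that ``in \cite[\S\,3]{HartlPSp} the weakly admissible and the admissible loci for $\GL_r$ are shown to be open paracompact strictly analytic subspaces,'' and then pull back along $\Darst_*$. The results \cite[Theorems~3.2.2 and 3.2.4]{HartlPSp} are stated and proved only for the quasi-projective Schubert \emph{cell} $\breve\CC$, that is, the open dense stratum of the Schubert variety $\breve\CH_{\GL_r,2n\rho\dual}$. The image of $\Darst_*\colon\breve\CH_{G,\hat Z}^\an\to\breve\CH_{\GL_r,2n\rho\dual}^\an$ need not lie inside $\breve\CC^\an$: a bound $\hat Z$ in the sense of Definition~\ref{DefBDLocal} typically hits the closed Schubert strata, and the whole point of allowing the more general $N_\an$ rather than a single cocharacter is precisely to accommodate this. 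So your preimage argument only controls the intersection with $\Darst_*^{-1}(\breve\CC^\an)$, which can miss part of $\breve\CH_{G,\hat Z}^\an$. The paper's proof explicitly notes this and supplies a non-trivial extension of the HartlPSp argument to the full Schubert variety: instead of using the smooth quotient presentation $\breve\CC=\wt G/S$ available for the open cell, it passes to an \'etale covering $X\to\breve\CH_{\GL_r,2n\rho\dual}^\an$ on which the universal Hodge--Pink $G$-structure lifts to an element $h\in G(\CO_X\dpl z-\zeta\dpr)$ with controlled denominators, and then runs the continuity/openness argument with the quotient topology of $X\to\breve\CH_{\GL_r,2n\rho\dual}^\an$ in place of $\wt G^\an\to\breve\CC^\an$. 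Without this extension — or an alternative argument producing openness in the full Schubert variety — your proof of the first assertion is incomplete. (Your paracompactness remark is also more roundabout than needed: any open subspace of the compact $\breve E$-analytic space $\breve\CH_{G,\hat Z}^\an$ is automatically paracompact by \cite[Lemma~A.2.6]{HartlPSp}, so paracompactness comes for free once openness is established.)
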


\begin{proof}
Choose a faithful representation $\Darst\colon G\into\GL_r$ in $\Rep_{\BF_q\dpl z\dpr}G$ that factors through $\SL_r$, and let $n$ be an integer as in Proposition~\ref{PropBound}\ref{PropBound_A} for which $\Darst_*\colon\hat{Z}\to\wh\Flag_{\SL_r}$ factors through $\wh\Flag^{(n)}_{\SL_r}=\hat Z_{\GL_r,2n\rho\dual}$; see Examples~\ref{ExConstantG} and \ref{Ex-mu_dom}. By Proposition~\ref{PropBound} the $\breve E$-analytic space $\breve\CH_{G,\hat{Z}}^\an$ is a subspace of $\CH_{\GL_r,2n\rho\dual}^\an\wh\otimes_{\BF_q\dpl\zeta\dpr}\breve E$, where $\CH_{\GL_r,2n\rho\dual}^\an:=(\hat{Z}_{\GL_r,2n\rho\dual})^\an$. On the connected components of $\breve\CH_{G,\hat{Z}}^\an$ where $[b]^{\#}=\gamma^{\#}$ in $\pi_1(G)_{\Gamma,\BQ}$ we have by Definition~\ref{DefWAPair} 
\begin{eqnarray*}
\breve\CH_{G,\hat{Z},b}^{wa} & = & \breve\CH_{G,\hat{Z}}^\an\;\cap\;\breve\CH_{\GL_r,2n\rho\dual\!,\Darst(b)}^{wa}\wh\otimes_{\BaseFld\dpl\zeta\dpr}\breve E\qquad \text{and}\\[2mm]
\breve\CH_{G,\hat{Z},b}^a & = & \breve\CH_{G,\hat{Z}}^\an\;\cap\;\breve\CH_{\GL_r,2n\rho\dual\!,\Darst(b)}^a\wh\otimes_{\BaseFld\dpl\zeta\dpr}\breve E.
\end{eqnarray*}
The intersections of the other components with $\breve\CH_{G,\hat{Z},b}^{wa}$ and $\breve\CH_{G,\hat{Z},b}^a$ are empty.
Since every open subspace of the compact $\breve E$-analytic space $\breve\CH_{G,\hat{Z}}^\an$ is paracompact by \cite[Lemma~A.2.6]{HartlPSp}, it suffices to show that $\breve\CH_{\GL_r,2n\rho\dual\!,\Darst(b)}^{wa}$ and $\breve\CH_{\GL_r,2n\rho\dual\!,\Darst(b)}^a$ are open in $\breve\CH_{\GL_r,2n\rho\dual}^\an\wh\otimes_{\BaseFld\dpl\zeta\dpr}\breve E$. An analogous statement was proved in \cite[Theorems~3.2.2 and 3.2.4]{HartlPSp} for the quasi-projective Schubert cell 
\[
\breve\CC\;:=\;\GL_r\bigl(\fdot\dbl z-\zeta\dbr\bigr)\cdot (2n\rho\dual)(z-\zeta)\cdot\GL_r\bigl(\fdot\dbl z-\zeta\dbr\bigr)\big/\GL_r\bigl(\fdot\dbl z-\zeta\dbr\bigr)
\]
from \cite[Definition~3.1.6]{HartlPSp}, which is open and dense in the Schubert variety $\breve\CH_{\GL_r,2n\rho\dual}$. Let us explain how to modify that proof to obtain a proof of the assertion above. The Schubert cell is a homogeneous space $\breve\CC=\wt G/S$; see \cite[p.~1318]{HartlPSp}. The properties that were needed in the proofs of \cite[Theorems~3.2.2 and 3.2.4]{HartlPSp} were the following two. The morphism $\wt G^\an\to\breve\CC^\an$ is smooth, and therefore $\breve\CC^\an$ carries the quotient topology under the morphism $\wt G^\an\to\breve\CC^\an$. Secondly, the universal Hodge-Pink structure on $\breve\CC$ is given on $\wt G$ by a universal matrix $g$ in $\GL_r\bigl(\CO_{\wt G}\dbl z-\zeta\dbr/(z-\zeta)^{2n(r-1)}\bigr)$. 

For our purpose here we modify this as follows. Since $LG\to\Flag_G$ has local sections for the \'etale topology, there is an \'etale covering $X$ of $\breve\CH_{\GL_r,2n\rho\dual}^\an$ on which the universal Hodge-Pink $G$-structure is given by a universal element $h$ in $G\bigl(\CO_X\dpl z-\zeta\dpr\bigr)$ that satisfies $\Darst(h)\in M_r\bigl((z-\zeta)^{-n(r-1)}\CO_X\dbl z-\zeta\dbr\bigr)$. We replace $g$ by $(z-\zeta)^{n(r-1)}\Darst(h)\mod (z-\zeta)^{2n(r-1)}$ and use that $\breve\CH_{\GL_r,2n\rho\dual}^\an$ carries the quotient topology under the morphism $X\to\breve\CH_{\GL_r,2n\rho\dual}^\an$ by \cite[Corollary~3.7.4]{Berkovich2}. With these modifications the proofs of \cite[Theorems~3.2.2 and 3.2.4]{HartlPSp} carry over to our situation.

The connectedness of $\breve\CH_{G,\hat{Z},b}^{wa}$ and $\breve\CH_{G,\hat{Z},b}^a$ can be proved by the same arguments as in \cite[Theorem~3.2.5]{HartlPSp}.

It remains to compute which connected components of $\Gr_{G,\breve L_0}^{\bB_\dR}$ meet $\breve\CH_{G,\hat{Z},b}^a$ and $\breve\CH_{G,\hat{Z},b}^{wa}$. By Remark~\ref{RemAcceptable} for every point $\gamma\in\breve\CH_{G,\hat{Z},b}^{wa}\wh\otimes_{\breve E}\breve L_0$ the Hodge point $\mu_\gamma\in X_*(T)$ lies in $N_\an$ and $\nu_b\preceq\mu_\gamma$. In particular, $\gamma$ lies in the component with image $\mu_\gamma^\#\in\pi_1(G)$. Conversely, let $\mu\in N_\an$ with $\nu_b\preceq\mu$. Then Remark~\ref{RemAcceptable} implies that there is a point $\gamma\in\Gr_{G,\breve L_0}^{\bB_\dR}$ with Hodge point $\mu$ such that $(b,\gamma)$ is weakly admissible. By Remark~\ref{DefBDLocal'}\ref{DefBDLocal_A6} the point $\gamma$ lies in $\breve\CH_{G,\hat{Z},b}^a$ and $\breve\CH_{G,\hat{Z},b}^{wa}$, and moreover, it lies in the connected component of $\Gr_{G,\breve L_0}^{\bB_\dR}$ with image $\mu^\#$ in $\pi_0(\Gr_{G,\breve L_0}^{\bB_\dR})=\pi_1(G)$.
\end{proof}

\begin{remark}\label{RemFunctoriality2}
We keep the notation of Remark~\ref{RemFunctoriality1}. The morphism $\epsilon\colon G\to G'$ of parahoric group schemes over $\BF_q\dbl z\dbr$ induces a morphism $\epsilon\colon\Gr_{G}^{\bB_\dR}\to\Gr_{G'}^{\bB_\dR}$ of the affine Grassmannians from \eqref{EqH_G}. It maps Hodge-Pink $G$-structures $\gamma\in \Gr_{G}^{\bB_\dR}$ to Hodge-Pink $G'$-structures $\gamma':=\epsilon(\gamma)\in\Gr_{G'}^{\bB_\dR}$ and the corresponding element $\gamma^\#\in\pi_1(G)_\Gamma$ to $(\gamma')^\#=\epsilon(\gamma^\#)\in\pi_1(G')_\Gamma$.

The morphism $\epsilon$ also induces a tensor functor $\epsilon^*\colon\Rep_{\BF_q\dpl z\dpr}G'\to\Rep_{\BF_q\dpl z\dpr}G$, given by $(V'\!,\Darst')\mapsto\epsilon^*(V'\!,\Darst'):=(V'\!,\Darst'\circ\epsilon)$. It satisfies $\ulD_{b,\gamma}(\epsilon^*(V'\!,\Darst'))=\ulD_{b'\!,\gamma'}(V'\!,\Darst')$ for $b'=\epsilon(b)$. Therefore, $\ulCE_{b,\gamma}(\epsilon^*(V'\!,\Darst'))=\ulCE_{b'\!,\gamma'}(V'\!,\Darst')$ and $\ulCF_{b,\gamma}(\epsilon^*(V'\!,\Darst'))=\ulCF_{b'\!,\gamma'}(V'\!,\Darst')$. It follows that $(b,\gamma)$ is (weakly) admissible for $G$ if and only if $(b'\!,\gamma')$ is for $G'$. Moreover, if $(b,\gamma)$ is neutral, then also $(b'\!,\gamma')$ is neutral. In other words, if $\epsilon(\hat{Z})\subset\hat{Z}'$ then $\epsilon$ induces morphisms
\begin{equation}\label{EqEpsilon^*}
\epsilon\colon\breve\CH_{G,\hat{Z}}\;\longto\;\breve\CH_{G'\!,\hat{Z}'} \quad\text{and}\quad \epsilon\colon\breve\CH_{G,\hat{Z},b}^\bullet\;\longto\;\breve\CH_{G'\!,\hat{Z}'\!,b'}^\bullet,\quad \gamma\;\longmapsto\;\epsilon(\gamma)
\end{equation}
for $\bullet\in\{wa, a, na\}$. 
\end{remark}

\section{Local systems of $\BF_q\dpl z\dpr$-vector spaces} \label{SectLocSyst}
\setcounter{equation}{0}

\begin{definition}\label{DefForget}
For a ring $A$ we let $\FMod_A$ denote the category of finite locally free $A$-modules. If the ring $A$ is either $\BF_q\dbl z\dbr$ or $\BF_q\dpl z\dpr$ and $\Pi$ is a topological group we denote by $\Rep^\cont_A(\Pi)$ the category of continuous representations in finite free $A$-modules, and by
\begin{equation}\label{EqForget}
{\it forget}\colon\Rep^\cont_{A}\bigl(\pi_1^\et(X,\bar x)\bigr)\;\longto\;\FMod_A
\end{equation}
the forgetful fiber functor. Moreover, we let
\begin{equation}\label{EqOmega0}
\omega\open_A\colon\Rep_{A}G\;\longto\;\FMod_A
\end{equation}
be the forgetful fiber functor. We also write $\omega\open:=\omega\open_{\BF_q\dpl z\dpr}$. Then $\Aut^\otimes(\omega\open)=G_{\BF_q\dpl z\dpr}$ by \cite[Theorem~2.11]{DM82} and $\Aut^\otimes(\omega\open_{\BF_q\dbl z\dbr})=G$ by \cite[Corollary~5.20]{Wed}. 
\end{definition}

Let $X$ be a strictly $L$-analytic space, where $L$ is a field extensions of $\BF_q\dpl\zeta\dpr$ that is complete with respect to an absolute value $|\,.\,|\colon L\to\BR_{\ge0}$ extending the $\zeta$-adic absolute value on $\BF_q\dpl\zeta\dpr$. For any group or ring $A$ we denote by $\ul{A}$ the locally constant sheaf on the \'etale site $X_\et$ of $X$.

We recall de Jong's~\cite[\S\,2]{dJ95a} definition of the \'etale fundamental group of $X$. De Jong calls a morphism $f\colon Y\to X$ of $L$-analytic spaces an \emph{{\'e}tale covering space of $X$\/} if for every analytic point $x$ of $X$ there exists an open neighborhood $U\subset X$ such that $Y\times_XU$ is a disjoint union of $L$-analytic spaces $V_i$ each mapping finite \'etale to $U$. The {\'e}tale covering spaces of $X$ form a category $\ul{\Cov}_X^\et$. It contains the full subcategory $\ul{\Cov}_X^\alg$ of \emph{finite} \'etale covering spaces.

A geometric base point $\bar x$ of $X$ is a morphism $\bar x\colon\SpBerk(\ol L)\to X$ where we denote by $\SpBerk(\ol L)$ the Berkovich spectrum of an algebraically closed complete extension $\ol L$ of $L$. For a geometric base point $\bar x$ of $X$ define the fiber functors at $\bar x$ 
\begin{eqnarray}
F_{\bar x}^\et :=\!\!\!\! & F_{X,\bar x}^\et\colon & \!\ul{\Cov}_X^\et \;\longto \;\ul{\rm Sets}\;,\quad F_{\bar x}^\et\bigl(f\colon Y\to X\bigr) \;:=\;\{\,\bar{y}\colon\SpBerk(\ol L)\to Y \;\text{with} \; f\circ\bar{y} = \bar{x}\,\} \nonumber\\[2mm]
F_{\bar x}^\alg :=\!\!\!\! & F_{X,\bar x}^\alg\colon & \!\ul{\Cov}_X^\alg \;\longto \;\ul{\rm Sets}\;,\quad F_{\bar x}^\alg:=F_{\bar x}^\et|_{\ul{\Cov}_X^\alg}\;.\label{EqEtaleFiberFunctor}
\end{eqnarray}
The \emph{\'etale fundamental group} $\pi_1^\et(X,\bar{x})$ and the \emph{algebraic fundamental group} $\pi_1^\alg(X,\bar{x})$ of $X$ are the automorphism groups
\[ 
\pi_1^\et(X,\bar{x}) \; := \; \Aut(F_{\bar x}^\et)\qquad\text{and}\qquad\pi_1^\alg(X,\bar{x}) \; := \; \Aut(F_{\bar x}^\alg)\,. 
\]  
These fundamental groups classify (finite) \'etale covering spaces in the sense that $F_{\bar x}^\et$ induces an equivalence 
\begin{equation}\label{EqFundGp}
F_{\bar x}^\et\colon\;\{\text{disjoint unions of objects of }\ul{\Cov}_X^\et\} \;\longto\; \pi_1^\et(X,\bar{x})\text{\rm -\ul{Sets}}\,.
\end{equation}
Connected coverings correspond to $\pi_1^\et(X,\bar{x})$-orbits, and similarly for $F_{\bar x}^\alg$; see \cite[Theorem 2.10]{dJ95a}. Here $\pi_1^\et(X,\bar{x})$-\ul{Sets} (respectively $\pi_1^\alg(X,\bar{x})$-\ul{Sets}) is the category of discrete (respectively finite) sets endowed with a continuous left action of $\pi_1^\et(X,\bar{x})$ (respectively $\pi_1^\alg(X,\bar{x})$).

The natural continuous group homomorphism $\pi_1^\et(X,\bar{x}) \to\pi_1^\alg(X,\bar{x})$ has dense image. The \'etale fundamental group $\pi_1^\et(X,\bar{x})$ is Hausdorff and pro-discrete, $\pi_1^\alg(X,\bar{x})$ is pro-finite and every continuous homomorphism from $\pi_1^\et(X,\bar{x})$ to a pro-finite group factors through $\pi_1^\alg(X,\bar{x})$; see \cite[Lemma~2.7 and Theorem 2.10]{dJ95a}. In particular, $\Rep^\cont_{\BF_q\dbl z\dbr}\bigl(\pi_1^\et(X,\bar x)\bigr)=\Rep^\cont_{\BF_q\dbl z\dbr}\bigl(\pi_1^\alg(X,\bar x)\bigr)$, but this is not true for representations on $\BF_q\dpl z\dpr$-vector spaces.

For the following overview we follow \cite[Definition~A4.4]{HartlPSp}.
\begin{definition}\label{DefLocalSystem}
A \emph{local system of $\BF_q\dbl z\dbr$-lattices} on $X$ is a projective system $\CF=(\CF_n,i_n)$ of sheaves $\CF_n$ of $\BF_q\dbl z\dbr/(z^n)$-modules on $X_\et$, such that 
\begin{enumerate}
\item $\CF_n$ is \'etale locally a constant free $\ul{\BF_q\dbl z \dbr/(z^n)}$-module of finite rank 
\item $i_n\otimes\id\colon \CF_n\otimes_{\ul{\BF_q\dbl z \dbr/(z^n)}}\;\,\ul{\BF_q\dbl z \dbr/(z^{n-1})}\;\isoto\;\CF_{n-1}$ is an isomorphism of sheaves of $\ul{\BF_q\dbl z \dbr/(z^{n-1})}$-modules.
\end{enumerate}
The category $\BLoc_X$ of local systems of $\BF_q\dbl z\dbr$-lattices with the obvious morphisms is an additive $\BF_q\dbl z\dbr$-linear rigid tensor category.
If $\bar x$ is a geometric point of $X$
\[
\CF_{\bar x}\es:=\es\invlim(\CF_{n,\bar x},\,i_n)\,.
\]
is the \emph{stalk} $\CF_{\bar x}$ of $\CF$ at $\bar x$. It is a finite free $\BF_q\dbl z\dbr$-module. Starting from $\BF_q\dbl z\dbr$-lattices one defines local systems of $\BF_q\dpl z\dpr$-vector spaces and their stalks as in \cite[\S\,4]{dJ95a}, or \cite[Definition A4.4]{HartlPSp}.

Local systems of $\BF_q\dpl z\dpr$-vector spaces form a category $\PLoc_X$. It is an abelian $\BF_q\dpl z\dpr$-linear rigid tensor category. The theory of these local systems parallels the theory of local systems of $\BQ_\ell$-vector spaces developed in \cite{dJ95a}.
In particular there is the following description.
\end{definition}

\begin{proposition} \label{Prop2.13}
(Compare \cite[Corollary~4.2]{dJ95a}.) For any geometric point $\bar x$ of $X$ there is a natural $\BF_q\dbl z\dbr$-linear tensor functor
\[
\omega_{\bar x}\colon \BLoc_X\;\longto\; \Rep^\cont_{\BF_q\dbl z\dbr}\bigl(\pi_1^\alg(X,\bar x)\bigr) \; = \; \Rep^\cont_{\BF_q\dbl z\dbr}\bigl(\pi_1^\et(X,\bar x)\bigr)
\]
and a natural $\BF_q\dpl z\dpr$-linear tensor functor
\[
\omega_{\bar x}\colon \PLoc_X\;\longto\; \Rep^\cont_{\BF_q\dpl z\dpr}\bigl(\pi_1^\et(X,\bar x)\bigr)
\]
that assigns to a local system $\CF\in\BLoc_X$, respectively $\CV\in\PLoc_X$ the representation of $\pi_1^\alg(X,\bar x)$ on $\CF_{\bar x}$, respectively of $\pi_1^\et(X,\bar x)$ on $\CV_{\bar x}$. These tensor functors are equivalences if $X$ is connected.
\end{proposition}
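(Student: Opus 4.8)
The plan is to imitate de Jong's proof of \cite[Corollary~4.2]{dJ95a}, which treats the $\BQ_\ell$-case, and transport it verbatim to the coefficient rings $\BF_q\dbl z\dbr$ and $\BF_q\dpl z\dpr$. The only features of $\BZ_\ell$ and $\BQ_\ell$ that enter are that $\BZ_\ell$ is a complete discrete valuation ring with finite residue rings $\BZ_\ell/(\ell^n)$ and that $\BQ_\ell$ is its fraction field; all of these are shared by $\BF_q\dbl z\dbr$ (complete DVR, finite quotients $\BF_q\dbl z\dbr/(z^n)$ since $\BF_q$ is finite) and $\BF_q\dpl z\dpr$. So the proof is essentially bookkeeping, and the main work is to set up the functor $\omega_{\bar x}$ correctly and to check that it is an equivalence on a connected $X$.

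First I would construct $\omega_{\bar x}$ on $\BLoc_X$. Given a local system of $\BF_q\dbl z\dbr$-lattices $\CF=(\CF_n,i_n)$, each $\CF_n$ is an \'etale locally constant sheaf of free $\ul{\BF_q\dbl z\dbr/(z^n)}$-modules of finite rank; since $\BF_q\dbl z\dbr/(z^n)$ is a finite ring, $\CF_n$ corresponds via \eqref{EqFundGp} (restricted to finite \'etale covering spaces, i.e.\ via $F_{\bar x}^\alg$) to a finite set with continuous $\pi_1^\alg(X,\bar x)$-action, namely the stalk $\CF_{n,\bar x}$ together with its monodromy action. The transition maps $i_n$ make $\bigl(\CF_{n,\bar x},i_n\bigr)$ into a projective system, and $\CF_{\bar x}=\invlim\CF_{n,\bar x}$ is a finite free $\BF_q\dbl z\dbr$-module carrying a continuous action of $\pi_1^\alg(X,\bar x)=\pi_1^\et(X,\bar x)$ (the two agree on $\BF_q\dbl z\dbr$-representations as recalled before the definition, since such a representation factors through a pro-finite quotient). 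Compatibility with tensor products, duals, and direct sums is immediate from the corresponding compatibilities of $F_{\bar x}^\alg$ and the projective limit. For $\PLoc_X$ one then defines $\omega_{\bar x}$ on a local system of $\BF_q\dpl z\dpr$-vector spaces $\CV$ by choosing, \'etale locally, a lattice $\CF\in\BLoc_X$ with $\CF\otimes_{\BF_q\dbl z\dbr}\BF_q\dpl z\dpr\cong\CV$, and setting $\omega_{\bar x}(\CV):=\CF_{\bar x}\otimes_{\BF_q\dbl z\dbr}\BF_q\dpl z\dpr$ with its $\pi_1^\et(X,\bar x)$-action; one checks this is independent of the chosen lattice and functorial, exactly as in \cite[\S\,4]{dJ95a}.

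Next I would prove that these functors are equivalences when $X$ is connected. For $\BLoc_X$: full faithfulness follows because a morphism of local systems of $\BF_q\dbl z\dbr$-lattices is a compatible system of morphisms of the $\CF_n$, each of which by the finite \'etale theory \eqref{EqFundGp} is the same datum as a $\pi_1^\alg(X,\bar x)$-equivariant map on stalks, and taking $\invlim$ identifies $\Hom(\CF,\CG)$ with $\Hom_{\pi_1^\alg}(\CF_{\bar x},\CG_{\bar x})$; here connectedness is used to ensure a morphism is determined by its effect on one stalk. For essential surjectivity, given a continuous representation $\Pi:=\pi_1^\alg(X,\bar x)\to\GL_r(\BF_q\dbl z\dbr)$, reduce modulo $(z^n)$ to get a continuous action on $(\BF_q\dbl z\dbr/(z^n))^r$, which factors through a finite quotient of $\Pi$; by \eqref{EqFundGp} this is realized by an object $\CF_n$ of $\ul{\Cov}_X^\alg$ (a locally constant sheaf of free $\ul{\BF_q\dbl z\dbr/(z^n)}$-modules), the reduction maps supply the $i_n$, and $\CF:=(\CF_n,i_n)$ maps to the given representation. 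The $\PLoc_X$ case is then deduced from the $\BLoc_X$ case by inverting $z$, using that every continuous $\pi_1^\et(X,\bar x)$-representation on a finite $\BF_q\dpl z\dpr$-vector space contains a stable $\BF_q\dbl z\dbr$-lattice (by compactness/openness of the stabilizer of a chosen lattice) and that every local system of $\BF_q\dpl z\dpr$-vector spaces arises from a local system of lattices \'etale locally; here one must be slightly careful because, unlike in the lattice case, a $\BF_q\dpl z\dpr$-representation need not factor through $\pi_1^\alg$, so one genuinely works with $\pi_1^\et$ throughout, as de Jong does.

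The main obstacle I anticipate is not conceptual but one of verification: making sure that every step in de Jong's argument uses only properties of $\BZ_\ell$, $\BQ_\ell$ that are valid for $\BF_q\dbl z\dbr$, $\BF_q\dpl z\dpr$ — in particular the finiteness of the residue rings $\BF_q\dbl z\dbr/(z^n)$ (which needs $\BF_q$ finite, and is exactly what guarantees mod-$z^n$ representations factor through finite quotients and hence correspond to finite \'etale covering spaces), and the fact that de Jong's \'etale fundamental group $\pi_1^\et(X,\bar x)$ for Berkovich spaces has the same formal properties (pro-discreteness, dense image in $\pi_1^\alg$, the classification \eqref{EqFundGp}) that were recalled above. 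Once these are in hand, the proof is the evident adaptation, so in the write-up I would simply cite \cite[\S\,4 and Corollary~4.2]{dJ95a} and \cite[Appendix~A4]{HartlPSp} and indicate that the arguments carry over word for word with $\BZ_\ell,\BQ_\ell$ replaced by $\BF_q\dbl z\dbr,\BF_q\dpl z\dpr$.
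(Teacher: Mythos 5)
Your proposal is correct and takes essentially the same route as the paper: both defer to de Jong's \cite[Corollary~4.2]{dJ95a}, observe that the $\BF_q\dpl z\dpr$ case is transported verbatim, and handle the (easier) $\BF_q\dbl z\dbr$ case via the correspondence between the finite truncations $\CF_n$ and finite \'etale covering spaces, yielding the $\pi_1^\alg(X,\bar x)$-action on $\CF_{\bar x}=\invlim\CF_{n,\bar x}$. The paper's own argument is just a two-line sketch of exactly this; your write-up fills in more of the bookkeeping (full faithfulness, essential surjectivity, the lattice argument for $\PLoc_X$) but introduces nothing different.
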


\begin{proof}
De Jong \cite[Corollary~4.2]{dJ95a} proved this for $\BQ_\ell$ and the statement for $\BF_q\dpl z\dpr$ is proved verbatim. We indicate the (easier) argument for $\BF_q\dbl z\dbr$. Let $\CF=(\CF_n,i_n)\in\BLoc_X$. Then the $\CF_n$ are represented by finite \'etale covering spaces $Y_n$ of $X$. This yields an action of $\pi_1^\alg(X,\bar x)$ on $F^\alg_{\bar x}(Y_n)=\CF_{n,\bar x}$ and on $\CF_{\bar x}:=\invlim(\CF_{n,\bar x},\,i_n)$.
\end{proof}

Let $A=\BF_q\dbl z\dbr$ or $A=\BF_q\dpl z\dpr$ and recall the forgetful fiber functor $\omega\open_A\colon\Rep_{A}G\to\FMod_A$ from Definition~\ref{DefForget}. If $A=\BF_q\dpl z\dpr$ we let in addition $\wt\omega\colon\Rep_{\BF_q\dpl z\dpr}G\to\FMod_{\BF_q\dpl z\dpr}$ be another fiber functor, and we let $\wt G:=\Aut^\otimes(\wt\omega)$ be the group scheme over $\BF_q\dpl z\dpr$ of tensor automorphisms of $\wt\omega$; see \cite[Theorem~2.11]{DM82}. Then $\Isom^\otimes(\omega\open,\wt\omega)$ is a left $G$-torsor and a right $\wt G$-torsor over $\BF_q\dpl z\dpr$ and corresponds to a cohomology class $cl(\omega\open,\wt\omega)\in\Koh^1(\BF_q\dpl z\dpr,G)$ by \cite[Theorem 3.2]{DM82}. The group $\wt G$ is isomorphic to the inner form of $G$ defined by the image of $cl(\omega\open,\wt\omega)$ in $\Koh^1(\BF_q\dpl z\dpr,G^\ad)$, where $G\onto G^\ad$ is the adjoint quotient. If $A=\BF_q\dbl z\dbr$ we set $\wt\omega:=\omega\open_{\BF_q\dbl z\dbr}$ and $\wt G:=G$. This is no restriction because Lang's Theorem~\cite[Theorem~2]{Lang}, stating $\Koh^1(\BF_q\dbl z\dbr,G)=\Koh^1(\BF_q,G)=(1)$, implies that all fiber functors $\Rep_{\BF_q\dbl z\dbr}G\to\FMod_{\BF_q\dbl z\dbr}$ are isomorphic to $\omega\open_{\BF_q\dbl z\dbr}$.

Let $X$ be connected, let $\bar x$ be a geometric base point of $X$, and recall the forgetful fiber functor ${\it forget}\colon\Rep^\cont_{A}\bigl(\pi_1^\et(X,\bar x)\bigr)\to\FMod_A$ from \eqref{EqForget}.

\begin{corollary}\label{Cor1.7}
In the situation above, consider the set $\CT_A$ of isomorphism classes of pairs $(\ulCV,\beta)$ where $\ulCV\colon\Rep_{A}G\to\ALoc_X$ is a tensor functor and $\beta\in\Isom^\otimes(\wt\omega,\,{\it forget}\circ\omega_{\bar x}\circ\ulCV)(A)$ is an isomorphism of tensor functors. There is a canonical bijection between $\CT_A$ and the set of continuous group homomorphisms
\[
\pi_1^\et(X,\bar  x)\;\longto\;\wt G(A)\,.
\]
\end{corollary}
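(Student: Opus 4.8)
The plan is to construct the bijection in both directions using the Tannakian formalism together with the equivalence of Proposition~\ref{Prop2.13}. First I would dispose of the easy direction. Given a continuous homomorphism $\rho\colon\pi_1^\et(X,\bar x)\to\wt G(A)$, composing with any representation $V\in\Rep_A G$ gives, via the action of $\wt G$ on $\wt\omega(V)$, a continuous representation of $\pi_1^\et(X,\bar x)$ on the finite free $A$-module $\wt\omega(V)$; this is functorial and tensor-compatible, hence defines a tensor functor $\Rep_A G\to\Rep^\cont_A(\pi_1^\et(X,\bar x))$. By Proposition~\ref{Prop2.13} (using that $X$ is connected, so $\omega_{\bar x}$ is an equivalence) this corresponds to a tensor functor $\ulCV\colon\Rep_A G\to\ALoc_X$ with ${\it forget}\circ\omega_{\bar x}\circ\ulCV$ canonically isomorphic (as a tensor functor) to $\wt\omega$; that canonical isomorphism is the required $\beta$. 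One checks this assignment is compatible with the notion of isomorphism of pairs, so we obtain a well-defined map from the set of homomorphisms to $\CT_A$.

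For the reverse direction, start from a pair $(\ulCV,\beta)\in\CT_A$. Applying ${\it forget}\circ\omega_{\bar x}$ to $\ulCV$ yields a fiber functor $\Rep_A G\to\FMod_A$; via $\beta$ it is identified with $\wt\omega$, so $\Aut^\otimes$ of it is canonically $\wt G$. On the other hand, each local system $\ulCV(V)$ carries, by Proposition~\ref{Prop2.13}, a continuous action of $\pi_1^\et(X,\bar x)$ on its stalk $\omega_{\bar x}(\ulCV(V))$, functorially and compatibly with tensor products; these actions are exactly tensor automorphisms of the fiber functor ${\it forget}\circ\omega_{\bar x}\circ\ulCV$. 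Transporting along $\beta$, we get a continuous homomorphism $\pi_1^\et(X,\bar x)\to\Aut^\otimes(\wt\omega)(A)=\wt G(A)$. Here one must be slightly careful about the meaning of "$\Aut^\otimes$ as a group scheme valued in $A$-points": for $A=\BF_q\dbl z\dbr$ or $A=\BF_q\dpl z\dpr$ this is exactly $\wt G(A)$ by the cited results \cite[Theorem~2.11]{DM82}, \cite[Corollary~5.20]{Wed}, since every natural tensor automorphism of a fiber functor into $\FMod_A$ is automatically $A$-linear and thus gives an $A$-point of the affine group scheme $\wt G$. One then verifies that the two constructions are mutually inverse: this is a formal consequence of the fact that Proposition~\ref{Prop2.13} is an equivalence of tensor categories and of the Tannakian reconstruction $\wt G=\Aut^\otimes(\wt\omega)$, i.e. that passing from a representation of $\pi_1^\et$ on stalks back to the homomorphism into $\wt G(A)$ recovers the original $\rho$, and conversely.

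The main technical points to be careful about — and the place I expect the real work to sit — are the continuity and the coherence-of-base-point issues. Concretely: (i) one must check that the action of $\pi_1^\et(X,\bar x)$ assembled from the individual $\omega_{\bar x}(\ulCV(V))$ is jointly continuous as a map into $\wt G(A)$ with its natural topology (for $A=\BF_q\dpl z\dpr$ this requires knowing that the representation is "defined over a lattice locally", i.e. that the full functor $\ulCV$ restricted to a tensor-generating subcategory lands, after choosing a lattice, in $\BLoc_X$ — this is where the hypotheses on $\wt G$ being a connected reductive group and the structure of $\PLoc_X$ from \cite{dJ95a} enter); and (ii) one must confirm that the identifications $\Aut^\otimes(\omega^\circ_A)=G$ and $\Aut^\otimes(\wt\omega)=\wt G$ are compatible with $\beta$ in the way implicitly used, so that the torsor $\Isom^\otimes(\omega^\circ,\wt\omega)$ plays no role beyond fixing which inner form $\wt G$ is. Everything else is a diagram chase; the substance is the Tannakian dictionary of Proposition~\ref{Prop2.13} combined with \cite[Theorem~2.11]{DM82}, and the only genuine obstacle is pinning down continuity uniformly across all $V\in\Rep_A G$, which one handles by reducing to a single faithful representation and its tensor constructions, exactly as in the proof of the equivalence in Definition~\ref{DefWAPair}.
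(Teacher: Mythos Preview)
Your approach is essentially the same as the paper's: construct the two maps using Proposition~\ref{Prop2.13} and the Tannakian identification $\wt G=\Aut^\otimes(\wt\omega)$, then check they are mutually inverse. The paper does exactly this (in the opposite order of directions, which is immaterial).

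The one place where you diverge is that you significantly overestimate the difficulty of the continuity check in the direction $(\ulCV,\beta)\mapsto f$. You anticipate needing lattice arguments, the reductivity of $\wt G$, and special features of $\PLoc_X$ from \cite{dJ95a}. None of this is needed. The paper's argument is a one-liner: for every $V\in\Rep_A G$ the composite $\pi_1^\et(X,\bar x)\to\wt G(A)\to\GL\bigl(\wt\omega(V)\bigr)(A)$ is continuous, because by Proposition~\ref{Prop2.13} the stalk $\omega_{\bar x}\bigl(\ulCV(V)\bigr)$ is a \emph{continuous} representation. Since $\wt G$ is affine and embeds into some $\GL\bigl(\wt\omega(V)\bigr)$, continuity of $f$ itself follows immediately. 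No uniformity-across-$V$ issue arises, and no hypothesis on $\wt G$ beyond affineness is used. Your point (ii) about the role of the torsor $\Isom^\otimes(\omega\open,\wt\omega)$ is correct and requires no further argument.
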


\begin{proof}
Let $(\ulCV,\beta)\in\CT_A$. By Proposition~\ref{Prop2.13} any element of $\pi_1^\et(X,\bar  x)$ yields a tensor automorphism of the fiber functor ${\it forget}\circ\omega_{\bar x}\circ\ulCV$. By $\beta$ it is transported to a tensor automorphism of $\wt\omega$, that is an element of $\wt G(A)$. This defines a group homomorphism $f:=f_{(\ulCV,\beta)}\colon \pi_1^\et(X,\bar  x)\to\wt G(A)$. Since for all $V\in\Rep_{A}G$ the induced homomorphism \mbox{$\pi_1^\et(X,\bar  x)\to\wt G(A)\to\GL\bigl(\wt\omega(V)\bigr)(A)$} is continuous, also $f$ is continuous.

Conversely let $f\colon \pi_1^\et(X,\bar  x)\to\wt G(A)$ be a continuous group homomorphism. Then we define a tensor functor $\Rep_{A}G \to\Rep^\cont_{A}\bigl(\pi_1^\et(X,\bar  x)\bigr)$ by sending a representation $V$ in $\Rep_{A}G$ to the representation $\bigl(\wt\omega(V),\rho'_V\bigr)$ given by
\[
\rho'_V\colon\;\pi_1^\et(X,\bar x)\;\longto\;\GL\bigl(\wt\omega(V)\bigr)(A)\;,\quad g\;\mapsto\;\wt\omega(V)(f(g))\,.
\]
Here $\wt\omega(V)(f(g))$ is the automorphism by which $f(g)\in\wt G(A)=\Aut^\otimes(\wt\omega)(A)$ acts on the $A$-module $\wt\omega(V)$. Note that $\rho'_V$ is continuous because $\wt G(A)\to\GL\bigl(\wt\omega(V)\bigr)(A)$ is continuous. Let $\ulCV_f(V)\in\ALoc_X$ be the local system on $X$ induced from $\rho'_V$ via Proposition~\ref{Prop2.13}. This defines a tensor functor $\ulCV_f\colon \Rep_{A}G \to\ALoc_{X}$ for which ${\it forget}\circ\omega_{\bar x}\circ\ulCV$ is identified with $\wt\omega$. We let $\beta_f\in\Isom^\otimes(\wt\omega,{\it forget}\circ\omega_{\bar x}\circ\ulCV)(A)$ be this identification.

Clearly the assignments $(\ulCV,\beta)\mapsto f_{(\ulCV,\beta)}$ and $f\mapsto(\ulCV_f,\beta_f)$ satisfy $f=f_{(\ulCV_f,\beta_f)}$. Conversely, if $(\ulCV,\beta)\in\CT_A$ and $f=f_{(\ulCV,\beta)}$ then $\beta$ provides an isomorphism $(\ulCV_f,\beta_f)\isoto(\ulCV,\beta)$, and so $(\ulCV,\beta)$ and $(\ulCV_f,\beta_f)$ coincide in $\CT_A$.
\end{proof}

\begin{remark}\label{Rem1.8}
(a) In the situation of Corollary~\ref{Cor1.7} any tensor functor $\ulCV\colon \Rep_{\BF_q\dpl z\dpr}G \to\PLoc_{X}$ induces a tower of \'etale covering spaces of $X$ with Hecke action, as described in \cite[Remark 2.7(a)]{HartlRZ}. To recall the construction, assume that $\Isom^\otimes(\wt\omega,{\it forget}\circ\omega_{\bar x}\circ\ulCV)\bigl(\BF_q\dpl z\dpr\bigr)$ is non-empty. The whole construction does not depend on the choice of the base point $\bar x$ by \cite[Theorem~2.9]{dJ95a} and hence also applies if $X$ is not connected. Let $\wt K\subset\wt G\bigl(\BF_q\dpl z\dpr\bigr)$ be a compact open subgroup. For a strictly $L$-analytic space $S$ over $X$ and a lift of $\bar x$ to a geometric base point $\bar s$ of $S$, a \emph{rational $\wt K$-level structure} on $\ulCV$ over $S$ is a residue class modulo $\wt K$ of $\BF_q\dpl z\dpr$-rational tensor isomorphisms 
\[
(\beta\colon \wt\omega\isoto{\it forget}\circ\omega_{\bar x}\circ\ulCV)\;\mod \wt K\es\in\es\Isom^\otimes(\wt\omega,{\it forget}\circ\omega_{\bar x}\circ\ulCV)\bigl(\BF_q\dpl z\dpr\bigr)/\wt K
\]
such that the class $\beta\wt K$ is invariant under the \'etale fundamental group $\pi_1^\et(S,\bar s)$. Here $\Isom^\otimes(\wt\omega,{\it forget}\circ\omega_{\bar x}\circ\ulCV)\bigl(\BF_q\dpl z\dpr\bigr)$ carries an action of $\wt K$ through the action of $\wt G$ on $\wt\omega$, and an action of $\pi_1^\et(S,\bar s)$ through its action on $\omega_{\bar x}\circ\ulCV$ via the map $\pi_1^\et(S,\bar s)\to\pi_1^\et(X,\bar x)$ and Proposition~\ref{Prop2.13}.

Let $\HeckeTower_{\wt K}$ be the \'etale covering space of $X$ corresponding to the discrete $\pi_1^\et(X,\bar x)$-set $\Isom^\otimes(\wt\omega,{\it forget}\circ\omega_{\bar x}\circ\ulCV)\bigl(\BF_q\dpl z\dpr\bigr)/\wt K$. Then $\HeckeTower_{\wt K}$ represents the rational $\wt K$-level structures on $\ulCV$. Any choice of a fixed tensor isomorphism $\beta_0\in\Isom^\otimes(\wt\omega,{\it forget}\circ\omega_{\bar x}\circ\ulCV)\bigl(\BF_q\dpl z\dpr\bigr)$ associates with $\ulCV$ a representation $\pi_1^\et(X,\bar x)\to\wt G\bigl(\BF_q\dpl z\dpr\bigr)$ as in Corollary~\ref{Cor1.7}, and induces an identification of the $\pi_1^\et(X,\bar x)$-sets $\Isom^\otimes(\wt\omega,{\it forget}\circ\omega_{\bar x}\circ\ulCV)\bigl(\BF_q\dpl z\dpr\bigr)/\wt K$ and $\wt G\bigl(\BF_q\dpl z\dpr\bigr)/\wt K$. Moreover, if $\wt K'\subset\wt K\subset\wt G\bigl(\BF_q\dpl z\dpr\bigr)$ are compact open subgroups there is a natural projection morphism $\breve\pi_{\wt K,\wt K'}\colon\HeckeTower_{\wt K'}\to\HeckeTower_{\wt K}$, $\beta\wt K'\mapsto\beta\wt K$.

On the tower $(\HeckeTower_{\wt K})_{\wt K\subset \wt G(\BF_q\dpl z\dpr)}$ the group $\wt G\bigl(\BF_q\dpl z\dpr\bigr)$ acts via Hecke correspondences: Let $g\in\wt G\bigl(\BF_q\dpl z\dpr\bigr)$ and let $\wt K\subset\wt G\bigl(\BF_q\dpl z\dpr\bigr)$ be a compact open subgroup. Then $g$ induces an isomorphism 
\begin{equation}\label{EqHeckeCor1}
\iota(g)_{\wt K}\colon\;\HeckeTower_{\wt K}\;\isoto\;\HeckeTower_{g^{-1}\wt Kg}\,,\quad \beta\wt K\;\longmapsto\;\beta \wt K g=\beta g(g^{-1}\wt Kg)\,.
\end{equation}

\medskip\noindent
(b) Assume moreover that a group $J$ acts on $X$ and let $\ulCV\colon \Rep_{\BF_q\dpl z\dpr}G \to\PLoc_{X}$ be a tensor functor that carries a \emph{$J$-linearization}, that is, for every $j\in J$ an isomorphism $\phi_j\colon j^*\ulCV\isoto\ulCV$ of tensor functors (where $j^*\ulCV$ is the pullback of $\ulCV$ under the morphism $j\colon X\to X$), satisfying a cocycle condition. Then the tower of \'etale covering spaces $\HeckeTower_{\wt K}$ inherits an action of $J$ over $X$ as in \cite[Remark 2.7(b)]{HartlRZ}.
\end{remark}

\medskip

We now apply these considerations to the period spaces of Hodge-Pink $G$-structures.

\begin{remark}\label{RemGlobalSigmaModule}
The construction of the $\sigma$-bundle $\ulCF_{b,\gamma}(V)$ from Definition~\ref{DefSigmaModuleOfD} not only works over a field extension $L$ of $\breve E$ but more generally over the entire $\breve E$-analytic space $\breve\CH_{G,\hat{Z}}^\an$ from Definition~\ref{DefPeriodSp}. There it produces a $\sigma$-bundle $\ulCF_b(V)$ over $\breve\CH_{G,\hat{Z}}^\an$ whose fiber at each point $\gamma$ is $\ulCF_{b,\gamma}(V)$; see \cite[\S\,2.4]{HartlPSp}. 

The restriction of $\ulCF_b(V)$ to $\breve\CH_{G,\hat{Z},b}^a$ induces by Theorem~\cite[Theorem~3.4.1]{HartlPSp} a canonical local system $\ulCV_b(V)$ of $\BF_q\dpl z\dpr$-vector spaces on $\breve\CH_{G,\hat{Z},b}^a$. It can be described as follows. On every connected component $Y$ of $\breve\CH_{G,\hat{Z},b}^a$ we choose a geometric base point $\bar\gamma\colon\SpBerk(\ol L)\to Y$. We consider the pullback $\bar\gamma^*\ulCF_b(V)$ and its $\tau$-invariants
\[
\bar\gamma^*\ulCF_b(V)^\tau\;:=\;\{\, f\in\bar\gamma^*\ulCF_b(V)\colon (\bar\gamma^*\tau_{\CF_b})(\s f)=f\,\}\,.
\]
Since $\bar\gamma^*\ulCF_b(V)\cong\ulCF_{0,1}{}^{\oplus\dim V}$ the $\tau$-invariants $\bar\gamma^*\ulCF_b(V)^\tau$ form an $\BF_q\dpl z\dpr$-vector space of dimension equal to $\dim V$ that is equipped with a continuous action of the \'etale fundamental group $\pi_1^\et(Y,\bar\gamma)$. This defines the local system $\ulCV_b(V)$ of $\BF_q\dpl z\dpr$-vector spaces on $Y$ under the correspondence of Proposition~\ref{Prop2.13}. It satisfies $\ulCV_b(V)_{\bar\gamma}=\bar\gamma^*\ulCF_b(V)^\tau$ and $\ulCV_b(V)_{\bar\gamma}\otimes_{\BF_q\dpl z\dpr}\ol L\ancon[s]=\bar\gamma^*\ulCF_b(V)^\tau\otimes_{\BF_q\dpl z\dpr}\ol L\ancon[s]=\bar\gamma^*\ulCF_b(V)=\ulCF_{b,\bar\gamma}(V)$.

If $\gamma\in\breve\CH_{G,\hat{Z},b}^a$ is the image of the geometric point $\bar\gamma$ and $L$ is the residue field of $\gamma$ then the fiber $\ulCV_b(V)_\gamma$ of $\ulCV_b(V)$ at $\gamma$ is a continuous representation of $\Gal(L^\sep\!/L)$ on the finite-dimensional $\BF_q\dpl z\dpr$-vector space $\bar\gamma^*\ulCF_b(V)$. By \cite[Remark~3.6.17]{HartlKim} and with the notation from \eqref{EqCrisRing}, this Galois-representation can be described as
\[
\ulCV_b(V)_\gamma \; = \; \bigl(\ulD\otimes_{k\dpl z\dpr}\CO_{\ol L}\dbl z,z^{-1}\}[\tplusminus^{-1}]\bigr)^\tau\,\cap\,\Fq_D\otimes_{L\dbl z-\zeta\dbr}\ol L\dbl z-\zeta\dbr\
\]
for $\ulD_{b,\gamma}(V)=\ulD=(D,\tau_D,\Fq_D)$. Here we use the notation from \eqref{EqCrisRing}, and in addition we let $\tplusminus_i\in \CO_{\BF_q\dpl\zeta\dpr^\sep}$ be solutions of the equations $\tplusminus_0^{q-1}=-\zeta$ and $\tplusminus_i^q+\zeta \tplusminus_i=\tplusminus_{i-1}$, and set $\tplus:=\sum_{i=0}^\infty \tplusminus_iz^i\in\CO_{\BF_q\dpl\zeta\dpr^\sep}\dbl z\dbr$ and $\tplusminus:=\tplus\tminus\in\CO_{\ol L}\dbl z,z^{-1}\}$, where $\tminus$ was defined in \eqref{EqTMinus}. In particular, if $L$ is discretely valued then $\ulCV_b(V)_\gamma$ is the (equal characteristic) crystalline Galois representation associated with the (weakly) admissible $z$-isocrystal with Hodge-Pink structure $\ulD_{b,\gamma}(V)$ in the sense of \cite[Definition~3.4.21 and Remark~3.6.17]{HartlKim}.
\end{remark}

\begin{theorem}\label{ThmLocSyst}
Let $b\in LG(\BaseFld)$ and let $\hat{Z}=[(R,\hat Z_R)]$ be a bound as in Definition~\ref{DefBDLocal} with reflex ring $R_{\hat Z}=\kappa\dbl\xi\dbr$. Then the assignments $V\mapsto \ulCF_b(V)\mapsto \bar\gamma^*\ulCF_b(V)^\tau\mapsto\ulCV_b(V)$ define an $\BF_q\dpl z\dpr$-linear tensor functor $\ulCV_b$ from $\Rep_{\BF_q\dpl z\dpr}G$ to the category $\BF_q\dpl z\dpr\mbox{-}\ul{\rm Loc}_{\breve\CH_{G,\hat{Z},b}^{a}}$ of local systems of $\BF_q\dpl z\dpr$-vector spaces on $\breve\CH_{G,\hat{Z},b}^{a}$. There is a canonical $J_b\bigl(\BF_q\dpl z\dpr\bigr)$-linearization on $\ulCV_b$.
\end{theorem}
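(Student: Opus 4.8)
The plan is to realise $\ulCV_b$ as a composition of tensor functors — from the family of $\sigma$-bundles on the period space to the associated local systems of $\BF_q\dpl z\dpr$-vector spaces — and then to transport the manifest $J_b$-equivariance of the period space through this composition. First I would upgrade the construction recalled in Remark~\ref{RemGlobalSigmaModule} to a tensor functor. For a fixed Hodge--Pink $G$-structure $\gamma$ the assignment $V\mapsto\ulD_{b,\gamma}(V)$ is an $\BF_q\dpl z\dpr$-linear tensor functor by Remark~\ref{RemDIsTensorFunctor}(b), it is compatible with base change in $\gamma$ by inspection of Definition~\ref{DefWA}, and composing with the tensor functor $\ulD\mapsto\ulCF(\ulD)$ of Definition~\ref{DefSigmaModuleOfD} gives $V\mapsto\ulCF_{b,\gamma}(V)$. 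Carrying out this construction in families over $\breve\CH_{G,\hat Z}^\an$, as in Remark~\ref{RemGlobalSigmaModule} (following \cite[\S\,2.4]{HartlPSp}), and observing that all the comparison isomorphisms are natural, one obtains an $\BF_q\dpl z\dpr$-linear tensor functor $V\mapsto\ulCF_b(V)$ from $\Rep_{\BF_q\dpl z\dpr}G$ to the category of $\sigma$-bundles over $\breve\CH_{G,\hat Z}^\an$, with fibre $\ulCF_{b,\gamma}(V)$ at each point $\gamma$ and sending the unit object to $\ul\CO(0)$.

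Next I would restrict to $\breve\CH_{G,\hat Z,b}^{a}$, over which, by the very definition of the admissible locus (Definition~\ref{DefPeriodSp} with Definition~\ref{DefSigmaModuleOfD}), each $\ulCF_b(V)$ is fibrewise trivial: $\ulCF_b(V)\otimes_{L\ancon[s]}\ol L\ancon[s]\cong(\ulCF_{0,1})^{\oplus\dim V}$ at every geometric point. The essential input is then \cite[Theorem~3.4.1]{HartlPSp}: on each connected component $Y$ of $\breve\CH_{G,\hat Z,b}^{a}$ with a chosen geometric base point $\bar\gamma$, the $\tau$-invariants $\bar\gamma^*\ulCF_b(V)^\tau$ form an $\BF_q\dpl z\dpr$-vector space of dimension $\dim V$ with a continuous $\pi_1^\et(Y,\bar\gamma)$-action, hence by Proposition~\ref{Prop2.13} a local system $\ulCV_b(V)$ of $\BF_q\dpl z\dpr$-vector spaces on $Y$, independent of $\bar\gamma$ by \cite[Theorem~2.9]{dJ95a} (cf. Remark~\ref{Rem1.8}(a)). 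I would then check that the passage $\ulCF\mapsto\ulCV$ is an $\BF_q\dpl z\dpr$-linear tensor functor on fibrewise-trivial $\sigma$-bundles over $Y$: it is additive, $\BF_q\dpl z\dpr$-linear and exact because taking $\tau$-invariants is so on the trivial $\sigma$-bundle, it sends $\ul\CO(0)$ to the constant sheaf $\ul{\BF_q\dpl z\dpr}$, and it is monoidal because the tensor-comparison isomorphisms $\ulCF_b(V\otimes V')\cong\ulCF_b(V)\otimes\ulCF_b(V')$ commute with the $\tau$-structures and therefore, after a common trivialisation, induce the canonical identification on $\tau$-invariants; equivalently, the category of fibrewise-trivial $\sigma$-bundles over $Y$ is tensor equivalent to $\Rep^\cont_{\BF_q\dpl z\dpr}\bigl(\pi_1^\et(Y,\bar\gamma)\bigr)$. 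Composing, $\ulCV_b\colon\Rep_{\BF_q\dpl z\dpr}G\to\PLoc_{\breve\CH_{G,\hat Z,b}^{a}}$ is the required $\BF_q\dpl z\dpr$-linear tensor functor.

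For the $J_b\bigl(\BF_q\dpl z\dpr\bigr)$-linearisation, take $g\in J_b\bigl(\BF_q\dpl z\dpr\bigr)=\QIsog_\BaseFld\bigl((L^+G)_\BaseFld,b\s\bigr)$, so that $g^{-1}\,b\,\s(g)=b$ and hence $b=g\,b\,\s(g^{-1})$. By Remark~\ref{RemJActsOnPeriodSp} the map $j_g\colon\gamma\mapsto\s(g)\cdot\gamma$ is an automorphism of $\breve\CH_{G,\hat Z,b}^{a}$ endowed with canonical isomorphisms $\ulCF_{b,\s(g)\gamma}(V)\cong\ulCF_{b,\gamma}(V)$ — induced by $\Darst(\s(g))$ acting on the Hodge--Pink lattice, together with the identity on the underlying $z$-isocrystal, using that $g\in J_b$ — which are functorial in $V$ and compatible with tensor products, and which, being given by a uniform recipe, globalise to an isomorphism of tensor functors $\phi_g\colon j_g^*\ulCF_b\isoto\ulCF_b$ over $\breve\CH_{G,\hat Z,b}^{a}$. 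The cocycle relation $\phi_{gh}=\phi_h\circ j_h^*(\phi_g)$ follows because $\s$ is a homomorphism and $\Darst(\s(\cdot))$ is multiplicative. Applying the tensor functor $\ulCF\mapsto\ulCV$ of the previous paragraph transports $(\phi_g)_{g\in J_b(\BF_q\dpl z\dpr)}$ to a canonical $J_b\bigl(\BF_q\dpl z\dpr\bigr)$-linearisation of $\ulCV_b$, i.e. exactly the datum feeding the tower construction of Remark~\ref{Rem1.8}(b).

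I expect the main obstacle to be the \emph{monoidality} of the passage $\ulCF\mapsto\ulCV$ from fibrewise-trivial $\sigma$-bundles to local systems, as opposed to its (clear) additivity: this requires that the tensor-product comparison isomorphisms for the family $\ulCF_b$ are horizontal in families and that \cite[Theorem~3.4.1]{HartlPSp} builds the local systems compatibly with $\otimes$, duals and internal Hom, so that $(-)^\tau$ really behaves as a fibre functor on the Tannakian category $\Rep_{\BF_q\dpl z\dpr}G$. The remaining ingredients — the relative $\sigma$-bundle formalism, independence of the base point, and the cocycle bookkeeping for the $J_b$-action — are either quoted from \cite{HartlPSp,dJ95a} or routine.
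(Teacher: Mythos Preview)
Your proposal is correct and follows essentially the same route as the paper: compose the tensor functors $V\mapsto\ulD_{b,\gamma}(V)$ and $\ulD\mapsto\ulCF(\ulD)$ in families, then pass to $\tau$-invariants and invoke Proposition~\ref{Prop2.13}, with the $J_b$-linearisation coming from Remark~\ref{RemJActsOnPeriodSp}. The paper makes the monoidality step you flag as the main obstacle explicit via the single line $\bar\gamma^*\ulCF_b(V)\cong\bigl(\bar\gamma^*\ulCF_b(V)^\tau\bigr)\otimes_{\BF_q\dpl z\dpr}\ol L\ancon[s]$, which immediately gives $\bar\gamma^*\ulCF_b(V\otimes V')^\tau\cong\bar\gamma^*\ulCF_b(V)^\tau\otimes\bar\gamma^*\ulCF_b(V')^\tau$; this is precisely your ``after a common trivialisation'' argument written out.
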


\begin{proof}
First of all, the functors $V\mapsto\ulD_{b,\gamma}(V)$ from Remark~\ref{RemDIsTensorFunctor} and $\ulD\mapsto\bigl(\ulCE(\ulD),\,\ulCF(\ulD)\bigr)$ from Definition~\ref{DefSigmaModuleOfD} are $\BF_q\dpl z\dpr$-linear tensor functors, and this works equally for the entire families over $\breve\CH_{G,\hat{Z},b}^{a}$. In particular $V\mapsto \ulCF_b(V)$ is a tensor functor from $\Rep_{\BF_q\dpl z\dpr}G$ to the category of $\sigma$-bundles over $\breve\CH_{G,\hat{Z},b}^{a}$. Next, taking $\tau$-invariants is obviously compatible with morphisms. It is in general not compatible with tensor products, but since $\bar\gamma^*\ulCF_b(V)\cong\ulCF_{0,1}{}^{\oplus\dim V}$ we have $\bar\gamma^*\ulCF_b(V)\cong\bigl(\bar\gamma^*\ulCF_b(V)^\tau\bigr)\otimes_{\BF_q\dpl z\dpr}\ol L\ancon[s]$. Therefore 
\begin{eqnarray*}
\bar\gamma^*\ulCF_b(V\otimes V') & \cong &\bar\gamma^*\ulCF_b(V)\otimes_{\ol L\ancon[s]}\bar\gamma^*\ulCF_b(V') \\[2mm]
& \cong & \bigl(\bar\gamma^*\ulCF_b(V)^\tau\otimes_{\BF_q\dpl z\dpr}\bar\gamma^*\ulCF_b(V)^\tau\bigr)\otimes_{\BF_q\dpl z\dpr}\ol L\ancon[s],
\end{eqnarray*}
and hence $\bar\gamma^*\ulCF_b(V\otimes V')^\tau\cong\bar\gamma^*\ulCF_b(V)^\tau\otimes_{\BF_q\dpl z\dpr}\bar\gamma^*\ulCF_b(V)^\tau$. We now use Proposition~\ref{Prop2.13} to conclude that $\ulCV_b$ is an $\BF_q\dpl z\dpr$-linear tensor functor.

If $j\in J_b\bigl(\BF_q\dpl z\dpr\bigr)$ the isomorphism $$\bigl(j^*\ulCF_b(V)\bigr)_{\gamma}\;=\;\bigl(\ulCF_b(V)\bigr)_{j(\gamma)}\;=\;\ulCF_{b,j(\gamma)}(V)\;\cong\;\ulCF_{b,\gamma}(V)\;=\;\bigl(\ulCF_b(V)\bigr)_\gamma$$ from Remark~\ref{RemJActsOnPeriodSp} yields $J_b\bigl(\BF_q\dpl z\dpr\bigr)$-linearizations $\phi_j\colon j^*\ulCF_b(V)\isoto\ulCF_b(V)$ and $\phi_j\colon j^*\ulCV_b\isoto\ulCV_b$.
\end{proof}

Let us end this section by stating the significance of Theorem~\ref{ThmLocSyst} in terms of the \'etale fundamental group and in terms of \'etale covering spaces. Let $\bar\gamma$ be a geometric base point of $\breve\CH_{G,\hat{Z},b}^{a}$.

\begin{remark}\label{RemLocSyst}
Let $\omega\open$ and $\omega_{b,\bar\gamma}:={\it forget}\circ\omega_{\bar\gamma}\circ\ulCV_b$ be the fiber functors from $\Rep_{\BF_q\dpl z\dpr}G$ to $\FMod_{\BF_q\dpl z\dpr}$ with $\omega\open(V):=V$ and $\omega_{b,\bar\gamma}(V):=\ulCV_b(V)_{\bar\gamma}=\bar\gamma^*\ulCF_b(V)^\tau$. By \cite[Theorem~2.9]{dJ95a} the fiber functors $\omega_{b,\bar\gamma}$ and $\omega_{b,\bar\gamma'}$ are isomorphic for any two geometric base points $\bar\gamma$ and $\bar\gamma'$ that lie in the same connected component of $\breve\CH_{G,\hat{Z},b}^{a}$. Let $\wt G:=\Aut^\otimes(\omega_{b,\bar\gamma})$. Then $\Isom^\otimes(\omega\open,\omega_{b,\bar\gamma})$ is a left $G$-torsor and a right $\wt G$-torsor over $\BF_q\dpl z\dpr$ and corresponds to a cohomology class $cl(b,\bar\gamma)\in\Koh^1(\BF_q\dpl z\dpr,G)$ by \cite[Theorem~3.2]{DM82}.  $\wt G$ is the inner form of $G$ defined by the image  $cl(b,\bar\gamma)\in\Koh^1(\BF_q\dpl z\dpr,G^\ad)$.

In the analogous situation over $\BQ_p$, the torsor between the crystalline and the \'etale fiber functor is given by the cohomology class
\[
cl(b,\bar\gamma)\;=\;[b]^\#-\bar\gamma^\#\;\in\;\Koh^1(\BQ_p,G)\,.
\]
This was proved by Rapoport and Zink~\cite[1.20]{RZ} if $G_{\rm der}$ is simply connected and in general by Wintenberger~\cite[Corollary to Proposition 4.5.3]{Wintenberger97}; see also \cite[Proposition on page 4]{CF}. 

The analog of Wintenberger's theorem in our situation (which has not been proved yet) is the statement that the torsor $\Isom^\otimes(\omega\open,\omega_{b,\bar\gamma})$ is given by the cohomology class
\begin{equation}\label{EqWintenberger}
cl(b,\bar\gamma)\;=\;[b]^\#-\bar\gamma^\#\;\in\;\Koh^1(\BF_q\dpl z\dpr,G)\,.
\end{equation}
Note that also in the function field case considered here, the weak admissibility of the pair $(b,\bar\gamma)$ implies that the difference $[b]^\#-\bar\gamma^\#$ lies in $(\pi_1(G)_\Gamma)_{{\rm tors}}$ which is identified with $\Koh^1(\BF_q\dpl z\dpr,G)$; see \cite[Theorem~1.15]{RapoportRichartz} (and use \cite[8.6]{BorelSpringer2} instead of Steinberg's theorem). In particular, if $\bar\gamma\in\breve\CH_{G,\hat{Z},b}^{na}$ and the analog \eqref{EqWintenberger} of Wintenberger's theorem is established, then there is an $\BF_q\dpl z\dpr$-rational tensor isomorphism $\beta\colon\omega\open\isoto\omega_{b,\bar\gamma}$ and $\wt G:=\Aut^\otimes(\omega_{b,\bar\gamma})\cong G$.
\end{remark}

\begin{remark}\label{RemTowerLocSyst}
By Corollary~\ref{Cor1.7} the restriction of the tensor functor $\ulCV_b$ from Theorem~\ref{ThmLocSyst} to the connected component $Y$ of $\breve\CH_{G,\hat{Z},b}^{a}$ containing the geometric base point $\bar\gamma$ corresponds to a continuous group homomorphism
\begin{equation}\label{EqRepFundGpPeriodSp}
\pi_1^\et(Y,\bar\gamma)\;\longto\;\wt G\bigl(\BF_q\dpl z\dpr\bigr)
\end{equation}
(under the choice of $\beta:=\id_{\wt\omega}$ for $\wt\omega:=\wt\omega_{b,\bar\gamma}$). 

By Remark~\ref{Rem1.8}, the tensor functor $\ulCV_b$ defines a tower $(\HeckeTower_{\wt K})_{\wt K\subset \wt G(\BF_q\dpl z\dpr)}$ of \'etale covering spaces of $\breve\CH_{G,\hat{Z},b}^{a}$. However, the group $\wt G$ might vary on the different connected components of $\breve\CH_{G,\hat{Z},b}^{a}$. It is therefore more useful to fix a base point $\bar\gamma$, and to define $\breve\CH_{G,\hat{Z},b}^{a,\bar\gamma}$ as the union of those connected components consisting of elements $\bar\gamma'$ with $\wt\omega_{b,\bar\gamma'}\cong\wt\omega_{b,\bar\gamma}$. Then we obtain a tower $(\HeckeTower_{\wt K}^{\bar\gamma})_{\wt K\subset \wt G(\BF_q\dpl z\dpr)}$ of \'etale covering spaces of $\breve\CH_{G,\hat{Z},b}^{a,\bar\gamma}$ with commuting actions of $\wt G\bigl(\BF_q\dpl z\dpr\bigr)$ by Hecke correspondences and of the quasi-isogeny group $J_b\bigl(\BF_q\dpl z\dpr\bigr)$ of $\ul\BG_0=\bigl((L^+G)_\BaseFld,b\s\bigr)$ from \eqref{EqJ}.

Note that for $A=\BF_q\dpl z\dpr$ it can happen that $\wt\omega\not\cong\omega\open$ but $\wt G\cong G$, namely if $cl(\omega\open,\wt\omega)\in\Koh^1(\BF_q\dpl z\dpr,G)$ is non-trivial and lies in the image of $\Koh^1(\BF_q\dpl z\dpr,Z)$, where $Z\subset G_{\BF_q\dpl z\dpr}$ is the center. In this case Corollary~\ref{Cor1.7} could be applied to a continuous group homomorphism $\pi_1^\et(X,\bar  x)\to G\bigl(\BF_q\dpl z\dpr\bigr)$ both using $\omega\open$ and $\wt\omega$. One obtains two tensor functors $\ulCV,\wt\ulCV\colon\Rep_{A}G\to\ALoc_X$ and tensor isomorphisms $\beta\in\Isom^\otimes(\omega\open,{\it forget}\circ\omega_{\bar x}\circ\ulCV)\bigl(\BF_q\dpl z\dpr\bigr)$ and $\tilde\beta\in\Isom^\otimes(\wt\omega,{\it forget}\circ\omega_{\bar x}\circ\wt\ulCV)\bigl(\BF_q\dpl z\dpr\bigr)$. Since $cl(\omega\open,\wt\omega)$ is trivialized by a finite unramified extension $\BF_{q^n}\dpl z\dpr$ of $\BF_q\dpl z\dpr$ by \cite[8.6]{BorelSpringer2}, the pairs $(\ulCV,\beta)$ and $(\wt\ulCV,\tilde\beta)$ become isomorphic after tensoring with $\BF_{q^n}\dpl z\dpr$. 
\end{remark}

\section{The period morphisms} \label{SectPeriodMorph}
\setcounter{equation}{0}

In this section we fix a local $G$-shtuka $\ul\BG_0=\bigl((L^+G)_\BaseFld,b\s\bigr)$ over $\BaseFld$ and a bound $\hat{Z}$ with reflex ring $R_{\hat Z}=\kappa\dbl\xi\dbr$. We set $E_{\hat Z}=\kappa\dpl\xi\dpr$ and $\breve E:=\BaseFld\dpl\xi\dpr$. For any point $(\ul\CG,\bar\delta)\in\breveRZ(S)$ with values in $S\in\Nilp_{\breve R_{\hat Z}}$ the quasi-isogeny $\bar\delta\colon\ul\CG_\olS\to\ul\BG_{0,\olS}$ lifts to a quasi-isogeny $\delta\colon\ul\CG\to\ul\BG_{0,S}$ by rigidity \cite[Proposition~2.11]{AH_Local}. To construct period morphisms for local $G$-shtukas we need to lift the universal $\bar\delta$, which is defined over the zero locus $\Var(\zeta)$ of $\zeta$ in $\breveRZ$, to the entire formal scheme $\breveRZ$. This lift will no longer be a quasi-isogeny, because it acquires larger and larger powers of $z$ in the denominators by lifting successively modulo $\zeta^{q^n}$. To describe what the limit of this lifting procedure is, we need the following generalization of \cite[Lemma~2.3.1]{HartlPSp} and \cite[Lemmas~2.8 and 6.4]{GL}. For an $\BaseFld\dbl\zeta\dbr$-algebra $B$ that is complete and separated with respect to a bounded norm $|\,.\,|\colon B\to\{x\in\BR\colon0\le x\le1\}$ with $0<|\zeta|<1$, we define the $\BaseFld\dpl z\dpr$-algebra
\begin{equation}\label{EqCrisRing}
B\dbl z,z^{-1}\}\;:=\;\bigl\{\,\sum_{i\in\BZ} b_iz^i\colon b_i\in B\,,\,|b_i|\,|\zeta|^{ri}\to0\;(i\to-\infty) \text{ for all }r>0\,\bigr\}\,.
\end{equation}
Note that the convergence $|b_i|\,|\zeta|^{ri}\to0$ for $i\to-\infty$ implies that $|b_i|<1$ for $i\gg0$. The element $\tminus:=\prod_{i\in\BN_0}\bigl(1-{\tfrac{\zeta^{q^i}}{z}}\bigr)$ lies in $B\dbl z,z^{-1}\}$ and satisfies $z\,\tminus=(z-\zeta)\s(\tminus)$; see \eqref{EqTMinus}. Note that $B\dbl z,z^{-1}\}$ contains $B\dbl z\dbr$ and $z^{-1}$, and that $B\dbl z,z^{-1}\}/(\zeta)=\olB\dbl z\dbr[\tfrac{1}{z}]=:\olB\dpl z\dpr$, for $\olB:=B/\zeta B$. Moreover, note that $\sigma^*(\tminus)\in B[\tfrac{1}{\zeta}]\dbl z-\zeta\dbr\mal$, and $B\dbl z,z^{-1}\}\subset B[\tfrac{1}{\zeta}]\dbl z-\zeta\dbr$ by $\sum_i b_i z^i=\sum_i b_i \bigl(\zeta+(z-\zeta)\bigr)^i=\sum_{n\ge0}\sum_i b_i\binom{i}{n}\zeta^{i-n}(z-\zeta)^n$. Therefore
\begin{equation}\label{EqBmaxInBdR}
B\dbl z,z^{-1}\}\bigl[\tfrac{1}{\sigma^*(\tminus)}\bigr]\;\subset\; B[\tfrac{1}{\zeta}]\dbl z-\zeta\dbr\,.
\end{equation}

\begin{lemma}\label{LemmaGL}
Let $B$ be an $\BaseFld\dbl\zeta\dbr$-algebra as above. Let $b\in LG(\BaseFld)=G\bigl(\BaseFld\dpl z\dpr\bigr)$, $A\in G\bigl(B\dbl z\dbr[\tfrac{1}{z-\zeta}]\bigr)$ and $\olDelta\in LG(\olB)=G\bigl(\olB\dpl z\dpr\bigr)$ such that $\olDelta\cdot (A\mod\zeta)=b\cdot\s(\olDelta)$ in $G\bigl(\olB\dpl z\dpr\bigr)$. Then there is a unique $\Delta\in G\bigl(B\dbl z,z^{-1}\}[\tfrac{1}{\tminus}]\bigr)$ with $\Delta\mod\zeta=\olDelta$ and $\Delta\cdot A=b\cdot\s(\Delta)$ in $G\bigl(B\dbl z,z^{-1}\}[\tfrac{1}{\tminus}]\bigr)$.
\end{lemma}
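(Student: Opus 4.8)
The statement is a Galois-descent/convergence result: one wants to solve the $\sigma$-linear equation $\Delta\cdot A=b\cdot\s(\Delta)$ over $G\bigl(B\dbl z,z^{-1}\}[\tfrac{1}{\tminus}]\bigr)$ with prescribed reduction $\olDelta$ modulo $\zeta$. The natural strategy is to use the faithful representation $\Darst\colon G\into\SL_r$ (as in Definition~\ref{DefBDLocal}\ref{DefBDLocal_A4}) to reduce to a statement about matrices, prove it there by a successive-approximation argument modulo powers of $\zeta$, and then check that the limiting matrix actually lands in $G$. Since $G\into\SL_r$ is a closed immersion, the equation $\Delta\cdot A=b\cdot\s(\Delta)$ with $\Delta\equiv\olDelta$ in $G\bigl(\olB\dpl z\dpr\bigr)$ forces (by uniqueness of the matrix solution together with the defining equations of $G$, applied $\zeta$-adically) the solution to stay in $G$; this is the kind of argument used in the proof of Proposition~\ref{PropBound}\ref{PropBound_A}, where ``it actually lies in $G(\cdots)$ because the closed ind-subscheme $LG\into L\SL_{r'}$ is defined by the equations which applied to the entries cut out $G$.'' So the real work is the $\GL_r$-case (or $\SL_r$-case), i.e.\ a purely linear-algebra statement about solving $\Delta A=b\,\s(\Delta)$.

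For the linear case the plan is the following. Write $b$ with bounded denominators: since $b\in G(\BaseFld\dpl z\dpr)$ we have $b,b^{-1}\in z^{-N}M_r(\BaseFld\dbl z\dbr)$ for some $N$, and similarly $A\in (z-\zeta)^{-M}M_r(B\dbl z\dbr)$. Starting from $\Delta_0\in M_r(B\dbl z,z^{-1}\})$ any lift of $\olDelta$, solve the equation recursively modulo $\zeta^{q^n}$: the equation $\Delta A=b\,\s(\Delta)$ read modulo $\zeta^{q^{n+1}}$ given a solution modulo $\zeta^{q^n}$ is an inhomogeneous $\sigma$-linear equation for the correction term, where the error term is divisible by $\zeta^{q^n}$ and the ``$\sigma$'' on the correction, raised to one power, gains a factor $q$ in the $\zeta$-adic order because $\s$ is the $q$-Frobenius on $B/\zeta B$-coefficients. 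Concretely, the correction $\Delta_{n+1}-\Delta_n$ is produced by the geometric-series-type formula $\Delta_{n+1}=\Delta_n+\sum_{k\ge0}(\text{error})\cdot(\text{twisted by }b,A,\s)^k$, and the key point is that applying $\s$ to something divisible by $\zeta^{q^n}$ makes it divisible by $\zeta^{q^{n+1}}$, so the series converges in the $(\zeta,z^{-1})$-adic-type topology defining $B\dbl z,z^{-1}\}[\tfrac{1}{\tminus}]$; the $\tminus$-localization is exactly what is needed to absorb the $(z-\zeta)$-denominators that accumulate, using the relation $z\tminus=(z-\zeta)\s(\tminus)$ from \eqref{EqTMinus}. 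This is the generalization of \cite[Lemma~2.3.1]{HartlPSp} and \cite[Lemmas~2.8 and 6.4]{GL} referred to in the text, so the mechanism is known; one must just check it goes through with the present completeness/boundedness hypotheses on $B$ and with the denominators in $A$ and $b$.

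Uniqueness is the easy half: if $\Delta,\Delta'$ are two solutions with the same reduction mod $\zeta$, then $E:=\Delta'\Delta^{-1}$ satisfies $E=b\,\s(E)\,b^{-1}$ (using $\Delta A=b\s(\Delta)$ and the analogue for $\Delta'$) and $E\equiv 1\pmod\zeta$; iterating, $E\equiv 1\pmod{\zeta^{q^n}}$ for all $n$ because $\s$ gains a factor $q$ in $\zeta$-order on $E-1$, hence $E=1$ in $G\bigl(B\dbl z,z^{-1}\}[\tfrac{1}{\tminus}]\bigr)$ by the separatedness built into the definition \eqref{EqCrisRing} of $B\dbl z,z^{-1}\}$ (together with \eqref{EqBmaxInBdR} to make sense of $z-\zeta$ being a nonzerodivisor after inverting $\tminus$). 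I expect the main obstacle to be purely bookkeeping: controlling precisely how the powers of $(z-\zeta)^{-1}$ in $A$ and of $z^{-1}$ in $b^{\pm1}$ propagate through the iteration and showing that the resulting matrix entries still satisfy the growth condition $|b_i|\,|\zeta|^{ri}\to 0$ defining $B\dbl z,z^{-1}\}[\tfrac{1}{\tminus}]$ — in other words, that the $\tminus$-localization is enough and no further denominators are created. Once the matrix solution is shown to exist and be unique with the correct convergence, membership in $G$ follows formally as indicated above, completing the proof.
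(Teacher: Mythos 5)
Your plan matches the paper's proof in its essentials: fix a faithful representation, iterate a $\sigma$-linear map that gains $\zeta$-adic precision because $\s$ raises $\zeta$-divisibility from $\zeta^{q^m}$ to $\zeta^{q^{m+1}}$, use $\tminus$ to absorb denominators, and check membership in $G$ by the equations cutting it out in $\GL_r$ together with separatedness of $B$. The implementation differs in one place that is more than bookkeeping: the paper's iteration is multiplicative, $C_m:=(z^{-N}\Darst(b))\,\s C_{m-1}\,((z-\zeta)^N\Darst(A^{-1}))$, with the prefactors $z^{-N}$ and $(z-\zeta)^N$ chosen so that every $C_m$ lies in $M_r\bigl(z^{-2N(m+1)}B\dbl z\dbr\bigr)$ with \emph{no} $(z-\zeta)$-denominators, and $\tminus^{-N}$ is reinserted only at the end; this gives, modulo $\zeta^{q^m}$, the identity $\tminus^{-N}C\equiv\Darst\bigl(b\cdots\sigma^{(m-1)*}(b)\cdot\sigma^{m*}(\olDelta)\cdot\sigma^{(m-1)*}(A^{-1})\cdots A^{-1}\bigr)$, which is visibly a product of $G$-elements and so satisfies the equations of $\Darst(G)\subset\GL_r$ modulo $\zeta^{q^m}$. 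Your additive geometric-series iterates are not products of $G$-elements, so your appeal to "the defining equations of $G$ applied $\zeta$-adically" requires this extra observation, which is worth making explicit. You also omit the invertibility of the resulting matrix: the paper runs a parallel sequence $D_m$ starting from a lift of $\Darst(\olDelta^{-1})$ and checks $(\tminus^{-N}C)(\tminus^{-N}D)=\Id_r$ before concluding that $\tminus^{-N}C\in\GL_r$; without some such step you only have a matrix in $M_r$, not in $G$.
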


\begin{proof}
We choose a faithful representation $\Darst\colon G\into\GL_{r,\BF_q\dbl z\dbr}$ over $\BF_q\dbl z\dbr$. There is a positive integer $N$ such that $\Darst(b),\Darst(b^{-1})\in M_r\bigl(z^{-N}\BaseFld\dbl z\dbr\bigr)$, as well as $\Darst(A),\Darst(A^{-1})\in M_r\bigl((z-\zeta)^{-N}B\dbl z\dbr\bigr)$, and $\Darst(\olDelta),\Darst(\olDelta^{-1})\in M_r\bigl(z^{-2N}\olB\dbl z\dbr\bigr)$. We choose $C_0,D_0\in M_r\bigl(z^{-2N}B\dbl z\dbr\bigr)$ with $C_0\equiv \Darst(\olDelta)\pmod{\zeta}$ and $D_0\equiv\Darst(\olDelta^{-1})\pmod{\zeta}$. For $m>0$ we inductively define
\begin{eqnarray*}
C_m&:=&(z^{-N}\Darst(b))\sigma^{*}C_{m-1}((z-\zeta)^N\Darst(A^{-1}))\\[2mm]
D_m&:=&((z-\zeta)^N\Darst(A))\sigma^{*}D_{m-1}( z^{-N}\Darst(b^{-1}))
\end{eqnarray*}
in $M_r\bigl(z^{-2N(m+1)}B\dbl z\dbr\bigr)$. The assumption on $\bar\Delta$ implies \[
C_1-C_0\;\equiv\; \rho\bigl(b\sigma^*(\bar\Delta) A^{-1}\bigr) -\rho(\bar\Delta)\;\equiv\; 0 \pmod \zeta\,.
\]
By induction on $m$, we obtain that 
\begin{eqnarray*}
C_{m+1}- C_m & = & (z^{-N}\Darst(b))\sigma^{*}(C_m-C_{m-1})((z-\zeta)^N\Darst(A^{-1})) \\[2mm]
& \equiv & 0 \pmod{\sigma^*(\sigma^{m-1})^{*}(\zeta)} \\[2mm]
& \equiv & 0 \pmod{(\sigma^{m})^{*}(\zeta)}\,. 
\end{eqnarray*}
Therefore the sequence $(C_m)_m$ converges to a matrix $C\in M_r\bigl(B\dbl z,z^{-1}\}\bigr)$ and the sequence $\bigl(\prod_{i=0}^m(1-\tfrac{\zeta^{q^i}}{z})^{-N}\cdot C_m\bigr)_m$ converges to the matrix $\tminus^{-N}\cdot C\in M_r\bigl(B\dbl z,z^{-1}\}[\tfrac{1}{\tminus}]\bigr)$ with $(\tminus^{-N}C)\equiv C\equiv \Darst(\olDelta)\pmod{\zeta}$. The equation 
\[
\prod_{i=0}^m(1-\tfrac{\zeta^{q^i}}{z})^{-N}\cdot C_m\cdot \Darst(A) \;=\; \Darst(b)\cdot\s\bigl(\prod_{i=0}^{m-1}(1-\tfrac{\zeta^{q^i}}{z})^{-N}\cdot C_{m-1}\bigr)
\]
implies $(\tminus^{-N}C)\Darst(A)=\Darst(b)\s(\tminus^{-N}C)$ in $M_r\bigl(B\dbl z,z^{-1}\}[\tfrac{1}{\tminus}]\bigr)$. In the same way one sees that $(D_m)_m$ converges to a matrix $D\in M_r\bigl(B\dbl z,z^{-1}\}\bigr)$ with $(\tminus^{-N}D)\Darst(b)=\Darst(A)\s(\tminus^{-N}D)$ in $M_r\bigl(B\dbl z,z^{-1}\}[\tfrac{1}{\tminus}]\bigr)$ and $(\tminus^{-N}D)\mod\zeta=\Darst(\olDelta^{-1})$. We obtain the congruences
\begin{eqnarray*}
\Id_r-\tminus^{-2N}CD & \equiv & 0\pmod{\zeta}\quad \text{and by induction}\\[2mm]
\Id_r-\tminus^{-2N}CD & = & \Darst(b)\cdot\s(\Id_r-\tminus^{-2N}CD)\Darst(b)^{-1} \es\equiv\es 0\pmod{\zeta^{q^m}} \quad\text{for all $m$}\,.
\end{eqnarray*}
By looking at power series expansions in $z$ of the matrix coefficients, these congruences imply by the separatedness of the norm $|\,.\,|$ that $(\tminus^{-N}C)(\tminus^{-N}D)=\Id_r$, that is $\tminus^{-N}C\in\GL_r\bigl(B\dbl z,z^{-1}\}[\tfrac{1}{\tminus}]\bigr)$. Since $(\sigma^{m*}C)\mod\zeta^{q^m}=\sigma^{m*}(C\mod\zeta)=\Darst(\sigma^{m*}\olDelta)$ it follows that
\begin{eqnarray*}
(\tminus^{-N}C)\mod\zeta^{q^m} & = & \bigl(\prod_{i=0}^{m-1}(1-\tfrac{\zeta^{q^i}}{z})^{-N}\cdot C_{m-1}\bigr)\mod\zeta^{q^m}\\[2mm]
& = & \Darst\Bigl(b\cdot\ldots\cdot\sigma^{(m-1)*}(b)\cdot\sigma^{m*}(\olDelta)\cdot\sigma^{(m-1)*}(A^{-1})\cdot\ldots\cdot A^{-1}\Bigr)\mod\zeta^{q^m}
\end{eqnarray*}
satisfies the equations that cut out the closed subgroup scheme $\Darst(G)$ of $\GL_r$. Since $B$ is separated with respect to the norm $|\,.\,|$ we must have $\tminus^{-N}C=\Darst(\Delta)$ for a matrix $\Delta\in G\bigl(B\dbl z,z^{-1}\}[\tfrac{1}{\tminus}]\bigr)$. 

Finally to prove the uniqueness of $\Delta$ we assume that $\Delta'$ also satisfies the assertions of the lemma. Then $U:=\Darst(\Delta)-\Darst(\Delta')$ satisfies $U\in M_r(\zeta\cdot B\dbl z,z^{-1}\}[\tfrac{1}{\tminus}]\bigr)$ and $U=\Darst(b)\cdot \s(U)\cdot\Darst(A^{-1})$. Since $B$ is separated with respect to the norm $|\,.\,|$ it follows that $U=0$ and $\Delta'=\Delta$.
\end{proof}

We can now define the period morphism as a morphism of $\breve E$-analytic spaces 
\[
\breve\pi\colon(\breveRZ)^\an\to\breve\CH_{G,\hat{Z}}^\an
\]
as follows. Let $S$ be an affinoid, strictly $\breve E$-analytic space and let $S\to(\breveRZ)^\an$ be a morphism of $\breve E$-analytic spaces. With it we have to associate a uniquely determined morphism $S\to\breve\CH_{G,\hat{Z}}^\an$. Then the period morphism is obtained by glueing when $S$ runs through an affinoid covering of $(\breveRZ)^\an$.

Before we proceed we recall that an algebra $B$ over $\breve R_{\hat Z}=\BF\dbl\xi\dbr$ is \emph{admissible} in the sense of Raynaud \cite{Raynaud} if it has no $\xi$-torsion and is a quotient of an $\breve R_{\hat Z}$-algebra of the form 
\begin{equation}\label{EqFormalTateAlgebra}
\breve R_{\hat Z}\langle X_1,\ldots,X_s\rangle \;:=\;\bigl\{\,\sum_{\ul n\in\BN_0^s}b_{\ul n}X_1^{n_1}\cdot\ldots\cdot X_s^{n_s}\colon b_{\ul n}\in\breve R_{\hat Z},\es \lim_{|\ul n|\to\infty}b_{\ul n}=0\,\bigr\}\,,
\end{equation}
where $\ul n=(n_1,\ldots,n_s)$ and $|\ul n|:=n_1+\ldots+n_s$. A formal $\breve R_{\hat Z}$-scheme $\scrS$ is \emph{admissible} if it is locally $\breve R_{\hat Z}$-isomorphic to $\Spf B$ for admissible $\breve R_{\hat Z}$-algebras $B$; see \cite[\S\,1]{FRG1}.

Recall that $(\breveRZ)^\an$ is constructed as the union of the strictly $\breve E$-analytic spaces associated with a family of admissible formal $\breve R_{\hat Z}$-schemes that are obtained by admissible formal blowing-ups of $\breveRZ$ in closed ideals; see \cite[Chapter~5]{RZ} or \cite[\S\,0.2]{Berthelot96}. By Raynaud's theorem the morphism $S\to(\breveRZ)^\an$ is induced by a morphism from a quasi-compact admissible formal $\breve R_{\hat Z}$-scheme $\scrS$ with $\scrS^\an=S$ to one of these admissible formal $\breve R_{\hat Z}$-schemes; see \cite[Theorem~4.1]{FRG1} or for example \cite[Theorem~A.2.5]{HartlPSp} for a formulation with Berkovich spaces. Composing with the map to $\breveRZ$ yields a morphism of formal schemes $\scrS\to\breveRZ$. The latter corresponds to $(\ul\CG,\bar\delta)\in\breveRZ(\scrS)$.

\begin{lemma}\label{LemmaTrivializing}
There is an \'etale covering $\scrS'=\Spf B'\to\scrS$ of admissible formal $\breve R_{\hat Z}$-schemes such that there is a trivialization $\alpha\colon\ul\CG_{\scrS'}\isoto\bigl((L^+G)_{\scrS'},A\s\bigr)$ for some $A\in G\bigl(B'\dbl z\dbr[\tfrac{1}{z-\zeta}]\bigr)$.
\end{lemma}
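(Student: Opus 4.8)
The plan is to produce the trivialization in two stages: first trivialize the underlying $L^+G$-torsor $\CG$ étale-locally on $\scrS$, and then record what the datum $\tau_\CG$ becomes under such a trivialization. For the first stage, recall from the discussion after Definition~\ref{DefLocSht} (see \cite[Proposition~2.2(c)]{HV1} and \cite[Proposition~2.4]{AH_Local}) that for a local $G$-shtuka over a scheme there is an étale covering trivializing the torsor together with its Frobenius. However, here $\scrS$ is an admissible formal $\breve R_{\hat Z}$-scheme, so I would work with the reduction $\scrS_n:=\scrS\whtimes_{\breve R_{\hat Z}}\Spec \breve R_{\hat Z}/(\xi^n)$ (equivalently $\Spec B'/(\xi^n)$ once a cover is chosen), apply the known trivialization result over $\scrS_1$, and then lift the étale cover and the trivialization $\alpha$ successively modulo $\xi^n$ using that $L^+G$ is smooth (it is formally smooth over $\BF_q$, being a positive loop group of a smooth affine group scheme) and that étale morphisms lift uniquely along nilpotent thickenings. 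Taking the inverse limit over $n$ gives an étale covering $\scrS'=\Spf B'\to\scrS$ of admissible formal schemes — here one checks $B'$ is again admissible: it has no $\xi$-torsion since it is $\xi$-adically flat over $\breve R_{\hat Z}$, and it is topologically of finite type over $\breve R_{\hat Z}$ because étale morphisms are of finite presentation — together with an isomorphism $\alpha_0\colon\CG_{\scrS'}\isoto (L^+G)_{\scrS'}$.

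For the second stage, under $\alpha_0$ the isomorphism $\tau_\CG\colon\sigma^*L\CG\isoto L\CG$ is transported to an automorphism of the trivial $LG$-torsor over $\scrS'$, i.e.\ to an element $A\s$ where $A\in LG(B')=G\bigl(\CO_{\scrS'}\dpl z\dpr(\scrS')\bigr)=G\bigl(B'\dpl z\dpr\bigr)$. This already gives the desired form $\bigl((L^+G)_{\scrS'},A\s\bigr)$, but with $A$ only in $G\bigl(B'\dpl z\dpr\bigr)$ rather than in $G\bigl(B'\dbl z\dbr[\tfrac1{z-\zeta}]\bigr)$. To upgrade the integrality of the poles of $A$ I would invoke the boundedness hypothesis: $\ul\CG$ is bounded by $\hat Z^{-1}$, meaning $\tau_\CG^{-1}$ is bounded by $\hat Z$ in the sense of Definition~\ref{DefBDLocal}\ref{DefBDLocal_C}. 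By Definition~\ref{DefBDLocal}\ref{DefBDLocal_A4} and Proposition~\ref{PropBound}\ref{PropBound_A}, after composing with a faithful representation $\Darst\colon G\into\SL_r$ the matrix $\Darst(A^{-1})$ has all its $j\times j$-minors in $(z-\zeta)^{n(j^2-jr)}B'\dbl z\dbr$ for a suitable $n$; in particular $\Darst(A^{-1})$, and by Cramer's rule also $\Darst(A)$, lie in $(z-\zeta)^{-n(r-1)}M_r\bigl(B'\dbl z\dbr\bigr)$. Hence $\Darst(A)\in M_r\bigl(B'\dbl z\dbr[\tfrac1{z-\zeta}]\bigr)$, and since $\Darst(G)$ is cut out by equations in the matrix entries, $A\in G\bigl(B'\dbl z\dbr[\tfrac1{z-\zeta}]\bigr)$ as required. (I may shrink $\scrS'$ to an affine $\Spf B'$ from the start, which is harmless since the statement is local on $\scrS$.)

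The main obstacle I anticipate is the bookkeeping in the formal-scheme lifting argument of stage one: one must check that the standard étale-descent trivialization for local $G$-shtukas over schemes genuinely propagates to the $\xi$-adic formal setting — i.e.\ that the successive lifts of the étale cover glue to an honest formal scheme étale over $\scrS$, and that the resulting $B'$ is admissible and noetherian so that the later constructions (applying Lemma~\ref{LemmaGL} with $B=B'\wh\otimes\ol L$ or similar) make sense. This is essentially a deformation-theory exercise using the smoothness of $L^+G$ and the infinitesimal lifting criterion for étale morphisms, but it is the step where care is needed. Everything after the cover is in hand — transporting $\tau_\CG$ to $A\s$ and pinning down the pole order of $A$ via the bound — is formal, relying only on Proposition~\ref{PropBound}\ref{PropBound_A} and Cramer's rule.
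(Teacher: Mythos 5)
Your stage-one argument (trivialize the $L^+G$-torsor over the zero locus $\olCS'=\Var_\scrS(\zeta)$, lift the \'etale cover to $\scrS$ by the infinitesimal lifting property, then lift the trivialization along the nilpotent thickenings using smoothness of $L^+G$) matches the paper's route, which cites \cite[Lemma~1.4(a)]{FRG2} and \cite[Proposition~2.2(c)]{HV1} for exactly these two lifting steps. That part is fine.

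The gap is at the start of stage two. You assert that after trivializing, $\tau_\CG$ is transported to an element $A\s$ with $A\in LG(B')=G\bigl(B'\dpl z\dpr\bigr)=G\bigl(B'\dbl z\dbr[\tfrac1z]\bigr)$, and that the boundedness merely \emph{upgrades the integrality of the poles} to land in $G\bigl(B'\dbl z\dbr[\tfrac1{z-\zeta}]\bigr)$. This misrepresents where the boundedness enters. Over the formal scheme $\scrS'=\Spf B'$, a local $G$-shtuka is a projective system of local $G$-shtukas over the $\Spec B'/(\xi^m)$ (cf.\ Remark~\ref{RemUnivLocGSht}), so the transported Frobenius is a priori only a compatible system of elements $A_m\in G\bigl(B'/(\xi^m)\dbl z\dbr[\tfrac1z]\bigr)$. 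This system need not glue to an element of $G\bigl(B'\dbl z\dbr[\tfrac1z]\bigr)$: for $G=\BG_m$, $B'=\BF_q\dbl\zeta\dbr$, $\xi=\zeta$, the compatible system $A_m=\sum_{k=0}^{m-1}(\zeta/z)^k$ has no limit in $B'\dbl z\dbr[\tfrac1z]$ (the ``limit'' $z/(z-\zeta)$ has an essential singularity at $z=0$ there), although it is a perfectly good element of $B'\dbl z\dbr[\tfrac1{z-\zeta}]$. So the unbounded claim $A\in G(B'\dpl z\dpr)$ is false, and moreover $G\bigl(B'\dbl z\dbr[\tfrac1{z-\zeta}]\bigr)$ is \emph{not} contained in $G\bigl(B'\dbl z\dbr[\tfrac1z]\bigr)$ (neither ring contains the other once $\zeta$ is a non-zero-divisor in $B'$), so ``upgrading the pole order'' is not even the right comparison.

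Your subsequent computation is sound and in fact repairs the gap if you reframe it: apply the boundedness condition modulo $\xi^m$ for each $m$ to get that, after applying the faithful representation $\Darst\colon G\into\SL_r$ from Definition~\ref{DefBDLocal}\ref{DefBDLocal_A4} (or more generally Proposition~\ref{PropBound}\ref{PropBound_A}), the entries of $(z-\zeta)^{n(r-1)}\Darst(A_m^{\pm1})$ lie in $(B'/(\xi^m))\dbl z\dbr$. These are compatible as $m$ varies, so by $\xi$-adic completeness of $B'$ they glue to honest matrices in $M_r\bigl(B'\dbl z\dbr\bigr)$, giving $A\in G\bigl(B'\dbl z\dbr[\tfrac1{z-\zeta}]\bigr)$ directly. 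The point to internalize is that the bound is not a refinement of an already-existing $A$; it is what produces $A$ as a single element over $B'$ at all.
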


\begin{proof}
We may choose an \'etale covering $\olCS'$ of $\olCS:=\Var_\scrS(\zeta)$ together with a trivialization $\bar\alpha\colon\CG_{\olCS'}\isoto(L^+G)_{\olCS'}$. After refining the covering $\olCS'$ there is by \cite[Lemma~1.4(a)]{FRG2} an \'etale morphism $\scrS'\to\scrS$ of admissible formal $\breve R_{\hat Z}$-schemes lifting $\olCS'\to\olCS$. Since $\scrS$ is quasi-compact we may further assume that $\scrS'=\Spf B'$ is affine. By \cite[Proposition~2.2(c)]{HV1} there is a lift $\alpha\colon\CG_{\scrS'}\isoto(L^+G)_{\scrS'}$ of the trivialization $\bar\alpha$. Since $\ul\CG$ is bounded by $\hat{Z}^{-1}$ this lift induces an isomorphism $\alpha\colon\ul\CG_{\scrS'}\isoto\bigl((L^+G)_{\scrS'},A\s\bigr)$ for $A\in G\bigl(B'\dbl z\dbr[\tfrac{1}{z-\zeta}]\bigr)$; compare the proof of Proposition~\ref{PropBound}\ref{PropBound_A}. 
\end{proof}

In addition the quasi-isogeny $\bar\delta\colon\ul\CG_{\olCS'}\to\ul\BG_{0,\olCS'}$ corresponds under $\bar\alpha$ to an element $\olDelta\in LG(\olB')$. We apply Lemma~\ref{LemmaGL} to obtain a uniquely determined element $\Delta\in G\bigl(B'\dbl z,z^{-1}\}[\tfrac{1}{\tminus}]\bigr)$ lifting $\olDelta$ with $\Delta A=b\,\s(\Delta)$ for $A$ as in Lemma \ref{LemmaTrivializing}. We set 
\begin{eqnarray}\label{EqPeriodMorph}
\gamma & := & \s(\Delta)A^{-1}\cdot G\bigl(B'[\tfrac{1}{\zeta}]\dbl z-\zeta\dbr\bigr)\es=\es b^{-1}\Delta\cdot G\bigl(B'[\tfrac{1}{\zeta}]\dbl z-\zeta\dbr\bigr)\\[2mm]
& \in & G\bigl(B'[\tfrac{1}{\zeta}]\dpl z-\zeta\dpr\bigr)\big/G\bigl(B'[\tfrac{1}{\zeta}]\dbl z-\zeta\dbr\bigr)\,.\nonumber
\end{eqnarray}
Since $\ul\CG$ is bounded by $\hat{Z}^{-1}$ the inverse $A^{-1}$ yields a point of $\hat Z^\an(\scrS'{}^\an)=\breve\CH_{G,\hat{Z}}^\an(\scrS'{}^\an)$. Because $\s(\Delta)\in G\bigl(B'\dbl z,z^{-1}\}[\tfrac{1}{\s\tminus}]\bigr)$ and $\s\tminus\in B'[\tfrac{1}{\zeta}]\dbl z-\zeta\dbr\mal$, we have $\gamma\in G\bigl(B'[\tfrac{1}{\zeta}]\dbl z-\zeta\dbr\bigr)\cdot\breve\CH_{G,\hat{Z}}^\an=\breve\CH_{G,\hat{Z}}^\an$ using \eqref{EqBmaxInBdR} and Definition~\ref{DefBDLocal}\ref{DefBDLocal_A11}. If we replace our trivialization $\alpha$ by a different trivialization $\alpha'\colon\ul\CG_{\scrS'}\isoto\bigl((L^+G)_{\scrS'},A'\s\bigr)$, there is an $h\in L^+G(B')=G\bigl(B'\dbl z\dbr\bigr)\subset G(B'[\tfrac{1}{\zeta}]\dbl z-\zeta\dbr)$ with $\alpha'=h\cdot\alpha$ and $A'=hA\,\s(h)^{-1}$. Then the quasi-isogeny $\bar\delta$ corresponds to $\olDelta'=\olDelta h^{-1}\in LG(\olB')$ and $\Delta'=\Delta h^{-1}$ is the lift of $\olDelta'$ from Lemma~\ref{LemmaGL}. This yields
\[
\gamma'\;=\;b^{-1}\Delta'\cdot G\bigl(B'[\tfrac{1}{\zeta}]\dbl z-\zeta\dbr\bigr)\;=\;b^{-1}\Delta\cdot G\bigl(B'[\tfrac{1}{\zeta}]\dbl z-\zeta\dbr\bigr)\;=\;\gamma\,.
\]
Therefore $\gamma$ descends to an element $\gamma\in\breve\CH_{G,\hat{Z}}^\an(S)$ giving the desired morphism $S\to\breve\CH_{G,\hat{Z}}^\an$. This completes the construction of the period morphism.

\begin{definition}\label{DefPeriodMorph}
The morphism $\breve\pi\colon(\breveRZ)^\an\to\breve\CH_{G,\hat{Z}}^\an$ of $\breve E$-analytic spaces constructed above is called the \emph{period morphism} associated with $\ul\BG_0$ and $\hat Z$. 
\end{definition}

\begin{remark}\label{RemHPStrOfLocGSht}
If $G=\GL_r$ and $B$ is an admissible $\BaseFld\dbl\zeta\dbr$-algebra in the sense of Raynaud, there is an equivalence \cite[\S\,4]{HV1} between local $\GL_r$-shtukas and local shtukas $\ulM=(M,\tau_M)$ over $\scrS=\Spf B$ consisting of a locally free $B\dbl z\dbr$-module $M$ of rank $r$ and an isomorphism $\tau_M\colon\s M[\tfrac{1}{z-\zeta}]\isoto M[\tfrac{1}{z-\zeta}]$. The de Rham cohomology 
\[
\Fp^\ulM\;:=\;\Koh^1_\dR(\ulM,B[\tfrac{1}{\zeta}]\dbl z-\zeta\dbr)\;:=\;\s M\otimes_{B\dbl z\dbr}B[\tfrac{1}{\zeta}]\dbl z-\zeta\dbr
\]
of $\ulM$ over $\scrS^\an$ carries a natural Hodge-Pink structure 
\[
\Fq^\ulM\;:=\;\tau_M^{-1}\bigl( M\otimes_{B\dbl z\dbr}B[\tfrac{1}{\zeta}]\dbl z-\zeta\dbr\bigr)\;\subset\;\Fp^\ulM[\tfrac{1}{z-\zeta}]\,;
\]
see \cite[Definition~3.4.13]{HartlKim}. If moreover, there is a fixed local $\GL_r$-shtuka $\ul\BG_0$ over $\BaseFld$ with associated local shtuka $(\BM,\tau_\BM)=(\BaseFld\dbl z\dbr^r,b\s)$, and a quasi-isogeny $\bar\delta\colon  \ul{\CG}_{\Spec B/(\zeta)}\to \ul{\BG}_{0,\Spec B/(\zeta)}$ given by $\olDelta\in LG\bigl(B/(\zeta)\bigr)$, then the lift $\Delta$ from Lemma~\ref{LemmaGL} provides an isomorphism 
\[
\s(\Delta)\colon\Koh^1_\dR(\ulM,B[\tfrac{1}{\zeta}]\dbl z-\zeta\dbr)\;\isoto\;\Koh^1_\cris\bigl(\ul\BM,\BaseFld\dpl z\dpr\bigr)\otimes_{\BaseFld\dpl z\dpr}B[\tfrac{1}{\zeta}]\dbl z-\zeta\dbr
\]
that transports the Hodge-Pink structure $\Fq^\ulM$ to the family $\s(\Delta)\circ\tau_M^{-1}\bigl( M\otimes_{B\dbl z\dbr}B[\tfrac{1}{\zeta}]\dbl z-\zeta\dbr\bigr)$ of Hodge-Pink structures on the constant $z$-isocrystal $\Koh^1_\cris\bigl(\ul\BM,\BaseFld\dpl z\dpr\bigr)\,:=\,\s\ul\BM\otimes_{\BaseFld\dbl z\dbr}\BaseFld\dpl z\dpr$; see \cite[Definition~3.5.14]{HartlKim}. Our period morphism $\breve\pi$ associates this family of Hodge-Pink structures with the universal local $\GL_r$-shtuka over $\breveRZ$. More precisely, this family equals $\gamma\cdot B[\tfrac{1}{\zeta}]\dbl z-\zeta\dbr^r$ where the element $\gamma$ from \eqref{EqPeriodMorph} is the image under $\breve\pi$ of the local $\GL_r$-shtuka $(\ulM,\bar\delta)\in\breveRZ(\Spf B)$.
\end{remark}

\begin{remark}\label{RemPiAndJ}
The period morphism $\breve\pi$ is equivariant for the action of $J:=J_b\bigl(\BF_q\dpl z\dpr\bigr)$. Indeed $j\in J$ acts on $\breveRZ$ by $j\colon(\ul\CG,\bar\delta)\mapsto(\ul\CG,j\circ\bar\delta)$. In terms of \eqref{EqPeriodMorph} this means that $j\in J\subset G\bigl(\BaseFld\dpl z\dpr\bigr)$ sends $\Delta$ to $j\cdot\Delta$ and $\gamma$ to $\s(j)\cdot\gamma$. Thus it coincides with the action on $\breve\CH_{G,\hat{Z}}^\an$ defined in Remark~\ref{RemJActsOnPeriodSp}.
\end{remark}

\begin{remark}
As in the arithmetic case, these period morphisms are not compatible with Weil descent data of source and target. Here the source is equipped with the Weil descent datum induced by the one in Remark \ref{defweildes}. On the target we have the natural Weil descent datum given by the fact that $\CH_{G,\hat{Z}}^\an$ is defined over $E_{\hat{Z}}$. In order to ensure such a compatibility, one has to extend the period morphism by a second component mapping to $\pi_1(G)_{\Gamma}$. For a more detailed discussion we refer to \cite{RZ}, or \cite[Properties~4.27(iv)]{RV}. 
\end{remark}

\begin{remark}\label{RemEtaleLocSht}
In the above construction the bounded local $G$-shtuka $\bigl((L^+G)_{\scrS'},A\s\bigr)$ over $\scrS'$ with $A\in G\bigl(B'\dbl z\dbr[\tfrac{1}{z-\zeta}]\bigr)$ induces an \'etale local $G$-shtuka $\bigl((L^+G)_{S'},A\s\bigr)$ over $S':=(\scrS')^\an$ in the sense of Definition~\ref{DefEtaleLocSht}, because $(z-\zeta)^{-1}=-\sum_{i=0}^\infty\zeta^{-i-1}z^i\in\CO_{S'}(S')\dbl z\dbr$ implies $B'\dbl z\dbr[\tfrac{1}{z-\zeta}]\subset\CO_{S'}(S')\dbl z\dbr$. The isomorphism $\alpha\colon\ul\CG_{\scrS'}\isoto\bigl((L^+G)_{\scrS'},A\s\bigr)$ yields a descent datum $g:=pr_2^*\alpha\circ pr_1^*\alpha^{-1}\in L^+G(\scrS'')$ with $pr_2^*A\cdot\s(g)=g\cdot pr_1^*A$, where $\scrS'':=\scrS'\times_\scrS\scrS'$ and $pr_i\colon\scrS''\to\scrS'$ is the projection onto the $i$-th factor. Viewing $g\in L^+G(S'')$ where $S'':=(\scrS'')^\an=S'\times_S S'$ provides a descent datum $pr_1^*(L^+G)_{S'}\isoto pr_2^*(L^+G)_{S'}$ on the $L^+G$-torsor $(L^+G)_{S'}$ via multiplication by $g$ on the right. This allows to descend $\bigl((L^+G)_{S'},A\s\bigr)$ to an \'etale local $G$-shtuka over $S=\scrS^\an$ which by abuse of notation we denote again by $\ul\CG$. In this way we obtain the universal family of \'etale local $G$-shtukas over $(\breveRZ)^\an$.
\end{remark}

\begin{definition}\label{DefEtaleLocSht}
Let $S$ be an $\BF\dpl\zeta\dpr$-scheme or a strictly $\BF\dpl\zeta\dpr$-analytic space. An \emph{\'etale local $G$-shtuka} over $S$ is a pair $\ul\CG=(\CG,\tau_\CG)$ consisting of an $L^+G$-torsor $\CG$ on $S$ and an isomorphism $\tau_\CG\colon\s\CG\isoto\CG$ of $L^+G$-torsors.
\end{definition}

\begin{proposition}\label{PropPeriodMorph}
The period morphism factors through the open $\breve E$-analytic subspace $\breve\CH_{G,\hat{Z},b}^a$.
\end{proposition}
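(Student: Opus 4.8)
The plan is to invoke Theorem~\ref{ThmWAOpen}, by which $\breve\CH_{G,\hat{Z},b}^a$ is an \emph{open} subspace of $\breve\CH_{G,\hat{Z}}^\an$; hence it suffices to show that $\breve\pi$ sends every point $x$ of $(\breveRZ)^\an$ into $\breve\CH_{G,\hat{Z},b}^a$, i.e.\ that the pair $(b,\breve\pi(x))$ is admissible in the sense of Definition~\ref{DefWAPair}. First I would unwind the construction of $\breve\pi$ at $x$: choose an affinoid neighbourhood $S$ of $x$ with a formal model $\scrS$, an \'etale covering $\scrS'=\Spf B'\to\scrS$ trivializing the universal local $G$-shtuka, and the attached data $A\in G\bigl(B'\dbl z\dbr[\tfrac1{z-\zeta}]\bigr)$, $\olDelta\in LG(\olB')$, $\Delta\in G\bigl(B'\dbl z,z^{-1}\}[\tfrac1{\tminus}]\bigr)$ and $\gamma=b^{-1}\Delta\cdot G\bigl(B'[\tfrac1\zeta]\dbl z-\zeta\dbr\bigr)$ as in \eqref{EqPeriodMorph}; after refining $\scrS'$, lift $x$ to a point $x'$ of $\scrS'^\an$ and let $L:=\CH(x')$ be its completed residue field, with valuation ring $\CO_L$ and residue field $\ol\ell$ (admissibility of $(b,\breve\pi(x))$ may be tested after the, possibly larger, field extension $\CH(x')/\CH(x)$). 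The morphism $\Spf\CO_L\to\scrS'$ induced by $x'$ pulls the universal object back to a \emph{genuine} local $G$-shtuka $\ul\CG_L=\bigl((L^+G)_{\CO_L},A_L\s\bigr)$ over $\CO_L$ which is bounded by $\hat{Z}^{-1}$, carries a quasi-isogeny $\bar\delta_L\colon\ul\CG_{L,\ol\ell}\to\ul\BG_{0,\ol\ell}$ given by $\olDelta_L$, and whose Lemma~\ref{LemmaGL}-lift is $\Delta_L$; moreover $\breve\pi(x)$ is the point $\gamma_L:=b^{-1}\Delta_L\cdot G\bigl(L\dbl z-\zeta\dbr\bigr)\in\Gr_G^{\bB_\dR}(L)$.

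For the core step I would fix a representation $\Darst\colon G\to\GL(V)$ in $\Rep_{\BF_q\dpl z\dpr}G$ and form $\ulM:=\Darst_*\ul\CG_L=\bigl(\CO_L\dbl z\dbr^{\oplus\dim V},\Darst(A_L)\s\bigr)$, a local $\GL(V)$-shtuka over $\CO_L$ (here boundedness of $\ul\CG_L$ by $\hat{Z}^{-1}$ bounds the singularities of $\Darst(A_L)$, as in the proof of Proposition~\ref{PropBound}\ref{PropBound_A}). Then I would check, precisely along the lines of Remark~\ref{RemHPStrOfLocGSht}, that the $z$-isocrystal with Hodge--Pink structure $\ulD_{b,\gamma_L}(V)$ produced by the period map \emph{arises} from the local shtuka $\ulM$ over $\CO_L$: the quasi-isogeny $\bar\delta_L$ identifies the reduction of $\ulM$ with $\Darst_*\ul\BG_{0,\ol\ell}$, so the underlying $z$-isocrystal is $\Koh^1_\cris(\Darst_*\ul\BG_0,\BaseFld\dpl z\dpr)=\bigl(V\otimes_{\BF_q\dpl z\dpr}\BaseFld\dpl z\dpr,\Darst(\s b)\s\bigr)$; and $\s(\Delta_L)$ gives an isomorphism of this $z$-isocrystal, extended to $L\dbl z-\zeta\dbr$, with the de Rham realization of $\ulM$, carrying the Hodge--Pink structure $\Fq^\ulM=\Darst(A_L)^{-1}\cdot V\otimes_{\BF_q\dpl z\dpr}L\dbl z-\zeta\dbr$ of $\ulM$ onto $\s(\Delta_L)\Darst(A_L)^{-1}\cdot V\otimes_{\BF_q\dpl z\dpr}L\dbl z-\zeta\dbr=\Darst(\gamma_L)\cdot V\otimes_{\BF_q\dpl z\dpr}L\dbl z-\zeta\dbr=\Fq_D(V)$, because $\gamma_L=\s(\Delta_L)A_L^{-1}$. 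By \cite[Theorem~2.4.7 and Definition~2.3.3]{HartlPSp}, anything arising from a local $\GL_r$-shtuka over $\CO_L$ is admissible; hence $\ulD_{b,\gamma_L}(V)$ is admissible. Doing this for every $V$ gives in particular admissibility for a faithful $V$, and, via Corollary~\ref{CorA=>WA}, Definition~\ref{DefWA}\ref{DefWA_C} and Lemma~\ref{LemmaKottwitzCond}, also $[b]^{\#}=\gamma_L^{\#}$ in $\pi_1(G)_{\Gamma,\BQ}$. Thus $(b,\gamma_L)$ is admissible, i.e.\ $\breve\pi(x)\in\breve\CH_{G,\hat{Z},b}^a$, and since $x$ was arbitrary this proves the proposition.

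I expect the main obstacle to be handling \emph{arbitrary} Berkovich points $x$, whose residue field $L=\CH(x)$ need not be discretely valued: over such $L$, weak admissibility does not imply admissibility, so one cannot simply combine Proposition~\ref{PropAorWA} with a weak-admissibility statement, and the argument must genuinely exploit that $\ul\CG_L$ is a local shtuka over the integral ring $\CO_L$ rather than merely a Hodge--Pink $G$-structure over $L$. The two subsidiary points needing care are (i) that the pullback of the universal local $G$-shtuka along $\Spf\CO_L\to\scrS'$ is indeed a bounded local shtuka over the (generally non-noetherian) valuation ring $\CO_L$, to which \cite[Theorem~2.4.7]{HartlPSp} applies, and (ii) the bookkeeping in Remark~\ref{RemHPStrOfLocGSht} showing that the abstract $z$-isocrystal with Hodge--Pink structure $\ulD_{b,\gamma_L}(V)$ coincides with the intrinsic one of $\ulM$, including the correct matching of the Frobenius and of the Hodge--Pink lattice.
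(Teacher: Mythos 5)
Your proposal is correct and takes essentially the same route as the paper's proof: interpret a Berkovich point $x$ as an $\CO_L$-valued point of $\breveRZ$ for $L$ a complete valued field extension, apply a faithful representation $\Darst$ to get a local shtuka $\ulM$ over $\CO_L$, identify $\ulD_{b,\gamma}(V)$ as arising from $\ulM$ via the dictionary in Remark~\ref{RemHPStrOfLocGSht}, and invoke \cite[Theorem~2.4.7]{HartlPSp} to conclude admissibility. The paper is terser — it directly cites \cite[Proposition~2.4.4]{HartlPSp} for the identification $\ulCF_{b,\gamma}(V)\cong\ulM\otimes L\ancon[s]$ and leaves the condition $[b]^\#=\gamma^\#$ in $\pi_1(G)_{\Gamma,\BQ}$ implicit (it follows since the argument yields admissibility of $\ulD_{b,\gamma}(V)$ for all $V$, not just one faithful $V$, so Lemma~\ref{LemmaKottwitzCond} applies), whereas you spell out both points explicitly, which is a reasonable addition.
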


\begin{proof}
Let $x$ be a point of $(\breveRZ)^\an$ with values in a complete field extension $L$ of $\breve E$ and let $\gamma:=\breve\pi(x)\in\breve\CH_{G,\hat{Z}}^\an$ be its image under the period morphism. Then $x$ corresponds to a pair $(\ul\CG,\bar\delta)\in\breveRZ\!(\Spf\CO_L)$ where $\ul\CG$ is a local $G$-shtuka over the valuation ring $\CO_L$ of $L$. Choose a faithful $\BF_q\dbl z\dbr$-rational representation $(\Darst,V)$ of $G$, and under the equivalence between local $\GL(V)$-shtukas and local shtukas from Remark~\ref{RemHPStrOfLocGSht} let $\ulM:=M(\Darst_*\ul\CG)$ be the associated local shtuka over $\CO_L$, and let $M(\Darst_*\bar\delta)$ be the associated quasi-isogeny. By \cite[Proposition~2.4.4]{HartlPSp} the $\sigma$-bundle $\ulCF_{b,\gamma}(V)$ is isomorphic to $\ulM\otimes_{\CO_L\dbl z\dbr}L\ancon[s]$ and hence $\ulCF_{b,\gamma}(V)\otimes_{L\ancon[s]}\ol L\ancon[s]\cong \ulCF_{0,1}{}^{\oplus\dim\Darst}$; see \cite[(Proof of) Theorem~2.4.7]{HartlPSp}. In other words $\gamma\in\breve\CH_{G,\hat{Z},b}^a$.
\end{proof}

\begin{proposition}\label{PropPeriodMEtale}
The period morphism $\breve\pi\colon(\breveRZ)^\an\to\breve\CH_{G,\hat{Z},b}^a$ is \'etale.
\end{proposition}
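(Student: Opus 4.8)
The plan is to show \'etaleness by working locally on source and target and reducing to a statement about \'etale local $G$-shtukas, which can then be checked on a single faithful representation using the $\GL_r$-case from \cite{HartlPSp}. First I would recall the construction of $\breve\pi$ in terms of the data $(\ul\CG,\bar\delta)$ over $\scrS'=\Spf B'$, the trivialization $\alpha\colon\ul\CG_{\scrS'}\isoto\bigl((L^+G)_{\scrS'},A\s\bigr)$ with $A\in G\bigl(B'\dbl z\dbr[\tfrac{1}{z-\zeta}]\bigr)$, the lift $\Delta\in G\bigl(B'\dbl z,z^{-1}\}[\tfrac{1}{\tminus}]\bigr)$ from Lemma~\ref{LemmaGL}, and the resulting Hodge-Pink $G$-structure $\gamma=b^{-1}\Delta\cdot G\bigl(B'[\tfrac1\zeta]\dbl z-\zeta\dbr\bigr)$. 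Since being \'etale is \'etale-local on the source and \'etale-local on the target (see \cite[\S3.3 and \S4.3]{Berkovich2}), I may pass to such an \'etale covering $\scrS'\to\scrS$ of the source, and I may also pass to an \'etale covering of $\breve\CH_{G,\hat Z,b}^a$ over which the universal Hodge-Pink $G$-structure is given by a universal element $h\in G\bigl(\CO_X\dpl z-\zeta\dpr\bigr)$ as in the proof of Theorem~\ref{ThmWAOpen}.

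The key reduction is the following: on such coverings, the period morphism becomes identified with the map that forgets the quasi-isogeny and retains the \'etale local $G$-shtuka $\bigl((L^+G)_{S'},A\s\bigr)$ together with its trivialization, while the target parametrizes Hodge-Pink $G$-structures bounded by $\hat Z$ whose associated pair with $b$ is admissible. More precisely, one uses Remark~\ref{RemEtaleLocSht}: over $(\breveRZ)^\an$ there is a universal \'etale local $G$-shtuka, and via the element $\Delta$ the datum $(\ul\CG,\bar\delta)$ over $\Spf\CO_L$ (for $L$ a complete field extension of $\breve E$) is equivalent to the datum of the Hodge-Pink $G$-structure $\gamma\in\breve\CH_{G,\hat Z,b}^a$ together with a trivialization of the associated $\tau$-invariant local system — equivalently, of the \'etale local $G$-shtuka recovered from $\gamma$ by the admissibility (Definition~\ref{DefSigmaModuleOfD} and Proposition~\ref{PropPeriodMorph}). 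This is exactly the Tannakian rephrasing: composing $\breve\pi$ with a faithful representation $\Darst\colon G\into\GL_r$ factoring through $\SL_r$, and using the equivalence of Remark~\ref{RemHPStrOfLocGSht} between local $\GL_r$-shtukas and local shtukas, one is reduced to the case $G=\GL_r$ (more precisely $\SL_r$), where the \'etaleness of the period morphism for local shtukas is \cite[Theorem~2.4.7 and its proof, together with \S3.4]{HartlPSp}: there the period morphism is shown to be an isomorphism onto an open subspace after trivializing, and in particular \'etale. The compatibility with the boundedness condition $\hat Z^{-1}$ ensures that the $\GL_r$-period morphism restricts correctly, and the faithfulness of $\Darst$ together with the fact that the closed ind-subscheme $LG\into L\SL_r$, resp.\ $\Gr_G^{\bB_\dR}\into\Gr_{\SL_r}^{\bB_\dR}$, is cut out by explicit equations (as in the proof of Proposition~\ref{PropBound}\ref{PropBound_A}) lets one descend \'etaleness from $\GL_r$ to $G$: a morphism to $\breve\CH_{G,\hat Z,b}^a$ is \'etale if its composite with the (locally closed) immersion $\breve\CH_{G,\hat Z,b}^a\into\breve\CH_{\SL_r,2n\rho\dual,\Darst(b)}^a$ is, because the former is a subspace of the latter cut out by the group equations and the source also maps into the corresponding $\SL_r$-Rapoport-Zink space in a way compatible with these equations.

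So the concrete steps are: (1) pass to \'etale-local trivializations on source and target as above; (2) identify $\breve\pi$ locally with the natural forgetful map between spaces carrying trivialized \'etale local $G$-shtukas, using Lemma~\ref{LemmaGL}, Lemma~\ref{LemmaTrivializing} and Remark~\ref{RemEtaleLocSht}; (3) compose with a faithful $\Darst\colon G\into\SL_r$ and invoke the $\GL_r$/$\SL_r$ result \cite[Theorem~2.4.7]{HartlPSp} (whose proof shows the period morphism is locally an open immersion, hence \'etale) together with Remark~\ref{RemHPStrOfLocGSht}; (4) descend \'etaleness along the equations cutting out $G$ inside $\SL_r$ at the level of loop groups and affine Grassmannians, i.e.\ use that \'etaleness of $\breve\pi$ follows from \'etaleness of its $\SL_r$-analogue by base change along the immersion $\breve\CH_{G,\hat Z,b}^a\into\breve\CH_{\SL_r,2n\rho\dual,\Darst(b)}^a$, since $(\breveRZ)^\an$ is the fiber product of the $\SL_r$-space with this immersion. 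The main obstacle I anticipate is step (4): making precise that the $G$-moduli problem is genuinely the ``group-equations'' base change of the $\SL_r$-moduli problem, on both source and target simultaneously and compatibly with the period morphism. One must check that the universal \'etale local $\SL_r$-shtuka on the $\SL_r$-Rapoport-Zink space, restricted along the equations cutting out $G$, recovers precisely the universal \'etale local $G$-shtuka on $(\breveRZ)^\an$ — this is where the Tannakian formalism and the explicit description of $LG\into L\SL_r$ do the work, but it requires care because the relevant spaces are analytic and the immersions are only locally closed. Once that compatibility is in place, \'etaleness is preserved under base change and the proof concludes; the connectedness and openness of the image are then the content of Theorem~\ref{MainThm} and not needed here.
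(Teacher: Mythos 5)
Your approach is genuinely different from the paper's, and it has a gap that I do not think can be repaired without essentially redoing the paper's argument. The paper proves \'etaleness by passing to the associated adic/rigid space and applying the infinitesimal lifting criterion of \cite[Proposition~2.4]{FRG3}: given a square-zero extension $B\onto B_0$, it constructs (after \'etale covers and admissible blow-ups) a unique lift of the $B_0[\tfrac1\zeta]$-point of $(\breveRZ)^\an$ over $B[\tfrac1\zeta]$ compatible with a given lift of the period, using Lemma~\ref{LemmaGL}, the key observation that $\sigma$ kills the square-zero ideal $I$ so that $\sigma^*(\Delta_0)$ lifts canonically, and Lemma~\ref{LemmaDenominators} to control denominators; partial properness is handled separately via Theorem~\ref{ThmRRZSp} and \cite{Huber96}. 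This is, as the paper's own remark after the proof notes, a direct translation of \cite[Proposition~5.17]{RZ}.

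The gap in your proposal is step (4), the fiber product claim $(\breveRZ)^\an \cong (\breveRZ_{\SL_r})^\an\times_{\breve\CH_{\SL_r,2n\rho\dual,\Darst(b)}^a}\breve\CH_{G,\hat Z,b}^a$. You flag it yourself as ``requiring care,'' but it is not merely a matter of care: it is a deep Tannakian statement about local shtukas over formal models (not just over fields), and it is essentially the hard content of Steps~2--6 of the proof of Theorem~\ref{MainThm}\ref{MainThm_1}. Example~\ref{ExampleNotATorsor} shows that for a non-parahoric $G$ the tensor functor $V\mapsto M_V$ fails to be exact over $\CO_\Omega\dbl z\dbr$, so an $\SL_r$-shtuka whose period is a $G$-period need \emph{not} arise from a $G$-shtuka; for parahoric $G$ this only gets rescued by the ind-projectivity of $\Flag_G$ via an intricate argument. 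Moreover Theorem~\ref{MainThm}\ref{MainThm_2} explicitly invokes Proposition~\ref{PropPeriodMEtale} in its proof, so one must take great care to avoid circularity if one wanted to import those Tannakian arguments here. There is also a secondary mismatch: $\Darst_*\hat Z$ is a closed subscheme of $\wh\Flag^{(n)}_{\SL_r}$, not all of it, so the $\SL_r$-Rapoport--Zink space for the bound $\hat Z_{\SL_r,\preceq 2n\rho\dual}$ is strictly larger than the one over which the base change would have to be taken, and the quasi-isogeny to $\Darst(b)$ sees the full $\SL_r$-centralizer rather than $J_b$; making the fiber product precise would require correcting for both.

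Finally, your appeal to \cite[Theorem~2.4.7]{HartlPSp} as the \'etaleness input in the $\GL_r$/$\SL_r$ case misreads that reference: in this paper it is cited only for the admissibility/existence equivalence (the equal-characteristic Colmez--Fontaine analog), not for \'etaleness of a period morphism. The phrase ``isomorphism onto an open subspace after trivializing'' is also too strong: in Berkovich/rigid geometry, \'etale morphisms are not local isomorphisms (a finite \'etale double cover of a disc is not locally an open immersion), and the Drinfeld and Gross--Hopkins examples (\ref{ExDrinfeld}, \ref{ExGrossHopkins}) exhibit period morphisms with infinite fibers, which on their own would only become isomorphisms after breaking the space into its connected components and even then only in the special Drinfeld case. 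The deformation-theoretic argument of the paper sidesteps all of these difficulties and is the cleaner route.
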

For later use, we formulate one smaller step in the proof as a separate lemma.

\begin{lemma}\label{LemmaPartProper}
$(\breveRZ)^\ad$ is separated and partially proper over $\breve E$.
\end{lemma}

\begin{proof}
The irreducible components of its special fiber are proper by Theorem \ref{ThmRRZSp}. Thus the lemma follows from \cite[Remark~1.3.18]{Huber96}.
\end{proof}

\begin{proof}[Proof of Proposition \ref{PropPeriodMEtale}]
Let $\breve\pi^\rig\colon(\breveRZ)^\rig\rightarrow (\breve\CH_{G,\hat{Z},b}^a)^\rig$ be the associated morphism of rigid analytic spaces and let $\breve\pi^\ad\colon(\breveRZ)^\ad\rightarrow (\breve\CH_{G,\hat{Z},b}^a)^\ad$ be the associated morphism of adic spaces in the sense of Huber~\cite{Huber96}. By \cite[Assertion~(a) on p.~427]{Huber96} the morphism $\breve\pi$ is \'etale if and only if $\breve\pi^\ad$ is \'etale and partially proper. The subspace $(\breve\CH_{G,\hat{Z},b}^a)^\ad\subset(\breve\CH_{G,\hat{Z}})^\ad$ is open by Theorem~\ref{ThmWAOpen} and \cite[Assertion~(1) on p.~431]{Huber96}. Therefore $(\breve\CH_{G,\hat{Z},b}^a)^\ad$ is separated over $\breve E$ by \cite[Lemma~1.10.17]{Huber96}. So by \cite[Lemma~1.10.17(vi)]{Huber96} and the above lemma $\breve\pi^\ad$ is partially proper.

It remains to show that $\breve\pi^\ad$ is \'etale. By \cite[Proposition~1.7.11]{Huber96} this is equivalent to $\breve\pi^\rig$ being \'etale. So by \cite[Proposition~2.4]{FRG3} we must show that for any admissible $\breve R_{\hat Z}$-algebra $B$ in the sense of Raynaud and for any ideal $I\subset B$ with $I^2=0$ and any commutative diagram with solid arrows
\begin{equation}\label{EqDiagEtale}
\xymatrix @C+3pc {
S_0\;:=\;\Spm B/I[\tfrac{1}{\zeta}] \ar[d] \ar[r]^{\TS f_0} & (\breveRZ)^\rig \ar[d]^{\TS\breve\pi^\rig}\\
S\;:=\;\Spm B[\tfrac{1}{\zeta}] \ar[r]^{\TS\gamma} \ar@{-->}[ru]^{\TS\exists\,!\,f} & (\breve\CH_{G,\hat{Z},b}^a)^\rig
}
\end{equation}
there is a unique dashed arrow $f$ making the diagram commutative. We set $\scrS:=\Spf B$, as well as $B_0:=B/I$ and $\scrS_0:=\Spf B_0$. We will construct $f$ as a morphism $\scrS\to\breveRZ$ after replacing $\scrS$ by an admissible blowing-up. Note that every admissible blowing-up of $\scrS_0$ is induced by an admissible blowing-up of $\scrS$.  Moreover, in the course of the proof we may replace $S$ by a quasi-compact, quasi-separated, \'etale covering $S'\to S$. Namely, by \cite[Corollaries~5.10 and 5.4]{FRG2}, every such covering is obtained from a quasi-compact morphism $\scrS'\to\scrS$ of formal schemes that is faithfully flat after replacing $\scrS$ by an admissible blowing-up. By the uniqueness assertion for $f$ it suffices to construct $f$ over $\scrS'$ and descend it back to $\scrS$. 

After replacing $\scrS$ by an admissible blowing-up the morphism $f_0$ extends to $f_0\colon\scrS_0\to\breveRZ$ and corresponds to a pair $(\ul\CG_0,\bar\delta_0)\in\breveRZ(\scrS_0)$ where $\ul\CG_0$ is a local $G$-shtuka over $\scrS_0$. By Lemma~\ref{LemmaTrivializing} we may replace $\scrS_0$ by an \'etale covering such that $\ul\CG_0\cong\bigl((L^+G)_{\scrS_0},A_0\s\bigr)$ for some $A_0\in G\bigl(B_0\dbl z\dbr[\tfrac{1}{z-\zeta}]\bigr)$. By \cite[Th\'eor\`eme~I.8.3]{SGA1} this \'etale covering lifts uniquely to an \'etale covering of $\scrS$. The quasi-isogeny $\bar\delta_0\colon\ul\CG_{0,\olCS_0}\to\ul\BG_{0,\olCS_0}$ over $\olCS_0:=\Spec B_0/(\zeta)$ corresponds to an element $\olDelta_0\in LG\bigl(B_0/(\zeta)\bigr)$ that lifts by Lemma~\ref{LemmaGL} to a unique $\Delta_0\in G\bigl(B_0\dbl z,z^{-1}\}[\tfrac{1}{\tminus}]\bigr)$ with $\Delta_0\mod\zeta=\olDelta_0$ and $\Delta_0\cdot A_0=b\cdot\s(\Delta_0)$. If we let $\gamma_0\in\breve\CH_{G,\hat{Z},b}^a(S_0)$ be the pullback of $\gamma\in\breve\CH_{G,\hat{Z},b}^a(S)$ then $\gamma_0=\s(\Delta_0)A_0^{-1}\cdot G\bigl(B_0[\tfrac{1}{\zeta}]\dbl z-\zeta\dbr\bigr)$ and $\s(\Delta_0)^{-1}\gamma_0=A_0^{-1}\cdot G\bigl(B_0[\tfrac{1}{\zeta}]\dbl z-\zeta\dbr\bigr)$. 

We claim that $\s(\Delta_0)$ lifts to a uniquely determined element of $G\bigl(B\dbl z,z^{-1}\}[\tfrac{1}{\s\tminus}]\bigr)$. Indeed, after choosing a faithful $\BF_q\dbl z\dbr$-rational representation $\Darst\colon G\into\SL_r$ there is an integer $e$ such that the matrix $\tminus^e\cdot\Darst(\Delta_0)\in B_0\dbl z,z^{-1}\}^{r\times r}$. Since $\s(I)=0$ the morphism $\s\colon B\to B$, $x\mapsto x^q$ factors over $B\onto B_0\to B$, and so $(\s\tminus)^e\cdot\Darst(\s\Delta_0)=\s\bigl(\tminus^e\cdot\Darst(\Delta_0)\bigr)\in B\dbl z,z^{-1}\}^{r\times r}$. This implies $\Darst(\s\Delta_0)\in\SL_r\bigl(B\dbl z,z^{-1}\}[\tfrac{1}{\s\tminus}]\bigr)$ and $\s(\Delta_0)\in G\bigl(B\dbl z,z^{-1}\}[\tfrac{1}{\s\tminus}]\bigr)$. By \eqref{EqBmaxInBdR} it follows moreover, that $\s(\Delta_0)\in G\bigl(B[\tfrac{1}{\zeta}]\dbl z-\zeta\dbr\bigr)$.

We now replace $S$ by a quasi-compact, quasi-separated, \'etale covering over which $\s(\Delta_0)^{-1}\gamma$ is induced from an element $g\in G\bigl(B[\tfrac{1}{\zeta}]\dpl z-\zeta\dpr\bigr)$. Consider the element $c_0:=(g\mod I)^{-1}A_0^{-1}\in G(B_0[\tfrac{1}{\zeta}]\dbl z-\zeta\dbr)$. Since the kernel of $B[\tfrac{1}{\zeta}]\dbl z-\zeta\dbr\to B_0[\tfrac{1}{\zeta}]\dbl z-\zeta\dbr$ is a nilpotent ideal and $G$ is smooth, $c_0$ lifts to an element $c\in G(B[\tfrac{1}{\zeta}]\dbl z-\zeta\dbr)$ and replacing $g$ by $gc$ yields $g\mod I=A_0^{-1}$. Also $\s(\Delta_0)^{-1}\gamma\in(\breve\CH_{G,\hat Z})^\rig(S)$ by Definition~\ref{DefBDLocal}\ref{DefBDLocal_A11}, whence $g\cdot G(B[\tfrac{1}{\zeta}]\dbl z-\zeta\dbr)\;\in\;(\hat Z_E\wh\otimes_E\breve E)^\rig(S)$. We denote the corresponding morphism of rigid analytic spaces by $\alpha\colon S\to(\hat Z_E\wh\otimes_E\breve E)^\rig$. Let $R$ be an extension of $R_{\hat Z}$ over which a representative $\hat Z_R$ of $\hat Z$ exists and such that $\Quot(R)$ is a finite Galois extension of $E_{\hat Z}$. We let $\breve R$ be the ring of integers in the completion of the maximal unramified extension of $\Quot(R)$, and we set $\hat{Z}_{\breve R}:=\hat{Z}_R\whtimes_R\Spf\breve R$. By applying Galois descend with respect to the field extension $\Quot(\breve R)/\breve E$ in the end, we may restrict to the case where $\scrS$ is a formal scheme over $\Spf\breve R$ and not just over $\Spf\breve R_{\hat Z}$. Let $\scrS'\subset \hat Z_{\breve R}\whtimes_{\breve R}\scrS$ be the $\zeta$-adic completion of the scheme theoretic closure of the graph of $\alpha$. It is projective over $\scrS$ by Proposition~\ref{PropBound}\ref{PropBound_D} and therefore $(\scrS')^\rig=\scrS^\rig$. So $\scrS'\to\scrS$ is an admissible blowing-up by \cite[Corollary~5.4]{FRG2} and we replace $\scrS$ by $\scrS'$ to obtain an extension $\alpha\colon\scrS\to\hat Z_{\breve R}$. After replacing $\scrS$ by an \'etale covering this morphism $\alpha$ is of the form $A^{-1}\cdot G(B\dbl z\dbr)\in\hat Z_{\breve R}(\scrS)$ for an element $A^{-1}\in G(B\dbl z\dbr[\tfrac{1}{z-\zeta}])$; compare the proof of Proposition~\ref{PropBound}. Since $g$ and $A^{-1}$ both correspond to the morphism $\alpha\colon S\to(\breve\CH_{G,\hat Z})^\rig$, we have $g\cdot G(B[\tfrac{1}{\zeta}]\dbl z-\zeta\dbr)=A^{-1}\cdot G(B[\tfrac{1}{\zeta}]\dbl z-\zeta\dbr)$ in $(\breve\CH_{G,\hat Z})^\rig(S)$. We consider the element $a_0:=A_0(A^{-1}\mod I)\in G(B_0\dbl z\dbr[\tfrac{1}{z-\zeta}])$. Its image in $(\breve\CH_{G,\hat Z})^\rig(S_0)$ equals 
\[
(g^{-1}\mod I)(A^{-1}\mod I)\cdot G(B_0[\tfrac{1}{\zeta}]\dbl z-\zeta\dbr)\;=\;(g^{-1}g\mod I)\cdot G(B_0[\tfrac{1}{\zeta}]\dbl z-\zeta\dbr)\;=\;1\cdot G(B_0[\tfrac{1}{\zeta}]\dbl z-\zeta\dbr), 
\]
that is $a_0\in G(B_0[\tfrac{1}{\zeta}]\dbl z-\zeta\dbr)$. By the following Lemma~\ref{LemmaDenominators} this implies that $a_0\in G(B_0\dbl z\dbr)$. Again since the kernel of $B\dbl z\dbr\to B_0\dbl z\dbr$ is nilpotent and $G$ is smooth, $a_0$ lifts to an element $a\in G(B\dbl z\dbr)$, and replacing $A^{-1}$ by $A^{-1}a^{-1}$ yields $A\mod I=A_0$. We consider the local $G$-shtuka $\ul\CG:=\bigl((L^+G)_{\scrS},A\s\bigr)$ with $\ul\CG_{\scrS_0}=\ul\CG_0$. Then $\Delta_0$ lifts to $\Delta:=b\,\s(\Delta_0)\,A^{-1}\in G\bigl(B\dbl z,z^{-1}\}[\tfrac{1}{\tminus}]\bigr)$ and $\olDelta:=\Delta\mod(\zeta)\colon\ul\CG_{\olCS}\to\ul\BG_{0,\olCS}$ is the unique lift of the quasi-isogeny $\bar\delta_0=\olDelta_0$ by rigidity. We let $f\colon\scrS\to\breveRZ$ be the morphism given by $(\ul\CG,\olDelta)\in\breveRZ(\scrS)$. It makes the diagram \eqref{EqDiagEtale} commutative, because $\s(\Delta)A^{-1}\cdot G\bigl(B[\tfrac{1}{\zeta}]\dbl z-\zeta\dbr\bigr)\;=\; \s(\Delta_0)g\cdot G\bigl(B[\tfrac{1}{\zeta}]\dbl z-\zeta\dbr\bigr)\;=\;\gamma$.

To prove that $f$ is uniquely determined let $f'\colon\scrS\to\breveRZ$ be a second morphism making the diagram \eqref{EqDiagEtale} commutative. The corresponding point $(\ul\CG'\!,\bar\delta')\in\breveRZ(\scrS)$ is of the form $\ul\CG'=\bigl((L^+G)_{\scrS},A'\s\bigr)$ with $A'\mod I=A_0$ and $\Delta'=b\,\s(\Delta_0)\,A'{}^{-1}\in G\bigl(B\dbl z,z^{-1}\}[\tfrac{1}{\tminus}]\bigr)$. We assume that it is mapped under $\breve\pi^\rig$ also to $\gamma$. This means $\s(\Delta_0)A^{-1}\cdot G\bigl(B[\tfrac{1}{\zeta}]\dbl z-\zeta\dbr\bigr)\;=\;\gamma\;=\;\s(\Delta_0)A'{}^{-1}\cdot G\bigl(B[\tfrac{1}{\zeta}]\dbl z-\zeta\dbr\bigr)$, whence $\Phi:=A'A^{-1}\in G(B[\tfrac{1}{\zeta}]\dbl z-\zeta\dbr)\subset G(B[\tfrac{1}{\zeta}]\dpl z-\zeta\dpr)$. From Lemma~\ref{LemmaDenominators} it follows that $\Phi\in G(B\dbl z\dbr)$. Also $\s(\Phi)=\s(A'A^{-1}\mod I)=\s(1)=1$ implies $\Phi A=A'\s(\Phi)$ and $\Phi=\Delta'{}^{-1}\Delta$. We conclude that $\Phi$ is an isomorphism $(\ul\CG,\bar\delta)\isoto(\ul\CG'\!,\bar\delta')$. This means $f=f'$ and finishes the proof.
\end{proof}

\begin{remark}
When $G=\GL_r$ the proof starts in terms of Remark~\ref{RemHPStrOfLocGSht} with a local shtuka $\ulM_0$ over $\scrS_0$. Then it considers the de Rham cohomology of $\ulM_0$, which lifts to $\scrS$ by its crystalline nature. Next it produces from the Hodge-Pink structure $\gamma$ a Hodge-Pink structure on the de Rham cohomology of $\ulM_0$ over $\scrS$ which lifts the Hodge-Pink structure of $\ulM_0$. This lift of the Hodge-Pink structure corresponds to a unique lift of $\ulM_0$ to a local shtuka $\ulM$ over $\scrS$ by \cite[Proposition~6.3]{GL}. In that sense our proof is a direct translation of \cite[Proposition~5.17]{RZ}.
\end{remark}

\begin{lemma}\label{LemmaDenominators}
Let $B$ be an $\BF_q\dbl\zeta\dbr$-algebra without $\zeta$-torsion that is $\zeta$-adically complete, and let $a\in G(B\dbl z\dbr[\tfrac{1}{z-\zeta}])$ such that the image of $a$ in $G(B[\tfrac{1}{\zeta}]\dpl z-\zeta\dpr)$ lies in $G(B[\tfrac{1}{\zeta}]\dbl z-\zeta\dbr)$. Then $a\in G(B\dbl z\dbr)$.
\end{lemma}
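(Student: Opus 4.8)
The plan is to reduce the statement to an elementary fact about coefficients of power series. Since a parahoric group scheme $G$ is affine of finite type over $\Spec\BF_q\dbl z\dbr$, for any $\BF_q\dbl z\dbr$-algebra $C$ a $C$-valued point of $G$ is the same as an $\BF_q\dbl z\dbr$-algebra homomorphism $\CO(G)\to C$, and for an injective homomorphism $C\into C'$ the induced map $G(C)\to G(C')$ is injective. Now $B\dbl z\dbr$ and $B\dbl z\dbr[\tfrac{1}{z-\zeta}]$ are $\BF_q\dbl z\dbr$-algebras via $z\mapsto z$; writing $B\dbl z-\zeta\dbr=B\dbl\varpi\dbr$ with a new variable $\varpi$ standing for $z-\zeta$, the substitution $z\mapsto\zeta+\varpi$ defines an $\BF_q\dbl z\dbr$-algebra homomorphism $\phi\colon B\dbl z\dbr\to B\dbl z-\zeta\dbr$ (well defined by the $\zeta$-adic completeness of $B$), and its localizations as well as $B[\tfrac{1}{\zeta}]\dbl z-\zeta\dbr$ and $B[\tfrac{1}{\zeta}]\dpl z-\zeta\dpr=B[\tfrac{1}{\zeta}]\dbl z-\zeta\dbr[\tfrac{1}{z-\zeta}]$ inherit this $\BF_q\dbl z\dbr$-algebra structure. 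With these conventions the hypothesis on $a$ says precisely that the composite $\CO(G)\to B\dbl z\dbr[\tfrac{1}{z-\zeta}]\to B[\tfrac{1}{\zeta}]\dpl z-\zeta\dpr$ factors, necessarily uniquely, through $B[\tfrac{1}{\zeta}]\dbl z-\zeta\dbr$. Hence it suffices to prove the identity
\[
B\dbl z\dbr\;=\;B\dbl z\dbr[\tfrac{1}{z-\zeta}]\,\cap\,B[\tfrac{1}{\zeta}]\dbl z-\zeta\dbr\qquad\text{inside}\quad B[\tfrac{1}{\zeta}]\dpl z-\zeta\dpr ,
\]
and then to apply it to each $a^\ast(f)$ for $f\in\CO(G)$; as $z-\zeta$ is a non-zero-divisor in $B\dbl z\dbr$ (because $B$ has no $\zeta$-torsion), the inclusion $B\dbl z\dbr\into B\dbl z\dbr[\tfrac{1}{z-\zeta}]$ is injective, so this yields a factorization $\CO(G)\to B\dbl z\dbr$ of $a^\ast$, i.e.\ $a\in G(B\dbl z\dbr)$.

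To prove the displayed identity I would first check that $\phi$ is an \emph{isomorphism}. The reverse substitution $\varpi\mapsto z-\zeta$ defines a homomorphism $B\dbl z-\zeta\dbr\to B\dbl z\dbr$, again well defined by the $\zeta$-adic completeness of $B$, and a direct computation of coefficients using the binomial identity $\sum_{i\le j\le k}(-1)^{j-i}\binom{k}{j}\binom{j}{i}=\delta_{ik}$ shows that these two substitutions are mutually inverse. Consequently $\phi$ induces an isomorphism $B\dbl z\dbr[\tfrac{1}{z-\zeta}]\isoto B\dbl z-\zeta\dbr[\tfrac{1}{z-\zeta}]$ sending $z-\zeta$ to $\varpi$ and compatible with the inclusions into $B[\tfrac{1}{\zeta}]\dpl z-\zeta\dpr$.

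It then remains to show that $B\dbl z-\zeta\dbr[\tfrac{1}{z-\zeta}]\cap B[\tfrac{1}{\zeta}]\dbl z-\zeta\dbr=B\dbl z-\zeta\dbr$ inside $B[\tfrac{1}{\zeta}]\dpl z-\zeta\dpr$. All the inclusions involved are injective, since $B\into B[\tfrac{1}{\zeta}]$ (as $B$ has no $\zeta$-torsion) and $z-\zeta$ is a non-zero-divisor in $B[\tfrac{1}{\zeta}]\dbl z-\zeta\dbr$, so the intersection is well posed; and the claim is immediate by inspecting coefficients. Indeed, an element of $B\dbl z-\zeta\dbr[\tfrac{1}{z-\zeta}]$ is a Laurent series $\sum_{j\ge-n}c_j(z-\zeta)^j$ with all $c_j\in B$ and only finitely many $j<0$; it lies in $B[\tfrac{1}{\zeta}]\dbl z-\zeta\dbr$ exactly when $c_j=0$ in $B[\tfrac{1}{\zeta}]$ for every $j<0$, equivalently (as $B\into B[\tfrac{1}{\zeta}]$) when $c_j=0$ in $B$ for every $j<0$, that is, when the element already lies in $B\dbl z-\zeta\dbr$.

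The argument is essentially formal, and I do not expect a genuine obstacle; the only point that demands care is the bookkeeping of the various rings and of the substitution $z\mapsto\zeta+(z-\zeta)$ — in particular the fact that $B\dbl z\dbr[\tfrac{1}{z-\zeta}]$, although it contains $\tfrac{1}{z-\zeta}$, is strictly smaller than $B[\tfrac{1}{\zeta}]\dbl z\dbr$, and that the correct ambient ring in which to compare it with $B[\tfrac{1}{\zeta}]\dbl z-\zeta\dbr$ is $B[\tfrac{1}{\zeta}]\dpl z-\zeta\dpr$, where the map induced by $z\mapsto\zeta+(z-\zeta)$ is well behaved. Once the substitution isomorphism of the second step is established, the remaining assertion is the trivial coefficient computation above.
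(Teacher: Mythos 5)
Your proof is correct, and it takes a route that is genuinely cleaner than the paper's, though the two ultimately rest on the same arithmetic observation. The paper fixes a faithful representation $\Darst\colon G\into\SL_r$, observes that a closed immersion lets one pass from matrix entries of $\Darst(a)$ back to $a$, clears denominators by multiplying by $(z-\zeta)^n$, and then appeals to a divisibility statement (``this follows as in Lemma~\ref{LemmaDivisibility}''): a power series $f\in B\dbl z\dbr$ whose image in $B[\tfrac{1}{\zeta}]\dbl z-\zeta\dbr$ is divisible by $z-\zeta$ is already divisible by $z-\zeta$ in $B\dbl z\dbr$, proved by an explicit formula for the coefficients of the quotient which converges $\zeta$-adically. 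You bypass the faithful representation entirely by working directly with the coordinate ring $\CO(G)$ — a reduction that only uses that $G$ is affine — and you package the ring-theoretic core as the clean identity $B\dbl z\dbr = B\dbl z\dbr[\tfrac{1}{z-\zeta}]\cap B[\tfrac{1}{\zeta}]\dbl z-\zeta\dbr$ inside $B[\tfrac{1}{\zeta}]\dpl z-\zeta\dpr$, which you prove by exhibiting the substitution isomorphism $\phi\colon B\dbl z\dbr\isoto B\dbl z-\zeta\dbr$ (well defined precisely because $B$ is $\zeta$-adically complete, inverse well defined for the same reason, inverses by the binomial identity) and then reading off coefficients in the Laurent series ring, where the claim is indeed trivial once one knows $B\into B[\tfrac{1}{\zeta}]$ is injective. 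Your isomorphism $\phi$ subsumes the paper's explicit divisibility computation: after transport along $\phi$, the quotient by $z-\zeta$ that the paper constructs coefficient-by-coefficient is just the visible shift of a power series in $z-\zeta$. One small point worth spelling out in a final write-up is that $\phi\circ\psi=\id$ and $\psi\circ\phi=\id$ can be checked either by the explicit interchange of $\zeta$-adically convergent double sums (as you sketch via the binomial identity) or by noting both composites are $B$-algebra endomorphisms fixing $z$ (resp.\ $z-\zeta$) that are continuous for the $(z,\zeta)$-adic topology and hence determined by their effect on $B[z]$; either way there is no gap.
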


\begin{proof}
Note that $B\dbl z\dbr$ has no $(z-\zeta)$-torsion, because $B$ has no $\zeta$-torsion. Let $\Darst\colon G\into\SL_r$ be a faithful representation over $\BF_q\dbl z\dbr$ and consider the matrix $\Darst(a)\in\SL_r(B\dbl z\dbr[\tfrac{1}{z-\zeta}])\subset B\dbl z\dbr[\tfrac{1}{z-\zeta}]^{r\times r}$.

It is enough to show that this matrix is in $B\dbl z\dbr^{r\times r}$ as $\SL_r(B\dbl z\dbr[\tfrac{1}{z-\zeta}])\cap B\dbl z\dbr^{r\times r}= \SL_r(B\dbl z\dbr)$ and  $\SL_r(B\dbl z\dbr)\cap \rho (G(B\dbl z\dbr[\tfrac{1}{z-\zeta}]))= \rho(G(B\dbl z\dbr))$ as $\rho$ is defined over $\BF_q\dbl z\dbr.$

After multiplying $\Darst(a)$ by $(z-\zeta)^n$ for sufficiently large $n$, its denominators disappear and its image in $B[\tfrac{1}{\zeta}]\dbl z-\zeta\dbr^{r\times r}$ is divisible by $(z-\zeta)^n$. Thus it suffices to show that an element $f$ of $B\dbl z\dbr$ whose image in $B[\tfrac{1}{\zeta}]\dbl z-\zeta\dbr$ is divisible by $z-\zeta$, is already divisible by $z-\zeta$ in $B\dbl z\dbr$. This follows as in Lemma~\ref{LemmaDivisibility}.
\end{proof}

We end this section with some examples.

\begin{example}{\bfseries (The Drinfeld period morphism.)}\label{ExDrinfeld}
This example is due to Drinfeld~\cite{Drinfeld76}. A good account is given by Genestier and Lafforgue~\cite{Genestier,GL08}. Let $d$ be a positive integer and let $D$ be the central division algebra over $\BF_q\dpl z\dpr$ of Hasse invariant $1/d$. Let $\CO_D$ be its maximal order. We may identify $D\cong\bigoplus_{i=0}^{d-1}\BF_{q^d}\dpl z\dpr\Pi^i$ and $\CO_D\cong\bigoplus_{i=0}^{d-1}\BF_{q^d}\dbl z\dbr\Pi^i$ with $\Pi^d=z$ and $\Pi a=\sigma(a)\Pi$ for $a\in\BF_{q^d}\dpl z\dpr$. We let $G:=\CO_D\mal$ be the group scheme over $\BF_q\dbl z\dbr$ with $G(A)=(\CO_D\otimes_{\BF_q\dbl z\dbr}A)\mal$ for $\BF_q\dbl z\dbr$-algebras $A$. Consider the matrices
\begin{equation}\label{EqDrinfeldPi}
T\;:=\;\left( \raisebox{4.6ex}{$
\xymatrix @C=0.2pc @R=0pc {
0 \ar@{.}[rrr]\ar@{.}[drdrdrdr] & & & 0 & z-\zeta\\
1\ar@{.}[drdrdr]  & & &  & 0 \ar@{.}[ddd]\\
0 \ar@{.}[dd]\ar@{.}[drdr] & & & &  \\
& & & & \\
0 \ar@{.}[rr] & & 0 & 1 & 0\\
}$}
\right) \qquad\text{and}\qquad
\olT\;:=\;\left( \raisebox{4.6ex}{$
\xymatrix @C=0.2pc @R=0pc {
0 \ar@{.}[rrr]\ar@{.}[drdrdrdr] & & & 0 & z\\
1\ar@{.}[drdrdr]  & & &  & 0 \ar@{.}[ddd]\\
0 \ar@{.}[dd]\ar@{.}[drdr] & & & &  \\
& & & & \\
0 \ar@{.}[rr] & & 0 & 1 & 0\\
}$}
\right)\;=\;T\mod\zeta.
\end{equation}
The field extension $\BF_{q^d}$ of $\BF_q$ splits the division algebra $D$ by the isomorphisms $D\otimes_{\BF_q}\BF_{q^d}\cong \BF_{q^d}\dpl z\dpr^{d\times d}$ and $\CO_D\otimes_{\BF_q}\BF_{q^d}\cong \{g\in\BF_{q^d}\dbl z\dbr^{d\times d}\colon g\mod z\text{ is lower triangular}\}$ sending $\Pi\otimes1$ to $\olT$ and $a\otimes1$ to $\diag\bigl(\sigma^{d-1}(a),\sigma^{d-2}(a),\ldots,a\bigr)$ for $a\in\BF_{q^d}\dpl z\dpr\subset D$. So $G\otimes_{\BF_q}\BF_{q^d}$ is the Iwahori group scheme $I:=\{g\in\GL_d\colon g\mod z\text{ is lower triangular}\}$. Let $b=\Pi\in LG(\BF_q)=D\mal$ and let the bound $\hat Z$ be represented over $R=\BF_{q^d}\dbl\zeta\dbr$ by 
\[
\hat Z_R\::=\:L^+I\cdot T^{-1}\cdot L^+I/L^+I\;\subset\;\wh\Flag_{I,R}\;\cong\;\wh\Flag_{G,R}\,. 
\]
Its reflex ring is $R_{\hat Z}=\BF_q\dbl\zeta\dbr$ and $\breve\CH_{G,\hat Z}=\BP^{d-1}_{\BaseFld\dpl\zeta\dpr}$. The quasi-isogeny group $J_b$ equals $\GL_d$. We are going to describe $\breveRZ$.

The category of $L^+G$-torsors over a scheme $S\in\Nilp_{\BF_q\dbl\zeta\dbr}$ is equivalent to the category of $\CO_S\dbl z\dbr$-modules with $\CO_{D^\opp}\wh\otimes_{\BF_q}\CO_S$-action, that are Zariski locally on $S$ of the form $\CO_D\wh\otimes_{\BF_q}\CO_S$, where $\CO_{D^\opp}\wh\otimes_{\BF_q}\CO_S$ acts by multiplication on the right. This equivalence sends an $L^+G$-torsor $\CG$ that is trivialized over an \'etale covering $S'\to S$ by $\alpha\colon\CG_{S'}\isoto(L^+G)_{S'}$ with $h:=p_1^*\alpha\circ p_2^*\alpha^{-1}\in L^+G(S'')=\bigl(\CO_D\otimes_{\BF_q\dbl z\dbr}\Gamma(S''\!,\CO_{S''})\dbl z\dbr\bigr)\mal$ where $p_i\colon S'':=S'\times_SS'\to S'$ is the projection onto the $i$-th factor, to the $\CO_S\dbl z\dbr$-module $M$ obtained by descent from $M':=\CO_D\wh\otimes_{\BF_q}\CO_{S'}$ with the descent datum $p_2^*M'\isoto p_1^*M'$, $m\mapsto hm$. Then $M$ is Zariski locally trivial by Hilbert 90; see \cite[Proposition~2.3]{HV1}. If $S'\in\Nilp_{\BF_{q^d}\dbl\zeta\dbr}$ then $M'=\CO_D\wh\otimes_{\BF_q}\CO_{S'}$ decomposes as a direct sum of eigenspaces $M'_i$ on which $a\in\BF_{q^d}\subset\CO_D$ acts as $a^{q^i}\in\CO_{S'}$ for $i\in\BZ/d\BZ$. Under the isomorphism $\CO_D\wh\otimes_{\BF_q}\CO_{S'}\cong\{g\in\CO_{S'}\dbl z\dbr^{d\times d}\colon g\mod z\text{ is lower triangular}\}$ the $i$-th eigenspace $M'_i$ is mapped to the $(d-i)$-th column in the matrix space (for $0\le i<d$). Multiplication with $\Pi$ on the right defines morphisms $\Pi\colon M'_i\to M'_{i+1}$. If $\ul\CG$ is a local $G$-shtuka over such an $S'$ then $\tau_\CG$ maps $\sigma^*M'_i$ to $M'_{i+1}[\tfrac{1}{z}]$. It is bounded by $\hat Z^{-1}$ if and only if for all $i$ the map $\tau_\CG$ is a morphism $\sigma^*M'_i\to M'_{i+1}$ with cokernel locally free of rank $1$ over $S'$. This means that $M$ is the local shtuka (called ``module de coordonn\'ees'' in \cite{Genestier,GL08}) of a special formal $\CO_D$-module of dimension $d$ and height $d^2$ in the sense of Drinfeld~\cite{Drinfeld76}. 

The formal scheme $\breveRZ=\coprod_\BZ\wh\Omega^d$ and the space $(\breveRZ)^\an=\coprod_\BZ\Omega^d$ are the disjoint unions indexed by the height of the quasi-isogeny $\bar\delta$, where
\[
\Omega^d\;:=\;\BP^{d-1}_{\BaseFld\dpl\zeta\dpr}\setminus \text{all $\BF_q\dpl\zeta\dpr$-rational hyperplanes}
\]
is Drinfeld's upper halfspace over $\breve E=\BaseFld\dpl\zeta\dpr$ and $\wh\Omega^d$ is its formal model over $\BaseFld\dbl\zeta\dbr$ constructed by Drinfeld, Deligne and Mumford. The representability and structure of $\breveRZ$ is described in detail in \cite[Chapitre~II]{Genestier}. The period space $\breve\CH_{G,\hat{Z},b}^{na}=\breve\CH_{G,\hat{Z},b}^{wa}$ also equals $\Omega^d$ and on each connected component of $(\breveRZ)^\an$ the period morphism is the identity of $\Omega^d$. The fibers of $\breve\pi$ are in bijection with $\BZ=D\mal/\CO_D\mal$; compare Proposition~\ref{PropFibersOfPiK} and Theorem~\ref{MainThm}\ref{MainThm_1}. Note that this example has a $\BQ_p$-analog also going back to Drinfeld that is discussed by \cite[1.44--1.46, 3.54--3.77 and 5.48--5.49]{RZ}. Our exposition differs from \cite{RZ} since they take covariant Dieudonn\'e modules of formal $\CO_D$-modules whereas the local shtuka functor \cite[\S\,2.1]{GL08} is contravariant.
\end{example}

\begin{example}{\bfseries (The Gross-Hopkins period morphism.)}\label{ExGrossHopkins}
Also this example is discussed in \cite{GL08}. Gross and Hopkins \cite{HG1,HG2} take $G=\GL_r$, with the upper triangular Borel subgroup and the diagonal torus. Let $b\in LG(\BF_q)$ be the matrix $\olT$ from \eqref{EqDrinfeldPi}, and let the bound $\hat Z=\hat Z_{\preceq\mu}$ be as in Example~\ref{exboundmu} for $\mu=(0,\ldots,0,-1) \in  \BZ^r\cong X_*(T)$ and with reflex field $E:=E_{\hat Z}=\BF_q\dpl\zeta\dpr$. Then $\hat Z^{-1}=\hat Z_{\preceq(-\mu)_\dom}$ with $(-\mu)_\dom=(1,0,\ldots,0)$, compare Example~\ref{exboundmu}. The quasi-isogeny group $J_b$ is the unit group of the central skew field over $\BF_q\dpl z\dpr$ with Hasse invariant $1/r$. The Rapoport-Zink space is the Lubin-Tate space
\[
\breveRZ\;=\;\coprod_\BZ\Spf\BaseFld\dbl\zeta,u_1,\ldots,u_{r-1}\dbr
\]
of $1$-dimensional formal $\BF_q\dbl z\dbr$-modules of height $r$. Its connected components are indexed by the \emph{height} of the quasi-isogeny $\bar\delta$, that is the image of $\bar\delta\in\Flag_{\GL_r}$ under the map $\Flag_{\GL_r}\to\pi_0(\Flag_{\GL_r})=\pi_1(\GL_r)=\BZ$. The period space is $\breve\CH_{G,\hat{Z},b}^{na}=\breve\CH_{G,\hat{Z}}^\an=\BP^{r-1}_{\BaseFld\dpl\zeta\dpr}$; compare \cite[Example~3.3.1]{HartlPSp}. To define the Hodge-Pink structure on the universal formal $\BF_q\dbl z\dbr$-module Gross and Hopkins \cite[\S\,11]{HG2} use the universal additive extension. See \cite[Remark~2.5.43]{HartlJuschka} for a comparison of this definition with our definition of the Hodge-Pink structure in Remark~\ref{RemHPStrOfLocGSht}. In \cite[\S\,23]{HG2} they construct the period morphism $\breve\pi$ and show that its image is $(\BP^{r-1}_{\breve E})^\an$; compare Theorem~\ref{MainThm}\ref{MainThm_1}. Note that Gross and Hopkins treat the $\BQ_p$-analog simultaneously; see also \cite[5.50]{RZ}.
\end{example}

\begin{example}{\bfseries(The $\boldsymbol{\zeta}$-adic Carlitz logarithm.)}\label{ExBreutmann}
The following example was computed by Breutmann~\cite{Breutmann15}. Let $G=\GL_2$, and let the Borel, the maximal torus, the bound $\hat Z=\hat Z_{\preceq\mu}$ and $\mu$ be as in the previous example. Let $b=\left(\begin{smallmatrix} z & 0 \\ 0 & 1 \end{smallmatrix}\right)$. Then $J_b$ is the diagonal torus in $\GL_2$. The Rapoport-Zink space descends to $\BF_q\dbl\zeta\dbr$ as the formal scheme
\[
\RZ\;=\;\coprod_{(i,j)\in\BZ^2}\Spf\BF_q\dbl\zeta,h\dbr\,.
\]
whose underlying affine Deligne-Lusztig variety $X_{Z^{-1}}(b)=\coprod_{\BZ^2}\Spec\BF_q$ is $0$-dimensional. Over the component $(i,j)\in\BZ^i$ the universal local $\GL_2$-shtuka $\ul\CG$ is given by the local shtuka $\ulM(\ul\CG)=(\BF_q\dbl\zeta,h\dbr\dbl z\dbr^2,\tau_M)$ with $\tau_M=\left(\begin{smallmatrix} z-\zeta & 0 \\ h & 1 \end{smallmatrix}\right)$; see Remark~\ref{RemHPStrOfLocGSht}. The universal quasi-isogeny 
\[
\bar\delta=\left(\begin{matrix} z^i & 0 \\[2mm] -z^j\sum_{\nu=0}^\infty h^{q^\nu}\!\!/z^{\nu+1}\; & z^j \end{matrix}\right)
\]
lifts to 
\[
\Delta\;=\;\left(\begin{array}{c@{\qquad}c} z^i\prod\limits_{\nu=0}^\infty\dfrac{z}{z-\zeta^{q^\nu}} & 0 \\[4mm] -z^j\sum\limits_{\nu=0}^\infty \dfrac{h^{q^\nu}}{(z-\zeta)\cdots(z-\zeta^{q^\nu})} & z^j \end{array}\right).
\]
The $E$-analytic space $(\breveRZ)^\an$ is the disjoint union indexed by $(i,j)\in\BZ^2$ of the open unit discs with coordinate $h$ and $\CH_{G,\hat{Z},b}^{na}=\CH_{G,\hat{Z},b}^{wa}=\BA^1_E=\BP^1_E\setminus\{(0:1)\}\subset\CH_{G,\hat{Z}}^\an=\BP^1_E$; compare \cite[Example~3.3.3]{HartlPSp}. On the component $(i,j)$ the period morphism $\breve\pi$ is given by $h\mapsto\zeta^{j-i}(\s\tminus)|_{z=\zeta}\log_{\text{Carlitz}}(h)$, where $\tminus$ was defined in \eqref{EqTMinus} and $\log_{\text{Carlitz}}(h):=\sum_{\nu=0}^\infty\tfrac{h^{q^\nu}}{(\zeta-\zeta^q)\cdots(\zeta-\zeta^{q^\nu})}$ is the $\zeta$-adic Carlitz logarithm; see \cite[\S\,3.4]{Goss}. In particular, $\breve\pi$ is surjective onto $\CH_{G,\hat{Z},b}^{na}$; compare Theorem~\ref{MainThm}\ref{MainThm_1}. This example is analogous to the period morphism for $p$-divisible groups \cite[5.51, 5.52]{RZ} given by the $p$-adic logarithm, which was constructed by Dwork; compare \cite[\S\S\,7,8]{KatzDwork}.
\end{example}

\section{The tower of \'etale coverings} \label{SectTower}
\setcounter{equation}{0}

In this section we fix a local $G$-shtuka $\ul\BG_0$ over $\BaseFld$ and a bound $\hat Z$ with reflex ring $R_{\hat Z}=\kappa\dbl\xi\dbr$. Let again $E_{\hat Z}=\kappa\dpl\xi\dpr$ and $\breve E=\BaseFld\dpl\xi\dpr$ and $\breve R_{\hat Z}=\BaseFld\dbl\xi\dbr$. We write ${\breve\CM}$ for the strictly $\breve E$-analytic space $(\breveRZ)^\an$. We shall construct a tower of finite \'etale coverings of ${\breve\CM}$ obtained by trivializing the Tate module of the universal \'etale local $G$-shtuka $\ul\CG$ over ${\breve\CM}$ from Remark~\ref{RemEtaleLocSht}.

We start more generally with a field extension $L/\breve E$ that is complete with respect to an absolute value extending the absolute value on $\breve E$, and with an \'etale local $G$-shtuka $\ul{\CG}$ over a connected strictly $L$-analytic space $\Test$ as in Definition~\ref{DefEtaleLocSht}. We choose a geometric base point $\bar x$ of $\Test$.

\begin{definition}
Let $\rho:G\rightarrow \GL_r$ be in $\Rep_{\BF_q\dbl z\dbr}G$. Let $\ulM=(M,\tau_M)$ be the \'etale local shtuka of rank $r$ associated with the \'etale local $\GL_r$-shtuka $\Darst_*\ul\CG$ obtained from $\ul{\CG}$ via $\rho$, see \cite[\S\,3]{AH_Local}, and let $\ulM_{\bar x}$ denote its fiber over $\bar x$. Then the (\emph{dual}) \emph{Tate module $\check T_{\ul{\CG},\bar x}(\rho)$ of $\ul{\CG}$ with respect to $\rho$} is defined as the (dual) Tate module of $\ulM_{\bar x}$
\[
\check T_{\ul{\CG},\bar x}(\rho)\;:=\;\check T_z\ulM_{\bar x}\;:=\;\{m\in \ulM_{\bar x} \colon \tau_M(\sigma^*m)=m\}\,.
\]
By \cite[Proposition~6.1]{TW} it is a free $\BF_q\dbl z\dbr $-module of rank $r$ with a continuous monodromy action of $\pi_1^\et(\Test,\bar x)$. This action factors through $\pi_1^\alg(\Test,\bar x)$.

Let now $\Darst:G\rightarrow \GL_r$ be in $\Rep_{\BF_q\dpl z\dpr}G$. Let $\ulN=(N,\tau_N)$ be the locally free $\CO_\Test\dpl z\dpr$-module of rank $r$ and the $\sigma$-linear isomorphism associated with  $\Darst_*L\ul\CG$ obtained from $L\ul{\CG}$ via $\Darst$. Let $\ulN_{\bar x}$ denote its fiber over $\bar x$. The \emph{rational} (\emph{dual}) \emph{Tate module} $\check V_{\ul{\CG},\bar x}(\Darst)$ of $\ul{\CG}$ with respect to $\Darst$ is 
$$\check V_{\ul{\CG},\bar x}(\Darst)\;:=\;\{n\in \ulN_{\bar x} \colon \tau_N(\sigma^*n)=n\},$$
a finite-dimensional $\BF_q\dpl z\dpr $-vector space with a continuous monodromy action of $\pi_1^\et(\Test,\bar x)$.
\end{definition}

\begin{remark}\label{RemNeupertTate}
As was pointed out to us by S.~Neupert, see also \cite[2.6]{N16}, one can also use the following direct way to define the Tate module of an \'etale local $G$-shtuka that does not use tensor functors. Let $\ul{\CG}=(\CG, \tau_\CG)$ be an \'etale local $G$-shtuka over a base scheme or a strictly $\BaseFld\dpl\zeta\dpr$-analytic space $S$, in other words $\tau_\CG$ induces an isomorphism $\tau_\CG:\sigma^*\CG\isoto \CG$. Consider for each $n\in\BN$ the $\tau$-invariants of the induced map $\sigma^*\CG_n\rightarrow \CG_n$. Here $\CG_n$ is the $G/G_n$-torsor induced by $\CG$ where $G_n$ is the kernel of the projection $G(\BF_q[[z]])\rightarrow G(\BF_q[[z]]/(z^n))$. These $\tau$-invariants form a $G(\BF_q[[z]]/(z^n))$-torsor which is trivialized by a finite \'etale covering. One can then define the Tate module of $\ul\CG$ as the inverse limit over $n$ of these torsors.
\end{remark}

\begin{remark}\label{RemTateModule}
\begin{enumerate}
\item\label{RemTateModule_A}
Let $(\rho'\!,V)\in \Rep_{\BF_q\dpl z\dpr}G$. Let $\Lambda_0$ be any $\BF_q\dbl z\dbr$-lattice in $V$. Then the stabilizer in $G(\BF_q\dbl z\dbr)$ of $\Lambda_0$ is open, and in particular of finite index in the compact group $G(\BF_q\dbl z\dbr)$. Therefore, 
$$\Lambda:=\bigcap_{g\in G(\BF_q\dbl z\dbr)}\rho'(g)(\Lambda_0)$$ is an intersection of finitely many translates of $\Lambda_0$, hence a lattice in $V$. By definition, $\Lambda$ is $G(\BF_q\dbl z\dbr)$-invariant. Thus $\rho'$ is induced by $(\rho,\Lambda):=(\rho'|_{\Lambda},\Lambda)\in \Rep_{\BF_q\dbl z\dbr}G$. From the definition above we obtain $$\check V_{\ul{\CG},\bar x}(\rho')=\check T_{\ul{\CG},\bar x}(\rho)\otimes_{\BF_q\dbl z\dbr }\BF_q\dpl z\dpr.$$
In particular, the vector space $\check V_{\ul{\CG},\bar x}(\rho')$ is of dimension $\dim V$.

\item \label{RemTateModule_C}
These definitions are independent of the chosen base point $\bar x$, because for any other geometric base point $\bar x'$ of $\Test$ there is an isomorphism of fiber functors $\check T_{\ul{\CG},\bar x}\cong\check T_{\ul{\CG},\bar x'}$ and  $\check V_{\ul{\CG},\bar x}\cong\check V_{\ul{\CG},\bar x'}$ by \cite[Theorem~2.9]{dJ95a} and Remark~\ref{RemNeupertTate}.

\item\label{RemTateModule_B}
From the definition one obtains that the Tate module and the rational Tate module are tensor functors 
\begin{eqnarray*}
& & \check T_{\ul{\CG},\bar x}\colon\Rep_{\BF_q\dbl z\dbr}G\;\longto\; \Rep^\cont_{\BF_q\dbl z\dbr }\bigl(\pi_1^\et(\Test,\bar x)\bigr)\;=\;\Rep^\cont_{\BF_q\dbl z\dbr }\bigl(\pi_1^\alg(\Test,\bar x)\bigr)\qquad\text{and}\\[2mm]
& & \check V_{\ul{\CG},\bar x}\colon\Rep_{\BF_q\dpl z\dpr}G\;\longto\; \Rep^\cont_{\BF_q\dpl z\dpr }\bigl(\pi_1^\et(\Test,\bar x)\bigr)\,.
\end{eqnarray*}
In terms of Definition~\ref{DefLocalSystem} we may view $\check T_{\ul{\CG},\bar x}$ and $\check V_{\ul{\CG},\bar x}$ as tensor functors
\begin{eqnarray}
\label{EqTateFunctor}
& & \check T_{\ul{\CG}}\colon\Rep_{\BF_q\dbl z\dbr}G\;\longto\; \BLoc_X\qquad\text{and}\\[2mm]
\label{EqRatTateFunctor}
& & \check V_{\ul{\CG}}\colon\Rep_{\BF_q\dpl z\dpr}G\;\longto\; \PLoc_X\,,
\end{eqnarray}
with $\check T_{\ul{\CG},\bar x}=F_{\bar x}^\et\circ\check T_{\ul\CG}$ and $\check V_{\ul{\CG},\bar x}=F_{\bar x}^\et\circ\check V_{\ul\CG}$. The tensor functors \eqref{EqTateFunctor} and \eqref{EqRatTateFunctor} also exist if $X$ is not connected.

Furthermore, $\check T_{\ul{\CG},\bar x}$ and $\check T_{\ul{\CG}}$ are functorial on the category of \'etale local $G$-shtukas $\ul{\CG}$ with isomorphisms as morphisms, and $\check V_{\ul{\CG},\bar x}$ and $\check V_{\ul{\CG}}$ are functorial on the category of \'etale local $G$-shtukas $\ul{\CG}$ with isogenies as morphisms. Indeed, an isomorphism, respectively an isogeny of \'etale local $G$-shtukas canonically induces an isomorphism between the corresponding $\ulM$, respectively $\ulN$.
\end{enumerate}
\end{remark}

Recall the forgetful functors $\omega^{\circ}_A:\Rep_A G\to\FMod_A$ and ${\it forget}\colon\Rep^\cont_{A}\bigl(\pi_1^\et(X,\bar x)\bigr)\to\FMod_A$ from Definition~\ref{DefForget}. For an \'etale local $G$-shtuka $\ul{\CG}$ over $\Test$ the sets 
\begin{eqnarray}\label{EqTensorIsom}
\Triv_{\ul\CG,\bar x}(\BF_q\dbl z\dbr) & := & \Isom^{\otimes}(\omega^{\circ}_{\BF_q\dbl z\dbr },{\it forget}\circ\check T_{\ul{\CG},\bar x})(\BF_q\dbl z\dbr)\qquad\text{and}\\[2mm]
\Triv_{\ul\CG,\bar x}\bigl(\BF_q\dpl z\dpr\bigr) & := & \Isom^{\otimes}(\omega^{\circ},{\it forget}\circ\check V_{\ul{\CG},\bar x})\bigl(\BF_q\dpl z\dpr\bigr) \nonumber
\end{eqnarray}
are non-empty; see \cite[after Definition~3.5]{AH_Local}. This is due to the fact that we assumed $G$ to have connected fibers. In \cite[5.32]{RZ}, where also non-connected orthogonal groups are allowed, the isomorphism class of the \'etale fiber functor analogous to ${\it forget}\circ\check T_{\ul{\CG},\bar x}$ can vary. By the definition of the Tate functor, $\Triv_{\ul{\CG},\bar x}(\BF_q\dbl z\dbr)$ carries an action of $G(\BF_q\dbl z\dbr )\times \pi_1^\alg(\Test,\bar x)$ where the first factor acts through $\omega^{\circ}_{\BF_q\dbl z\dbr }$ and the action of $\pi_1^\alg(\Test,\bar x)$ is induced by the action on the Tate functor. Similarly, $\Triv_{\ul{\CG},\bar x}\bigl(\BF_q\dpl z\dpr\bigr)$ admits an action of $G\bigl(\BF_q\dpl z\dpr\bigr)\times \pi_1^\et(\Test,\bar x)$. For every choice of an element $\eta\in\Triv_{\ul{\CG},\bar x}(\BF_q\dbl z\dbr)$ we obtain a $G(\BF_q\dbl z\dbr )$-equivariant bijection 
\begin{equation}\label{EqBijIsom}
G(\BF_q\dbl z\dbr )\;\isoto\;\Triv_{\ul{\CG},\bar x}(\BF_q\dbl z\dbr)\,,\quad g\longmapsto\eta\circ g\,,
\end{equation}
where $G(\BF_q\dbl z\dbr )$ acts on itself by multiplication on the right. Under this bijection the action of $\pi_1^\alg(\Test,\bar x)$ corresponds to a group homomorphism
\[
\pi_1^\alg(\Test,\bar x)\;\longto\;G(\BF_q\dbl z\dbr )\,,\quad h\longmapsto\eta^{-1}\circ h(\eta)
\]
that is independent of $\eta$ up to conjugation by elements of $G(\BF_q\dbl z\dbr )$. Similar statements hold for $\Triv_{\ul\CG,\bar x}\bigl(\BF_q\dpl z\dpr\bigr)$ with $G(\BF_q\dbl z\dbr )$ and $\pi_1^\alg(\Test,\bar x)$ replaced by $G\bigl(\BF_q\dpl z\dpr\bigr)$ and $\pi_1^\et(\Test,\bar x)$.

\begin{definition}\label{DefLevel}
Let $\ul{\CG}$ be an \'etale local $G$-shtuka over a connected $\BaseFld\dpl\zeta\dpr$-analytic space $\Test$, and let $K\subset G(\BF_q\dbl z\dbr)$ be an open subgroup. Then an \emph{integral $K$-level structure} on $\ul{\CG}$ is a $\pi_1^\alg(\Test,\bar x)$-invariant $K$-orbit in $\Triv_{\ul{\CG},\bar x}(\BF_q\dbl z\dbr)$.

If $K\subset G\bigl(\BF_q\dpl z\dpr\bigr)$ is an open compact subgroup a \emph{rational $K$-level structure} on $\ul{\CG}$ is a $\pi_1^\et(\Test,\bar x)$-invariant $K$-orbit in $\Triv_{\ul{\CG},\bar x}\bigl(\BF_q\dpl z\dpr\bigr)$. For non-connected $\Test$ we make a similar definition choosing a base point on each connected component and an integral, respectively rational $K$-level structure on the restriction to each connected component separately. Note that every integral $K$-level structure on $\ul\CG$ defines a rational $K$-level structure but not conversely.

For an open subgroup $K\subset G(\BF_q\dbl z\dbr)$ let $\Test^K$ be the functor on the category of $L$-analytic spaces over $\Test$ parametrizing integral $K$-level structures on the \'etale local $G$-shtuka $\ul\CG$ over $\Test$. 
\end{definition}

\begin{proposition}\label{PropCM^K}
\begin{enumerate}\item \label{PropCM^K_A}
$\Test^K$ is represented by the finite \'etale covering space of $\Test$ that corresponds to the finite $\pi_1^\alg(\Test,\bar x)$-set $\Triv_{\ul{\CG},\bar x}(\BF_q\dbl z\dbr)/K$ under the equivalence \eqref{EqFundGp}. In particular $\Test^K$ is a strictly $L$-analytic space.
\item \label{PropCM^K_B}
For $K_0=G(\BF_q\dbl z\dbr )$, the morphism assigning to $\ul{\CG}$ the $K_0$-orbit of all elements of $\Triv_{\ul{\CG},\bar x}(\BF_q\dbl z\dbr)$ induces an isomorphism $\Test\cong \Test^{K_0}$.
\item \label{PropCM^K_C}
For any inclusion of open subgroups $K'\subset K\subset G\bigl(\BF_q\dbl z\dbr\bigr)$, forgetting part of the level structure induces compatible finite \'etale surjective morphisms
$$\breve\pi_{K,K'}:\Test^{K'}\rightarrow\Test^K$$ that are Galois with Galois group $K/K'$ if $K'$ is normal in $K$.
\end{enumerate}
\end{proposition}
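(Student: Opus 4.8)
The plan is to deduce everything from the equivalence \eqref{EqFundGp} between \'etale covering spaces of $\Test$ (here meaning disjoint unions of objects of $\ul{\Cov}_\Test^\et$) and discrete $\pi_1^\et(\Test,\bar x)$-sets, combined with the dictionary of Proposition~\ref{Prop2.13} relating local systems to continuous representations. First I would record that by Remark~\ref{RemTateModule}\ref{RemTateModule_C} the choice of base point $\bar x$ is irrelevant, so on a connected $\Test$ we may fix $\bar x$ once and for all; for non-connected $\Test$ every statement is made componentwise and the arguments below apply verbatim to each component. For part~\ref{PropCM^K_A}, the key point is that $\Triv_{\ul\CG,\bar x}(\BF_q\dbl z\dbr)$ is a \emph{finite} set carrying a continuous $\pi_1^\alg(\Test,\bar x)$-action: finiteness follows from the $G(\BF_q\dbl z\dbr)$-equivariant bijection \eqref{EqBijIsom} $G(\BF_q\dbl z\dbr)\isoto\Triv_{\ul\CG,\bar x}(\BF_q\dbl z\dbr)$ together with the fact that $K\subset G(\BF_q\dbl z\dbr)$ is open, hence of finite index since $G(\BF_q\dbl z\dbr)$ is profinite (as $G$ is an affine group scheme of finite type over $\BF_q\dbl z\dbr$). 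Thus $\Triv_{\ul\CG,\bar x}(\BF_q\dbl z\dbr)/K$ is a finite $\pi_1^\alg(\Test,\bar x)$-set, and since $\pi_1^\et(\Test,\bar x)\to\pi_1^\alg(\Test,\bar x)$ has dense image the induced $\pi_1^\et(\Test,\bar x)$-action factors through $\pi_1^\alg$; applying the equivalence \eqref{EqFundGp} (restricted to finite sets, i.e.\ to $\ul{\Cov}_\Test^\alg$, by \cite[Theorem~2.10]{dJ95a}) produces a finite \'etale covering space $\Test^K\to\Test$. It remains to check that this covering space \emph{represents} the level-structure functor: an $L$-analytic morphism $Y\to\Test$ pulls back $\ul\CG$ to an \'etale local $G$-shtuka $\ul\CG_Y$, and the datum of a $\pi_1^\alg(Y,\bar y)$-invariant $K$-orbit in $\Triv_{\ul\CG_Y,\bar y}(\BF_q\dbl z\dbr)$ is, by functoriality of the Tate functor and of \eqref{EqFundGp}, the same as a $\pi_1^\alg(Y,\bar y)$-fixed point of the pulled-back $\pi_1^\alg(\Test,\bar x)$-set $\Triv_{\ul\CG,\bar x}(\BF_q\dbl z\dbr)/K$, which by the Yoneda-type description of covering spaces is exactly a $\Test$-morphism $Y\to\Test^K$. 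That $\Test^K$ is strictly $L$-analytic follows because it is finite \'etale over the strictly $L$-analytic space $\Test$.

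For part~\ref{PropCM^K_B}, with $K_0=G(\BF_q\dbl z\dbr)$ the quotient $\Triv_{\ul\CG,\bar x}(\BF_q\dbl z\dbr)/K_0$ is a one-point set with trivial $\pi_1^\alg(\Test,\bar x)$-action (the single orbit is automatically invariant), so the corresponding covering space is $\Test$ itself; I would simply note that the tautological $K_0$-orbit on $\ul\CG$ over $\Test$ provides the morphism $\Test\to\Test^{K_0}$ which is an isomorphism by the representability already established, or equivalently because $\Test\to\Test^{K_0}$ is a finite \'etale covering of degree $\#(\Triv/K_0)=1$.

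For part~\ref{PropCM^K_C}, given $K'\subset K\subset G(\BF_q\dbl z\dbr)$ open there is a $\pi_1^\et(\Test,\bar x)$-equivariant surjection of finite sets $\Triv_{\ul\CG,\bar x}(\BF_q\dbl z\dbr)/K'\onto\Triv_{\ul\CG,\bar x}(\BF_q\dbl z\dbr)/K$, whose fibers all have cardinality $[K:K']$; applying the equivalence \eqref{EqFundGp} this corresponds to a finite \'etale surjective morphism $\breve\pi_{K,K'}\colon\Test^{K'}\to\Test^K$ of degree $[K:K']$, and these are compatible for triples $K''\subset K'\subset K$ because the corresponding maps of $\pi_1$-sets compose correctly. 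For the Galois assertion, when $K'\trianglelefteq K$ is normal the group $K/K'$ acts on $\Triv_{\ul\CG,\bar x}(\BF_q\dbl z\dbr)/K'$ by right translation, commuting with the $\pi_1^\et$-action, and freely and transitively on each fiber of the surjection onto $\Triv/K$; hence this action descends to an action of $K/K'$ on $\Test^{K'}$ over $\Test^K$ which is simply transitive on geometric fibers, i.e.\ $\breve\pi_{K,K'}$ is a $K/K'$-torsor, in particular Galois with group $K/K'$. I would phrase this last point using that a finite \'etale covering with a group acting simply transitively on fibers is Galois, citing \cite[Theorem~2.10]{dJ95a} for the correspondence between connected coverings and $\pi_1$-orbits.

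The main obstacle is not any single deep input but rather the bookkeeping of representability: one must carefully unwind that the functor $\Test^K$ — defined via $\pi_1^\alg$-invariant $K$-orbits with a base point chosen on each connected component — is well-defined independently of those choices (this uses Remark~\ref{RemTateModule}\ref{RemTateModule_C} and \cite[Theorem~2.9]{dJ95a}) and that it is genuinely represented by the covering space attached to $\Triv_{\ul\CG,\bar x}(\BF_q\dbl z\dbr)/K$ rather than merely being in bijection with its points; the subtlety is that level structures on pullbacks $\ul\CG_Y$ must be matched with $Y$-points of $\Test^K$ for arbitrary, possibly non-connected, $Y$, which requires the functoriality of the Tate functor in Remark~\ref{RemTateModule}\ref{RemTateModule_B} and the fact that \eqref{EqFundGp} is an equivalence of categories (not just a classification of isomorphism classes). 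Everything else — finiteness of $G(\BF_q\dbl z\dbr)/K$, the descent of the $\pi_1^\et$-action through $\pi_1^\alg$, and the torsor structure — is then routine.
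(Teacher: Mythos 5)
Your proposal is correct and follows essentially the same route as the paper: both rest on de Jong's equivalence \eqref{EqFundGp} together with the base-point independence of Remark~\ref{RemTateModule}\ref{RemTateModule_C}, and the representability argument you sketch — identifying a $\pi_1^\alg(Y,\bar y)$-invariant $K$-orbit in $\Triv_{\ul\CG_Y,\bar y}(\BF_q\dbl z\dbr)$ with a fixed point of the pulled-back $\pi_1$-set, hence with a $Y$-morphism to the covering space — is exactly what the paper spells out by computing $F^\et_{Y,\bar y}(\wt\Test^K\times_\Test Y)=\Triv_{\ul\CG,\bar x}(\BF_q\dbl z\dbr)/K$ and applying \cite[Theorem~2.10]{dJ95a}. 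The only difference is one of emphasis: you give slightly more detail on parts~\ref{PropCM^K_B} and \ref{PropCM^K_C} (which the paper dismisses as immediate from \ref{PropCM^K_A}), while the paper is somewhat more explicit in the representability step of \ref{PropCM^K_A}.
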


\begin{proof}
Denote by $\wt\Test^K$ the finite \'etale covering space of $\Test$ from \ref{PropCM^K_A}. Let $f\colon Y\to\Test$ be a connected $L$-analytic space over $\Test$ and let $\eta K$ be an integral $K$-level structure on $f^*\ul\CG$, that is $\eta\in\Triv_{f^*\ul{\CG},\bar y}(\BF_q\dbl z\dbr )$ and the $K$-orbit $\eta K$ is $\pi_1^\alg(Y,\bar y)$-invariant where $\bar y$ is a geometric base point of $Y$. We must show that $\eta K$ arises from a uniquely determined $\Test$-morphism $Y\to\wt\Test^K$. Moving $\bar x$ by Remark~\ref{RemTateModule}\ref{RemTateModule_C} we may assume that $f(\bar y)=\bar x$, and hence $\check T_{f^*\ul{\CG},\bar y}=\check T_{\ul{\CG},\bar x}$. Consider the finite \'etale covering space $\wt\Test^K\times_\Test Y\to Y$. Then $F^\et_{Y,\bar y}(\wt\Test^K\times_\Test Y)=F^\et_{\Test,\bar x}(\wt\Test^K)=\Triv_{\ul{\CG},\bar x}(\BF_q\dbl z\dbr)/K$ for the \'etale fiber functors from \eqref{EqEtaleFiberFunctor}. In particular, the element $\eta K$ defines a $\pi_1^\alg(Y,\bar y)$-equivariant map from the one-element set $\{\bar y\}=F^\et_{Y,\bar y}(Y)$ to $F^\et_{Y,\bar y}(\wt\Test^K\times_\Test Y)$. By \cite[Theorem~2.10]{dJ95a} this map corresponds to a uniquely determined $Y$-morphism $Y\to\wt\Test^K\times_\Test Y$. The projection $Y\to\wt\Test^K$ onto the first component is the desired $\Test$-morphism that induces the integral $K$-level structure $\eta K$ over $Y$.

\smallskip\noindent
\ref{PropCM^K_B} and \ref{PropCM^K_C} follow directly from \ref{PropCM^K_A} .
\end{proof}

For arbitrary $\Test$ and $\ul\CG$, Proposition~\ref{PropCM^K} is the best one can hope for. However, if $\Test=(\breveRZ)^\an$ one can even replace $\check T_{\ul{\CG},\bar x}$ and $\Triv_{\ul{\CG},\bar x}(\BF_q\dbl z\dbr)$ by $\check V_{\ul{\CG},\bar x}$ and $\Triv_{\ul{\CG},\bar x}\bigl(\BF_q\dpl z\dpr\bigr)$, and allow compact open subgroups $K\subset G\bigl(\BF_q\dpl z\dpr\bigr)$; see Corollaries~\ref{CorLevel} and \ref{CorLevel2}. To explain this (also as a preparation to define rational level structures in Definition~\ref{DefRatLevel}) we keep the field $L$ introduced at the beginning of this section and consider in the following an admissible $\CO_L$-algebra $B$ in the sense of Raynaud, that is $B$ is a quotient $\beta\colon\CO_L\langle X_1,\ldots,X_s\rangle\onto B$ that is $\zeta$-torsion free; see \eqref{EqFormalTateAlgebra} and \cite[p.~293]{FRG1}. Then $\CX:=\Spf B$ is an admissible formal $\CO_L$-scheme. Let $B[\tfrac{1}{\zeta}]$ be the associated strictly affinoid $L$-algebra. We equip $B[\tfrac{1}{\zeta}]$ with the quotient map $\beta\colon L\langle X_1,\ldots,X_s\rangle\onto B[\tfrac{1}{\zeta}]$ and the $L$-Banach norm $|b|:=\inf\{|f|_{\sup}\colon f\in\beta^{-1}(b)\}$, where $|f|_{\sup}$ denotes the Gau{\ss} norm on the Tate algebra $L\langle X_1,\ldots,X_s\rangle$. Then $B=\{b\in B[\tfrac{1}{\zeta}]\colon |b|\le1\}$. The Berkovich spectrum $X=\SpBerk B[\tfrac{1}{\zeta}]$ is the $L$-analytic space $\CX^\an$ associated with the formal scheme $\CX$.

\begin{lemma}\label{LemmaDivisibility}
Recall the notation from \eqref{EqCrisRing}. Let $f=\sum\limits_{i\in\BZ}b_i z^i\in B\dbl z,z^{-1}\}$ and $a\in B$ with $|a|<1$ and assume that $f\in B\dbl z\dbr$ or $a\in\CO_L\setminus\{0\}$. If $f(a)=\sum\limits_{i\in\BZ}b_i a^i=0$ in $B[\tfrac{1}{\zeta}]$, then $f=(z-a)\cdot g$ for a uniquely determined $g=\sum\limits_{n\in\BZ}c_n z^n\in B\dbl z,z^{-1}\}$ with $c_n=\sum_{i>n} b_i a^{i-n-1}\in B$. Moreover, if $f\in B\dbl z\dbr$ then also $g\in B\dbl z\dbr$. 
\end{lemma}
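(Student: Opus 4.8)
The plan is to treat the statement as division of $f$ by the linear factor $z-a$, with explicit candidate quotient $g=\sum_{n\in\BZ}c_n z^n$, $c_n:=\sum_{i>n}b_i a^{i-n-1}$, and to check in turn that each $c_n$ lies in $B$, that $g$ lies in $B\dbl z,z^{-1}\}$, that $f=(z-a)g$, that $g$ is unique, and that $g\in B\dbl z\dbr$ when $f\in B\dbl z\dbr$. First I would observe that for $i>n$ the exponent $i-n-1$ is $\geq 0$, so $a^{i-n-1}\in B$ and $|b_i a^{i-n-1}|\leq|a|^{i-n-1}\to0$ as $i\to\infty$; since $B$ is complete the series defining $c_n$ converges in $B$. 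Writing the $z^m$-coefficient of $(z-a)g$ as $c_{m-1}-a\,c_m$ and using the elementary identity $c_{m-1}=\sum_{i\geq m}b_i a^{i-m}=b_m+a\sum_{i\geq m+1}b_i a^{i-m-1}=b_m+a\,c_m$ (splitting off one term and factoring $a$ out of a convergent non-archimedean series), one gets $f=(z-a)g$ once $g$ is known to lie in $B\dbl z,z^{-1}\}$. For uniqueness I would use that $z-a$ is a non-zero-divisor in $B\dbl z,z^{-1}\}$: if $(z-a)h=0$ then $h_{n-1}=a\,h_n$ for all $n$, hence $h_n=a^k h_{n+k}$ and $|h_n|\leq|a|^k\to0$, so $h_n=0$ because $|\cdot|$ is a genuine norm on $B$.

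The hypothesis $f(a)=0$ is what makes the remaining two claims work. If $f\in B\dbl z\dbr$ then for $n<0$ every term of $c_n$ has $i\geq0>n$, so $c_n=a^{-n-1}\sum_{i\geq0}b_i a^i=a^{-n-1}f(a)=0$ (the exponent $-n-1$ is $\geq0$, so this is a genuine element of $B$ and no denominators occur); thus $g\in B\dbl z\dbr$. For the harder case $a\in\CO_L$ with $f$ possibly carrying negative powers, I would use $f(a)=0$ to rewrite $c_n$ with the index range reversed: since $\sum_{i>n}b_i a^{i-n-1}+\sum_{i\leq n}b_i a^{i-n-1}=a^{-n-1}f(a)=0$ in $B[\tfrac1\zeta]$ (both series converging in $B[\tfrac1\zeta]$, the second by the decay of $|b_i|$ as $i\to-\infty$ built into the definition of $B\dbl z,z^{-1}\}$ together with $|a|<1$), we have $c_n=-\sum_{i\leq n}b_i a^{i-n-1}$.

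The main obstacle is showing $g\in B\dbl z,z^{-1}\}$, i.e.\ $|c_n|\,|\zeta|^{rn}\to0$ as $n\to-\infty$ for every $r>0$. I would first reduce to large $r$: if the estimate holds for some $r_0$, then for $r<r_0$ one writes $|c_n|\,|\zeta|^{rn}=\bigl(|c_n|\,|\zeta|^{r_0 n}\bigr)\,|\zeta|^{(r-r_0)n}$, and the second factor is $\leq1$ and tends to $0$ as $n\to-\infty$. So fix $r$ with $|\zeta|^r\leq|a|$. Using $c_n=-\sum_{i\leq n}b_i a^{i-n-1}$ and the inequality $|a|^{i-n-1}\leq|\zeta|^{r(i-n-1)}$, valid for $i\leq n$ (negative exponent, larger base), one obtains $|b_i a^{i-n-1}|\leq|b_i|\,|\zeta|^{ri}\cdot|\zeta|^{-r(n+1)}$, hence
\[
|c_n|\,|\zeta|^{rn}\;\leq\;|\zeta|^{-r}\cdot\sup_{i\leq n}|b_i|\,|\zeta|^{ri}\,,
\]
and the supremum on the right tends to $0$ as $n\to-\infty$ precisely because $f\in B\dbl z,z^{-1}\}$. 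This completes the verification; the rest is bookkeeping with convergent non-archimedean series, which is routine once the index-flip above is in place.
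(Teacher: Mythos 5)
Your proof is correct and follows essentially the same route as the paper's: the explicit candidate $c_n=\sum_{i>n}b_i a^{i-n-1}$, uniqueness via the recurrence $c'_{n-1}=a c'_n$ forcing $|c'_n|$ arbitrarily small, the observation $c_n=a^{-n-1}f(a)=0$ for $n<0$ when $f\in B\dbl z\dbr$, and the index flip $c_n=-\sum_{i\le n}b_i a^{i-n-1}$ (justified by $f(a)=0$) to obtain the decay estimate for $n\to-\infty$ in the case $a\in\CO_L$. The only cosmetic difference is that you carry out the estimate with $|\zeta|^{rn}$ and make the reduction to large $r$ explicit, whereas the paper works directly with $|a|^{rn}$ for $r\ge1$ and leaves the equivalence implicit.
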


\begin{proof}
First of all, $b_i\in B$ and $|a|<1$ implies that the series $c_n:=\sum_{i>n} b_i a^{i-n-1}$ converge in $B$ for all $n\in\BZ$. One easily computes that $f=(z-a)\cdot g$ for $g:=\sum_{n\in\BZ}c_n z^n$. To prove uniqueness let $\tilde g=\sum_{n\in\BZ}\tilde c_n z^n\in B\dbl z,z^{-1}\}$ also satisfy $f=(z-a)\cdot\tilde g$. Setting $c'_n:=c_n-\tilde c_n$ yields $c'_{n-1}=ac'_n$, whence $c'_n=a^{m-n}c'_m$ for all $m\ge n$. Letting $m$ go to $\infty$ and using $c'_m\in B$ shows that $|c'_n|$ is arbitrarily small, and therefore $c'_n=0$ for all $n$. This proves the uniqueness of $g$. 

If $f\in B\dbl z\dbr$ and $n<0$ then $c_n=a^{-n-1}\cdot f(a)=0$ and therefore $g\in B\dbl z\dbr$. If $a\in\CO_L\setminus\{0\}$ we must verify the convergence condition $\lim_{n\to-\infty}|c_n|\,|a|^{rn}=0$ for all $r\ge1$. We compute $c_n=-\sum_{i\le n} b_i a^{i-n-1}$. If $i\le n$ then $|a|^{(r-1)n}\le|a|^{(r-1)i}$, and hence $$|c_n|\,|a|^{rn}\;\le\;\max_{i\le n}|b_i|\,|a|^{i-n-1+(r-1)i+n}\;=\;\max_{i\le n}|b_i|\,|a|^{ri-1}.$$ The latter goes to zero for $n\to-\infty$ because $f\in B\dbl z,z^{-1}\}$. Therefore $g\in B\dbl z,z^{-1}\}$. 
\end{proof}

\begin{remark}\label{RemUnivLocGSht}
In addition to the loop group $LG$ we consider over $\BF_q\dbl\zeta\dbr$ the loop groups defined as the \fppf-sheaves on $\BF_q\dbl\zeta\dbr$-algebras $R$ by
\[
L_{z-\zeta}G(R) \;:=\;G\bigl(R\dbl z\dbr[\tfrac{1}{z-\zeta}]\bigr)\qquad \text{and} \qquad L_{z(z-\zeta)}G(R) \;:=\;G\bigl(R\dbl z\dbr[\tfrac{1}{z(z-\zeta)}]\bigr)
\]
and the canonical maps of groups $L^+G\to LG\to L_{z(z-\zeta)}G$ and $L^+G\to L_{z-\zeta}G\to L_{z(z-\zeta)}G$ which coincide as homomorphisms $L^+G\to L_{z(z-\zeta)}G$. If $\zeta\in R\mal$ is a unit, note that $z-\zeta\in R\dbl z\dbr\mal$, and hence $L_{z-\zeta}G=L^+G$ and $L_{z(z-\zeta)}G=LG$. On the other hand, if $\zeta$ is nilpotent in $R$, then $L_{z-\zeta}G=L_{z(z-\zeta)}G=LG$.

Recall from \cite[Definition~4.22]{AH_Local} that a local $G$-shtuka over an admissible formal $\CO_L$-scheme $\CX$ can be viewed as a projective system $(\ul\CG_m)_{m\in\BN}$ of local $G$-shtukas $\ul\CG_m$ over $\CX_m=\Var(\zeta^m)\subset\CX$ with $\ul\CG_{m-1}\cong\ul\CG_m\otimes_{\CX_m}\CX_{m-1}$. On $\CX_m$ the element $\zeta$ is nilpotent.

Now let $B$ and $B[\tfrac{1}{\zeta}]$ be as before. If $(\ul\CG,\bar\delta)$ is a $\Spf B$-valued point in $\breveRZ(\Spf B)$, then the \'etale covering $\Spf B'\to\Spf B$ from Lemma~\ref{LemmaTrivializing} is given by a faithfully flat ring homomorphism $B\to B'$ by \cite[Lemma~1.6]{FRG1}, and this implies that $\ul\CG$ comes from an $L^+G$-torsor $\CG$ over $\Spec B$ together with an isomorphism of the associated $L_{z-\zeta}G$-torsors $\tau_\CG\colon L_{z-\zeta}\s\CG\isoto L_{z-\zeta}\CG$. We view $(\CG,\tau_\CG)$ as the \emph{bounded local $G$-shtuka over $\Spec B$} induced from the bounded local $G$-shtuka $\ul\CG$ over $\Spf B$. A \emph{quasi-isogeny} $u\colon(\CG'\!,\tau_{\CG'})\to(\CG,\tau_\CG)$ between two such bounded local $G$-shtukas $(\CG'\!,\tau_{\CG'})$ and $(\CG,\tau_\CG)$ over $\Spec B$ is an isomorphism of the associated $LG$-torsors $u\colon L\CG'\isoto L\CG$ which satisfies $u\circ\tau_{\CG'}=\tau_\CG\circ\s u$ as isomorphism of the associated $L_{z(z-\zeta)}G$-torsors.

In particular, $(\CG,\tau_\CG)$ induces an \'etale local $G$-shtuka on the $L$-analytic space $X=(\Spf B)^\an=\SpBerk(B[\tfrac{1}{\zeta}])$.
\end{remark}

The following proposition is a weaker analog of the fact that lifts of $p$-divisible groups and morphisms between them correspond uniquely to lifts of the Hodge-filtrations on the associated crystals (resp.\ morphisms between them). We do not dispose of the full analog of this assertion as in our (in general non-minuscule) context Griffiths transversality does not hold; compare the discussion of Genestier and Lafforgue in \cite[\S\,11]{GL}. 

\begin{proposition}\label{PropLiftOfIsog}
Let $\CX=\Spf B$ be an admissible formal $\CO_L$-scheme and let $X:=\CX^\an$ be its associated $L$-analytic space. Assume that $X$ is connected and choose a geometric base point $\bar x$ of $X$. Let $(\ul\CG,\bar\delta)$ and $(\ul\CG'\!,\bar\delta')$ be (representatives of) points in $\breveRZ(\CX)$. Then $\breve\pi(\ul\CG,\bar\delta)=\breve\pi(\ul\CG'\!,\bar\delta')$ in $\breve\CH_{G,\hat{Z}}^\an(X)$ if and only if the unique lift of the quasi-isogeny $\bar\delta^{-1}\circ\bar\delta'$ by rigidity \cite[Proposition~2.11]{AH_Local} is a quasi-isogeny $u\colon\ul\CG'\to\ul\CG$ over $\Spec B$ in the sense of Remark~\ref{RemUnivLocGSht}. In this case $u$ induces an isomorphism of the rational Tate module functors $\check V_{u,\bar x}\colon\check V_{\ul\CG'\!,\bar x}\isoto\check V_{\ul\CG,\bar x}$ over $X$ and the following assertions are equivalent
\begin{enumerate}
\item \label{PropLiftOfIsog_A}
$u\colon\ul\CG'\to\ul\CG$ is an isomorphism of local $G$-shtukas, that is $(\ul\CG,\bar\delta)=(\ul\CG'\!,\bar\delta')$ in $\breveRZ(\CX)$,
\item \label{PropLiftOfIsog_B}
$\check V_{u,\bar x}$ is an isomorphism $\check T_{\ul\CG'\!,\bar x}\isoto\check T_{\ul\CG,\bar x}$ of the integral Tate module functors,
\item \label{PropLiftOfIsog_C}
$\check V_{u,\bar x}(\Darst)$ is an isomorphism $\check T_{\ul\CG'\!,\bar x}(\Darst)\isoto\check T_{\ul\CG,\bar x}(\Darst)$ for some faithful 
$\Darst\in\Rep_{\BF_q\dbl z\dbr}G$.
\end{enumerate}
Moreover, for every rational $G(\BF_q\dbl z\dbr)$-level structure $\eta G(\BF_q\dbl z\dbr)$ on $\ul\CG$ with $\eta\in\Triv_{\ul\CG,\bar x}\bigl(\BF_q\dpl z\dpr\bigr)$ there is an admissible formal blowing-up $\CY\to\CX$ and a $(\ul\CG''\!,\bar\delta'')\in\breveRZ(\CY)$ with $\breve\pi(\ul\CG,\bar\delta)=\breve\pi(\ul\CG''\!,\bar\delta'')$ and $(\check V_{u''\!,\bar x})^{-1}\circ\eta\in\Triv_{\ul\CG''\!,\bar x}(\BF_q\dbl z\dbr)$, where $u''\colon\ul\CG''\to\ul\CG$ is the unique lift of $\bar\delta^{-1}\circ\bar\delta''$.
\end{proposition}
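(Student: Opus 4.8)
The first equivalence (equality of period images $\Leftrightarrow$ the rigidity-lift $u$ of $\bar\delta^{-1}\circ\bar\delta'$ is a quasi-isogeny $\ul\CG'\to\ul\CG$ over $\Spec B$) is essentially a reformulation of the construction of $\breve\pi$ in \eqref{EqPeriodMorph}. The plan is to work \'etale-locally on $\CX$ where, by Lemma~\ref{LemmaTrivializing}, we have trivializations $\alpha,\alpha'$ with $\ul\CG\cong\bigl((L^+G)_{\scrS'},A\s\bigr)$ and $\ul\CG'\cong\bigl((L^+G)_{\scrS'},A'\s\bigr)$. Let $\olDelta,\olDelta'\in LG(\olB')$ be the matrices of $\bar\delta,\bar\delta'$, and $\Delta,\Delta'\in G\bigl(B'\dbl z,z^{-1}\}[\tfrac{1}{\tminus}]\bigr)$ their lifts from Lemma~\ref{LemmaGL}. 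Then $\breve\pi(\ul\CG,\bar\delta)=\breve\pi(\ul\CG',\bar\delta')$ means $b^{-1}\Delta\cdot G\bigl(B'[\tfrac1\zeta]\dbl z-\zeta\dbr\bigr)=b^{-1}\Delta'\cdot G\bigl(B'[\tfrac1\zeta]\dbl z-\zeta\dbr\bigr)$, i.e.\ $\Delta^{-1}\Delta'\in G\bigl(B'[\tfrac1\zeta]\dbl z-\zeta\dbr\bigr)$. On the other hand the rigidity-lift $u$ of $\bar\delta^{-1}\circ\bar\delta'$ corresponds under $\alpha,\alpha'$ to $v:=\Delta^{-1}\Delta'\in G\bigl(B'\dbl z,z^{-1}\}[\tfrac1\tminus]\bigr)$, which a priori is only an isomorphism of $L_{z(z-\zeta)}G$-torsors; it satisfies $v\,A'=A\,\s(v)$ automatically (from $\Delta A=b\s(\Delta)$, $\Delta'A'=b\s(\Delta')$). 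By Lemma~\ref{LemmaDenominators} (with $B'$ in place of $B$), $v\in G\bigl(B'\dbl z,z^{-1}\}[\tfrac1\tminus]\bigr)\cap G\bigl(B'[\tfrac1\zeta]\dbl z-\zeta\dbr\bigr)$ lies in $G\bigl(B'\dbl z\dbr[\tfrac1{z-\zeta}]\bigr)$ precisely when the denominators at $z=\zeta^{q^i}$, $i\ge1$, disappear; but $v\in G\bigl(B'[\tfrac1\zeta]\dbl z-\zeta\dbr\bigr)$ already kills those, so $v\in LG(B')$ and hence $u$ is a quasi-isogeny over $\Spec B$ iff $\Delta^{-1}\Delta'\in G\bigl(B'[\tfrac1\zeta]\dbl z-\zeta\dbr\bigr)$. (One descends from $B'$ to $B$ using faithful flatness of $B\to B'$, \cite[Lemma~1.6]{FRG1}, and the compatibility on $\scrS''=\scrS'\times_\scrS\scrS'$.) This also shows $u$ induces $\check V_{u,\bar x}\colon\check V_{\ul\CG',\bar x}\isoto\check V_{\ul\CG,\bar x}$ by Remark~\ref{RemTateModule}\ref{RemTateModule_B}, since a quasi-isogeny over $\Spec B$ restricts to an isogeny of the \'etale local $G$-shtukas over $X$.

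For the equivalences \ref{PropLiftOfIsog_A}--\ref{PropLiftOfIsog_C}: \ref{PropLiftOfIsog_A}$\Rightarrow$\ref{PropLiftOfIsog_B}$\Rightarrow$\ref{PropLiftOfIsog_C} are trivial ($\check T$ is functorial for isomorphisms, and restricting a tensor-isomorphism to a faithful representation preserves isomorphy). For \ref{PropLiftOfIsog_C}$\Rightarrow$\ref{PropLiftOfIsog_A}, work again \'etale-locally with the matrix $v=\Delta^{-1}\Delta'\in LG(B')$ from above; we must show $v\in L^+G(B')$. Fix $\Darst\colon G\into\SL_r$ faithful. Then $\check V_{u,\bar x}(\Darst)$ being an isomorphism of the \emph{integral} Tate modules $\check T_{\ul\CG',\bar x}(\Darst)\isoto\check T_{\ul\CG,\bar x}(\Darst)$ means, after passing to the associated \'etale local shtukas $\ulM',\ulM$ over $X$, that $\Darst(v)$ identifies the lattices $M'_{\bar x}$ and $M_{\bar x}$ inside the common rational fiber; by the equivalence between \'etale local shtukas and their Tate modules \cite[Proposition~6.1]{TW} this forces $\Darst(v)\in\GL_r(B'\dbl z\dbr)$, hence (since $\Darst$ is a closed immersion over $\BF_q\dbl z\dbr$) $v\in L^+G(B')$. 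Descending to $\Spec B$ and then to $\CX$ gives that $u\colon\ul\CG'\to\ul\CG$ is an isomorphism of local $G$-shtukas over $\CX$; combined with the first part ($\breve\pi$-images agree, hence $u$ exists and lifts $\bar\delta^{-1}\circ\bar\delta'$) this is exactly \ref{PropLiftOfIsog_A}. Here I would be careful that ``isomorphism of \'etale local $G$-shtukas over $X$'' upgrades to ``isomorphism of local $G$-shtukas over $\CX$'': this uses that $X=\CX^\an$ is dense enough, more precisely that $B\to B[\tfrac1\zeta]$ is injective (as $B$ is $\zeta$-torsion-free), so an element of $L^+G(B[\tfrac1\zeta])$ lying in $LG(B)=L^+G(B)$ already lies in $L^+G(B)$.

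The last assertion (existence of $\CY\to\CX$ and $(\ul\CG'',\bar\delta'')$ realizing a given rational $G(\BF_q\dbl z\dbr)$-level structure by an \emph{integral} one on a lattice model) is the part where I expect the real work. The idea: $\eta\in\Triv_{\ul\CG,\bar x}\bigl(\BF_q\dpl z\dpr\bigr)$ is a tensor-trivialization of $\check V_{\ul\CG,\bar x}$; pick a faithful $\Darst\in\Rep_{\BF_q\dbl z\dbr}G$ and let $\Lambda\subset V:=\BF_q\dpl z\dpr^{\oplus r}$ be a $G$-stable $\BF_q\dbl z\dbr$-lattice as in Remark~\ref{RemTateModule}\ref{RemTateModule_A}. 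Then $\eta(\Darst)$ carries $\Lambda$ to a $\pi_1$-stable lattice $L_\Lambda$ in the rational Tate module $\check V_{\ul\CG,\bar x}(\Darst)$, which by \cite[Proposition~6.1]{TW} corresponds to an \'etale local shtuka $\ulM''$ over $X$ isogenous to $\Darst_*(\ul\CG|_X)$; more conceptually, varying $\Darst$ one gets a new \'etale local $G$-shtuka $\ul\CG''_X$ over $X$ together with an isogeny $u''_X\colon\ul\CG''_X\to\ul\CG|_X$ whose $\check V$ carries the standard lattice trivialization to $\eta$. One must then (i) promote $\ul\CG''_X$ to a \emph{bounded-by-$\hat Z^{-1}$} local $G$-shtuka over a formal model: since $u''_X$ is an isogeny, the matrix $A''=v''^{-1}A\,\s(v'')$ for $v''\in LG$ the transition matrix; applying the construction of $\breve\pi$ in reverse and then $\breve\pi$-image equality tells us the Hodge-Pink $G$-structure of $\ul\CG''$ equals that of $\ul\CG$, which still lies in $\hat Z^\an$, so boundedness persists; (ii) find a formal model — this is exactly where the admissible formal blowing-up $\CY\to\CX$ enters, by Raynaud's theorem (\cite[Theorem~4.1]{FRG1}) one spreads the morphism $X\to(\breveRZ)^\an$ defined by $(\ul\CG'',\bar\delta'')$ (where $\bar\delta'':=\bar\delta\circ(u''_X\bmod\zeta)$, i.e.\ $\bar\delta''$ is adjusted so that $u''$ lifts $\bar\delta^{-1}\circ\bar\delta''$) out to a morphism from a blow-up $\CY$ of $\CX$ to $\breveRZ$, using ind-properness of $\Flag_G$ (Theorem~\ref{ThmRRZSp}) as in the proof of Proposition~\ref{PropPeriodMEtale}. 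The main obstacle is organizing step (ii) cleanly: ensuring the integrality $(\check V_{u'',\bar x})^{-1}\circ\eta\in\Triv_{\ul\CG'',\bar x}(\BF_q\dbl z\dbr)$ is preserved under the blow-up (it is, since it is a statement about Tate modules on the generic fiber, which the blow-up does not change) and that $(\ul\CG'',\bar\delta'')$ is genuinely a $\CY$-point of $\breveRZ$ — i.e.\ that the \'etale local $G$-shtuka extends over all of $\CY$, not just generically — which is where ind-projectivity of $\Flag_G$ is essential and why the statement is only asserted after an admissible blow-up rather than over $\CX$ itself.
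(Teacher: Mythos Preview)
Your overall plan matches the paper's, but the hard direction of the first equivalence has a genuine gap. You claim that $v=\Delta^{-1}\Delta'\in G\bigl(B'\dbl z,z^{-1}\}[\tfrac{1}{\tminus}]\bigr)\cap G\bigl(B'[\tfrac{1}{\zeta}]\dbl z-\zeta\dbr\bigr)$ immediately lies in $LG(B')$, invoking Lemma~\ref{LemmaDenominators}. This fails on two counts. First, Lemma~\ref{LemmaDenominators} takes as hypothesis that $a\in G\bigl(B\dbl z\dbr[\tfrac{1}{z-\zeta}]\bigr)$, which you do not have for $v$. Second, membership in $G\bigl(B'[\tfrac{1}{\zeta}]\dbl z-\zeta\dbr\bigr)$ only controls behavior at $z=\zeta$: since $\zeta-\zeta^{q^i}\in B'[\tfrac{1}{\zeta}]^\times$ for $i\ge1$, the factors $(1-\zeta^{q^i}/z)$ of $\tminus$ map to units in $B'[\tfrac{1}{\zeta}]\dbl z-\zeta\dbr$, so this condition says nothing about poles at $z=\zeta^{q^i}$. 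The paper fills this in with two steps you are missing: (i) from $v\in G\bigl(B'[\tfrac{1}{\zeta}]\dbl z-\zeta\dbr\bigr)$ one gets via Lemma~\ref{LemmaDivisibility} only that $v\in G\bigl(B'\dbl z,z^{-1}\}[\tfrac{1}{\s\tminus}]\bigr)$, and then one \emph{iterates} using the Frobenius relation $v=A\,\s(v)\,(A')^{-1}$ (noting $A,A'\in G\bigl(B'[\tfrac{1}{\zeta}]\dbl z-\zeta^{q^i}\dbr\bigr)$ for $i\ge1$) to successively kill the poles at $z=\zeta^{q^i}$ and conclude $v\in G\bigl(B'\dbl z,z^{-1}\}\bigr)$; (ii) this is still not $LG(B')=G\bigl(B'\dbl z\dbr[\tfrac{1}{z}]\bigr)$, since $B'\dbl z,z^{-1}\}$ allows infinite negative tails. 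To descend to $LG(B')$ the paper argues pointwise: for each minimal prime $\Fp\subset B'[\tfrac{1}{\zeta}]$ one passes to an algebraically closed complete $\Omega$, uses trivializers $H_\Fp,H'_\Fp\in G(\Omega\dbl z\dbr)$ of the two Tate modules to see $H_\Fp^{-1}v_\Fp H'_\Fp\in G\bigl(\BF_q\dpl z\dpr\bigr)$, whence $z^{N_\Fp}\Darst(v_\Fp),\,z^{N_\Fp}\Darst(v_\Fp^{-1})\in\CO_\Omega\dbl z\dbr^{r\times r}$ for a finite $N_\Fp$. Taking $N=\max_\Fp N_\Fp$ and handling the nilradical of $B'[\tfrac{1}{\zeta}]$ via a $\sigma^m$-trick (combined with $v=A\cdots\sigma^{(m-1)*}(A)\cdot\sigma^{m*}(v)\cdot\sigma^{(m-1)*}(A')^{-1}\cdots(A')^{-1}$) finally gives $v\in LG(B')$.

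This same pointwise mechanism is what makes \ref{PropLiftOfIsog_C}$\Rightarrow$\ref{PropLiftOfIsog_A} work: condition \ref{PropLiftOfIsog_C} says precisely that $\Darst(H_\Fp^{-1}v_\Fp H'_\Fp)\in\GL_r\bigl(\BF_q\dbl z\dbr\bigr)$ for every minimal $\Fp$, so the $N$ above is $0$ and $v\in L^+G(B')$. Your appeal to \cite{TW} is too coarse --- that result is over a point, and the passage from fiberwise integrality to global integrality over $B'$ is exactly the content of the minimal-prime argument. Your outline of the last assertion is correct in spirit; the paper packages the extension step as a separate Lemma~\ref{LemmaLiftOfIsog}, whose proof uses ind-projectivity of $\Flag_G$ to take a scheme-theoretic closure of a graph inside a projective piece of $\ul\CM_{\ul\CG}$, followed by Raynaud--Gruson flattening to produce the blow-up $\CY$.
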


\noindent
{\it Remark.} The last assertion uses the ind-projectivity of the affine flag variety $\Flag_G$ and is in general false if $G$ is not parahoric; see Example~\ref{ExampleNotATorsor}.

\begin{proof}[{Proof of Proposition~\ref{PropLiftOfIsog}}]
By Lemma~\ref{LemmaTrivializing} there is an \'etale covering $\wt\CX=\Spf \wt B\to\CX$ of admissible formal $\CO_L$-schemes and trivializations $\alpha\colon\ul\CG\isoto\bigl((L^+G)_{\wt\CX},A\s\bigr)$ and $\alpha'\colon\ul\CG'\isoto\bigl((L^+G)_{\wt\CX},A'\s\bigr)$ with $A,A'\in G\bigl(\wt B\dbl z\dbr[\tfrac{1}{z-\zeta}]\bigr)$. Note that $\wt B\subset \wt B[\tfrac{1}{\zeta}]$ because $\wt B$ has no $\zeta$-torsion. In addition the quasi-isogenies $\bar\delta$ and $\bar\delta'$ correspond under $\alpha$ and $\alpha'$ to elements $\olDelta,\olDelta'\in LG\bigl(\wt B/(\zeta)\bigr)$, that lift by Lemma~\ref{LemmaGL} to uniquely determined elements $\Delta,\Delta'\in G\bigl(\wt B\dbl z,z^{-1}\}[\tfrac{1}{\tminus}]\bigr)$ with $\Delta A=b\,\s(\Delta)$ and $\Delta' A'=b\,\s(\Delta')$. In particular, the quasi-isogeny $\bar\delta^{-1}\circ\bar\delta'\colon\ul\CG\to\ul\CG'$ over $\Spec B/(\zeta)$ lifts to $U=\Delta^{-1}\Delta'\in G\bigl(\wt B\dbl z,z^{-1}\}[\tfrac{1}{\tminus}]\bigr)$ with $UA'=A\s(U)$. The morphism $\breve\pi$ sends $(\ul\CG,\bar\delta)$ and $(\ul\CG'\!,\bar\delta')$ to 
\begin{eqnarray*}
\gamma & := & \s(\Delta)A^{-1}\cdot G\bigl(\wt B[\tfrac{1}{\zeta}]\dbl z-\zeta\dbr\bigr)\qquad\text{and}\\[2mm]
\gamma' & := & \s(\Delta')(A')^{-1}\cdot G\bigl(\wt B[\tfrac{1}{\zeta}]\dbl z-\zeta\dbr\bigr)\\[2mm]
& = & \s(\Delta)A^{-1}U\cdot G\bigl(\wt B[\tfrac{1}{\zeta}]\dbl z-\zeta\dbr\bigr).
\end{eqnarray*}
If $u$ is a quasi-isogeny over $\Spec B$ then $U\in G(\wt B\dbl z\dbr[\tfrac{1}{z}])\subset G\bigl(\wt B[\tfrac{1}{\zeta}]\dbl z-\zeta\dbr\bigr)$ and hence $\breve\pi(\ul\CG,\bar\delta)=\gamma=\gamma'=\breve\pi(\ul\CG'\!,\bar\delta')$ in $\breve\CH_{G,\hat Z}^\an$. 

Conversely, the condition $\breve\pi(\ul\CG,\bar\delta)=\breve\pi(\ul\CG'\!,\bar\delta')$ yields $U\in G\bigl(\wt B[\tfrac{1}{\zeta}]\dbl z-\zeta\dbr\bigr)$. We claim that this implies $U\in G\bigl(\wt B\dbl z,z^{-1}\}[\tfrac{1}{\s(\tminus)}]\bigr)$. To prove the claim let $\Darst\colon G\into\GL_r$ be a faithful representation. Then the entries of $\Darst(U)$ and $\Darst(U)^{-1}$ are of the form $\tminus^{-e}f$ with $e\in\BN_0$ and $f\in \wt B\dbl z,z^{-1}\}$. We must show that there is a $g\in \wt B\dbl z,z^{-1}\}$ with $\tminus^{-e}f=\s(\tminus)^{-e}g$. Recall from \eqref{EqTMinus} that $\tminus^{-e}f=(1-\tfrac{\zeta}{z})^{-e}\s(\tminus)^{-e}f$. If $e>0$ then $U\in G\bigl(\wt B[\tfrac{1}{\zeta}]\dbl z-\zeta\dbr\bigr)$ resp.~$U^{-1}\in G\bigl(\wt B[\tfrac{1}{\zeta}]\dbl z-\zeta\dbr\bigr)$ implies that $f(\zeta)=0$ in $\wt B[\tfrac{1}{\zeta}]$. By Lemma~\ref{LemmaDivisibility} we find $f=(z-\zeta)g_1=(1-\tfrac{\zeta}{z})zg_1$ with $g_1\in \wt B\dbl z,z^{-1}\}$, and hence $\tminus^{-e}f=(1-\tfrac{\zeta}{z})^{1-e}\s(\tminus)^{-e}zg_1$. Continuing in this way for $\Darst(U)$ and $\Darst(U)^{-1}$, we obtain that $\Darst(U)\in \GL_r\bigl(\wt B\dbl z,z^{-1}\}[\tfrac{1}{\s(\tminus)}]\bigr)$. The claim follows.

This shows that $\s(U)\in G\bigl(\wt B\dbl z,z^{-1}\}[\tfrac{1}{\sigma^{2*}(\tminus)}]\bigr)\subset G\bigl(\wt B[\tfrac{1}{\zeta}]\dbl z-\zeta^q\dbr\bigr)$, where we use \eqref{EqBmaxInBdR}. Since $A,A'\in G\bigl(\wt B[\tfrac{1}{\zeta}]\dbl z-\zeta^{q^i}\dbr\bigr)$ for all $i>0$ we obtain $U=A\s(U)(A')^{-1}\in G\bigl(\wt B[\tfrac{1}{\zeta}]\dbl z-\zeta^q\dbr\bigr)$. Analogously to the previous paragraph this implies that $U\in G\bigl(\wt B\dbl z,z^{-1}\}[\tfrac{1}{\sigma^{2*}(\tminus)}]\bigr)$ and iteratively $U\in G\bigl(\wt B\dbl z,z^{-1}\}[\tfrac{1}{\sigma^{i*}(\tminus)}]\bigr)$ for all $i>0$. It follows that the entries of $\Darst(U)$ converge on all of $\{0<|z|<1\}$, whence lie in $\wt B\dbl z,z^{-1}\}$, and so $U\in G\bigl(\wt B\dbl z,z^{-1}\}\bigr)$.

Now let $\Fp\subset \wt B[\tfrac{1}{\zeta}]$ be a minimal prime ideal and let $x\in\wt\CX^\an=\SpBerk(\wt B[\tfrac{1}{\zeta}])$ be a point given by a multiplicative semi-norm $|\,.\,|_x\colon \wt B[\tfrac{1}{\zeta}]\to\BR_{\ge0}$ such that $\{b\in \wt B[\tfrac{1}{\zeta}]\colon|b|_x=0\}=\Fp$. Note that $|\,.\,|_x$ exists by \cite[Corollary~2.1.16]{Berkovich1} for example as the preimage of the multiplicative Gau{\ss} norm on $L\langle X_1,\ldots,X_d\rangle$ under a Noether normalization map $L\langle X_1,\ldots,X_d\rangle\into \wt B[\tfrac{1}{\zeta}]/\Fp$; see \cite[\S\,6.1.2, Theorem~1]{BGR}. Let $\Omega$ be the completion with respect to $|\,.\,|_x$ of an algebraic closure of $\wt B[\tfrac{1}{\zeta}]/\Fp$ and let $\CO_{\Omega}$ be the valuation ring of $\Omega$. Then the image of $\wt B$ in $\Omega$ lies in $\CO_{\Omega}$. We denote the image of $U$ in $G\bigl(\CO_{\Omega}\dbl z,z^{-1}\}\bigr)$ by $U_\Fp$. By \cite[Lemma~2.8]{AH_Local} there are elements $H_\Fp,H_\Fp'\in G(\Omega\dbl z\dbr)$ with $A\s(H_\Fp)=H_\Fp$ and $A'\s(H_\Fp')=H_\Fp'$ that provide a trivialization of the Tate module functors. By \cite[Remark~3.4.3]{HartlKim} we even have $\Darst(H_\Fp),\Darst(H_\Fp')\in\GL_r(\Omega\langle\tfrac{z}{\zeta^s}\rangle)$ for every $s>\frac{1}{q}$. We compute $\Darst(H_\Fp^{-1}U_\Fp H_\Fp')=\Darst(\s(H_\Fp^{-1})A^{-1}U_\Fp A'\s(H_\Fp'))=\s\Darst(H_\Fp^{-1}U_\Fp H_\Fp')\in \GL_r(\Omega\ancon[s])$ and this implies $\Darst(H_\Fp^{-1}U_\Fp H_\Fp')\in \GL_r\bigl(\BF_q\dpl z\dpr\bigr)$ because $\Omega\ancon[s]^\sigma=\BF_q\dpl z\dpr$. Let $N_\Fp\in\BN_0$ be minimal such that $z^{N_\Fp}\Darst(H_\Fp^{-1}U_\Fp H_\Fp'),\,z^{N_\Fp}\Darst(H_\Fp^{-1}U_\Fp H_\Fp')^{-1}\,\in\,\BF_q\dbl z\dbr^{r\times r}$. Then $N_\Fp=0$ if and only if $\Darst(H_\Fp^{-1}U_\Fp H_\Fp')\in\GL_r(\BF_q\dbl z\dbr)$, that is $H_\Fp^{-1}U_\Fp H_\Fp'\in G(\BF_q\dbl z\dbr)$. Moreover, $z^{N_\Fp}\Darst(U_\Fp),\,z^{N_\Fp}\Darst(U_\Fp)^{-1}\in \Omega\langle\tfrac{z}{\zeta^s}\rangle^{r\times r}\cap\CO_{\Omega}\dbl z,z^{-1}\}^{r\times r}=\CO_{\Omega}\dbl z\dbr^{r\times r}$. Because this holds for all of the finitely many minimal prime ideals of $\wt B[\tfrac{1}{\zeta}]$ and the intersection of these is the nil-radical $\CN$ of $\wt B[\tfrac{1}{\zeta}]$ we see that $U\in G\bigl(\wt B\dbl z,z^{-1}\}\bigr)$ implies $z^N\Darst(U),z^N\Darst(U^{-1})\in \wt B\dbl z\dbr^{r\times r}+\CN\dbl z^{-1}\dbr^{r\times r}$ for $N:=\max\{N_\Fp\colon\Fp\text{ minimal}\}$. Since $\wt B[\tfrac{1}{\zeta}]$ is noetherian the nil-radical is nilpotent and there is an integer $m$ such that $\CN^{q^m}=(0)$. In particular $z^N\Darst(\sigma^{m*}(U))\in \wt B\dbl z\dbr^{r\times r}$. Let $n$ be such that $(z-\zeta)^n\Darst(A),(z-\zeta)^n\Darst(A')^{-1}\in \wt B\dbl z\dbr^{r\times r}$. Then 
\begin{eqnarray*}
\Bigl(\prod_{i=0}^{m-1}(z-\zeta^{q^i})^{2n}\Bigr)\cdot z^N\Darst(U) & = & (z-\zeta)^n\Darst(A)\cdot\ldots\cdot\sigma^{(m-1)*}\bigl((z-\zeta)^n\Darst(A)\bigr)\cdot z^N\Darst(\sigma^{m*}(U))\cdot\\[-3mm]
& & \qquad\qquad \cdot\,\sigma^{(m-1)*}\bigl((z-\zeta)^n\Darst(A')^{-1}\bigr)\cdot\ldots\cdot(z-\zeta)^n\Darst(A')^{-1}\\
& \in & \wt B\dbl z\dbr^{r\times r}
\end{eqnarray*}
and applying Lemma~\ref{LemmaDivisibility} with $a=\zeta^{q^i}$ for $i=0,\ldots,m-1$ to the entries of this matrix yields $z^N\Darst(U)\in \wt B\dbl z\dbr^{r\times r}$. In the same way we see that $z^N\Darst(U^{-1})\in \wt B\dbl z\dbr^{r\times r}$. This implies $\Darst(U)\in\GL_r\bigl(\wt B\dbl z\dbr[\tfrac{1}{z}]\bigr)$ and $U\in G\bigl(\wt B\dbl z\dbr[\tfrac{1}{z}]\bigr)=LG(\wt B)$. We conclude that $U$ defines a quasi-isogeny $U\colon\ul\CG_{\wt B}\to\ul\CG'_{\wt B}$ that induces the isomorphism of the rational Tate module functors $\check V_{U,\bar x}=H_\Fp^{-1}U_\Fp H_\Fp'\colon\check V_{\ul\CG'\!,\bar x}\isoto\check V_{\ul\CG,\bar x}$ for the geometric base point $\bar x\colon\SpBerk(\Omega)\to X$. By uniqueness $U$ descends to a quasi-isogeny $u\colon\ul\CG'\to\ul\CG$ over $\Spec B$ as desired.

In this situation, clearly \ref{PropLiftOfIsog_A} implies \ref{PropLiftOfIsog_B} and \ref{PropLiftOfIsog_B} implies \ref{PropLiftOfIsog_C}. We further see that \ref{PropLiftOfIsog_C} for our representation $\Darst$ implies $\Darst(H_\Fp^{-1}U_\Fp H_\Fp')\in \GL_r\bigl(\BF_q\dbl z\dbr\bigr)$ for all minimal $\Fp\subset \wt B[\tfrac{1}{\zeta}]$, and hence the integer $N$ defined above is zero and $U\in G\bigl(\wt B\dbl z\dbr\bigr)$. In particular, \ref{PropLiftOfIsog_C} implies \ref{PropLiftOfIsog_A}, because $u\colon\ul\CG'\isoto\ul\CG$ is an isomorphism of local $G$-shtukas if and only if $U\in G\bigl(\wt B\dbl z\dbr\bigr)$.

\medskip
It remains to prove the last assertion about the rational $G(\BF_q\dbl z\dbr)$-level structure $\eta G(\BF_q\dbl z\dbr)$ on $\ul\CG$ where $\eta\in\Triv_{\ul{\CG},\bar x}\bigl(\BF_q\dpl z\dpr\bigr)$. Let $\rho''\colon\pi_1^\et(\Test,\bar x)\to G(\BF_q\dbl z\dbr)$ be the homomorphism by which the fundamental group acts on $\eta$, that is $g(\eta)=\eta\cdot\rho''(g)$ for $g\in\pi_1^\et(\Test,\bar x)$. Note that $\rho''(g)$ indeed lies in $G(\BF_q\dbl z\dbr)$ because $g$ fixes $\eta G(\BF_q\dbl z\dbr)$. In particular $\rho''$ factors through a representation $\pi_1^\alg(\Test,\bar x)\to G(\BF_q\dbl z\dbr)$. By Corollary~\ref{Cor1.7}, Proposition~\ref{Prop2.13} and \cite[Proposition~3.6]{AH_Local}, $\rho''$ comes from an \'etale local $G$-shtuka $\ul\CG''_L$ over $\Spec B[\tfrac{1}{\zeta}]$ together with a tensor isomorphism $\beta\in\Triv_{\ul{\CG}''\!,\bar x}(\BF_q\dbl z\dbr)$, and the tensor isomorphism $\eta\beta^{-1}\colon\check V_{\ul\CG''_L,\bar x}\isoto\check V_{\ul\CG,\bar x}$ is of the form $\check V_{u''_L,\bar x}$ for a quasi-isogeny $u''_L\colon\ul\CG''_L\to\ul\CG_{B[\frac{1}{\zeta}]}$ over $\Spec B[\tfrac{1}{\zeta}]$. This means that $\ul\CG''_L=(\CG''_L,\tau'')$ where $\CG''_L$ is an $L^+G$-torsor over $\Spec B[\tfrac{1}{\zeta}]$ and $\tau''\colon\s\CG''_L\isoto\CG''_L$ is an isomorphism of $L^+G$-torsors. Also $u''_L\colon L\CG''_L\isoto L\CG_{B[\frac{1}{\zeta}]}$ is an isomorphism of the associated $LG$-torsors with $u''_L\circ\tau''=\tau_\CG\circ\s u''_L$. Note that the assumption on the nilpotence of $\zeta$ in \cite[Proposition~3.6]{AH_Local} is not satisfied for $\Spec B[\tfrac{1}{\zeta}]$, but is also not used in the proof of loc.\ cit. 

We may thus apply the following Lemma~\ref{LemmaLiftOfIsog} by taking the $LG$-torsor associated with $\CG$ as the $LG$-torsor $\CG$ in Lemma~\ref{LemmaLiftOfIsog}. It provides an extension of the pair $(\ul\CG''_L,u''_L)$ to a local $G$-shtuka $\ul\CG''$ bounded by $\hat{Z}^{-1}$ and a quasi-isogeny $u''\colon\ul\CG''\to\ul\CG$ over a blowing-up $Y$ of $\Spec B$ in a finitely generated ideal $\Fb\subset B$ containing a power of $\zeta$. By \cite[Propositions~2.1 and 1.3]{FRG1} the $\zeta$-adic completion $\CY$ of $Y$ is the admissible formal blowing-up of $\CX=\Spf B$ in the ideal $\Fb$. In particular, $\CY^\an\to X$ is an isomorphism. We set $\bar\delta'':=\bar\delta\circ(u''\mod\zeta)$. Then $(\ul\CG''\!,\bar\delta'')\in\breveRZ(\CY)$, and $\breve\pi(\ul\CG,\bar\delta)=\breve\pi(\ul\CG''\!,\bar\delta'')$ by the first part of the proposition, and $(\check V_{u''\!,\bar x})^{-1}\circ\eta=\beta\in\Triv_{\ul\CG''\!,\bar x}(\BF_q\dbl z\dbr)$ by construction. 
\end{proof}

\begin{lemma}\label{LemmaLiftOfIsog}
Let $B$ be an admissible formal $\CO_L$-algebra. Let $\CG$ be an $LG$-torsor over $\Spec B$ and assume that there is an \'etale covering $\Spf\wt B\to\Spf B$ of admissible formal $\CO_L$-schemes such that $\CG$ admits a trivialization $\alpha\colon\CG\otimes_B\Spec\wt B\isoto LG_{\Spec\wt B}$. Let $\tau_\CG\colon L_{z(z-\zeta)}\s\CG\isoto L_{z(z-\zeta)}\CG$ be an isomorphism of the associated $L_{z(z-\zeta)}G$-torsors. Furthermore, let $\CG''_L$ be an $L^+G$-torsor over $\Spec B[\tfrac{1}{\zeta}]$, let $\tau''\colon\s\CG''_L\isoto\CG''_L$ be an isomorphism of $L^+G$-torsors, and let $u''_L\colon L\CG''_L\isoto\CG_{B[\frac{1}{\zeta}]}$ be an isomorphism of $LG$-torsors over $\Spec B[\frac{1}{\zeta}]$ satisfying $u''_L\circ\tau''=\tau_\CG\circ\s u''_L$.

Then there is a blowing-up $Y$ of $\Spec B$ in a finitely generated ideal $\Fb\subset B$ containing a power of $\zeta$, an $L^+G$-torsor $\CG''$ over $Y$, an isomorphism $\tau_{\CG''}\colon L_{z-\zeta}\s\CG''\isoto L_{z-\zeta}\CG''$ of the associated $L_{z-\zeta}G$-torsors over $Y$, and an isomorphism $u''\colon L\CG''\isoto \CG_Y$ of $LG$-torsors satisfying $u''\circ\tau_{\CG''}=\tau_\CG\circ\s u''$, such that the pullback of $(\CG''\!,\tau_{\CG''},u'')$ to $Y\times_B\Spec B[\tfrac{1}{\zeta}]=\Spec B[\tfrac{1}{\zeta}]$ is isomorphic to $(\CG''_L,\tau''\!,u''_L)$ via an isomorphism of $L^+G$-torsors $h\colon\CG''_{B[\frac{1}{\zeta}]}\isoto\CG''_L$ satisfying $h\circ\tau_{\CG''}=\tau''\circ\s h$ and $u''=u''_L\circ h$.

Moreover, if the element $\s\alpha\circ\tau_\CG^{-1}\circ\alpha^{-1}\in G\bigl(\wt B\dbl z\dbr[\tfrac{1}{z(z-\zeta)}]\bigr)\subset G\bigl(\wt B[\tfrac{1}{\zeta}]\dbl z\dbr[\tfrac{1}{z-\zeta}]\bigr)$ maps to a point in $\hat{Z}^\an(\SpBerk\wt B[\tfrac{1}{\zeta}])$, then $\tau_{\CG''}$ is bounded by $\hat{Z}^{-1}$ and $\ul\CG''=(\CG''\!,\tau_{\CG''})$ is a local shtuka over $Y$ bounded by $\hat{Z}^{-1}$ in the sense of Remark~\ref{RemUnivLocGSht}.
\end{lemma}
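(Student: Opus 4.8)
The plan is to extend, across $V(\zeta)$ and after an admissible blowing-up, the $L^+G$-reduction of $\CG$ that is given over the generic fibre, using the ind-projectivity of $\Flag_G$ in place of a valuative criterion. I would first reformulate the datum $(\CG''_L,u''_L)$ intrinsically. Since $\Flag_G=LG/L^+G$, the quotient $\mathrm{Fl}_\CG:=\CG/L^+G$ is an ind-scheme over $\Spec B$ which is \'etale-locally on $\Spec B$ isomorphic to $\Flag_G\times\Spec B$, hence ind-projective over $\Spec B$ by \cite[Theorem~1.4]{PR2} and \cite[Theorem~A]{Richarz13}. It carries a tautological $L^+G$-torsor $\scrP$, namely $\CG$ regarded as a scheme over its own quotient $\CG/L^+G$, together with a tautological isomorphism $L\scrP\isoto\pi^\ast\CG$ of $LG$-torsors, $\pi\colon\mathrm{Fl}_\CG\to\Spec B$ being the projection, induced by the $L^+G$-equivariant map $\CG\to\CG\times_{\Spec B}(\CG/L^+G)$, $g\mapsto(g,\bar g)$. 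Pulling $\scrP$ and this isomorphism back along a section identifies, for every $B$-scheme $T$, the sections $T\to\mathrm{Fl}_\CG$ with the $L^+G$-reductions of $\CG_T$; in particular $(\CG''_L,u''_L)$ corresponds to a section $s_U\colon U:=\Spec B[\tfrac{1}{\zeta}]\to\mathrm{Fl}_\CG\times_{\Spec B}U$.

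Because $\zeta$ is a non-zero-divisor in $B$, the open $U$ is schematically dense in $\Spec B$; let $Y$ be the scheme-theoretic closure of $s_U(U)$ in $\mathrm{Fl}_\CG$. Using the given trivialization $\alpha$, after base change to $\wt B$ the section $s_U$ becomes a morphism $\Spec\wt B[\tfrac{1}{\zeta}]\to\Flag_G\times\Spec\wt B$ with quasi-compact source, hence factors through one of the projective closed subschemes exhausting the ind-scheme $\Flag_G$; so the closure of $s_U(U)$ after this base change is projective over $\Spec\wt B$, and by fppf descent (the scheme-theoretic closure commuting with flat base change) $Y$ is a scheme, projective over $\Spec B$, equal to $U$ over $U$, and $\zeta$-torsion-free because $\CO_Y$ embeds into the pushforward of the $\zeta$-torsion-free sheaf $\CO_U$. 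Arguing as in the proof of Proposition~\ref{PropPeriodMEtale} --- via \cite[Corollary~5.4]{FRG2} applied to the $\zeta$-adic completion $\CY$ of $Y$, together with \cite[Propositions~2.1 and 1.3]{FRG1} --- one identifies $Y$ with the blowing-up $\mathrm{Bl}_{\Fb}\Spec B$ in a finitely generated ideal $\Fb\subset B$ containing a power of $\zeta$. Pulling $\scrP$ and $u''$ back along the extended section $Y\to\mathrm{Fl}_\CG$ produces the $L^+G$-torsor $\CG''$ over $Y$, the isomorphism $u''\colon L\CG''\isoto\CG_Y$ of $LG$-torsors, and, over $U$, the tautological identification $h\colon\CG''_{B[\frac{1}{\zeta}]}\isoto\CG''_L$ with $u''|_U=u''_L\circ h$.

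Next I would install the Frobenius structure. Since the absolute $q$-Frobenius is natural, $\s\CG_Y=(\s\CG)_Y$, so $\tau_\CG$ pulls back to $Y$ and I set $\tau_{\CG''}:=(u'')^{-1}\circ(\tau_\CG)_Y\circ\s u''$, which a priori is an isomorphism of the associated $L_{z(z-\zeta)}G$-torsors and satisfies $u''\circ\tau_{\CG''}=(\tau_\CG)_Y\circ\s u''$ by construction. Over $U$ it equals $h^{-1}\circ\tau''\circ\s h$ by the hypothesis $u''_L\circ\tau''=\tau_\CG\circ\s u''_L$, so after a trivialization over an \'etale cover $\wt Y\to Y$ on which $\CG''$ trivializes, the matrices of $\tau_{\CG''}$ and of its inverse lie in $G\bigl(\wt Y[\tfrac{1}{\zeta}]\dbl z\dbr[\tfrac{1}{z-\zeta}]\bigr)$, i.e. acquire no pole along $z=0$ once $\zeta$ is inverted. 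As $\wt Y$ is again $\zeta$-torsion-free, the divisibility argument used in Lemmas~\ref{LemmaDivisibility} and \ref{LemmaDenominators} removes the pole along $z=0$ over $\wt Y$ itself, so that $\tau_{\CG''}$ is an isomorphism of the associated $L_{z-\zeta}G$-torsors and $h\circ\tau_{\CG''}=\tau''\circ\s h$ over $U$. Because $\mathrm{Fl}_\CG$, $\scrP$ and $u''$ were defined without choosing a trivialization of $\CG$, the whole package $(Y,\CG'',\tau_{\CG''},u'')$ descends from the \'etale cover $\wt B\to B$ to $\Spec B$ with no further argument.

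For the boundedness statement, a trivialization of $\CG''$ over $\wt Y$ makes $u''$ the identity and the matrix of $\tau_{\CG''}^{-1}$ equal to that of $\s\alpha\circ\tau_\CG^{-1}\circ\alpha^{-1}$ pulled back to $\wt Y$, and by hypothesis the latter maps to a point of $\hat Z^\an$ over the strictly affinoid generic fibre. Since $\hat Z_R\into\wh\Flag^{(n)}_{\SL_r,R}$ is a closed immersion of projective $\zeta$-adic formal schemes by Proposition~\ref{PropBound}\ref{PropBound_B} and \ref{PropBound_D}, and the relevant $\zeta$-adic completion of $\wt Y$ is again admissible, hence $\zeta$-torsion-free, the ideal cutting out $\hat Z_R$ pulls back to a $\zeta$-power-torsion, hence zero, ideal, so the induced morphism factors through $\hat Z_R$; after reducing to a representative over $R$ by Galois descent along $\Quot(\breve R)/\breve E$ as in the proof of Proposition~\ref{PropPeriodMEtale}, this is precisely the assertion that $\tau_{\CG''}^{-1}$ is bounded by $\hat Z$, so $\ul\CG''=(\CG'',\tau_{\CG''})$ is bounded by $\hat Z^{-1}$ and arises by $\zeta$-adic completion from a local $G$-shtuka on $\CY$ in the sense of Remark~\ref{RemUnivLocGSht}. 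The main obstacle is the extension step in the second paragraph: the existence of a scheme $Y$, proper over $\Spec B$, to which the generically given $L^+G$-reduction extends rests on the ind-properness of $\Flag_G$ in an essential way, and Example~\ref{ExampleNotATorsor} shows the conclusion fails for non-parahoric $G$; everything else is bookkeeping with loop-group denominators, controlled by the $\zeta$-torsion-freeness built into the construction together with Lemmas~\ref{LemmaDivisibility} and \ref{LemmaDenominators}.
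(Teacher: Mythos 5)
Your architecture matches the paper's: both proofs pass through the ind-projective ind-scheme classifying $L^+G$-reductions of $\CG$ (the paper's $\ul\CM_{\ul\CG}$, your $\mathrm{Fl}_\CG := \CG/L^+G$ — these agree, since as the paper also observes the Frobenius $\tau_{\CG''}$ is determined by $u''$), take the scheme-theoretic closure of the section over $U=\Spec B[\tfrac{1}{\zeta}]$, and extract a blowing-up $Y$ of $\Spec B$ carrying the extension. The boundedness argument at the end is essentially the paper's.

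There is, however, a gap in the central step. You claim that the closure itself is the desired blowing-up of $\Spec B$, obtained by applying \cite[Corollary~5.4]{FRG2} to its $\zeta$-adic completion $\CY$ and then appealing to \cite[Propositions~2.1, 1.3]{FRG1}. But \cite[Corollary~5.4]{FRG2} is a statement about \emph{formal} schemes: it tells you that $\CY\to\Spf B$ is an admissible \emph{formal} blowing-up. Passing from this to the assertion that the algebraic scheme $Y$ is the algebraic blowing-up $\mathrm{Bl}_\Fb\Spec B$ is a formal-GAGA / algebraization step, and it requires noetherian hypotheses which fail here: $L$ is allowed to be an arbitrary complete valued extension of $\breve E$, so $\CO_L$, and hence $B$, need not be noetherian. (The argument in Proposition~\ref{PropPeriodMEtale} that you invoke does not have this issue because there everything lives at the formal level to begin with, whereas the lemma's conclusion needs an algebraic $Y$ since $\CG''$, $\tau_{\CG''}$ and $u''$ are required over the scheme $Y$.) The paper circumvents this by \emph{not} trying to show the closure $\Gamma$ is a blowing-up: it applies Raynaud--Gruson's flattening theorem \cite[Corollaire~5.7.12]{RaynaudGruson}, which works without noetherian hypotheses, to produce an algebraic blowing-up $Y\to\Spec B$ together with the fact that the strict transform of $\Gamma$ over $Y$ is flat, hence finite locally free of rank one, hence isomorphic to $Y$. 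This yields a morphism $Y\to\Gamma\to\ul\CM_{\ul\CG}^{(N)}$ and hence the extension over $Y$; the closure $\Gamma$ itself plays only an auxiliary role and is never identified with a blowing-up. You should keep $\Gamma$ and $Y$ separate and interpose the flattening step.

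A minor point: to remove the pole of $\tau_{\CG''}$ along $z=0$ you cite Lemmas~\ref{LemmaDivisibility} and \ref{LemmaDenominators}, but these handle denominators at $z=\zeta$, not at $z=0$. What is needed is the elementary observation that $\CO_{\wt Y}\dbl z\dbr[\tfrac{1}{z}]\cap\CO_{\wt Y_L}\dbl z\dbr=\CO_{\wt Y}\dbl z\dbr$ inside $\CO_{\wt Y_L}\dbl z\dbr[\tfrac{1}{z}]$, which holds because $\CO_{\wt Y}$ has no $\zeta$-torsion; this is the argument the paper uses.
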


\begin{proof}
We consider the functor $\ul\CM_{\ul\CG}$ on $\Spec B$-schemes classifying ``quasi-isogenies of (unbounded) local $G$-shtukas to $\ul\CG:=(\CG,\tau_\CG)$\,''\!, that on affine $B$-schemes $S=\Spec R$ is defined by
\begin{eqnarray*}
\ul\CM_{\ul\CG}(S) & := & \bigl\{\text{Isomorphism classes of }(\CG''\!,\tau_{\CG''},u'')\colon\;\text{where }\CG''\text{ is an $L^+G$-torsor over } R, \\[1mm]
&&~~ \text{ where } \tau_{\CG''}\colon L_{z(z-\zeta)}\s\CG''\isoto L_{z(z-\zeta)}\CG'' \text{ is an isomorphism of the associated} \\[1mm]
&&~~ \text{ $L_{z(z-\zeta)}G$-torsors and }u''\colon L\CG''\isoto \CG\times_B S \text{ is an isomorphism of $LG$-torsors,} \\[1mm]
&&~~ \text{ with } \tau_\CG\circ\s u''=u''\circ\tau_{\CG''} \bigr\}. 
\end{eqnarray*}
Here $(\CG''\!,\tau_{\CG''},u'')$ and $(\CG'\!,\tau_{\CG'},u')$ are isomorphic if there is an isomorphism $h\colon\CG''\isoto\CG'$ of $L^+G$-torsors satisfying $h\circ\tau_{\CG''}=\tau_{\CG'}\circ\s h$ and $u''=u'\circ h$.

The functor $\ul\CM_{\ul\CG}$ is representable by an ind-projective ind-scheme over $\Spec B$ as follows. We consider its base change $\ul\CM_{\ul\CG}\otimes_B\wt B$ and fix a trivialization $\alpha\colon\CG\otimes_B\wt B\isoto LG_{\wt B}$. Over a $\wt B$-algebra $R$ the data $(\CG''\!,\alpha\circ u'')$ is represented by the ind-projective ind-scheme $\Flag_G\whtimes_{\BF_q}\Spec\wt B$ over $\Spec\wt B$. Indeed, over an \'etale covering $\Spec \wt R$ of $\Spec R$ the $L^+G$-torsor $\CG''$ can be trivialized by an isomorphism $\beta\colon\CG''_{\wt R}\isoto (L^+G)_{\wt R}$ and then $\alpha\circ u''\circ\beta^{-1}$ yields an $\wt R$-valued point of $\Flag_G$ that is independent of all choices and descends to an $R$-valued point of $\Flag_G$; see \cite[Theorem~6.2]{HV1} or \cite[Theorem~4.4]{AH_Local} for more details and for the inverse construction. Over $\Spec\wt R$, also 
\[
\beta\circ\tau_{\CG''}\circ\s\beta^{-1}\;=\;\beta\circ(u'')^{-1}\circ\tau_G\circ\s u''\circ\s\beta^{-1} \;\in\; L_{z(z-\zeta)}G(\wt R)\;=\;G\bigl(\wt R\dbl z\dbr[\tfrac{1}{z(z-\zeta)}]\bigr)
\]
is uniquely determined by $u''$. This shows that $\ul\CM_{\ul\CG}\otimes_B\Spec\wt B\isoto \Flag_G\whtimes_{\BF_q}\Spec\wt B$ is an ind-projective ind-scheme over $\Spec\wt B$. It descends to an ind-projective ind-scheme $\ul\CM_{\ul\CG}$ over $\Spec B$, because $B\to\wt B$ is faithfully flat by \cite[Lemma~1.6]{FRG1}; see \cite[Theorem~4.4]{AH_Local} for details. 

The triple $(\CG''_L,\tau''\!,u''_L)$ corresponds to a morphism $f\colon\Spec B[\tfrac{1}{\zeta}]\to\ul\CM_{\ul\CG}$. Since its source is quasi-compact, $f$ factors through a subscheme $\ul\CM_{\ul\CG}^{(N)}$ that is projective over $\Spec B$; see \cite[Lemma~5.4]{HV1}. The scheme theoretic closure $\Gamma$ of the graph of $f$ in $\ul\CM_{\ul\CG}^{(N)}$ is a projective scheme over $\Spec B$ and the projection $\Gamma\to\Spec B$ is an isomorphism over $\Spec B[\tfrac{1}{\zeta}]$. By the flattening technique of Raynaud and Gruson~\cite[Corollaire~5.7.12]{RaynaudGruson} there is a blowing-up $Y$ of $\Spec B$ in a finitely generated ideal $\Fb\subset B$ containing a power of $\zeta$, such that the strict transform of $\Gamma$, that is, the closed subscheme of $\Gamma\times_B Y$ defined by the sheaf of ideals of $\zeta$-torsion, is isomorphic to $Y$. The morphism $Y\to\Gamma\to\ul\CM_{\ul\CG}^{(N)}$ corresponds to an extension over $Y$ of the triple $(\CG''_L,\tau''\!,u''_L)$ from $\Spec B[\tfrac{1}{\zeta}]$. This means that over $Y$ there is an $L^+G$-torsor $\CG''$, an isomorphism of the associated $L_{z(z-\zeta)}G$-torsors $\tau_{\CG''}\colon L_{z(z-\zeta)}\s\CG''\isoto L_{z(z-\zeta)}\CG''$ and an isomorphism of the associated $LG$-torsors $u''\colon L\CG''\isoto \CG\times_B Y$ with $\tau_\CG\circ\s u''=u''\circ\tau_{\CG''}$ and over $Y\times_B\Spec B[\tfrac{1}{\zeta}]=\Spec B[\tfrac{1}{\zeta}]$ an isomorphism of $L^+G$-torsors $h\colon\CG''_{B[\frac{1}{\zeta}]}\isoto\CG''_L$ satisfying $h\circ\tau_{\CG''}=\tau''\circ\s h$ and $u''=u''_L\circ h$.

We claim that $\tau_{\CG''}$ comes from an isomorphism of $L_{z-\zeta}G$-torsors $\tau_{\CG''}\colon L_{z-\zeta}\s\CG''\isoto L_{z-\zeta}\CG''$. We choose a trivialization of $\ul\CG''$ over an \'etale covering $\wt Y$ of $Y\times_{\Spec B}\Spec\wt B$ and write the Frobenius $\tau_{\CG''}$ of $\ul\CG''$ as an element $\tau_{\CG''}\in G\bigl(\CO_{\wt Y}\dbl z\dbr[\tfrac{1}{z(z-\zeta)}]\bigr)$. Our claim means that $\tau_{\CG''}$ actually lies in $G\bigl(\CO_{\wt Y}\dbl z\dbr[\tfrac{1}{z-\zeta}]\bigr)$. To prove the claim, we choose a faithful representation $\Darst\colon G\into\SL_r$ and we consider the matrix entries $g_{ij}$ of $\Darst(\tau_{\CG''})$ which satisfy $(z-\zeta)^mg_{ij}\in\CO_{\wt Y}\dbl z\dbr[\tfrac{1}{z}]$ for an appropriate power $m$. Since $\tau_{\CG''}$ differs from $\tau''$ over $\wt Y_L:=\wt Y\otimes_{\CO_L}L$ by the isomorphism $h$ of $L^+G$-torsors, we see that $(z-\zeta)^mg_{ij}\in\CO_{\wt Y_L}\dbl z\dbr$. Since the intersection of $\CO_{\wt Y}\dbl z\dbr[\tfrac{1}{z}]$ and $\CO_{\wt Y_L}\dbl z\dbr$ in $\CO_{\wt Y_L}\dbl z\dbr[\tfrac{1}{z}]$ equals $\CO_{\wt Y}\dbl z\dbr$, this shows that $(z-\zeta)^mg_{ij}\in\CO_{\wt Y}\dbl z\dbr$. In particular, $g_{ij}\in\CO_{\wt Y}\dbl z\dbr[\tfrac{1}{z-\zeta}]$, and this proves our claim.

It remains to show that $\ul\CG''$ is bounded by $\hat Z^{-1}$ under the additional assumptions on $\tau_\CG$. By \cite[Propositions~2.1 and 1.3]{FRG1} the $\zeta$-adic completion $\CY$ of $Y$ is the admissible formal blowing-up of $\Spf B$ in the ideal $\Fb$. By Proposition~\ref{PropBound}\ref{PropBound_A} there is an integer $n$ such that for all representatives $(R,\hat Z_R)$ of $\hat Z$ the morphism $\hat{Z}_R\to\wh\Flag_{\SL_r,R}$ factors through $\wh\Flag^{(n)}_{\SL_r,R}$. By enlarging $n$, we may assume that the morphism $\tilde f\colon\wt\CY\to\wh\Flag_{G,R_{\hat Z}}\to\wh\Flag_{\SL_r,R_{\hat Z}}$ defined by $\tau_{\CG''}^{-1}$ factors through $\wh\Flag^{(n)}_{\SL_r,R_{\hat Z}}$. Let $(R,\hat Z_R)$ be such a representative and set $\wt\CY_R:=\wt\CY\whtimes_{R_{\hat Z}}\Spf R$. Thus $\tilde f\wh\otimes\id_R\colon\wt\CY_R\to\wh\Flag^{(n)}_{G,R}:=\wh\Flag_{G,R}\whtimes_{\wh\Flag_{\SL_r,R}}\wh\Flag^{(n)}_{\SL_r,R}$ and $\hat Z_R$ is a closed formal subscheme of $\wh\Flag^{(n)}_{G,R}$, defined by a sheaf of ideals $\Fa$ on $\wh\Flag^{(n)}_{G,R}$. We must show that $(\tilde f\wh\otimes\id_R)^*\Fa=(0)$. The associated morphism of $\Quot(R)$-analytic spaces $(\tilde f\wh\otimes\id_R)^\an\colon(\wt\CY_R)^\an\to(\wh\Flag^{(n)}_{G,R})^\an$ is given by $\tau_{\CG''}^{-1}=(\s u'')^{-1}\circ\tau_\CG^{-1}\circ u''$ and factors through $\hat Z_R^\an$, because $\s\alpha\circ\tau_\CG^{-1}\circ\alpha^{-1}\in\hat Z_R^\an(\wt\CY^\an)$, as well as $\alpha\circ u''\!,\s(\alpha\circ u'')^{-1}\in G(\CO_{\wt\CY^\an}\dbl z-\zeta\dbr)$, and $\hat Z_R^\an$ is invariant under multiplication with $G(\CO_{\wt\CY^\an}\dbl z-\zeta\dbr)$ on the left. This implies $(\tilde f\wh\otimes\id_R)^{\an*}\Fa=(0)$ on $(\wt\CY_R)^\an$ and since $\CO_{\wt\CY_R}\subset\CO_{(\wt\CY_R)^\an}$, we obtain $(\tilde f\wh\otimes\id_R)^*\Fa=(0)$ on $\wt\CY_R$. Therefore the morphism $\wt\CY_R\to\wh\Flag^{(n)}_{G,R}$ given by $\tau_{\CG''}^{-1}$ factors through the closed formal subscheme $\hat Z_R$. By Definition~\ref{DefBDLocal}\ref{DefBDLocal_C} and Remark~\ref{RemBdLocal}(c) this means that $\tau_{\CG''}^{-1}$ is bounded by $\hat Z$ and $\ul\CG''$ is bounded by $\hat{Z}^{-1}$.
\end{proof}

To define ${\breve\CM}^K$ for all compact open subgroups $K\subset G\bigl(\BF_q\dpl z\dpr\bigr)$ we proceed slightly differently than Rapoport and Zink~\cite[5.34]{RZ} and instead use the following definition and corollary. For a comparison with \cite[5.34]{RZ} see Remark~\ref{remdefhecke}.

\begin{definition}\label{DefRatLevel}
Let $X$ be a connected affinoid strictly $L$-analytic space with geometric base point $\bar x$, and let $K\subset G\bigl(\BF_q\dpl z\dpr\bigr)$ be a compact open subgroup. Consider the category of triples $(\ul\CG,\bar\delta,\eta K)$ where $(\ul\CG,\bar\delta)\in\breveRZ(\CX)$ for an admissible formal model $\CX$ of $X$ and $\eta K\in\Triv_{\ul\CG,\bar x}\bigl(\BF_q\dpl z\dpr\bigr)/K$ is a rational $K$-level structure on $\ul\CG$ over $X$. Morphisms between two such triples $(\ul\CG_1,\bar\delta_1,\eta_1 K)$ and $(\ul\CG_2,\bar\delta_2,\eta_2 K)$ over admissible formal models $\CX_1$, respectively $\CX_2$, are quasi-isogenies $u\colon\ul\CG_1\to\ul\CG_2$ as in Remark~\ref{RemUnivLocGSht} over a model $\wt\CX$ dominating both $\CX_i$ with $\bar\delta_2\circ (u\mod\zeta) =\bar\delta_1$ such that $\check V_{u,\bar x}\circ\eta_1 K=\eta_2 K$. In particular all morphisms are isomorphisms and by rigidity of quasi-isogenies \cite[Proposition~2.11]{AH_Local} all Hom sets contain at most one element.
\end{definition}

For $K\subset G(\BF_q\dbl z\dbr)$ we consider over ${\breve\CM}^K$ the triple $(\ul\CG^\univ,\bar\delta^\univ,\eta^\univ K)$, where $\eta^\univ K$ is the rational $K$-level structure on $\ul\CG^\univ$ induced from the universal integral $K$-level structure on $\ul\CG^\univ$ via the inclusion $\Triv_{\ul\CG,\bar x}(\BF_q\dbl z\dbr)\to\Triv_{\ul\CG,\bar x}\bigl(\BF_q\dpl z\dpr\bigr)$.

\begin{corollary}\label{CorLevel}
Let ${\breve\CM}=(\breveRZ)^\an$ and for every open subgroup $K\subset G(\BF_q\dbl z\dbr)$ let ${\breve\CM}^K$ be the finite \'etale covering space from Definition~\ref{DefLevel} parametrizing \emph{integral} $K$-level structures on the universal local $G$-shtuka $\ul\CG^\univ$ over ${\breve\CM}$ from Remark~\ref{RemUnivLocGSht}. Then ${\breve\CM}^K$ also parametrizes isomorphism classes of triples $(\ul\CG,\bar\delta,\eta K)$ in the sense of Definition~\ref{DefRatLevel}.
\end{corollary}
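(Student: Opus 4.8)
The plan is to exhibit a natural transformation between the two functors that ${\breve\CM}^K$ represents and check that it is an isomorphism. First I would recall that, by Proposition~\ref{PropCM^K}, the finite \'etale covering space ${\breve\CM}^K$ represents the functor of integral $K$-level structures $\eta K\in\Triv_{\ul\CG^\univ,\bar x}(\BF_q\dbl z\dbr)/K$ on the universal local $G$-shtuka $\ul\CG^\univ$, and that by Theorem~\ref{ThmRRZSp} the space ${\breve\CM}=(\breveRZ)^\an$ itself represents the functor sending a strictly $L$-analytic space over $\breve E$ to pairs $(\ul\CG,\bar\delta)$. So I must compare, for a connected affinoid strictly $L$-analytic $X$ with geometric base point $\bar x$, the set ${\breve\CM}^K(X)$ with the set of isomorphism classes of triples $(\ul\CG,\bar\delta,\eta K)$ of Definition~\ref{DefRatLevel}.

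The map in one direction is tautological: given a morphism $X\to{\breve\CM}^K$, pull back the triple $(\ul\CG^\univ,\bar\delta^\univ,\eta^\univ K)$, where $\eta^\univ K$ is the rational $K$-level structure obtained from the universal integral $K$-level structure via the inclusion $\Triv_{\ul\CG,\bar x}(\BF_q\dbl z\dbr)\hookrightarrow\Triv_{\ul\CG,\bar x}\bigl(\BF_q\dpl z\dpr\bigr)$; this uses Raynaud's theorem to pass from $X$ to an admissible formal model $\CX$ as in the construction of the period morphism. For the converse I would start with a triple $(\ul\CG,\bar\delta,\eta K)$ over $X$. Since $K\subset G(\BF_q\dbl z\dbr)$ is already an open subgroup of the integral points, the rational $K$-level structure $\eta K$ is automatically an integral $K$-level structure once we know $\eta$ itself can be chosen in $\Triv_{\ul\CG,\bar x}(\BF_q\dbl z\dbr)$ after possibly modifying $\ul\CG$ by a quasi-isogeny. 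This is exactly what the last assertion of Proposition~\ref{PropLiftOfIsog} provides: after an admissible formal blowing-up $\CY\to\CX$ there is $(\ul\CG'',\bar\delta'')\in\breveRZ(\CY)$ with $\breve\pi(\ul\CG,\bar\delta)=\breve\pi(\ul\CG'',\bar\delta'')$ and with $(\check V_{u'',\bar x})^{-1}\circ\eta$ lying in $\Triv_{\ul\CG'',\bar x}(\BF_q\dbl z\dbr)$, where $u''\colon\ul\CG''\to\ul\CG$ is the unique lift of $\bar\delta^{-1}\circ\bar\delta''$. Since $\CY^\an\to X$ is an isomorphism, this produces a point of ${\breve\CM}^K(X)$, and the quasi-isogeny $u''$ is a morphism of triples in the sense of Definition~\ref{DefRatLevel}, so the two triples define the same isomorphism class.

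The remaining work is to check that these two assignments are mutually inverse and well-defined on isomorphism classes. Well-definedness on isomorphism classes follows because a morphism $u\colon\ul\CG_1\to\ul\CG_2$ of triples is, by definition, a quasi-isogeny with $\bar\delta_2\circ(u\bmod\zeta)=\bar\delta_1$ and $\check V_{u,\bar x}\circ\eta_1 K=\eta_2 K$; applying the equivalence $\eqref{PropLiftOfIsog_A}\Leftrightarrow\eqref{PropLiftOfIsog_B}$ of Proposition~\ref{PropLiftOfIsog} (using that $K\subset G(\BF_q\dbl z\dbr)$, so the integral $K$-orbits agree) shows that after the blow-up the two resulting integral $K$-level structures coincide, hence define the same morphism $X\to{\breve\CM}^K$. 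That the round trip starting from a morphism $X\to{\breve\CM}^K$ returns the same morphism is immediate since the triple we built is literally the pullback of the universal one and the blow-up in Proposition~\ref{PropLiftOfIsog} is trivial (an isomorphism on analytic spaces) when $\eta$ already is integral. I expect the main obstacle to be bookkeeping: tracking the admissible formal models and blow-ups, the base-point independence of the Tate functors (Remark~\ref{RemTateModule}\ref{RemTateModule_C}), and verifying that the isomorphism-of-triples relation is exactly the fibre of the map to ${\breve\CM}^K$; the genuine mathematical content is entirely packaged in Proposition~\ref{PropLiftOfIsog} and Proposition~\ref{PropCM^K}, so no new estimates are needed.
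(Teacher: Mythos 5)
Your proposal is correct and takes essentially the same approach as the paper: the reverse map is constructed precisely by invoking the final assertion of Proposition~\ref{PropLiftOfIsog} to pass to an isogenous $(\ul\CG'',\bar\delta'')$ over an admissible formal blow-up on which the level structure becomes integral, and then applying Proposition~\ref{PropCM^K}\ref{PropCM^K_A}. Your additional remarks on well-definedness are correct; the only small inaccuracy is the parenthetical ``(an isomorphism on analytic spaces)'' as the gloss on ``trivial'' for the blow-up, since admissible formal blow-ups always induce isomorphisms on associated analytic spaces — what matters for the round-trip argument is that when $\eta$ is already integral the construction returns literally $(\ul\CG,\bar\delta,\eta K)$ with no blow-up needed.
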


\begin{proof}
Let $X$ be a connected affinoid strictly $L$-analytic space with geometric base point $\bar x$, and let $(\ul\CG,\bar\delta,\eta K)$ be a triple over $X$, where $(\ul\CG,\bar\delta)\in\breveRZ(\CX)$ for an admissible formal model $\CX$ of $X$, and $\eta K$ is a rational $K$-level structure on $\ul\CG$ over $X$. We may assume that $\CX=\Spf B$ is affine. By Proposition~\ref{PropLiftOfIsog} there is an admissible formal blowing-up $\CY\to\CX$ and a $(\ul\CG''\!,\bar\delta'')\in\breveRZ(\CY)$ with $\breve\pi(\ul\CG,\bar\delta)=\breve\pi(\ul\CG''\!,\bar\delta'')$ and an isogeny $u''\colon\ul\CG\to\ul\CG''$ over $\CY$ lifting $(\bar\delta'')^{-1}\circ\bar\delta$ such that $\check V_{u''\!,\bar x}\circ\eta\in\Triv_{\ul\CG''\!,\bar x}(\BF_q\dbl z\dbr)$. The pair $(\ul\CG''\!,\bar\delta'')$ induces a morphism of $\breve E$-analytic spaces $X=\CY^\an\to{\breve\CM}$ which is independent of the admissible formal blowing-up $\CY$. By Proposition~\ref{PropCM^K}\ref{PropCM^K_A} the integral $K$-level structure $(\check V_{u''\!,\bar x}\circ\eta)K$ on $\ul\CG''$ defines a uniquely determined ${\breve\CM}$-morphism $f\colon X\to{\breve\CM}^K$ such that $(f^*\ul\CG^\univ,f^*\bar\delta^\univ,f^*\eta^\univ K)=\bigl(\ul\CG''\!,\bar\delta''\!,(\check V_{u''\!,\bar x}\circ\eta)K\bigr)\cong(\ul\CG,\bar\delta,\eta K)$.
\end{proof}

\begin{definition}\label{deflevelgen}
Let $K\subset G\bigl(\BF_q\dpl z\dpr\bigr)$ be a compact open subgroup. Let $K'\subset K$ be a normal subgroup of finite index with $K'\subset G(\BF_q\dbl z\dbr)$. (Such a subgroup exists as $K\cap  G(\BF_q\dbl z\dbr)\subset K$ is of finite index due to the openness of $G(\BF_q\dbl z\dbr)$ and the compactness of $K$.) Then we define ${\breve\CM}^K$ as the $\breve E$-analytic space that is the quotient of ${\breve\CM}^{K'}$ by the finite group $K/K'$; see \cite[Proposition~2.1.14(ii)]{Berkovich1} and \cite[Lemma~4]{BerkovichAutom}. Here $gK'\in K/K'$ acts on ${\breve\CM}^{K'}$ by sending the universal triple $(\ul\CG^\univ,\bar\delta^\univ,\eta^\univ K')$ over ${\breve\CM}^{K'}$ from Corollary~\ref{CorLevel} to the triple $(\ul\CG^\univ,\bar\delta^\univ,\eta^\univ gK')$. By Remark~\ref{remdefhecke} this means that $gK'\in K/K'$ acts on ${\breve\CM}^{K'}$ as the Hecke correspondence $\iota(g)_{K'}$. In particular $\breve\CM^{K_0}=(\breveRZ)^\an$ for $K_0=G(\BF_q\dbl z\dbr)$. We denote by $(\ul\CG^\univ,\bar\delta^\univ,\eta^\univ K)$ the triple over ${\breve\CM}^K$ induced by the universal triple $(\ul\CG^\univ,\bar\delta^\univ,\eta^\univ K')$ over ${\breve\CM}^{K'}$. It is universal by the following corollary.
\end{definition}

By Proposition~\ref{PropCM^K}\ref{PropCM^K_C} the definition of ${\breve\CM}^K$ is independent of the normal subgroup $K'\subset K$ and Proposition~\ref{PropCM^K}\ref{PropCM^K_C} continues to hold in this more general setting. We will see in Theorem~\ref{MainThm} that ${\breve\CM}^K$ always is a strictly $\breve E$-analytic space.

\begin{corollary}\label{CorLevel2}
If $K\subset G\bigl(\BF_q\dpl z\dpr\bigr)$ is any compact open subgroup then ${\breve\CM}^K$ is an $\breve E$-analytic space that is separated and partially proper over $\breve E$ and parametrizes isomorphism classes of triples $(\ul\CG,\bar\delta,\eta K)$ in the sense of Definition~\ref{DefRatLevel}.
\end{corollary}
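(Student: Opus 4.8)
The plan is to deduce Corollary~\ref{CorLevel2} from Corollary~\ref{CorLevel} and Definition~\ref{deflevelgen} by a straightforward descent along the finite quotient map ${\breve\CM}^{K'}\to{\breve\CM}^K$, where $K'\subset K$ is the normal subgroup of finite index with $K'\subset G(\BF_q\dbl z\dbr)$ chosen in Definition~\ref{deflevelgen}. First I would fix a connected affinoid strictly $\breve E$-analytic space $X$ with geometric base point $\bar x$ and a triple $(\ul\CG,\bar\delta,\eta K)$ over $X$ in the sense of Definition~\ref{DefRatLevel}, and I have to produce a unique morphism $X\to{\breve\CM}^K$ pulling back the universal triple $(\ul\CG^\univ,\bar\delta^\univ,\eta^\univ K)$ to it. The natural idea is to refine the $K$-level structure to a $K'$-level structure \emph{after} passing to a suitable finite \'etale covering of $X$: since $\eta K$ is $\pi_1^\alg(X,\bar x)$-invariant but $\eta K'$ need not be, the $K'$-orbits inside $\eta K$ form a finite $\pi_1^\alg(X,\bar x)$-set on which $K/K'$ acts simply transitively, and this set is represented by a finite \'etale (hence $K/K'$-torsor) covering $X'\to X$. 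Over $X'$ we obtain a canonical triple $(\ul\CG_{X'},\bar\delta_{X'},\eta K')$ with a rational $K'$-level structure, to which Corollary~\ref{CorLevel} applies (as $K'\subset G(\BF_q\dbl z\dbr)$, a rational $K'$-level structure is allowed there since one can shrink further to an integral level by Proposition~\ref{PropLiftOfIsog}, exactly as in the proof of Corollary~\ref{CorLevel}), giving a unique morphism $f'\colon X'\to{\breve\CM}^{K'}$.

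Next I would check that $f'$ is $K/K'$-equivariant for the action on $X'$ coming from the $K/K'$-torsor structure and the action on ${\breve\CM}^{K'}$ from Definition~\ref{deflevelgen}, namely $gK'$ sends $(\ul\CG^\univ,\bar\delta^\univ,\eta^\univ K')$ to $(\ul\CG^\univ,\bar\delta^\univ,\eta^\univ gK')$. This equivariance is essentially forced by the uniqueness in Corollary~\ref{CorLevel}: both $gK'\circ f'$ and $f'\circ gK'$ classify the same triple over $X'$ (the one obtained by replacing $\eta K'$ by $\eta gK'$), so they coincide. Having this, the morphism $f'$ descends along the finite quotient maps: ${\breve\CM}^K$ is by construction the quotient ${\breve\CM}^{K'}/(K/K')$ in the category of Berkovich spaces (using \cite[Proposition~2.1.14(ii)]{Berkovich1}), and $X = X'/(K/K')$ since $X'\to X$ is a $K/K'$-torsor; a $K/K'$-equivariant morphism $X'\to{\breve\CM}^{K'}$ therefore induces a unique morphism $f\colon X\to{\breve\CM}^K$ fitting into the obvious commutative square. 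Pulling back the universal triple $(\ul\CG^\univ,\bar\delta^\univ,\eta^\univ K)$ on ${\breve\CM}^K$ under $f$, and using that its pullback to $X'$ via ${\breve\CM}^{K'}\to{\breve\CM}^K$ equals $(\ul\CG^\univ,\bar\delta^\univ,\eta^\univ K')$, one recovers $(\ul\CG_{X'},\bar\delta_{X'},\eta K')$ over $X'$, hence $(\ul\CG,\bar\delta,\eta K)$ over $X$ after descending the level structure back along $X'\to X$. For uniqueness of $f$: any two morphisms $X\to{\breve\CM}^K$ inducing isomorphic triples pull back to two morphisms $X'\to{\breve\CM}^{K'}$ inducing isomorphic triples with $K'$-level structure, which agree by Corollary~\ref{CorLevel}, and then descend-uniqueness gives $f$ unique. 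Finally, since morphisms to ${\breve\CM}^K$ are determined by their values on an admissible affinoid covering and everything above is compatible with restriction, the assignment $(\ul\CG,\bar\delta,\eta K)\mapsto f$ glues to give the desired functorial bijection for arbitrary strictly $\breve E$-analytic $X$; for non-connected $X$ one works componentwise as in Definition~\ref{DefLevel}.

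The step I expect to be the main obstacle is the careful verification that the $K/K'$-equivariant morphism $f'$ really does descend to a morphism $X\to {\breve\CM}^K$ with the correct universal property, i.e.\ that forming the quotient ${\breve\CM}^{K'}/(K/K')$ is compatible with forming $X'/(K/K')$ and with the universal families. One has to know that the quotient of a strictly analytic space by a finite group in the sense of \cite[Proposition~2.1.14(ii)]{Berkovich1} and \cite[Lemma~4]{BerkovichAutom} behaves well enough — in particular that it corepresents the expected functor on the category of $\breve E$-analytic spaces, so that a $K/K'$-invariant morphism from a space carrying a free $K/K'$-action factors uniquely through the quotient. This is where I would spend the most care; the rest (constructing $X'$, applying Corollary~\ref{CorLevel}, checking equivariance by uniqueness) is routine once this descent formalism is in place. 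A minor additional point to watch is independence of the chosen normal subgroup $K'$, which however is already recorded after Definition~\ref{deflevelgen} via Proposition~\ref{PropCM^K}\ref{PropCM^K_C}, so for two choices $K'_1, K'_2$ one may pass to $K'_1\cap K'_2$ and compare.
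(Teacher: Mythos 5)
Your proposal is correct and follows essentially the same route as the paper: you construct the $K/K'$-torsor $X'\to X$ as the finite \'etale covering classifying the $K'$-cosets inside $\eta K$, apply Corollary~\ref{CorLevel} to obtain a unique morphism $X'\to\breve\CM^{K'}$, observe $K/K'$-equivariance from uniqueness, and descend to $X\to\breve\CM^K$. Your additional remarks about verifying that the Berkovich quotient corepresents the expected functor and about independence of $K'$ flesh out points the paper leaves implicit, but the core argument is identical.
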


\begin{proof}
That ${\breve\CM}^K$ is separated and partially proper over $\breve E$ follows from Lemma~\ref{LemmaPartProper} and \cite[Lemma~1.10.17 (iv), (vii)]{Huber96}. Let $X$ be a connected affinoid strictly $L$-analytic space with geometric base point $\bar x$, and let $(\ul\CG,\bar\delta,\eta K)$ be a triple over $X$ where $(\ul\CG,\bar\delta)\in\breveRZ(\CX)$ for an admissible formal model $\CX$ of $X$ and where $\eta K\in\Triv_{\ul\CG,\bar x}\bigl(\BF_q\dpl z\dpr\bigr)/K$ is a rational $K$-level structure on $\ul\CG$ over $X$. Let $K'\subset K$ be a normal subgroup of finite index with $K'\subset G(\BF_q\dbl z\dbr)$. Consider the \'etale covering space $X'\to X$ corresponding to the $\pi_1^\et(X,\bar x)$-set $\{\eta'K'\in\Triv_{\ul\CG,\bar x}\bigl(\BF_q\dpl z\dpr\bigr)/K'\colon \eta'K=\eta K\}$ that is isomorphic to $K/K'$ under the maps $\eta'K'\mapsto \eta^{-1}\eta'K'$ and $\eta gK'\leftmapsto gK'$. In particular $X'\to X$ is a $K/K'$-torsor. By Corollary~\ref{CorLevel} there is a uniquely determined $\breve E$-morphism $X'\to{\breve\CM}^{K'}$ that is equivariant for the action of $K/K'$ and therefore descends to a uniquely determined $\breve E$-morphism $X\to{\breve\CM}^K$.
\end{proof}

\begin{remark}\label{remdefhecke}
We have an action of $G\bigl(\BF_q\dpl z\dpr\bigr)$ on the tower $({\breve\CM}^K)_{K\subset G(\BF_q\dpl z\dpr)}$ by Hecke correspondences defined as follows. Let $g\in G\bigl(\BF_q\dpl z\dpr\bigr)$ and let $K$ be a compact open subgroup of $G\bigl(\BF_q\dpl z\dpr\bigr)$. Then $g$ induces an isomorphism 
\begin{equation}\label{EqHeckeCor2}
\iota(g)_K\colon{\breve\CM}^K\;\isoto\;{\breve\CM}^{g^{-1}Kg}
\end{equation}
by sending the universal triple $(\ul\CG^\univ,\bar\delta^\univ,\eta^\univ K)$ over ${\breve\CM}^K$ from Definition~\ref{deflevelgen} to the triple $\bigl(\ul\CG^\univ,\bar\delta^\univ,\eta^\univ Kg=\eta^\univ g(g^{-1}Kg)\bigr)$. The morphisms $\iota(g)$ are compatible with the group structure on $G\bigl(\BF_q\dpl z\dpr\bigr)$, that is they satisfy $\iota(gh)_K=\iota(g)_{h^{-1}Kh}\circ\iota(h)_K$ for all $g,h\in G\bigl(\BF_q\dpl z\dpr\bigr)$ and all $K$. They are also compatible with the projection maps $\breve\pi_{K,K'}$, that is $\iota(g)_K\circ\breve\pi_{K,K'}=\breve\pi_{g^{-1}Kg,\,g^{-1}K'g}\circ\iota(g)_{K'}$.

If both $K$ and $g^{-1}Kg$ are contained in $G(\BF_q\dbl z\dbr)$, we can translate the definition of $\iota(g)_K$ in terms of \emph{integral} $K$-level structures by inspecting the proof of Corollary~\ref{CorLevel}. Namely we start with the universal integral $K$-level structure $\eta^\univ K\in\Triv_{\ul\CG^\univ,\bar x}(\BF_q\dbl z\dbr)/K$ on $\ul\CG^\univ$ over ${\breve\CM}^K$. The rational $g^{-1}Kg$-level structure $\eta^\univ Kg=\eta^\univ g(g^{-1}Kg)$ yields a pair $(\ul\CG''\!,\bar\delta'')\in\breveRZ(\CY)$ for an admissible formal blowing-up $\CY\to\CX$ with $\breve\pi(\ul\CG^\univ,\bar\delta^\univ)=\breve\pi(\ul\CG''\!,\bar\delta'')$ and $\check V_{u''\!,\bar x}\circ\eta^\univ g\in\Triv_{\ul\CG''\!,\bar x}(\BF_q\dbl z\dbr)$, where $u''$ is the unique lift of $(\bar\delta'')^{-1}\circ\bar\delta^\univ$. Then the morphism $\iota(g)_K$ is given by
\begin{equation}\label{EqHeckeExplicit}
\iota(g)_K\colon (\ul\CG^\univ,\bar\delta^\univ,\eta^\univ K)\;\longmapsto\;(\ul\CG''\!,\bar\delta''\!,(\check V_{u''\!,\bar x}\circ\eta^\univ g)g^{-1}Kg)
\end{equation}
This shows that the definition of $\iota(g)_K$ in this case, and therefore the definitions of ${\breve\CM}^K$, $\breve\pi_{K,K'}$ and $\iota(g)_K$ for all $K$ coincide with the definitions analogous to \cite[5.34]{RZ}. 

Although it is not explicitly stated in \cite{RZ}, the analog of Corollary~\ref{CorLevel2} also holds in their setup, as it can be deduced from the existence of the $\iota(g)_K$ as follows. If $(\ul\CG,\bar\delta)\in\breveRZ(\CX)$ and $\eta K$ with $\eta\in\Triv_{\ul\CG,\bar x}\bigl(\BF_q\dpl z\dpr\bigr)$ is a rational $K$-level structure on $\ul\CG$ over $X=\CX^\an$, we can choose an element $\eta_0\in\Triv_{\ul\CG,\bar x}(\BF_q\dbl z\dbr)$ and set $g:=\eta_0^{-1}\eta\in\Aut^\otimes(\omega\open)=G\bigl(\BF_q\dpl z\dpr\bigr)$. Then $\eta K g^{-1}=\eta_0(gKg^{-1})$ is an integral $gKg^{-1}$-level structure on $\ul\CG$ and defines a uniquely determined $\breve E$-morphism $X\to{\breve\CM}^{gKg^{-1}}$. Composing with $\iota(g)_{gKg^{-1}}$ produces the desired $\breve E$-morphism $X\to{\breve\CM}^K$.
\end{remark}

\begin{proposition}\label{PropPiK}
The period morphism $\breve\pi$ induces compatible morphisms $\breve\pi_K\colon{\breve\CM}^K\to\breve\CH_{G,\hat Z}^\an$ for all compact open subgroups $K\subset G\bigl(\BF_q\dpl z\dpr\bigr)$. In terms of Corollary~\ref{CorLevel2} it has the form $\breve\pi_K\colon(\ul\CG^\univ,\bar\delta^\univ,\eta^\univ K)\mapsto\breve\pi(\ul\CG^\univ,\bar\delta^\univ)$ where $\eta^\univ K$ is the universal (integral or) rational $K$-level structure on $\ul\CG^\univ$. These morphisms commute with the Hecke correspondences in the sense that $\breve\pi_{g^{-1}Kg}\circ \iota(g)_K=\breve\pi_K$ for all $g$ and $K$.
\end{proposition}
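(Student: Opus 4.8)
The plan is to build $\breve\pi_K$ in two stages --- first for open $K\subset G(\BF_q\dbl z\dbr)$, where it is just a composition, and then for arbitrary compact open $K\subset G\bigl(\BF_q\dpl z\dpr\bigr)$ by descent along the finite quotient ${\breve\CM}^{K'}\to{\breve\CM}^K$ --- and afterwards to read off the moduli description and the Hecke compatibility. For an open subgroup $K\subset G(\BF_q\dbl z\dbr)$ I would simply set $\breve\pi_K:=\breve\pi\circ\breve\pi_{K_0,K}$, where $K_0=G(\BF_q\dbl z\dbr)$, $\breve\pi_{K_0,K}\colon{\breve\CM}^K\to{\breve\CM}^{K_0}=(\breveRZ)^\an$ is the finite \'etale projection of Proposition~\ref{PropCM^K}\ref{PropCM^K_C}, and $\breve\pi=\breve\pi_{K_0}$ is the period morphism of Definition~\ref{DefPeriodMorph}. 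For open $K'\subset K\subset K_0$ the identity $\breve\pi_{K_0,K}\circ\breve\pi_{K,K'}=\breve\pi_{K_0,K'}$ from Proposition~\ref{PropCM^K}\ref{PropCM^K_C} gives at once the compatibility $\breve\pi_K\circ\breve\pi_{K,K'}=\breve\pi_{K'}$.

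For a general compact open $K\subset G\bigl(\BF_q\dpl z\dpr\bigr)$ I would choose, as in Definition~\ref{deflevelgen}, a normal subgroup $K'\trianglelefteq K$ of finite index with $K'\subset G(\BF_q\dbl z\dbr)$, so that ${\breve\CM}^K$ is the quotient of ${\breve\CM}^{K'}$ by the finite group $K/K'$. To descend $\breve\pi_{K'}$ to ${\breve\CM}^K$ it suffices to check that $\breve\pi_{K'}$ is $K/K'$-invariant. By Remark~\ref{remdefhecke} the class $gK'\in K/K'$ acts on ${\breve\CM}^{K'}$ as the Hecke correspondence $\iota(g)_{K'}$, which by \eqref{EqHeckeExplicit} sends the universal triple $(\ul\CG^\univ,\bar\delta^\univ,\eta^\univ K')$ to $(\ul\CG'',\bar\delta'',(\check V_{u'',\bar x}\circ\eta^\univ g)K')$, where $u''\colon\ul\CG^\univ\to\ul\CG''$ is the quasi-isogeny over $\Spec B$ lifting $(\bar\delta'')^{-1}\circ\bar\delta^\univ$ and $\breve\pi(\ul\CG'',\bar\delta'')=\breve\pi(\ul\CG^\univ,\bar\delta^\univ)$ by the ``if'' direction of Proposition~\ref{PropLiftOfIsog}. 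Hence $\breve\pi_{K'}\circ\iota(g)_{K'}=\breve\pi_{K'}$, so by the universal property of the quotient by a finite group in the category of Berkovich spaces (\cite[Proposition~2.1.14(ii)]{Berkovich1}, \cite[Lemma~4]{BerkovichAutom}) $\breve\pi_{K'}$ factors uniquely through a morphism $\breve\pi_K\colon{\breve\CM}^K\to\breve\CH_{G,\hat Z}^\an$; comparing over a common refinement $K''\subset K'$ shows that $\breve\pi_K$ is independent of the choice of $K'$ and that the $\breve\pi_K$ stay compatible with the projections $\breve\pi_{K,K'}$.

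For the moduli description I would argue via Corollary~\ref{CorLevel2}: a morphism from a connected affinoid $X$ to ${\breve\CM}^K$ corresponds to a triple $(\ul\CG,\bar\delta,\eta K)$, and after an admissible formal blowing-up $\CY\to\CX$ and replacing $(\ul\CG,\bar\delta)$ by the quasi-isogenous pair $(\ul\CG'',\bar\delta'')$ of Proposition~\ref{PropLiftOfIsog} carrying an \emph{integral} level structure, the composite $X\to{\breve\CM}^K\xrightarrow{\breve\pi_K}\breve\CH_{G,\hat Z}^\an$ equals $X\to(\breveRZ)^\an\xrightarrow{\breve\pi}\breve\CH_{G,\hat Z}^\an$ applied to $(\ul\CG'',\bar\delta'')$, and $\breve\pi(\ul\CG'',\bar\delta'')=\breve\pi(\ul\CG,\bar\delta)$ again by Proposition~\ref{PropLiftOfIsog}. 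This yields $\breve\pi_K\colon(\ul\CG,\bar\delta,\eta K)\mapsto\breve\pi(\ul\CG,\bar\delta)$. The Hecke compatibility $\breve\pi_{g^{-1}Kg}\circ\iota(g)_K=\breve\pi_K$ is then formal: by \eqref{EqHeckeCor2} and \eqref{EqHeckeExplicit} the correspondence $\iota(g)_K$ replaces $(\ul\CG,\bar\delta)$ only by a quasi-isogenous pair over $\Spec B$ with the same period, so both sides of the equality send $(\ul\CG,\bar\delta,\eta K)$ to $\breve\pi(\ul\CG,\bar\delta)$.

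The only genuine input here is Proposition~\ref{PropLiftOfIsog}, namely that the period morphism is constant on quasi-isogeny classes taken over $\Spec B$ rather than just over the special fiber; this is already available, so the remaining work is bookkeeping with the quotient construction of ${\breve\CM}^K$ and the explicit formula \eqref{EqHeckeExplicit} for the Hecke correspondences. I expect the one point that needs a little care to be the verification that $K/K'$-invariance of the morphism $\breve\pi_{K'}$ of $\breve E$-analytic spaces really does let it factor through ${\breve\CM}^K$ --- this rests on the quotient ${\breve\CM}^K$ being a categorical quotient, and on noting that the target $\breve\CH_{G,\hat Z}^\an$ carries the trivial $K/K'$-action so that no further hypothesis on it is needed.
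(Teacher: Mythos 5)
Your proof is correct and follows essentially the same route as the paper: define $\breve\pi_K=\breve\pi\circ\breve\pi_{K_0,K}$ for $K\subset G(\BF_q\dbl z\dbr)$, check $K/K'$-invariance via \eqref{EqHeckeExplicit} and Proposition~\ref{PropLiftOfIsog} (the paper folds the period-equality into the phrasing of Remark~\ref{remdefhecke}, but it is the same input), and descend along the finite quotient ${\breve\CM}^{K'}\to{\breve\CM}^K$. The only cosmetic difference is that for the Hecke identity $\breve\pi_{g^{-1}Kg}\circ\iota(g)_K=\breve\pi_K$ the paper deduces it from compatibility with the projections $\breve\pi_{K,\wt K}$, while you read it off directly from the moduli description; both are immediate.
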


\begin{proof}
To construct $\breve\pi_K$ we choose a normal subgroup $K'\subset K$ of finite index with $K'\subset G(\BF_q\dbl z\dbr)$. We let $\breve\pi_{K'}:=\breve\pi\circ\breve\pi_{G(\BF_q\dbl z\dbr),K'}\colon{\breve\CM}^{K'}\to\breve\CH_{G,\hat Z}^\an$. It has the given form. If $g\in G(\BF_q\dpl z\dpr)$ satisfies $g^{-1}K'g\subset G(\BF_q\dbl z\dbr)$, we see from the description of $\iota(g)_{K'}$ in \eqref{EqHeckeExplicit} that $\breve\pi_{g^{-1}K'g}\circ \iota(g)_{K'}=\breve\pi_{K'}$. In particular, if $g\in K$ then $\breve\pi_{K'}$ is $K/K'$-invariant. Therefore $\breve\pi_{K'}$ descends to a morphism $\breve\pi_K\colon{\breve\CM}^K\to\breve\CH_{G,\hat Z}^\an$ which has the given form. By Proposition~\ref{PropCM^K}\ref{PropCM^K_C} the definition of $\breve\pi_K$ is independent of the chosen $K'$ and satisfies $\breve\pi_K\circ\breve\pi_{K,\wt K}=\breve\pi_{\wt K}$ for all compact open subgroups $\wt K\subset K\subset G\bigl(\BF_q\dpl z\dpr\bigr)$. From this also $\breve\pi_{g^{-1}Kg}\circ \iota(g)_K=\breve\pi_K$ follows.
\end{proof}

The following result is the analog of \cite[Proposition 5.37]{RZ} and can be proved in the same way. However, we give a different proof using Corollary~\ref{CorLevel2}.

\begin{proposition}\label{PropFibersOfPiK}
Let $K_1,K_2\subset G\bigl(\BF_q\dpl z\dpr\bigr)$ be compact open subgroups and let $\Omega$ be an algebraically closed complete extension of $\breve E$. Then two points $x_1\in {\breve\CM}^{K_1}(\Omega)$ and $x_2\in {\breve\CM}^{K_2}(\Omega)$ satisfy $\breve\pi_{K_1}(x_1)=\breve\pi_{K_2}(x_2)$ if and only if they are mapped to each other under a Hecke correspondence, i.e. if there is a $g\in G\bigl(\BF_q\dpl z\dpr\bigr)$ and a $y\in{\breve\CM}^{K_1\cap gK_2g^{-1}}(\Omega)$ with $\breve\pi_{K_1,\,K_1\cap gK_2g^{-1}}(y)=x_1$ and $\breve\pi_{K_2,\,g^{-1}K_1g\cap K_2}\circ\iota(g)_{K_1\cap gK_2g^{-1}}(y)=x_2$. In particular, the geometric fibers of $\breve\pi_K$ are (non-canonically) isomorphic to $G\bigl(\BF_q\dpl z\dpr\bigr)/K$.
\end{proposition}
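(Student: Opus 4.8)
\textbf{Proof strategy for Proposition~\ref{PropFibersOfPiK}.}
The plan is to use the moduli description of the ${\breve\CM}^K$ from Corollary~\ref{CorLevel2} together with Proposition~\ref{PropLiftOfIsog} to translate the equality $\breve\pi_{K_1}(x_1)=\breve\pi_{K_2}(x_2)$ into the existence of a quasi-isogeny between the two local $G$-shtukas, and then to read off the Hecke parameter $g$ from this quasi-isogeny on Tate modules. First, the ``if'' direction is formal: a Hecke correspondence by definition does not change the underlying pair $(\ul\CG,\bar\delta)$ (it only changes the level structure), and by Proposition~\ref{PropPiK} we have $\breve\pi_{K_2}\circ\iota(g)_{K_1\cap gK_2g^{-1}}=\breve\pi_{K_1\cap gK_2g^{-1}}$ and $\breve\pi_{K_1}\circ\breve\pi_{K_1,K_1\cap gK_2g^{-1}}=\breve\pi_{K_1\cap gK_2g^{-1}}$, so $\breve\pi_{K_1}(x_1)$ and $\breve\pi_{K_2}(x_2)$ are both equal to $\breve\pi_{K_1\cap gK_2g^{-1}}(y)$.

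For the ``only if'' direction I would proceed as follows. By Corollary~\ref{CorLevel2}, $x_i$ corresponds to an isomorphism class of a triple $(\ul\CG_i,\bar\delta_i,\eta_i K_i)$ over $\SpBerk(\Omega)$, with $(\ul\CG_i,\bar\delta_i)\in\breveRZ(\Spf\CO_\Omega)$ and $\eta_i\in\Triv_{\ul\CG_i,\bar x}\bigl(\BF_q\dpl z\dpr\bigr)$ (here $\bar x=\bar x_i$ is the tautological geometric base point $\SpBerk(\Omega)\to\SpBerk(\Omega)$, and $\pi_1^\et=\pi_1^\alg=(1)$ since $\Omega$ is algebraically closed, so the level structures impose no invariance condition). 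The assumption $\breve\pi_{K_1}(x_1)=\breve\pi_{K_2}(x_2)$ means $\breve\pi(\ul\CG_1,\bar\delta_1)=\breve\pi(\ul\CG_2,\bar\delta_2)$ in $\breve\CH_{G,\hat Z}^\an(\SpBerk\Omega)$. By Proposition~\ref{PropLiftOfIsog} (applied with $B=\CO_\Omega$, which is an admissible $\CO_\Omega$-algebra; note $\Spf\CO_\Omega$ is connected so no blowing-up is needed), the unique rigidity lift of $\bar\delta_1^{-1}\circ\bar\delta_2$ is a quasi-isogeny $u\colon\ul\CG_2\to\ul\CG_1$ over $\Spec\CO_\Omega$, and it induces an isomorphism $\check V_{u,\bar x}\colon\check V_{\ul\CG_2,\bar x}\isoto\check V_{\ul\CG_1,\bar x}$ of rational Tate module functors. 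Now define $g\in G\bigl(\BF_q\dpl z\dpr\bigr)=\Aut^\otimes(\omega\open)$ by $g:=\eta_1^{-1}\circ\check V_{u,\bar x}\circ\eta_2$, i.e.\ the automorphism of $\omega\open$ comparing the two trivializations. Then $\check V_{u,\bar x}\circ\eta_2 = \eta_1\circ g$, so the rational $K_1\cap gK_2g^{-1}$-level structure $\eta_1(K_1\cap gK_2g^{-1})$ on $\ul\CG_1$ is carried by $\check V_{u,\bar x}$ to $(\eta_1 g)(g^{-1}K_1g\cap K_2)=(\check V_{u,\bar x}\eta_2)(g^{-1}K_1g\cap K_2)$ on $\ul\CG_1$, which in turn corresponds under the isomorphism $u$ to the level structure $\eta_2 g(g^{-1}K_1g\cap K_2)$ on $\ul\CG_2$. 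Thus the triple $(\ul\CG_1,\bar\delta_1,\eta_1(K_1\cap gK_2g^{-1}))$ defines a point $y\in{\breve\CM}^{K_1\cap gK_2g^{-1}}(\Omega)$; by construction $\breve\pi_{K_1,\,K_1\cap gK_2g^{-1}}(y)=x_1$ (forgetting level structure down to $K_1$), and applying $\iota(g)_{K_1\cap gK_2g^{-1}}$ sends $(\ul\CG_1,\bar\delta_1,\eta_1(K_1\cap gK_2g^{-1}))$ to $(\ul\CG_1,\bar\delta_1,\eta_1 g(g^{-1}K_1g\cap K_2))\cong(\ul\CG_2,\bar\delta_2,\eta_2(g^{-1}K_1g\cap K_2))$ via $u$, whose image under $\breve\pi_{K_2,\,g^{-1}K_1g\cap K_2}$ is $x_2$.

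Finally, for the statement about geometric fibers, fix $\gamma\in\breve\CH_{G,\hat Z,b}^a(\Omega)$ in the image of $\breve\pi_K$ and a point $x_0\in{\breve\CM}^K(\Omega)$ above it. The computation just carried out, specialized to $K_1=K_2=K$, shows that every point of ${\breve\CM}^K(\Omega)$ above $\gamma$ is of the form obtained from $x_0$ by a Hecke element $g\in G\bigl(\BF_q\dpl z\dpr\bigr)$, and two elements $g,g'$ give the same point precisely when $gK=g'K$ (the level structure $\eta_0 gK$ determines, and is determined by, the coset $gK$, once $\eta_0$ is fixed), yielding a bijection with $G\bigl(\BF_q\dpl z\dpr\bigr)/K$ that depends on the choice of $x_0$ and $\eta_0$, hence is non-canonical. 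The main obstacle in writing this out carefully is bookkeeping: one must check that the various level structures really do match up under $u$ and under the Hecke operator $\iota(g)$ in the explicit form \eqref{EqHeckeExplicit}, and that the chain of isomorphisms of triples is consistent — but all of this is a direct application of Proposition~\ref{PropLiftOfIsog}, Corollary~\ref{CorLevel2} and Remark~\ref{remdefhecke}, with no new geometric input required (in particular the ind-projectivity of $\Flag_G$ enters only through Proposition~\ref{PropLiftOfIsog}, which is already available here).
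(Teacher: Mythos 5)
Your proposal is correct and follows essentially the same route as the paper: express the points via Corollary~\ref{CorLevel2} as triples $(\ul\CG_i,\bar\delta_i,\eta_iK_i)$, apply Proposition~\ref{PropLiftOfIsog} to produce the quasi-isogeny $u$, set $g:=\eta_1^{-1}\circ\check V_{u,\bar x}\circ\eta_2$, and take $y=(\ul\CG_1,\bar\delta_1,\eta_1(K_1\cap gK_2g^{-1}))$; the ``if'' direction and the fiber count via $gK\mapsto(\ul\CG_1,\bar\delta_1,\eta_1gK)$ match the paper as well. Minor wording aside (e.g.\ the phrase ``carried by $\check V_{u,\bar x}$'' where you mean the effect of $\iota(g)$, and the parenthetical about blowing-ups, which is beside the point since only the first assertion of Proposition~\ref{PropLiftOfIsog} is used here), the argument is the paper's.
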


\begin{proof}
Since one direction was proved in Proposition~\ref{PropPiK}, we now assume $\breve\pi_{K_1}(x_1)=\breve\pi_{K_2}(x_2)$. In terms of Corollary~\ref{CorLevel2} let $x_i$ correspond to the triple $(\ul\CG_i,\bar\delta_i,\eta_iK_i)$. Since $\Omega$ is algebraically closed we may choose representatives $\eta_i\in\Triv_{\ul\CG_i,\SpBerk(\Omega)}\bigl(\BF_q\dpl z\dpr\bigr)$ of $\eta_iK_i$. By the description of $\breve\pi_K$ in Proposition~\ref{PropPiK} we have $\breve\pi(\ul\CG_1,\bar\delta_1)=\breve\pi(\ul\CG_2,\bar\delta_2)$ and so Proposition~\ref{PropLiftOfIsog} yields a quasi-isogeny $u\colon\ul\CG_2\to\ul\CG_1$ over $\Spec\CO_\Omega$ with $\bar\delta_1\circ (u\mod\zeta) =\bar\delta_2$. We set $\eta'_1:=\check V_{u,\SpBerk(\Omega)}\circ\eta_2\in\Triv_{\ul\CG_1,\SpBerk(\Omega)}\bigl(\BF_q\dpl z\dpr\bigr)$. Then $x_2=(\ul\CG_2,\bar\delta_2,\eta_2K_2)\cong(\ul\CG_1,\bar\delta_1,\eta'_1K_2)$. Therefore $g:=\eta_1^{-1}\eta_1'\in\Aut^\otimes(\omega\open)=G\bigl(\BF_q\dpl z\dpr\bigr)$ and $y=(\ul\CG_1,\bar\delta_1,\eta_1(K_1\cap gK_2g^{-1})\bigr)\in{\breve\CM}^{K_1\cap gK_2g^{-1}}(\Omega)$ solve the problem.

Thus after the choice of the representative $\eta_1$ of $\eta_1K_1$ the bijection between $G\bigl(\BF_q\dpl z\dpr\bigr)/K_1$ and the fiber of $\breve\pi_{K_1}$ over $\breve\pi_{K_1}(x_1)$ is given by $gK_1\mapsto(\ul\CG_1,\bar\delta_1,\eta_1gK_1)$ with inverse $\eta_1^{-1}\eta'_1K_1\leftmapsto(\ul\CG_1,\bar\delta_1,\eta'_1K_1)$.
\end{proof}

\begin{remark}\label{RemJActsOnMK}
Finally, the action of $j\in J_b\bigl(\BF_q\dpl z\dpr\bigr)$ on $\breveRZ$ from Definition~\ref{DefRZSpace} induces actions on each of the spaces ${\breve\CM}^K$ individually by 
\[
j\colon{\breve\CM}^K\;\longto\;{\breve\CM}^K,\quad(\ul\CG,\bar\delta,\eta K)\;\longmapsto\;(\ul\CG,j\circ\bar\delta,\eta K)\,. 
\]
In other words the pullback $j^*(\ul\CG^\univ,\bar\delta^\univ,\eta^\univ K)$ of the universal triple over $\breve\CM^K$ is isomorphic to $(\ul\CG^\univ,j\circ\bar\delta^\univ,\eta^\univ K)$ in the category of triples from Definition~\ref{DefRatLevel}. Using the universal property of $\breveRZ$ the condition $j^*(\ul\CG^\univ,\bar\delta^\univ)\cong(\ul\CG^\univ,j\circ\bar\delta^\univ)$ shows that there is an isomorphism $\Phi_j\colon j^*\ul\CG^\univ\isoto \ul\CG^\univ$ of local $G$-shtukas with $j^\ast\bar\delta^\univ=j\circ\bar\delta^\univ\circ(\Phi_j\mod\zeta)$. By rigidity of quasi-isogenies \cite[Proposition~2.11]{AH_Local} the isomorphism $\Phi_j$ is uniquely determined and a straight forward calculation shows that $\Phi_j\circ j^\ast\Phi_{j'}=\Phi_{j'j}$. It follows from Definition~\ref{DefRatLevel} that $j^*\eta^\univ K=(\check V_{\Phi_j,\bar x}^{-1}\circ\eta^\univ) K$. The $J_b\bigl(\BF_q\dpl z\dpr\bigr)$-action on $\breve\CM^K$ is compatible with the projection maps $\breve\pi_{K,K'}$ and the Hecke action.
\end{remark}

\begin{lemma}\label{lemjcont}
The action of $J:=J_b\bigl(\BF_q\dpl z\dpr\bigr)$ on $\breveRZ$ and the induced actions on each ${\breve\CM}^K$ are continuous.
\end{lemma}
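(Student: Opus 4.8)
The plan is to unwind the definition of continuity to one geometric input. Recall that an action of a locally profinite group $J$ on a formal scheme or Berkovich analytic space $\scrX$ written as an increasing union $\scrX=\bigcup_n\scrU_n$ of quasi-compact open subspaces is continuous exactly when (i) for every $n$ and every $j\in J$ there is an $m$ with $j(\scrU_n)\subseteq\scrU_m$, and (ii) for every $n$ there is an open subgroup of $J$ acting trivially on $\scrU_n$. Indeed, (ii) forces the stabiliser $\{j\in J\colon j\scrU_n=\scrU_n\}$ to be open and to act on $\scrU_n$ through a discrete quotient, so $\{j\colon j\scrU_n=\scrU_n\}\times\scrU_n\to\scrU_n$ is continuous; combined with (i), the set $j_0\bigl(\{j\in J\colon j\scrU_n=\scrU_n\}\bigr)\times\scrU_n$ is an open neighbourhood of any $(j_0,x)$ with $x\in\scrU_n$ which maps continuously into $\scrU_m$, whence $J\times\scrX\to\scrX$ is continuous. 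So I only have to check (i) and (ii) for $\breveRZ$ and for each ${\breve\CM}^K$; the whole content lies in (ii).

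For $\breveRZ$ this is where the ind-projectivity of $\Flag_G$ enters. By Theorem~\ref{ThmRRZSp} the formal scheme $\breveRZ$ is an ind-closed ind-subscheme of $\Flag_G\whtimes_{\BF_q}\Spf\breve R_{\hat Z}$, and after the trivialisation $\ul\BG_0\cong((L^+G)_\BaseFld,b\s)$ from Definition~\ref{DefRZSpace} the element $j\in J=J_b(\BF_q\dpl z\dpr)$ acts by left multiplication by $j\in J\subseteq LG(\BaseFld)$ on $\Flag_G\whtimes_{\BF_q}\BaseFld$. Since $\Flag_G$ is ind-projective, write $\Flag_G=\bigcup_nY_n$ with each $Y_n$ a projective, $L^+G$-stable closed subscheme (a finite union of Schubert varieties); it is standard that the $L^+G$-action on any such $Y_n$ factors through a finite-type quotient $L^+_{N}G=G(\,\cdot\,\dbl z\dbr/(z^{N}))$ once $N\ge N(n)$ is large. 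Given a quasi-compact open $\scrU\subseteq\breveRZ$, choose $n$ with $\scrU\subseteq\breveRZ\cap\bigl(Y_n\whtimes_{\BF_q}\Spf\breve R_{\hat Z}\bigr)$, which is possible by the ind-scheme property. Fix a faithful representation $\Darst\colon G\into\SL_r$ over $\BF_q\dbl z\dbr$ (Proposition~\ref{PropBound}); after a decency reduction we have $J\subseteq LG(\BF_{q^s})\subseteq LG(\BaseFld)$ (Remark~\ref{RemKottwitz}(b)), and we let $J_N\subseteq J$ be the $N$-th congruence subgroup cut out by $\Darst$, these forming a neighbourhood basis of $1$. For $N\ge1$ one has $J_N\subseteq L^+G(\BaseFld)=G(\BaseFld\dbl z\dbr)$, and because $\Darst$ is a closed immersion the congruence condition in $\SL_r$ forces $J_N\subseteq\ker\bigl(L^+G(\BaseFld)\to L^+_NG(\BaseFld)\bigr)$. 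Hence for $N\ge N(n)$ the subgroup $J_N$ acts trivially on $Y_n$, therefore on $Y_n\whtimes_{\BF_q}\Spf\breve R_{\hat Z}$ and on its closed subscheme $\breveRZ\cap(Y_n\whtimes\cdots)\supseteq\scrU$, so $J_N$ fixes $\scrU$; this is (ii), and (i) is immediate since $j(\scrU_n)$ is again quasi-compact. The same reasoning covers $(\breveRZ)^\an={\breve\CM}^{G(\BF_q\dbl z\dbr)}$, whose $J$-action is induced by that on $\breveRZ$ and its admissible formal models and which inherits the exhaustion $\scrU_n^\an$ on which the $J_N$ act trivially.

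For a general compact open $K\subseteq G(\BF_q\dpl z\dpr)$ the statement follows formally from the case of $(\breveRZ)^\an$. Choose $K''\trianglelefteq K$ of finite index with $K''\subseteq G(\BF_q\dbl z\dbr)$ (Definition~\ref{deflevelgen}); then $p\colon{\breve\CM}^{K''}\to(\breveRZ)^\an$ is a finite \'etale covering space (Proposition~\ref{PropCM^K}) which is $J$-equivariant, and ${\breve\CM}^K={\breve\CM}^{K''}/(K/K'')$ $J$-equivariantly, the $K/K''$-action being by Hecke correspondences and hence commuting with $J$ (Remarks~\ref{RemJActsOnMK} and~\ref{remdefhecke}). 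A quasi-compact open $\scrV\subseteq{\breve\CM}^{K''}$ lies in $p^{-1}(\scrU_n^\an)$ for some member of the exhaustion above; since $J_{N(n)}$ fixes $\scrU_n^\an$, it acts on the finite \'etale cover $p^{-1}(\scrU_n^\an)\to\scrU_n^\an$ through its finite deck group, and by the continuity of $j\mapsto\Phi_j$ and hence of $j\mapsto\check V_{\Phi_j,\bar x}\in G(\BF_q\dpl z\dpr)$ (Remark~\ref{RemJActsOnMK}, rigidity) an open subgroup $J'\subseteq J_{N(n)}$ already acts trivially on $p^{-1}(\scrU_n^\an)\supseteq\scrV$; property (i) for ${\breve\CM}^{K''}$ is clear by equivariance. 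Passing to the finite quotient ${\breve\CM}^K$ (its quasi-compact opens are images of those of ${\breve\CM}^{K''}$, and $J'$ still acts trivially on those images), and finally composing with a Hecke isomorphism to reach an arbitrary $K$, completes the proof. The main obstacle is the input for $\breveRZ$: matching the exhaustion of $\breveRZ$ by quasi-compact opens with the congruence filtration of $J$, i.e.\ showing that a deep enough congruence subgroup of $J$ both lands inside $L^+G(\BaseFld)$ and acts trivially on a prescribed Schubert region; this rests on the ind-projectivity of $\Flag_G$ and on $\Darst$ being a closed immersion.
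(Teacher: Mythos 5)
Your reduction rests on the general criterion in the first paragraph, and in particular on condition (ii), which asks that for every member $\scrU_n$ of a quasi-compact open exhaustion some open subgroup of $J$ acts \emph{trivially} on $\scrU_n$. For a Berkovich analytic space this criterion is too strong to be equivalent to continuity: an analytic automorphism of a reduced connected strictly analytic space that restricts to the identity on a compact analytic domain with nonempty interior is already the identity on the whole component, so if the kernel of the $J$-action on a component of $(\breveRZ)^\an$ is not open (as in, say, the Lubin--Tate or Drinfeld case, where no open subgroup of $J$ acts trivially on $\Omega^d$ or on the Lubin--Tate ball), no open subgroup of $J$ acts trivially on any such domain, yet the action is certainly continuous. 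The continuity criterion used in the paper, via Berkovich's Lemma~8.4, is instead formulated in terms of $\zeta$-\emph{nilpotent} truncations, i.e.\ quasi-compact schemes $S\in\Nilp_{\breve R_{\hat Z}}$ mapping to $\breveRZ$; these are the objects on which one must find an open subgroup of $J$ acting trivially, and they are not quasi-compact opens of the formal scheme.

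The same issue resurfaces, more concretely, in your choice of $n$: the assertion that a quasi-compact open formal subscheme $\scrU\subseteq\breveRZ$ lies in $\breveRZ\cap\bigl(Y_n\whtimes_{\BF_q}\Spf\breve R_{\hat Z}\bigr)$ for a fixed Schubert region $Y_n$ is false. The immersion $\breveRZ\hookrightarrow\Flag_G\whtimes_{\BF_q}\Spf\breve R_{\hat Z}$ is realized via the rigidity lift of $\bar\delta$, and that lift acquires unboundedly many powers of $z$ in the denominator as one passes to deeper $\zeta$-adic truncations; this is exactly why Lemma~\ref{LemmaGL} produces $\Delta$ in $G\bigl(B\dbl z,z^{-1}\}[\tfrac{1}{\tminus}]\bigr)$ and not in $LG(B)$ once $\zeta$ ceases to be nilpotent. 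Thus the compatible family $\Spec B/\zeta^m B\to\Flag_G$ escapes every fixed $Y_n$ as $m\to\infty$, and $\scrU=\Spf B$ is not contained in any $\breveRZ\cap\bigl(Y_n\whtimes\Spf\breve R_{\hat Z}\bigr)$. The derivation for ${\breve\CM}^K$ then inherits the same flaw, since it is premised on ``$J_{N(n)}$ fixes $\scrU_n^\an$''.

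Once you restrict attention to quasi-compact $S\in\Nilp_{\breve R_{\hat Z}}$ — so $\zeta$ is nilpotent, the lift $\Delta$ of $\bar\delta$ lies in $LG(B)$ with bounded $z$-depth, and the image does sit in a fixed Schubert region — your congruence-subgroup argument is correct, and it is in substance the same estimate as the paper's: trivialize $\ul\CG$ over an affine \'etale cover $\Spec B$, choose $N$ with $z^N\Darst(\Delta),\,z^N\Darst(\Delta^{-1})\in M_r\bigl(B\dbl z\dbr\bigr)$, and observe that $\Darst(j)\equiv 1\bmod z^{2N}$ forces $\Delta^{-1}j\Delta\in L^+G(B)$. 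The paper carries this out directly on $S$ without routing through the Schubert filtration of $\Flag_G$, and then applies Berkovich's lemma for the passage to $(\breveRZ)^\an$ and ${\breve\CM}^K$; your more explicit transfer to ${\breve\CM}^K$ via the finite \'etale covers and the Hecke action is a viable alternative to that citation, but only after the first step is repaired.
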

\begin{proof}
For the first assertion we have to show the following claim. Let $S\in \Nilp_{\BF_q\dbl\xi\dbr}$ be a quasi-compact scheme and $(\ul\CG,\bar\delta)\in \breveRZ(S)$. We must show that there is a neighborhood $U$ of $1\in J$ such that for $j\in U$ we have $j(\ul\CG,\bar\delta)\cong(\ul\CG,\bar\delta)$, that is, $\ol{\delta}^{-1}\circ j\circ \ol \delta$ lifts to an automorphism of $\ul\CG$ over $S$. 

To prove this claim let $\delta\colon\ul\CG\to\ul\BG_{0,S}$ be the quasi-isogeny that lifts $\bar\delta$ by rigidity of quasi-isogenies \cite[Proposition~2.11]{AH_Local}. Let $S'\to S$ be an \'etale covering that trivializes $\ul\CG\cong\bigl((L^+G)_{S'},A\s\bigr)$. Since $S$ is quasi-compact there is a refinement of this covering such that $S'=\Spec B$ is affine. Then $\delta$ corresponds to an element $\Delta\in LG(B)$. We fix a faithful representation $\Darst\colon G\into\SL_r$ and consider the elements $\Darst(\Delta)$ and $\Darst(\Delta^{-1})$ of $L\SL_r(B)=\SL_r(B\dbl z\dbr[\tfrac{1}{z}])$. Let $N\in\BN_0$ be such that the matrices $z^N\cdot\Darst(\Delta)$ and $z^N\cdot\Darst(\Delta^{-1})$ have their entries in $B\dbl z\dbr$. If $\Darst(j)\equiv 1\mod z^{2N}$ then $\Darst(\Delta^{-1}\cdot j\cdot\Delta)\in\SL_r(B\dbl z\dbr)$, and hence $\Delta^{-1}\cdot j\cdot\Delta\in L^+G(B)$ and $\delta^{-1}\circ j\circ\delta\in\Aut(\ul\CG)$. Since the map $J\subset LG(\BaseFld)\xrightarrow{\;\Darst\,}L\SL_r(\BaseFld)$ is continuous with respect to the $z$-adic topology, the claim follows.

Because the action on $\breveRZ$ is continuous, the same holds for each ${\breve\CM}^K$ by \cite[Lemma~8.4]{Berkovich3}. Note that continuity of the $J$-action is defined as continuity of the homomorphism $J\to \scrG({\breve\CM}^K)$, where $\scrG({\breve\CM}^K)$ is the topological automorphism group defined by Berkovich~\cite[\S\,6]{Berkovich3}. The topology of $\scrG({\breve\CM}^K)$ is defined via compact subspaces of ${\breve\CM}^K$. Therefore the continuity of the $J$-action on $\breveRZ$ we proved above and \cite[Lemma~8.4]{Berkovich3} are applicable although $\breveRZ$ is not quasi-compact.
\end{proof}

\begin{remark}\label{RemFunctoriality3}
We keep the notation from Remarks~\ref{RemFunctoriality1} and \ref{RemFunctoriality2} and let $\epsilon\colon G\to G'$ be a morphism of parahoric group schemes over $\BF_q\dbl z\dbr$. If $\epsilon(\hat{Z})\subset\hat{Z}'$ and $\ul\BG'_0=\epsilon_*\ul\BG_0\cong\bigl((L^+G')_\BaseFld,b'\s\bigr)$ where $\ul\BG_0\cong\bigl((L^+G)_\BaseFld,b\s\bigr)$ and $b'=\epsilon(b)$, then there is a commutative diagram of $\breve E$-analytic spaces
\[
\xymatrix @C+2pc {
\breveRZ\ar[r]^-{\TS\epsilon_*} \ar[d]_{\TS\breve\pi} & {\breve\CM}_{\ul\BG'_0}^{\raisebox{-0.7ex}{$\SC\hat{Z}'{}^{-1}$}} \ar[d]^{\TS\breve\pi'} \\
\breve\CH_{G,\hat Z,b}^{a} \ar[r]^-{\TS\epsilon} & \breve\CH_{G'\!,\hat Z'\!,b'}^{a},
}
\]
where $\breve\pi$ and $\breve\pi'$ are the period morphisms from Definition~\ref{DefPeriodMorph} for $\ul\BG_0$ and $\hat{Z}$, respectively for $\ul\BG'_0$ and $\hat{Z}'$, and where the horizontal morphisms $\epsilon_*$ and $\epsilon$ were described in \eqref{EqFunctRZ} and \eqref{EqEpsilon^*}. This diagram is equivariant for the action of $J^G_b$ that acts on the right column via the morphism $J_b^G\to J_{b'}^{G'}$ from \eqref{EqFunctJ}. The diagram is indeed commutative, because in the notation of Lemma~\ref{LemmaTrivializing} and equation~\eqref{EqPeriodMorph}, the left morphism $\breve\pi$ sends $\ul\CG_{\scrS'}\cong\bigl((L^+G)_{\scrS'},A\s\bigr)$ to $\gamma:=\sigma^*(\Delta) A^{-1}=b^{-1}\Delta$ and the right morphism $\breve\pi'$ sends $\epsilon_*(\ul\CG)_{\scrS'}\cong\bigl((L^+G')_{\scrS'},\epsilon(A)\s\bigr)$ to $\gamma':=\sigma^*(\epsilon(\Delta)) \epsilon(A)^{-1}=\epsilon(b)^{-1}\epsilon(\Delta)=\epsilon(\gamma)$.

The tensor functors $\ulCV_b$ and $\ulCV_{b'}$ from Theorem~\ref{ThmLocSyst} are part of a commutative diagram of tensor functors
\[
\xymatrix @C+2pc {
\Rep_{\BF_q\dpl z\dpr} G \ar[r]^-{\TS\ulCV_b} & \PLoc_{\breve\CH_{G,\hat Z,b}^{a}} \\
\Rep_{\BF_q\dpl z\dpr} G' \ar[r]^-{\TS\ulCV_{b'}} \ar[u]^{\TS\epsilon^*} & \PLoc_{\breve\CH_{G'\!,\hat Z'\!,b'}^{a}} \ar[u]_{\TS\epsilon^*}
}
\]
where the right vertical arrow denotes pullback of local systems of $\BF_q\dpl z\dpr$-vector spaces along the morphism $\epsilon\colon\breve\CH_{G,\hat Z,b}^{a}\to\breve\CH_{G'\!,\hat Z'\!,b'}^{a}$ from \eqref{EqEpsilon^*}, and the left vertical functor sends $(V',\Darst')$ to $(V',\Darst'\circ\epsilon)$, see Remark~\ref{RemFunctoriality2}.

Moreover, the Tate module functor $\check{T}_{\ul\CG}$ from \eqref{EqTateFunctor} satisfies $\check{T}_{\epsilon_*\ul\CG}(\Darst')=\check{T}_{\ul\CG}(\Darst'\circ\epsilon)$, that is, $\check{T}_{\epsilon_*\ul\CG}=\check{T}_{\ul\CG}\circ\epsilon^*$, and likewise $\check{V}_{\epsilon_*\ul\CG}=\check{V}_{\ul\CG}\circ\epsilon^*$. This defines a map of the sets from \eqref{EqTensorIsom} 
\[
\epsilon^*\colon\Triv_{\ul\CG,\bar x}(A)\;\longto\;\Triv_{\epsilon_*\ul\CG,\bar x}(A)\,,\quad\eta\;\longmapsto\;\epsilon^*(\eta)
\]
for both arguments $A=\BF_q\dbl z\dbr$ and $A=\BF_q\dpl z\dpr$. In particular, if $K\subset G(A)$ and $K'\subset G'(A)$ are compact open subgroups with $\epsilon(K)\subset K'$ then the map $\epsilon_*\colon\eta K\mapsto\epsilon_*(\eta)K'$ sends $K$-level structures on $\ul\CG$ to $K'$-level structures on $\epsilon_*\ul\CG$. This yields a commutative diagram of $\breve E$-analytic spaces
\[
\xymatrix @C+2pc {
\breve\CM^K_G\ar[r]^-{\TS\epsilon_*} \ar[d]_{\TS\breve\pi_K} & \breve\CM^{K'}_{G'} \ar[d]^{\TS\breve\pi'_{K'}} \\
\breve\CH_{G,\hat Z,b}^{a} \ar[r]^-{\TS\epsilon} & \breve\CH_{G'\!,\hat Z'\!,b'}^{a},
}
\]
which is again $J^G_b$-equivariant. It is further equivariant for the action of $g\in G\bigl(\BF_q\dpl z\dpr\bigr)$ that acts on the left column by the Hecke correspondence $\iota(g)_K$ from Remark~\ref{remdefhecke} and on the right column  by the Hecke correspondence $\iota\bigl(\epsilon(g)\bigr)_{K'}$\,.
\end{remark}

\section{The image of the period morphism} \label{SectImage}
\setcounter{equation}{0}

In this section we fix a local $G$-shtuka $\ul\BG_0=\bigl((L^+G)_\BaseFld,b\s\bigr)$ over $\BaseFld$ and a bound $\hat{Z}$ with reflex ring $R_{\hat Z}=\kappa\dbl\xi\dbr$. We set $E_{\hat Z}=\kappa\dpl\xi\dpr$ and $\breve E:=\BaseFld\dpl\xi\dpr$.

We will determine the image of the period morphism $\breve\pi$ from Definition~\ref{DefPeriodMorph}. This is the function field analog of \cite[Theorem~8.4]{HartlRZ}, where the situation of PEL-Rapoport-Zink spaces for $p$-divisible groups was treated. Note however, that our proof here is entirely different from loc.\ cit., because we here already constructed a tensor functor to the category of local systems in Theorem~\ref{ThmLocSyst}, and will use this to determine the image of $\breve\pi$. In loc.\ cit.\ the proof proceeds in the opposite direction and first determines the image of the period morphism and then constructs the local systems.

\begin{theorem}\label{MainThm}
\begin{enumerate}
\item \label{MainThm_1}
The image $\breve\pi\bigl((\breveRZ)^\an\bigr)$ of the period morphism $\breve\pi$ equals the union of the connected components of $\breve\CH_{G,\hat{Z},b}^{a}$ on which there is an $\BF_q\dpl z\dpr$-rational isomorphism $\beta\colon\omega\open\isoto\omega_{b,\bar\gamma}$ between the tensor functors $\omega\open$ and $\omega_{b,\bar\gamma}$ from Remark~\ref{RemLocSyst}. 
\item  \label{MainThm_2}
The rational dual Tate module $\check V_{\ul\CG}$ of the universal local $G$-shtuka $\ul\CG$ over $(\breveRZ)^\an$ descends to a tensor functor $\check V_{\ul\CG}$ from $\Rep_{\BF_q\dpl z\dpr}G$ to the category of local systems of $\BF_q\dpl z\dpr$-vector spaces on $\breve\pi\bigl((\breveRZ)^\an\bigr)$. It carries a canonical $J_b\bigl(\BF_q\dpl z\dpr\bigr)$-linearization and is canonically $J_b\bigl(\BF_q\dpl z\dpr\bigr)$-equivariantly isomorphic to the tensor functor $\ulCV_b$ from Theorem~\ref{ThmLocSyst}.
\item \label{MainThm_3}
The tower of strictly $\breve E$-analytic spaces $({\breve\CM}^K)_{K\subset G(\BF_q\dpl z\dpr)}$ is canonically isomorphic over $\breve\pi\bigl((\breveRZ)^\an\bigr)$ in a Hecke and $J_b\bigl(\BF_q\dpl z\dpr\bigr)$-equivariant way to the tower of \'etale covering spaces $(\HeckeTower_K)_{K\subset G(\BF_q\dpl z\dpr)}$ of $\breve\pi\bigl((\breveRZ)^\an\bigr)$ associated with the tensor functor $\ulCV_b$ in Remark~\ref{RemTowerLocSyst}.
\end{enumerate}
\end{theorem}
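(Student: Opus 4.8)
\textbf{Proof plan for Theorem~\ref{MainThm}.}
The three parts are tightly linked, so the plan is to prove them together, with part~\ref{MainThm_1} as the heart of the argument. For part~\ref{MainThm_1}, first observe that Proposition~\ref{PropPeriodMorph} already places $\breve\pi\bigl((\breveRZ)^\an\bigr)$ inside $\breve\CH_{G,\hat{Z},b}^{a}$. For a point $x$ of $(\breveRZ)^\an$ over a complete field $L/\breve E$ corresponding to $(\ul\CG,\bar\delta)\in\breveRZ(\Spf\CO_L)$, the construction of $\breve\pi$ via Lemma~\ref{LemmaGL} produces the lift $\Delta\in G\bigl(\CO_L\dbl z,z^{-1}\}[\tfrac{1}{\tminus}]\bigr)$, and for any $V\in\Rep_{\BF_q\dpl z\dpr}G$ the proof of Proposition~\ref{PropPeriodMorph} (via \cite[Proposition~2.4.4]{HartlPSp}) identifies $\ulCF_{b,\gamma}(V)$ with $\ulM(\Darst_*\ul\CG)\otimes L\ancon[s]$. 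Hence the $\tau$-invariants $\bar\gamma^*\ulCF_b(V)^\tau$ are canonically identified with the rational Tate module $\check V_{\ul\CG,\bar x}(V)$. The point is that the universal local $G$-shtuka (with its universal $G(\BF_q\dbl z\dbr)$-level structure, which exists because $G$ has connected fibres, see \cite[after Definition~3.5]{AH_Local}) provides an $\BF_q\dpl z\dpr$-rational trivialisation $\eta\in\Triv_{\ul\CG,\bar x}\bigl(\BF_q\dpl z\dpr\bigr)$, i.e.\ an isomorphism $\omega\open\isoto{\it forget}\circ\omega_{\bar\gamma}\circ\ulCV_b=\omega_{b,\bar\gamma}$; this forces $\breve\pi(x)$ to land in a component on which such a $\beta$ exists. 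For the reverse inclusion, let $\gamma$ be a point of such a component over $L$; the existence of $\beta$ means the $\sigma$-bundle $\ulCF_{b,\gamma}(\Darst)$ for a faithful $\Darst\colon G\into\SL_r$ has a full lattice of $\tau$-invariants over $\ol L\ancon[s]$, so by \cite[Theorem~2.4.7]{HartlPSp} it arises from a local $\GL_r$-shtuka $\ulM$ over $\CO_L$; one then needs to upgrade $\ulM$ to a local $G$-shtuka. Here I would use the tensor functor $V\mapsto\ulCF_{b,\gamma}(V)$ together with the $\tau$-invariant lattice structure (available on every $V$ since admissibility is preserved under the tensor operations by the proof of Definition~\ref{DefWAPair}) to produce a tensor functor $\Rep_{\BF_q\dbl z\dbr}G\to(\text{local }\GL\text{-shtukas over }\CO_L)$, which by Tannakian duality (using $\Aut^\otimes(\omega\open_{\BF_q\dbl z\dbr})=G$, \cite[Corollary~5.20]{Wed}) is a local $G$-shtuka $\ul\CG$ over $\CO_L$ bounded by $\hat Z^{-1}$; the isomorphism $\beta$ together with the element $b$ gives the quasi-isogeny $\bar\delta$ over $\bar S$, and by construction $\breve\pi(\ul\CG,\bar\delta)=\gamma$. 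A descent argument along $L/\breve E$ extends this from field-valued points to a surjection onto the whole component, giving \ref{MainThm_1}.

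Given part~\ref{MainThm_1}, part~\ref{MainThm_2} is the statement that the canonical identification $\check V_{\ul\CG,\bar x}(V)\cong\bar\gamma^*\ulCF_b(V)^\tau=\ulCV_b(V)_{\bar\gamma}$ just observed is compatible with the monodromy actions of $\pi_1^\et$, functorial in $V$, and equivariant for $J_b\bigl(\BF_q\dpl z\dpr\bigr)$. Compatibility with monodromy is immediate from the fact that both sides are computed as $\tau$-invariants inside the same $\sigma$-bundle, which carries a $\pi_1^\et$-action; that $\check V_{\ul\CG}$ descends from $(\breveRZ)^\an$ to $\breve\pi\bigl((\breveRZ)^\an\bigr)$ then follows because $\breve\pi$ is étale (Proposition~\ref{PropPeriodMEtale}) with geometric fibres $G(\BF_q\dpl z\dpr)/K$ on the tower (Proposition~\ref{PropFibersOfPiK}), so by Galois descent for local systems along the pro-étale tower one gets a well-defined local system on the image. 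The $J_b$-linearisation on $\check V_{\ul\CG}$ comes from the isomorphisms $\Phi_j\colon j^*\ul\CG^\univ\isoto\ul\CG^\univ$ of Remark~\ref{RemJActsOnMK}, and its compatibility with the $J_b$-linearisation on $\ulCV_b$ from Theorem~\ref{ThmLocSyst} follows from Remark~\ref{RemPiAndJ} (equivariance of $\breve\pi$).

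For part~\ref{MainThm_3}, the plan is to match the two towers by their moduli descriptions: ${\breve\CM}^K$ parametrises triples $(\ul\CG,\bar\delta,\eta K)$ by Corollary~\ref{CorLevel2}, while $\CE_K$ parametrises rational $K$-level structures on $\ulCV_b$, i.e.\ $\pi_1^\et$-invariant $K$-orbits in $\Triv_{\ulCV_b}\bigl(\BF_q\dpl z\dpr\bigr)$ in the sense of Remark~\ref{RemTowerLocSyst}. Using the canonical isomorphism $\check V_{\ul\CG}\cong\ulCV_b\circ\breve\pi$ from part~\ref{MainThm_2}, a rational $K$-level structure $\eta K$ on $\ul\CG$ is the same datum as a rational $K$-level structure on the pullback of $\ulCV_b$; this gives a morphism of towers ${\breve\CM}^K\to\CE_K\times_{\breve\CH}\breve\pi\bigl((\breveRZ)^\an\bigr)$ over the image, compatible with the projection maps $\breve\pi_{K,K'}$, the Hecke correspondences $\iota(g)_K$ (matching by \eqref{EqHeckeCor1} and \eqref{EqHeckeExplicit}), and the $J_b$-actions (matching by the last sentence of Remark~\ref{RemJActsOnMK}). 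That it is an isomorphism is checked on geometric points using Proposition~\ref{PropFibersOfPiK}: both geometric fibres of the towers over a point of the image are torsors under $G\bigl(\BF_q\dpl z\dpr\bigr)$, and the morphism is $G\bigl(\BF_q\dpl z\dpr\bigr)$-equivariant, hence bijective on fibres; since both are étale over the image it is an isomorphism. Since each $\CE_K$ is a strictly $\breve E$-analytic space, this also shows ${\breve\CM}^K$ is.

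\textbf{Main obstacle.} I expect the hard part to be the reverse inclusion in \ref{MainThm_1}: reconstructing, from a point $\gamma$ of a "good" component, an actual local $G$-shtuka $\ul\CG$ over $\CO_L$ bounded by $\hat Z^{-1}$ whose period is $\gamma$, rather than merely a local $\GL_r$-shtuka. This requires running \cite[Theorem~2.4.7]{HartlPSp} Tannakianly over all of $\Rep_{\BF_q\dbl z\dbr}G$ at once and checking the boundedness condition of Definition~\ref{DefBDLocal}\ref{DefBDLocal_C} survives — the latter is exactly where condition~\ref{DefBDLocal_A11} (via Lemma~\ref{LemBoundConjInv}) and the ind-projectivity of $\Flag_G$ (via Proposition~\ref{PropBound} and Proposition~\ref{PropLiftOfIsog}) are needed. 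A secondary subtlety is passing from field-valued points to the surjectivity onto the whole (non-reduced, analytic) component, which needs the étaleness of $\breve\pi$ (Proposition~\ref{PropPeriodMEtale}) to spread out the field-valued construction.
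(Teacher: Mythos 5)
Your plan for parts~\ref{MainThm_2} and \ref{MainThm_3} is in line with the paper's proof: one descends $\check V_{\ul\CG}$ along the \'etale surjection $\breve\pi$ via the descent datum supplied by Proposition~\ref{PropLiftOfIsog} on $\breve\CM\times_{\im\breve\pi}\breve\CM$, compares with $\ulCV_b$ via \cite[Proposition~2.4.4]{HartlPSp}, and matches the towers using Corollary~\ref{CorLevel2} together with Proposition~\ref{PropFibersOfPiK}. The easy containment in part~\ref{MainThm_1} (image lies in the described union) is also fine, for the reason you give.

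The gap is in the reverse inclusion. You write that the tensor functor
$V\mapsto\ulM_V$ (local shtukas over $\CO_L$) "by Tannakian duality (using $\Aut^\otimes(\omega\open_{\BF_q\dbl z\dbr})=G$) is a local $G$-shtuka $\ul\CG$ over $\CO_L$." This step is precisely what can fail, and the paper's Example~\ref{ExampleNotATorsor} is an explicit counterexample for a non-reductive $G$: the tensor functor $V\mapsto M_V$ need not be exact on the underlying $\CO_L\dbl z\dbr$-modules, so it need not correspond to an $L^+G$-torsor. $\Aut^\otimes(\omega\open_{\BF_q\dbl z\dbr})=G$ only identifies $G$ from its neutral fiber functor; to get a $G$-torsor out of a new fiber functor you need the hypotheses of Proposition~\ref{PropTannaka} (preservation of epimorphisms), which is not automatic here. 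The paper's proof therefore takes a much longer detour: it first shows (step~3 of the proof) that $V\mapsto M_V\otimes_{\CO_\Omega\dbl z\dbr}\CO_\Omega\dpl z\dpr$ \emph{is} exact (via a maximal-ideal analysis using Lemma~\ref{LemmaOOmegaZ}), producing a $G$-torsor $\CG$ only over $\CO_\Omega\dpl z\dpr$; then (steps~4--6) proves $\CG$ is trivial by a delicate gluing and Weil-restriction argument that crucially uses the reductivity of the generic fiber and the theorem of Borel--Springer; and only then (step~7) invokes Lemma~\ref{LemmaLiftOfIsog} — which is where the parahoric hypothesis and the ind-projectivity of $\Flag_G$ enter, via the flattening technique of Raynaud--Gruson — to extend to an actual local $G$-shtuka $\ul\CG''$ over $\CO_\Omega$ bounded by $\hat Z^{-1}$. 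You flag these ingredients as places "where condition \ref{DefBDLocal_A11} and the ind-projectivity of $\Flag_G$ are needed," but your sketch presents the Tannakian step as routine, whereas it is the single hardest point of the whole argument and the naive version you write down is false in general. Without steps~3--7 (or some substitute for them), the construction of a $G$-torsor over the valuation ring $\CO_L$, rather than merely over $\CO_L\dpl z\dpr$ or $L$, is unavailable.

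A secondary, smaller point: the appeal to "$\tau$-invariant lattice structure, available on every $V$" should be organized as in step~1 of the paper (pull back to the \'etale cover $\HeckeTower_{K_0}$ so the representation of $\pi_1^\et$ lands in $K_0=G(\BF_q\dbl z\dbr)$ and Corollary~\ref{Cor1.7} produces an integral tensor functor $\ulCT_b\colon\Rep_{\BF_q\dbl z\dbr}G\to\BLoc$); without this the fiber-wise $\tau$-invariants for different $V$ need not assemble coherently into a lattice-valued tensor functor. Your cited \cite[Theorem~2.4.7]{HartlPSp} is an existence statement for a single local shtuka; the functorial version the paper uses is \cite[Proposition~2.4.9]{HartlPSp} applied to the rigidified module $\CQ'=\ulCT_b(V)_{\bar\gamma}\otimes\Omega\langle\tfrac{z}{\zeta^s}\rangle$.
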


\begin{remark}\label{RemWintenberger}
(a) Note that if the analog of Wintenberger's theorem \eqref{EqWintenberger} is established, the union of connected components in \ref{MainThm_1} is simply $\breve\CH_{G,\hat{Z},b}^{na}$. In particular, this would imply that $\breve\CM^K\ne\emptyset$ if and only if $[b]\in B(G,\hat{Z}_E)$. This is the analog of \cite[Conjecture~4.21]{RV}.

\medskip\noindent
(b) By Theorem~\ref{MainThm}\ref{MainThm_3} the tower $({\breve\CM}^K)_{K\subset G(\BF_q\dpl z\dpr)}$ only depends on the triple $(G_{\BF_q\dpl z\dpr},[b],\hat{Z}_E)$, where $[b]$ is the $\sigma$-conjugacy class of $b$ under $LG(\BaseFld)$. Indeed, the tower $(\HeckeTower_K)_{K\subset G(\BF_q\dpl z\dpr)}$ only depends on this triple; see Definition~\ref{DefPeriodSp} and Remark~\ref{RemTowerLocSyst}. In the arithmetic situation the analogs of the tower $({\breve\CM}^K)_{K\subset G(\BF_q\dpl z\dpr)}$ are called \emph{local Shimura varieties}; see \cite[\S\,5.2]{RV} and \cite[\S\,24]{ScholzeBerkeley}. For the same reason they also only depend on the group scheme over $\BQ_p$, the bounding cocharacter $\mu$ and the $\sigma$-conjugacy class $[b]$; see \cite[Proposition~23.2.1 and thereafter]{ScholzeBerkeley}.
\end{remark}

To prove Theorem~\ref{MainThm} we will take a Tannakian approach and make use of the following proposition.

\begin{proposition} \label{PropTannaka}
Let $G$ be a faithfully flat affine group scheme over a Dedekind domain $A$ and let $B$ be an $A$-algebra. Let $\Rep_AG$ be the Tannakian category of representations of $G$ on finite projective $A$-modules and let $\omega\open\colon\Rep_AG\to\FMod_A$ be the forgetful fiber functor. Let $\omega\colon\Rep_AG\to\FMod_B$ be a tensor functor to the category $\FMod_B$ of finite projective $B$-modules that sends morphisms in $\Rep_AG$ that are epimorphisms on the underlying $A$-modules to epimorphisms in $\FMod_B$. Then $\Isom^\otimes(\omega\open\otimes_AB,\omega)$ is representable by a $G$-torsor over $B$ (for the \fpqc{} topology).
\end{proposition}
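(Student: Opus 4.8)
\textbf{Proof plan for Proposition~\ref{PropTannaka}.} The statement is a Tannakian reconstruction result in the style of \cite[Theorem~3.2]{DM82}, but over a base that is only a Dedekind domain rather than a field, so the plan is to reduce to the field case by working fppf-locally on $\Spec B$ and to exploit the hypothesis that $\omega$ is exact on epimorphisms in order to produce enough local sections of $\Isom^\otimes(\omega\open\otimes_AB,\omega)$. First I would check that $\Isom^\otimes(\omega\open\otimes_AB,\omega)$ is representable by an affine scheme over $B$ together with its natural $G_B$-action by pre-composition: this is the standard argument, writing $G=\Spec\CO(G)$, decomposing the regular representation $\CO(G)$ as a filtered colimit of finitely generated subrepresentations that are projective over $A$ (using that $G$ is flat over the Dedekind domain $A$, so finitely generated torsion-free $=$ projective), and expressing $\Isom^\otimes$ as a closed subscheme of an affine scheme cut out by the tensor-compatibility equations; the $G_B$-action is free because $\omega$ is a tensor functor and $\omega\open$ is faithful. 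So the only real content is to show the structure morphism $\Isom^\otimes(\omega\open\otimes_AB,\omega)\to\Spec B$ is faithfully flat and that the action is simply transitive, i.e.\ that the torsor is locally trivial for the fppf topology; once there is a section fppf-locally, the $G_B$-torsor structure is automatic.

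The heart of the argument is therefore a local existence statement: for every point of $\Spec B$, after an fppf base change $B\to B'$ there exists a tensor isomorphism $\omega\open\otimes_AB'\isoto\omega\otimes_BB'$. Here I would follow the strategy of Broshi (``G-torsors over Dedekind schemes'') and of the appendix arguments in the literature on Tannakian categories over Dedekind rings: first base change to the residue field $k(\Fp)$ of a prime $\Fp\subset B$, where $\omega\otimes_Bk(\Fp)$ is a fiber functor $\Rep_AG\to\Vec_{k(\Fp)}$ sending epimorphisms to epimorphisms; such a functor is automatically exact and faithful (exactness on the right is the hypothesis; exactness on the left follows by dualizing, using that $\Rep_AG$ is rigid and $\omega$ respects duals, together with the fact that a monomorphism in $\Rep_AG$ which is split over $A$ has dual an epimorphism, and every mono becomes split over the flat cover; faithfulness follows since a nonzero object admits $A\hookrightarrow V\otimes V^\vee$-type maps detecting it), hence a neutral fiber functor over $k(\Fp)$, so by \cite[Theorem~3.2]{DM82} it is (fppf-, even étale-) locally isomorphic to $\omega\open\otimes_Ak(\Fp)$. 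Then I would lift this isomorphism from the residue field to a formal neighbourhood and, using that $\Isom^\otimes$ is of finite presentation and formally smooth over $B$ at that point (smoothness because $G$ is smooth — or, if $G$ is not assumed smooth, because the obstruction to lifting an isomorphism of fiber functors through a square-zero extension lives in an $H^1$ of the relevant group which vanishes after an fppf cover — this is precisely the place where one invokes the exactness hypothesis to guarantee the deformation functor is unobstructed after an fppf cover), deduce by a limit/approximation argument (Artin approximation or the elementary fact that a finitely presented scheme with a section over the completion and smoothness has a section over an étale/fppf neighbourhood) that $\Isom^\otimes(\omega\open\otimes_AB,\omega)$ acquires a section fppf-locally on $\Spec B$ near $\Fp$.

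Having a section fppf-locally, the free $G_B$-action together with the identification of the section's orbit with all of the fiber (simple transitivity: any two fiber functors to $\FMod_{B'}$ with the epimorphism property differ by a tensor automorphism of one of them, i.e.\ by an element of $G(B')$, which is the standard torsor-triviality argument from \cite[Theorem~3.2]{DM82} carried out over $B'$) shows that $\Isom^\otimes(\omega\open\otimes_AB,\omega)$ is an fppf-torsor under $G_B$, completing the proof. \textbf{Main obstacle.} The delicate point is the passage from the residue field to $B$ itself: over a field the epimorphism hypothesis upgrades ``tensor functor'' to ``exact faithful neutral fiber functor'' and Deligne--Milne applies verbatim, but over the Dedekind ring one must control the deformation theory of fiber functors through nilpotent thickenings and then globalize/approximate, and one must make sure the exactness-on-epimorphisms condition is genuinely enough (rather than full exactness) to run the dualization argument giving left-exactness and to kill the lifting obstructions after an fppf cover. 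Concretely, the key lemma to nail down is: a tensor functor $\Rep_AG\to\FMod_{B'}$ sending $A$-epimorphisms to epimorphisms, with $B'$ local (or a field), is isomorphic to $\omega\open\otimes_AB'$ after a further faithfully flat extension — and that the locus where this holds is open and fppf-covering in $\Spec B$, which is what promotes the pointwise statement to the representability of $\Isom^\otimes$ by a $G$-torsor.
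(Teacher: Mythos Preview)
Your approach is genuinely different from the paper's and has a real gap at the lifting step.

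The paper does not go through residue fields, deformation theory, or Artin approximation at all. Instead it interprets $\Rep_AG$ as the category $\FMod_{BG}$ of finite locally free sheaves on the classifying stack $BG=[\Spec A/G]$, observes that the Dedekind hypothesis makes $(A,\CO(G))$ an \emph{Adams Hopf algebroid} (the regular representation is a filtered union of finite projective subcomodules, by the argument you sketch), and then invokes Sch{\"a}ppi's Tannakian reconstruction for Adams stacks: a tensor functor $\FMod_{BG}\to\Mod_B$ sending locally split epimorphisms to epimorphisms comes from a unique morphism $f\colon\Spec B\to BG$ together with $\alpha\colon f^*\isoto\omega\circ p_0^*$. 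By definition of $BG$ such an $f$ is a $G$-torsor $X\to\Spec B$, and the paper checks directly (again via Sch{\"a}ppi's equivalence applied over any test scheme $S\to\Spec B$) that $X\cong\Isom^\otimes(\omega\open\otimes_AB,\omega)$. The epimorphism hypothesis enters exactly once, to verify Sch{\"a}ppi's input condition; no smoothness, no finite presentation of $G$, and no pointwise argument is used.

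Your route, by contrast, needs two ingredients that are not in the hypotheses. First, lifting a tensor isomorphism from $k(\Fp)$ through nilpotent thickenings is controlled by the cotangent complex of $G$; without smoothness this is not formally smooth, and your parenthetical ``the obstruction lives in an $H^1$ which vanishes after an fppf cover'' is circular --- that $H^1$ description presupposes you already know $\Isom^\otimes$ is a $G$-torsor near $\Fp$, which is what you are proving. Second, the passage from the completion to an \'etale or fppf neighbourhood via Artin approximation requires $\Isom^\otimes\to\Spec B$ to be of finite presentation, hence $G$ to be of finite type over $A$, which is not assumed. So while your outline is reasonable when $G$ is smooth and of finite type, the proposition as stated (only faithfully flat affine) needs the stack-theoretic shortcut the paper uses.
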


\begin{proof}
Let $BG=[\Spec A/G]$ be the classifying stack of $G$ that parametrizes $G$-torsors over $A$-schemes. By \cite[Tags \href{http://stacks.math.columbia.edu/tag/0443}{0443} and \href{http://stacks.math.columbia.edu/tag/06WS}{06WS}]{StacksProject} the category of linear representations of $G$ on (arbitrary) $A$-modules is tensor equivalent to the category $\Mod_{BG}$ of quasi-coherent sheaves on (the big \fppf{}-site of) $BG$; see \cite[Tags \href{http://stacks.math.columbia.edu/tag/06NT}{06NT} and \href{http://stacks.math.columbia.edu/tag/03DL}{03DL}]{StacksProject}. This equivalence is given by the functor that sends a quasi-coherent sheaf $\CF$ on $BG$ to its pullback under the morphism $p_0\colon\Spec A\to BG$. The quasi-inverse functor sends a representation of $G$ on an $A$-module $F$ to its faithfully flat descent on $BG$. By faithfully flat descent \cite[IV$_2$, Proposition~2.5.2]{EGA}, $\CF$ is finite locally free if and only if the representation is a finite locally free $A$-module. In particular, the category $\Rep_AG$ is tensor equivalent to the category $\FMod_{BG}$ of finite locally free sheaves on $BG$.

If we write $G=\Spec\Gamma$ the multiplication of $G$ makes $\Gamma$ into a comodule, which is the filtered union of its finitely generated sub-comodules by \cite[\S\,1.5, Corollaire]{Serre68}. Since $\Gamma$ is a flat $A$-module and $A$ is Dedekind, these finitely generated sub-comodules are torsion free, whence finite projective and dualizable. The Hopf algebroid $(A,\Gamma)$ is therefore an Adams Hopf algebroid and the stack $BG$ is an \emph{Adams stack}; see \cite[\S\,1.3]{Schaeppi12}. Due to our assumption the tensor functor $\omega$ defines a tensor functor from $\FMod_{BG}$ to $\Mod_B$ which sends locally split epimorphisms $\phi$ to epimorphisms. Indeed, if $U$ is a scheme and $u\colon U\to BG$ is an \fpqc{} covering over that $u^*\phi$ splits, then $u^*\phi$ is an epimorphism of quasi-coherent sheaves on $U$. Since $U\times_{BG}\Spec A\to\Spec A$ is an \fpqc{} covering, \fpqc{} descent \cite[IV$_2$, Proposition~2.2.7]{EGA} shows that $p_0^*\phi$ is an epimorphism on the underlying $A$-modules, and our assumption applies. Thus by \cite[Corollary~3.4.3]{Schaeppi15} and \cite[Theorems~1.3.2 and 1.2.1]{Schaeppi12} the functor $\omega$ corresponds to a morphism $f\colon\Spec B\to BG$ and an isomorphism of tensor functors $\alpha\colon f^*\isoto\omega\circ p_0^*$. 

By definition of $BG$ this morphism corresponds to a $G$-torsor $p\colon X\to\Spec B$ that  is obtained as the base change
\[
\xymatrix @R=0.5pc @C=0pc {
X \ar[rr]^{\TS h} \ar[dd]_{\TS p} & & \Spec A \ar[dd]^{\TS p_0}\\
& \qed\qquad \\
\Spec B \ar[rr]^{\quad\TS f} & & BG\,.
}
\]
We will show that $X$ represents $\Isom^\otimes(\omega\open\otimes_AB,\omega)$. Since $p_0^*$ is an equivalence between $\FMod_{BG}$ and $\Rep_AG$, the tensor functor $p^*\alpha\colon h^*\circ p_0^*=p^*\circ f^*\isoto p^*\circ\omega\circ p_0^*$ induces a tensor isomorphism $p^*\circ(\omega\open\otimes_AB)=h^*\isoto p^*\circ\omega$ over $X$, and hence a morphism $X\to\Isom^\otimes(\omega\open\otimes_AB,\omega)$. To prove that the latter is an isomorphism let $s\colon S=\Spec R\to\Spec B$ and let $\beta\in\Isom^\otimes(\omega\open\otimes_AB,\omega)(S)$ which we view as a tensor isomorphism $\beta\colon s^*\circ(\omega\open\otimes_AB)\circ p_0^* \isoto   s^*\circ\omega\circ p_0^*$. Let $r\colon\Spec B\to\Spec A$ be the structure morphism. The two morphisms $f\circ s$ and $p_0\circ r\circ s$ from $S$ to $BG$ induce the pullback functors $s^*\circ f^*$ and $s^*\circ r^*\circ p_0^*=s^*\circ(\omega\open\otimes_AB)\circ p_0^*$ which are isomorphic by 
\[
\beta^{-1}\circ s^*\alpha\colon\; s^*\circ f^*\;\isoto\; s^*\circ\omega\circ p_0^*\;\isoto\; s^*\circ(\omega\open\otimes_AB)\circ p_0^*\,. 
\]
Again by \cite[Corollary~3.4.3]{Schaeppi15} and \cite[Theorems~1.3.2 and 1.2.1]{Schaeppi12} the latter isomorphism corresponds to an isomorphism $\eta$ between the morphisms $f\circ s$ and $p_0\circ r\circ s$ from $S$ to $BG$. The data $(s,r\circ s,\eta)$ defines a morphism $S\to X$ and this proves that $X\isoto\Isom^\otimes(\omega\open\otimes_AB,\omega)$ as desired.
\end{proof}

We also need the following easy

\begin{lemma}\label{LemmaOOmegaZ}
Let $\Omega$ be an algebraically closed field that is complete with respect to an absolute value $|\,.\,|\colon\Omega\to\BR_{\ge0}$ and let $\CO_\Omega$ be its valuation ring. Let $\Fm_\Omega\subset\CO_\Omega$ be the maximal ideal and $\kappa_\Omega=\CO_\Omega/\Fm_\Omega$. 
\begin{enumerate}
\item \label{LemmaOOmegaZ_A}
If $\Fp\subset \CO_\Omega\dbl z\dbr$ is a prime ideal then one of the following assertions holds
\begin{enumerate}
\item \label{LemmaOOmegaZ_A1}
$\Fp$ is contained in $\Fp_0\,:=\,\{\,\sum_{i=0}^\infty b_i z^i\colon b_i\in\Fm_\Omega\,\}\,=\,\ker(\CO_\Omega\dbl z\dbr\onto\kappa_\Omega\dbl z\dbr)$,
\item \label{LemmaOOmegaZ_A2}
$\Fp=(z-\alpha)$ for an $\alpha\in\Fm_\Omega$, or
\item \label{LemmaOOmegaZ_A3}
$\Fp=(z)+\Fm_\Omega$ is the maximal ideal of $\CO_\Omega\dbl z\dbr$.
\end{enumerate}
\item \label{LemmaOOmegaZ_B}
The maximal ideals $\Fm$ of the ring $\CO_\Omega\dpl z\dpr:=\CO_\Omega\dbl z\dbr[\tfrac{1}{z}]$ are all of the following form: $\Fm=(z-\alpha)$ for $\alpha\in\Fm_\Omega\setminus\{0\}$ with $\CO_\Omega\dpl z\dpr/\Fm=\Omega$, or $\Fm\,=\,\Fm_0\,:=\,\{\,\sum_i b_i z^i\colon b_i\in\Fm_\Omega\,\}\,=\,\ker\bigl(\CO_\Omega\dpl z\dpr\onto\kappa_\Omega\dpl z\dpr\bigr)$ with $\CO_\Omega\dpl z\dpr/\Fm=\kappa_\Omega\dpl z\dpr$. 
\end{enumerate}
\end{lemma}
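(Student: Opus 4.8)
The statement is a purely commutative-algebra fact about the rings $\CO_\Omega\dbl z\dbr$ and $\CO_\Omega\dpl z\dpr = \CO_\Omega\dbl z\dbr[\tfrac1z]$, so the whole argument will be elementary; the only mildly delicate point is keeping track of which elements of $\CO_\Omega$ are or are not invertible, using that $\CO_\Omega$ is a (non-discrete, rank one) valuation ring with algebraically closed fraction field $\Omega$. I will first prove \ref{LemmaOOmegaZ_A} and then deduce \ref{LemmaOOmegaZ_B} from it.

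\emph{Step 1 (the three cases are mutually exclusive and exhaust, part \ref{LemmaOOmegaZ_A}).} Let $\Fp\subset\CO_\Omega\dbl z\dbr$ be prime. First observe $\Fp_0=\ker(\CO_\Omega\dbl z\dbr\onto\kappa_\Omega\dbl z\dbr)$ is prime since $\kappa_\Omega\dbl z\dbr$ is a domain, and $(z)+\Fm_\Omega=\ker(\CO_\Omega\dbl z\dbr\onto\kappa_\Omega)$ is maximal. Now suppose $\Fp\not\subset\Fp_0$; I must show we are in case \ref{LemmaOOmegaZ_A2} or \ref{LemmaOOmegaZ_A3}. Pick $f=\sum b_iz^i\in\Fp$ with $f\notin\Fp_0$, i.e.\ some coefficient $b_i$ is a unit of $\CO_\Omega$. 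Let $n\ge0$ be minimal with $b_n\in\CO_\Omega\mal$. Then $f = z^n\cdot u\cdot(1+z\,g) + (\text{terms of degree}<n\text{ with coefficients in }\Fm_\Omega)$ where $u=b_n$ and $1+zg\in\CO_\Omega\dbl z\dbr\mal$ (its constant term is a unit); more precisely, write $f = z^n h + r$ with $h\in\CO_\Omega\dbl z\dbr\mal$ and $r=\sum_{i<n}b_iz^i$, $b_i\in\Fm_\Omega$. Since $h$ is a unit, $z^n + h^{-1}r\in\Fp$. If $n=0$ this element is a unit (its constant term $1+b_0h^{-1}(0)\cdots$ — more simply $h^{-1}f$ has unit constant term) which forces $\Fp=\CO_\Omega\dbl z\dbr$, impossible; so it gives no new information in that degenerate reading. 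The cleaner route: argue that $\CO_\Omega\dbl z\dbr/\Fp$ is a domain in which the image of $z$ is either a unit or topologically nilpotent, splitting into the cases below.

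\emph{Step 1, cleaner version.} Let $\Fp\not\subset\Fp_0$ and choose $f\in\Fp\setminus\Fp_0$ of minimal "unit-degree" $n$ as above, so $f\equiv z^n u \pmod{(z^{n+1})+\Fm_\Omega\dbl z\dbr z^n+\dots}$ with $u\in\CO_\Omega\mal$; dividing, we may assume $f$ is a \emph{distinguished} element: $f = z^n + a_{n-1}z^{n-1}+\dots+a_0$ with all $a_i\in\Fm_\Omega$, times a unit, after a Weierstrass-type normalization valid because $\CO_\Omega\dbl z\dbr$ is $z$-adically separated and $\CO_\Omega$ is a valuation ring (one checks directly by successive approximation, mod $\Fm_\Omega$ the series becomes $z^n(\text{unit})$). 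Since $\Omega$ is algebraically closed and $\CO_\Omega$ is integrally closed with $a_i$ topologically nilpotent, the monic polynomial $z^n+a_{n-1}z^{n-1}+\dots+a_0\in\CO_\Omega[z]$ factors over $\CO_\Omega$ as $\prod_{j=1}^n(z-\alpha_j)$ with all $\alpha_j\in\Fm_\Omega$ (its reduction mod $\Fm_\Omega$ is $z^n$, so every root reduces to $0$, hence lies in $\Fm_\Omega$; Hensel/integral closure gives the roots in $\CO_\Omega$). As $\Fp$ is prime and contains this product, $z-\alpha_j\in\Fp$ for some $j$. Now $\CO_\Omega\dbl z\dbr/(z-\alpha_j)\cong\CO_\Omega$ via $z\mapsto\alpha_j$ (the substitution is well defined since $\alpha_j\in\Fm_\Omega$ is topologically nilpotent), and $\CO_\Omega$ has exactly two primes, $(0)$ and $\Fm_\Omega$. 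Hence either $\Fp=(z-\alpha_j)$, which is case \ref{LemmaOOmegaZ_A2} with $\alpha:=\alpha_j$ (including $\alpha=0$ giving $\Fp=(z)$), or $\Fp$ pulls back the maximal ideal $\Fm_\Omega$, i.e.\ $\Fp=(z-\alpha_j)+\Fm_\Omega=(z)+\Fm_\Omega$, which is case \ref{LemmaOOmegaZ_A3}. This proves \ref{LemmaOOmegaZ_A}.

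\emph{Step 2 (part \ref{LemmaOOmegaZ_B}).} Let $\Fm\subset\CO_\Omega\dpl z\dpr$ be maximal. Its contraction $\Fp:=\Fm\cap\CO_\Omega\dbl z\dbr$ is prime, and since $z$ is a unit in $\CO_\Omega\dpl z\dpr$, $z\notin\Fp$; thus case \ref{LemmaOOmegaZ_A3} is excluded and $\Fp\ne(z)$. If $\Fp\subset\Fp_0$, then since $\Fp$ survives in the localization it is also $\ne\Fp_0$ only if... — rather: I claim $\Fp=\Fp_0$. Indeed localization at $z$ identifies $\Spec\CO_\Omega\dpl z\dpr$ with the primes of $\CO_\Omega\dbl z\dbr$ not containing $z$; by \ref{LemmaOOmegaZ_A} these are: $(0)$, the primes contained in $\Fp_0$, the ideals $(z-\alpha)$ with $\alpha\in\Fm_\Omega\setminus\{0\}$, and $\Fp_0$ itself (note $(z-\alpha)\subset\Fp_0$ iff $\alpha\in\Fm_\Omega$, which always holds, so the $(z-\alpha)$ are among the primes contained in $\Fp_0$ — wait, they are only contained in $\Fp_0$ when $\alpha=0$, since $z-\alpha\equiv -\alpha\not\equiv0$ mod $\Fm_\Omega$ when $\alpha\ne0$). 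So among primes not containing $z$: $(0)\subsetneq$ various, $(z-\alpha)$ ($\alpha\ne0$), primes inside $\Fp_0$, and $\Fp_0$. The \emph{maximal} ones are exactly $\Fp_0$ and the $(z-\alpha)$ with $\alpha\in\Fm_\Omega\setminus\{0\}$: each $(z-\alpha)$, $\alpha\ne0$, is maximal in $\CO_\Omega\dpl z\dpr$ because $\CO_\Omega\dpl z\dpr/(z-\alpha)\cong\CO_\Omega[\tfrac1\alpha]=\Omega$ (localizing the valuation ring $\CO_\Omega$ at the nonzero non-unit $\alpha$ inverts $\alpha$ and hence, $\CO_\Omega$ being a valuation ring of rank one, yields $\Omega$); and $\Fp_0$ is maximal in $\CO_\Omega\dpl z\dpr$ because $\CO_\Omega\dpl z\dpr/\Fp_0\cong\kappa_\Omega\dpl z\dpr$ which is... not a field — so $\Fp_0$ is \emph{not} maximal. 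Let me re-examine: $\kappa_\Omega\dpl z\dpr$ is the Laurent series field over $\kappa_\Omega$? No: $\CO_\Omega\dpl z\dpr/\Fp_0 = (\CO_\Omega\dbl z\dbr/\Fp_0)[\tfrac1z]=\kappa_\Omega\dbl z\dbr[\tfrac1z]$, which the paper denotes $\kappa_\Omega\dpl z\dpr$; this is a field (it is the fraction field of the DVR $\kappa_\Omega\dbl z\dbr$ after inverting the uniformizer — yes, $\kappa_\Omega\dbl z\dbr[\tfrac1z]$ is a field since every nonzero element $z^m\cdot(\text{unit})$ becomes invertible). So $\Fp_0$ \emph{is} maximal with residue field $\kappa_\Omega\dpl z\dpr$. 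Good — that matches the claim. Therefore the maximal ideals of $\CO_\Omega\dpl z\dpr$ are precisely $\Fm_0=\Fp_0\cdot\CO_\Omega\dpl z\dpr$ and the $(z-\alpha)$, $\alpha\in\Fm_\Omega\setminus\{0\}$, with residue fields $\kappa_\Omega\dpl z\dpr$ and $\Omega$ respectively, which is exactly \ref{LemmaOOmegaZ_B}.

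\textbf{Main obstacle.} The only genuinely non-formal step is Step 1: producing from an arbitrary $f\in\Fp\setminus\Fp_0$ a distinguished polynomial factor and then splitting it into linear factors $z-\alpha_j$ with $\alpha_j\in\Fm_\Omega$. This is a Weierstrass-preparation-type argument over the valuation ring $\CO_\Omega$ (not a field, not complete in the usual Noetherian sense), so I must be slightly careful: I will use that $\CO_\Omega$ is $\Fm_\Omega$-adically... actually $\CO_\Omega$ need not be $\Fm_\Omega$-adically complete, so instead of invoking a completeness-based Weierstrass theorem I will argue directly that $\CO_\Omega\dbl z\dbr/\Fp$ is a domain finite (or integral) over $\CO_\Omega$, reduce modulo $\Fm_\Omega$ to see $z$ is nilpotent there, and use integral closure of $\CO_\Omega$ in $\Omega$ together with algebraic closedness of $\Omega$ to pull back the roots $\alpha_j$; the topological nilpotence of the $\alpha_j$ (i.e.\ $\alpha_j\in\Fm_\Omega$) is what makes the substitution $z\mapsto\alpha_j$ a well-defined continuous $\CO_\Omega$-algebra map on $\CO_\Omega\dbl z\dbr$. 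Everything else is bookkeeping with the two-element spectrum of a rank-one valuation ring.
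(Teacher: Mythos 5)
Your overall plan matches the paper's: both first factor a witness $f\in\Fp\setminus\Fp_0$ as a unit times a product $\prod_j(z-\alpha_j)$ with $\alpha_j\in\Fm_\Omega$, use primality to put some $z-\alpha_j$ into $\Fp$, identify $\CO_\Omega\dbl z\dbr/(z-\alpha_j)\cong\CO_\Omega$ and use the two-element spectrum of $\CO_\Omega$; and both deduce~\ref{LemmaOOmegaZ_B} from~\ref{LemmaOOmegaZ_A} by restricting to primes not containing $z$.

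The genuine gap is Step~1, and you essentially flag it yourself without closing it. Your stated justification for Weierstrass preparation in $\CO_\Omega\dbl z\dbr$ — that $\CO_\Omega\dbl z\dbr$ is $z$-adically separated and $\CO_\Omega$ is a valuation ring — is not sufficient and is actually not the relevant hypothesis: preparation fails for a non-complete valuation ring. The input you actually need is that $\CO_\Omega$ is \emph{complete for the absolute value} $|\,.\,|$, and that the "deficient" coefficients $b_0,\dots,b_{n-1}$ are a \emph{finite} set in $\Fm_\Omega$, hence uniformly bounded by some $\epsilon<1$; then the Weierstrass division fixed-point iteration $q\mapsto b^{-1}\tau(g)-b^{-1}\tau(aq)$ is a contraction with ratio $\epsilon$ on each coefficient, and completeness of $\CO_\Omega$ lets it converge. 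Your closing paragraph concedes the completeness worry and proposes to bypass it by showing $\CO_\Omega\dbl z\dbr/\Fp$ is integral over $\CO_\Omega$, but you never show how to produce the needed monic relation for $z$ without, in effect, re-proving Weierstrass preparation: from $\bar f=(\text{unit})\,z^n$ in $\kappa_\Omega\dbl z\dbr$ you only get $z^n\in\Fp+\Fm_\Omega\CO_\Omega\dbl z\dbr$, and since $\Fm_\Omega$ is not finitely generated this does not by itself yield an integral equation. So as written the proof is incomplete.

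For contrast, the paper avoids invoking a general Weierstrass theorem altogether: starting from $\bar f\ne0$ and the observation that $|b_0|<1=|b_n|$ forces a negative slope in the Newton polygon of $f$ on the open unit disc, it applies Lazard's factorization to peel off a single factor $f=(z-\alpha)g$ with $g\in\Omega\dbl z\dbr$ converging on the disc, and then checks $g\in\CO_\Omega\dbl z\dbr$ by a direct elementary estimate on the coefficients $c_i$ (showing that $|c_{m-1}|>1$ would contradict convergence of $g$ at $z=\alpha$). Iterating $n$ times produces $f=(z-\alpha_1)\cdots(z-\alpha_n)h$ with $h\in\CO_\Omega\dbl z\dbr\mal$. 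This one-root-at-a-time argument is what fills the hole in your Step~1. If you prefer to keep your Weierstrass route, you need to state and use the completeness of $\CO_\Omega$ and the finiteness-and-boundedness observation explicitly, and actually run the contraction argument; the reasons you currently cite would not save you.

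A minor aside in Step~2: the parenthetical about which $(z-\alpha)$ lie inside $\Fp_0$ is confused even after your self-correction — in fact $z-\alpha\notin\Fp_0$ for \emph{all} $\alpha\in\Fm_\Omega$, including $\alpha=0$, since the linear coefficient is $1\notin\Fm_\Omega$. This does not affect your conclusion, because you only use the list of maximal elements among primes not containing $z$, which you do end up identifying correctly.
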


\begin{proof}
\ref{LemmaOOmegaZ_A} If (i) fails, that is if $\Fp\not\subset\Fp_0$ then there is an element $f=\sum_{i=0}^\infty b_i z^i\in\Fp$ with $b_n\in\CO_\Omega\mal$ for some $n$. We may assume that $b_i\in\Fm_\Omega$ for all $i<n$, and hence the image $\bar f$ of $f$ in $\kappa_\Omega\dbl z\dbr$ has $\ord_z(\bar f)=n$, where $\ord_z$ is the valuation of the discrete valuation ring $\kappa_\Omega\dbl z\dbr$. If $n=0$ then $f$ would be a unit, which is not the case. So $|b_0|<|b_n|=1$ and the Newton polygon of $f$ has a negative slope. By \cite[Proposition~2]{Lazard} there is an element $\alpha\in\Fm_\Omega$ and a power series $g=\sum_{i=0}^\infty c_i z^i\in\Omega\dbl z\dbr$ with $f=(z-\alpha)\cdot g$, that is $b_i=c_{i-1}-\alpha c_i$, such that the region of convergence of $f$ is contained in the one of $g$. In particular, $g$ converges for all $z$ in $\Fm_\Omega$. 

We claim that $g\in\CO_\Omega\dbl z\dbr$, that is $|c_i|\le1$ for all $i$. Assume contrarily that there exists an $m>0$ with $|c_{m-1}|>1\ge|b_m|$. Then $\alpha c_m=c_{m-1}-b_m$ implies $|c_m|=|\alpha^{-1} c_{m-1}|>|c_{m-1}|$ and by induction $|c_i|=|\alpha|^{m-1-i}|c_{m-1}|$ for all $i\ge m-1$. This implies that $g$ does not converge at $z=\alpha$. So we obtain a contradiction and the claim is proved.

For the images in $\kappa_\Omega\dbl z\dbr$ we obtain $\bar f=z\cdot\bar g$ and $\ord_z(\bar g)=\ord_z(\bar f)-1$. Continuing in this way we find that $f=(z-\alpha_1)\cdot\ldots\cdot(z-\alpha_n)\cdot h$ for $\alpha_i\in\Fm_\Omega$ and a unit $h\in\CO_\Omega\dbl z\dbr\mal$. Since $\Fp$ is prime, it contains $z-\alpha_i$ for some $i$. Under the isomorphism $\CO_\Omega\dbl z\dbr/(z-\alpha_i)\isoto\CO_\Omega$, $z\mapsto\alpha_i$, the quotient $\Fp/(z-\alpha_i)$ is a prime ideal in $\CO_\Omega$. If $\Fp/(z-\alpha_i)=(0)$, then $\Fp=(z-\alpha_i)$ and we are in case (ii). If $\Fp/(z-\alpha_i)=\Fm_\Omega$, then $\Fp=(z-\alpha_i)+\Fm_\Omega=(z)+\Fm_\Omega$ and we are in case (iii).

\medskip\noindent
\ref{LemmaOOmegaZ_B} follows from \ref{LemmaOOmegaZ_A} via the identification of the prime ideals in $\CO_\Omega\dpl z\dpr$ with the prime ideals in $\CO_\Omega\dbl z\dbr$ not containing $z$.
\end{proof}

\begin{proof}[Proof of Theorem~\ref{MainThm}]
\ref{MainThm_1} 
Since the proof is quite long and involved, let us give a summary first. By \cite[Theorem~2.9]{dJ95a} the subset of points $\bar\gamma$ admitting an $\BF_q\dpl z\dpr$-rational isomorphism $\beta\colon\omega\open\isoto\omega_{b,\bar\gamma}$ between the tensor functors $\omega\open$ and $\omega_{b,\bar\gamma}$ is a union of connected components. Thus the task is to prove that it agrees with the image of the period morphism.

We start with a point $\bar\gamma\in\breve\CH_{G,\hat Z,b}^{a}(\Omega)$ with values in an algebraically closed, complete field $\Omega$ and a tensor isomorphism $\beta$. In Step~1 we construct from this data a tensor functor $\ulCT_b\colon\Rep_{\BF_q\dbl z\dbr}G\to\BLoc_{\HeckeTower_{K_0}}$ over an \'etale covering space $\HeckeTower_{K_0}$ of $\breve\CH_{G,\hat Z,b}^{a}$. By the admissibility of $\bar\gamma$, this tensor functor will in Step~2 induce a tensor functor $M\colon\Rep_{\BF_q\dbl z\dbr}G\to\FMod_{\CO_{\Omega}\dbl z\dbr},\,V\mapsto M_V$ that gives the underlying module $M_V$ of a local shtuka $\ulM_V$. Unfortunately, it is not clear that the functor $V\mapsto M_V$ is exact without using the hypothesis that $G$ is parahoric. This might even be false for groups $G$ with non-reductive generic fiber as Example~\ref{ExampleNotATorsor} shows. Nevertheless, we show in Step~3 that $V\mapsto M_V\otimes_{\CO_\Omega\dbl z\dbr}\CO_\Omega\dpl z\dpr$ is exact and corresponds to a $G$-torsor $\CG$ over $\CO_\Omega\dpl z\dpr$. From \cite[Proposition~11.5]{Anschuetz} we obtain that $\CG\cong G\otimes_{\BF_q\dbl z\dbr}\CO_\Omega\dpl z\dpr$ is trivial. This allows us in Step~4 to apply Lemma~\ref{LemmaLiftOfIsog} to produce a local $G$-shtuka $(\ul\CG''\!,\bar\Delta'')\in\breveRZ(\CO_\Omega)$ with $\breve\pi(\ul\CG''\!,\bar\Delta'')=\bar\gamma$.

\medskip\noindent
1. Let $\gamma\in\breve\CH_{G,\hat Z,b}^{a}$ and let $\bar\gamma$ be a geometric point lying above $\gamma$ with algebraically closed, complete residue field $\Omega$. We consider the tensor functor $\ulCV_b\colon\Rep_{\BF_q\dpl z\dpr}G\to\PLoc_{\breve\CH_{G,\hat Z,b}^{a}}$ from Theorem~\ref{ThmLocSyst} and the two fiber functors $\omega_{b,\bar\gamma}:={\it forget}\circ\omega_{\bar\gamma}\circ\ulCV_b$ and $\omega\open\colon\Rep_{\BF_q\dpl z\dpr}G\to\FMod_{\BF_q\dpl z\dpr}$ from Remark~\ref{RemLocSyst} and assume that there is an $\BF_q\dpl z\dpr$-rational isomorphism 
\begin{equation}\label{EqBeta}
\beta\colon\omega\open\isoto\omega_{b,\bar\gamma}
\end{equation}
of tensor functors which we fix. It induces an isomorphism of group schemes $\wt G:=\Aut^\otimes(\omega_{b,\bar\gamma})\isoto\Aut^\otimes(\omega_0)= G_{\BF_q\dpl z\dpr}$ over $\BF_q\dpl z\dpr$. Consider the compact open subgroup $K_0:=G(\BF_q\dbl z\dbr)\subset G\bigl(\BF_q\dpl z\dpr\bigr)$ and the \'etale covering space $\HeckeTower_{K_0}$ of $\breve\CH_{G,\hat Z,b}^{a}$ from Remark~\ref{Rem1.8} associated with the local system $\ulCV_b$ on $\breve\CH_{G,\hat Z,b}^{a}$. The point $\beta K_0\in\Isom^\otimes(\omega\open,\omega_{b,\bar\gamma})\bigl(\BF_q\dpl z\dpr\bigr)/K_0=F_{\bar\gamma}^\et(\HeckeTower_{K_0})$ corresponds to a lift of the base point $\bar\gamma$ to a geometric base point $\bar\gamma$ of $\HeckeTower_{K_0}$ such that the morphism $\pi_1^\et(\HeckeTower_{K_0},\bar\gamma)\to\pi_1^\et(\breve\CH_{G,\hat Z,b}^{a},\bar\gamma)\to G\bigl(\BF_q\dpl z\dpr\bigr)$ from \eqref{EqRepFundGpPeriodSp} factors through $K_0$. By Corollary~\ref{Cor1.7} the induced morphism $\pi_1^\et(\HeckeTower_{K_0},\bar\gamma)\to K_0$ yields a tensor functor $\ulCT_b\colon\Rep_{\BF_q\dbl z\dbr}G\to\BLoc_{\HeckeTower_{K_0}}$ and an isomorphism $\omega\open\isoto{\it forget}\circ\omega_{\bar\gamma}\circ\ulCT_b$, which equals the restriction of $\beta$ by construction. In particular, for each $V\in\Rep_{\BF_q\dbl z\dbr}G$ the fiber $\ulCT_b(V)_{\bar\gamma}$ is a free $\BF_q\dbl z\dbr$-module of rank equal to the rank of $V$.

\medskip\noindent
2. Let $(\Darst_V,V)\in\Rep_{\BF_q\dbl z\dbr}G$. Then the $z$-isocrystal with Hodge-Pink structure
\[
\ulD_{b,\bar\gamma}(V)=\bigl(V\otimes_{\BF_q\dbl z\dbr}\BaseFld\dpl z\dpr,\Darst_V(\s b)\sigma^*,\Fq_D(V)\bigr)
\]
over $\Omega$ is admissible and comes from a pair $(\ulM_V,\delta_V)$ where $\ulM_V=(M_V,\tau_{M_V})$ is a local shtuka over $\CO_{\Omega}$ with $M_V\cong\CO_{\Omega}\dbl z\dbr^r$ and $\tau_{M_V}\colon\s M_V[\tfrac{1}{z-\zeta}]\isoto M_V[\tfrac{1}{z-\zeta}]$, and with the notation \eqref{EqCrisRing},
\begin{equation}\label{EqDeltaV}
\delta_V\colon\ulM_V\otimes_{\CO_{\Omega}\dbl z\dbr}\CO_{\Omega}\dbl z,z^{-1}\}[\tfrac{1}{\tminus}]\;\isoto\;\bigl(V\otimes_{\BF_q\dbl z\dbr}\CO_{\Omega}\dbl z,z^{-1}\}[\tfrac{1}{\tminus}],\Darst_V(b)\sigma^*\bigr)
\end{equation}
is an isomorphism satisfying $\delta_V\circ\tau_{M_V}=\Darst_V(b)\circ\s\delta_V$, such that
\[
\Fq_D(V):=\Darst_V(\bar\gamma)\cdot V\otimes_{\BF_q\dbl z\dbr}\Omega\dbl z-\zeta\dbr=\sigma^*\delta_V\circ\tau_{M_V}^{-1}(M_V\otimes_{\CO_{\Omega}\dbl z\dbr}\Omega\dbl z-\zeta\dbr)
\]
and so
\begin{equation}\label{Eq_h_ohneNenner}
\tau_{M_V}\circ\s\delta_V^{-1}\circ\Darst_V(\bar\gamma)\colon V\otimes_{\BF_q\dbl z\dbr}\Omega\dbl z-\zeta\dbr\isoto M_V\otimes_{\CO_{\Omega}\dbl z\dbr}\Omega\dbl z-\zeta\dbr.
\end{equation}
This follows from \cite[Proposition~2.4.9]{HartlPSp} with $X_L=\Spm\Omega$ and $X=\Spf\CO_{\Omega}$, where our pair $(\ulM_V,\delta_V)$ is the rigidified local shtuka of loc.\ cit.. We may apply this proposition by taking $\CQ':=\ulCT_b(V)_{\bar\gamma}\otimes_{\BF_q\dbl z\dbr}\Omega\langle\tfrac{z}{\zeta^s}\rangle$ inside $\CQ:=\ulCF_{b,\bar\gamma}(V)=\CQ'\otimes_{\Omega\langle\frac{z}{\zeta^s}\rangle}\Omega\ancon[s]$ for an $s$ with $1>s>\frac{1}{q}$; see Remark~\ref{RemGlobalSigmaModule}. Note that our notation here of the underlying $z$-isocrystal deviates from \cite{HartlPSp}. Namely, our $\ulD_{b}(V)=\bigl(V\otimes_{\BF_q\dbl z\dbr}\BaseFld\dpl z\dpr,\Darst_V(\s b)\sigma^*\bigr)$ was called $\sigma^*(D,F_D)$ in \cite[Proposition~2.4.9]{HartlPSp}. So the $(D,F_D)$ from \cite[Proposition~2.4.9]{HartlPSp} is equal to our $\bigl(V\otimes_{\BF_q\dbl z\dbr}\BaseFld\dpl z\dpr,\Darst_V(b)\sigma^*\bigr)$. The latter equals the $z$-isocrystal associated with the local $\GL(V)$-shtuka $\Darst_{V,*}\ul\BG_0$ over $\BaseFld$ that was used in (the proof of) Lemma~\ref{LemmaGL}.

As can be seen from the proof of loc.\ cit., there is an isomorphism $\varepsilon_V\colon\ulM_V\otimes_{\CO_{\Omega}\dbl z\dbr}\Omega\langle\tfrac{z}{\zeta^s}\rangle\isoto\CQ'=\ulCT_b(V)_{\bar\gamma}\otimes_{\BF_q\dbl z\dbr}\Omega\langle\tfrac{z}{\zeta^s}\rangle$ yielding $\check T_z\ulM_V\cong (\CQ')^\tau=\ulCT_b(V)_{\bar\gamma}$. Alternatively,  \cite[Proposition~2.4.4]{HartlPSp} provides an isomorphism $\varepsilon_V\colon\ulM_V\otimes_{\CO_{\Omega}\dbl z\dbr}\Omega\ancon[s]\isoto\ulCF_{b,\bar\gamma}(V)$ yielding $\check T_z\ulM_V\otimes_{\BF_q\dbl z\dbr}\BF_q\dpl z\dpr\cong(\ulCF_{b,\bar\gamma}(V))^\tau=\ulCT_b(V)_{\bar\gamma}\otimes_{\BF_q\dbl z\dbr}\BF_q\dpl z\dpr=\ulCV_b(V)_{\bar\gamma}$. Now the existence of a uniquely determined pair $(\ulM_V,\delta_V)$ with $\varepsilon_V(\check T_z\ulM_V)=\ulCT_b(V)_{\bar\gamma}$ follows from Proposition~\ref{PropLiftOfIsog}. 

We claim that the underlying free $\CO_{\Omega}\dbl z\dbr$-module $M_V$ of the local shtuka $\ulM_V$ defines a tensor functor $M\colon\Rep_{\BF_q\dbl z\dbr}G\to\FMod_{\CO_{\Omega}\dbl z\dbr},\,V\mapsto M_V$. We use the isomorphisms 
\begin{eqnarray}
\label{EqTensorIsoms1}
\s\delta_V\circ\tau_{M_V}^{-1}\hspace{-1em} & \colon & \hspace{-1em} M_V\otimes_{\CO_{\Omega}\dbl z\dbr}\CO_{\Omega}\dbl z,z^{-1}\}[\tfrac{1}{\tminus}]\;\isoto\;V\otimes_{\BF_q\dbl z\dbr}\CO_{\Omega}\dbl z,z^{-1}\}[\tfrac{1}{\tminus}]\quad\text{and\qquad}\\[2mm]
\label{EqTensorIsoms2}
(\beta_V\otimes \id_{\Omega\langle\frac{z}{\zeta^s}\rangle})^{-1}\circ\varepsilon_V\hspace{-1em} & \colon & \hspace{-1em} M_V\otimes_{\CO_{\Omega}\dbl z\dbr}\Omega\langle\tfrac{z}{\zeta^s}\rangle\;\isoto\;\ulCT_b(V)_{\bar\gamma}\otimes_{\BF_q\dbl z\dbr}\Omega\langle\tfrac{z}{\zeta^s}\rangle\;\isoto\; V\otimes_{\BF_q\dbl z\dbr}\Omega\langle\tfrac{z}{\zeta^s}\rangle,
\end{eqnarray}
where $\beta_V\colon V=\omega(V)\open\isoto\omega_{b,\bar\gamma}(V)$ is induced from the isomorphism $\beta$ from \eqref{EqBeta}. Consider the identification $$h_V\colon \ulCT_b(V)_{\bar\gamma}\otimes_{\BF_q\dbl z\dbr}\Omega\ancon[s]{}[\tfrac{1}{\tminus}]=\CF_{b,\bar\gamma}(V)[\tfrac{1}{\tminus}]=\CE_{b,\bar\gamma}(V)[\tfrac{1}{\tminus}]:=V\otimes_{\BF_q\dbl z\dbr}\Omega\ancon[s]{}[\tfrac{1}{\tminus}].$$ Then the isomorphisms $\s\delta_V\circ\tau_{M_V}^{-1}$ and $(\beta_V\otimes \id_{\Omega\langle\frac{z}{\zeta^s}\rangle})^{-1}\circ\varepsilon_V$ satisfy \begin{multline*}h_V\circ(\beta_V\otimes \id_{\Omega\ancon[s]{}[\frac{1}{\tminus}]})\circ\bigl((\beta_V\otimes \id_{\Omega\langle\frac{z}{\zeta^s}\rangle})^{-1}\circ\varepsilon_V\bigr)\otimes\id_{\Omega\ancon[s]{}[\frac{1}{\tminus}]}\\ \,=\,(\s\delta_V\circ\tau_{M_V}^{-1})\otimes\id_{\Omega\ancon[s]{}[\frac{1}{\tminus}]}\colon M_V\otimes_{\CO_{\Omega}\dbl z\dbr}\Omega\ancon[s]{}[\tfrac{1}{\tminus}]\isoto \CE_{b,\bar\gamma}(V)[\tfrac{1}{\tminus}]\end{multline*} and $M_V$ is obtained from the intersection
\begin{equation}\label{EqIntersection}
\sigma^*\delta_V\circ\tau_{M_V}^{-1}(M_V)\;=\; V\otimes_{\BF_q\dbl z\dbr}\CO_\Omega\dbl z,z^{-1}\}[\tfrac{1}{\tminus}]\cap h_V\beta_V\bigl(V\otimes_{\BF_q\dbl z\dbr}\Omega\langle\tfrac{z}{\zeta^s}\rangle\bigr)
\end{equation}
inside $V\otimes_{\BF_q\dbl z\dbr}\Omega\ancon[s]{}[\tfrac{1}{\tminus}]$.

To prove compatibility with morphisms $f\colon V\to V'$ in $\Rep_{\BF_q\dbl z\dbr}G$, we consider the induced morphisms $\CE_{b,\bar\gamma}(f):=f\otimes\id_{\Omega\ancon[s]{}[\frac{1}{\tminus}]}\colon\CE_{b,\bar\gamma}(V)\to\CE_{b,\bar\gamma}(V')$ and $\ulCT_b(f)_{\bar\gamma}\colon\ulCT_b(V)_{\bar\gamma}\to\ulCT_b(V')_{\bar\gamma}$. Since $h_{V'}\circ\ulCT_b(f)_{\bar\gamma}=\CE_{b,\bar\gamma}(f)\circ h_V$ and $\ulCT_b(f)_{\bar\gamma}\circ\beta_V=\beta_{V'}\circ f$, the morphism $f$ induces via \eqref{EqIntersection} a morphism $M_f\colon M_V\to M_{V'}$.

Finally, if $V,V'\in\Rep_{\BF_q\dbl z\dbr}G$ then $(\ulM_V,\delta_V)\otimes(\ulM_{V'},\delta_{V'})$ has $\ulD_{b,\bar\gamma}(V)\otimes\ulD_{b,\bar\gamma}(V')=\ulD_{b,\bar\gamma}(V\otimes V')$ as $z$-isocrystal with Hodge-Pink structure and $\check T_z\ulM_V\otimes\check T_z\ulM_{V'}\cong\ulCT_b(V)_{\bar\gamma}\otimes\ulCT_b(V')_{\bar\gamma}=\ulCT_b(V\otimes V')_{\bar\gamma}$ as Tate-module. This implies that $(\ulM_{V\otimes V'},\delta_{V\otimes V'})=(\ulM_V,\delta_V)\otimes(\ulM_{V'},\delta_{V'})$ and so $M$ is indeed a tensor functor.

Unfortunately, it is not a priory clear that this tensor functor comes from a $G$-torsor over $\CO_\Omega\dbl z\dbr$. Namely, a necessary (and by Proposition~\ref{PropTannaka} also sufficient) condition is that for every sequence $0\to V'\to V\to V''\to 0$ in $\Rep_{\BF_q\dbl z\dbr}G$ that is exact on the underlying free $\BF_q\dbl z\dbr$-modules, also the sequence $0\to M_{V'}\to M_V\to M_{V''}\to 0$ of free $\CO_\Omega\dbl z\dbr$-modules is exact. This is not obvious without using the hypothesis that $G$ is parahoric, and may even be false if the generic fiber of $G$ is not reductive as Example~\ref{ExampleNotATorsor} shows.

\medskip\noindent
3. We first claim that the sequence of $\CO_\Omega\dbl z\dbr$-modules $0\to M_{V'}\to M_V\to M_{V''}\to 0$ becomes exact after tensoring with $\CO_\Omega\dpl z\dpr:=\CO_\Omega\dbl z\dbr[\tfrac{1}{z}]$. Namely, since the cokernel $C$ of $M_V[\tfrac{1}{z}]\to M_{V''}[\tfrac{1}{z}]$ is finitely generated, it suffices by Nakayama's lemma to prove that $C/\Fm C=(0)$ for every maximal ideal $\Fm\subset\CO_\Omega\dpl z\dpr$. By Lemma~\ref{LemmaOOmegaZ}\ref{LemmaOOmegaZ_B} there are two cases, namely $\Fm=\Fm_0=\ker\bigl(\CO_\Omega\dpl z\dpr\onto\kappa_\Omega\dpl z\dpr\bigr)$, and $\Fm=(z-\alpha)$  for $\alpha\in\Fm_\Omega\setminus\{0\}$. In the first case, the morphism $\CO_\Omega\dbl z\dbr\to\CO_\Omega\dpl z\dpr/\Fm_0=\kappa_\Omega\dpl z\dpr$ factors through $\CO_\Omega\dbl z,z^{-1}\}[\tfrac{1}{\tminus}]$, because $\tminus\equiv 1\pmod{\Fm_0}$, and hence $C/\Fm_0 C\cong\coker(V\onto V'')\otimes_{\BF_q\dbl z\dbr}\kappa_\Omega\dpl z\dpr=(0)$ by using the isomorphism \eqref{EqTensorIsoms1}. In the second case, if $|\alpha|>|\zeta|^s$, the morphism $\CO_\Omega\dbl z\dbr\to\CO_\Omega\dpl z\dpr/\Fm=\Omega$, $z\mapsto\alpha$ likewise factors through $\CO_\Omega\dbl z,z^{-1}\}[\tfrac{1}{\tminus}]$, because $\tminus(\alpha)=\prod_{i\in\BN_0}\bigl(1-{\tfrac{\zeta^{q^i}}{\alpha}}\bigr)\ne0$, and hence $C/\Fm_0 C\cong\coker(V\onto V'')\otimes_{\BF_q\dbl z\dbr}\Omega=(0)$ by using the isomorphism \eqref{EqTensorIsoms1} again. Finally in the second case, if $|\alpha|\le|\zeta|^s$, the morphism $\CO_\Omega\dbl z\dbr\to\CO_\Omega\dpl z\dpr/\Fm=\Omega$ factors through $\Omega\langle\frac{z}{\zeta^s}\rangle$, and hence $C/\Fm_0 C\cong\coker(V\onto V'')\otimes_{\BF_q\dbl z\dbr}\Omega=(0)$ by using the isomorphism \eqref{EqTensorIsoms2}. 

Now Proposition~\ref{PropTannaka} shows that $\CG:=\Isom^\otimes\bigl(\omega\open\otimes_{\BF_q\dpl z\dpr}\CO_\Omega\dpl z\dpr,\,M\otimes_{\CO_\Omega\dbl z\dbr}\CO_\Omega\dpl z\dpr\bigr)$ is a $G$-torsor over $\CO_\Omega\dpl z\dpr$ for the \fpqc-topology, and hence for the \'etale topology, because $G$ is smooth.
By \cite[Proposition~11.5]{Anschuetz} the $G$-torsor $\CG$ over $\CO_\Omega\dpl z\dpr$ is trivial.

\medskip\noindent
4. From the triviality of $\CG$ and from the isomorphism \eqref{EqTensorIsoms2} we obtain an $\CO_\Omega\dpl z\dpr$-rational, respectively $\Omega\langle\tfrac{z}{\zeta^s}\rangle$-rational, tensor isomorphism $\theta\colon\omega\open\otimes_{\BF_q\dpl z\dpr}\CO_\Omega\dpl z\dpr\isoto M\otimes_{\CO_\Omega\dbl z\dbr}\CO_\Omega\dpl z\dpr$, respectively $\theta_2\colon\omega_{\BF_q\dbl z\dbr}\open\otimes_{\BF_q\dbl z\dbr}\Omega\langle\tfrac{z}{\zeta^s}\rangle\isoto M\otimes_{\CO_\Omega\dbl z\dbr}\Omega\langle\tfrac{z}{\zeta^s}\rangle$, which we fix in the sequel. Over $\Omega\langle\tfrac{z}{\zeta^s}\rangle[z^{-1}]$ the composition $u''_\Omega:=\theta^{-1}\circ\theta_2$ corresponds to an element $u''_\Omega\in G\bigl(\Omega\langle\tfrac{z}{\zeta^s}\rangle[z^{-1}]\bigr)\subset G\bigl(\Omega\dpl z\dpr\bigr)=LG(\Omega)$. The isomorphisms 
\[
\theta_V^{-1}\circ\tau_{M_V}\circ\s\theta_V\colon\; V\otimes\CO_{\Omega}\dbl z\dbr[\tfrac{1}{z(z-\zeta)}]\;\isoto\;\s M_V[\tfrac{1}{z(z-\zeta)}]\;\isoto\; M_V[\tfrac{1}{z(z-\zeta)}]\;\isoto\; V\otimes\CO_{\Omega}\dbl z\dbr[\tfrac{1}{z(z-\zeta)}]
\]
provide an automorphism of the tensor functor $\omega\open\otimes_{\BF_q\dpl z\dpr}\CO_{\Omega}\dbl z\dbr[\tfrac{1}{z(z-\zeta)}]$, that is an element $A\in G\bigl(\CO_{\Omega}\dbl z\dbr[\tfrac{1}{z(z-\zeta)}]\bigr)=L_{z(z-\zeta)}G(\CO_\Omega)$. We consider $A$ as an isomorphism $\tau_\CG:=A\s\colon \s L_{z(z-\zeta)}G_{\CO_\Omega}\isoto L_{z(z-\zeta)}G_{\CO_\Omega}$ of trivial $L_{z(z-\zeta)}G$-torsors. Likewise the isomorphisms 
\[
\delta_V\circ\theta_V\colon\; V\otimes\CO_{\Omega}\dbl z,z^{-1}\}[\tfrac{1}{\tminus}]\;\isoto\; M_V\otimes_{\CO_{\Omega}\dbl z\dbr}\CO_{\Omega}\dbl z,z^{-1}\}[\tfrac{1}{\tminus}]\;\isoto\; V\otimes\CO_{\Omega}\dbl z,z^{-1}\}[\tfrac{1}{\tminus}]
\]
provide an automorphism of the tensor functor $\omega\open\otimes_{\BF_q\dpl z\dpr}\CO_{\Omega}\dbl z,z^{-1}\}[\tfrac{1}{\tminus}]$, that is an element $\Delta\in G\bigl(\CO_{\Omega}\dbl z,z^{-1}\}[\tfrac{1}{\tminus}]\bigr)$. The equalities $\Darst_V(b)\circ\s\delta_V=\delta_V\circ\tau_{M_V}$ yield the equality $b\cdot\s\Delta=\Delta\cdot A$ in $G\bigl(\CO_{\Omega}\dbl z,z^{-1}\}[\tfrac{1}{\tminus}]\bigr)$. Finally, the isomorphisms 
\[
\theta_{2,V}^{-1}\circ\tau_{M_V}\circ\s\theta_{2,V}\colon\; V\otimes\Omega\langle\tfrac{z}{\zeta^{qs}}\rangle\;\isoto\;\s M_V\otimes\Omega\langle\tfrac{z}{\zeta^{qs}}\rangle\;\isoto\; M_V\otimes\Omega\langle\tfrac{z}{\zeta^{qs}}\rangle\;\isoto\; V\otimes\Omega\langle\tfrac{z}{\zeta^{qs}}\rangle
\]
provide an automorphism of the tensor functor $\omega\open_{\BF_q\dbl z\dbr}\otimes_{\BF_q\dbl z\dbr}\Omega\langle\tfrac{z}{\zeta^{qs}}\rangle$, that is an element $A''\in G\bigl(\Omega\langle\tfrac{z}{\zeta^{qs}}\rangle\bigr)\subset G\bigl(\Omega\dbl z\dbr\bigr)=L^+G(\Omega)$. It satisfies $u''_\Omega\cdot A''=A\cdot\s u''_\Omega$. We view $A''$ as an isomorphism $\tau'':=A''\s\colon \s L^+G_\Omega\isoto L^+G_\Omega$  of trivial $L^+G$-torsors satisfying $u''_\Omega\circ\tau''=\tau_\CG\circ\s u''_\Omega$.

Moreover, choose a representative $\bar\gamma\in G\bigl(\Omega\dpl z-\zeta\dpr\bigr)$ of $\bar\gamma\in\CH_{G,\hat Z}^\an(\Omega)\subset\Gr_G^{\bB_\dR}(\Omega)$. Then the element $h:=A\cdot\sigma^*\Delta^{-1}\cdot\bar\gamma\in G\bigl(\Omega\dpl z-\zeta\dpr\bigr)$ satisfies $\Darst_V(h)=\theta_V^{-1}\circ\tau_{M_V}\circ\s\delta_V^{-1}\circ\Darst_V(\bar\gamma)$. In formula \eqref{Eq_h_ohneNenner} we computed that this is an automorphism of $V\otimes_{\BF_q\dbl z\dbr}\Omega\dbl z-\zeta\dbr$. Thus $h\in G(\Omega\dbl z-\zeta\dbr)$, and hence $A^{-1}=\sigma^*\Delta^{-1}\cdot\bar\gamma\cdot h^{-1}$ lies in $\hat{Z}^\an(\Omega)$, because $\sigma^*\Delta\in G(\Omega\dbl z-\zeta\dbr)$. Therefore, the pair $(LG_{\CO_\Omega},\tau_\CG)$ consisting of the trivial $LG$-torsor over $\CO_\Omega$ associated with $\CG$ and $\tau_\CG:=A\s$, as well as the pair $(\CG''_\Omega,\tau'')$ consisting of the trivial $L^+G$-torsor $\CG''_\Omega:=L^+G_\Omega$ over $\Omega$ with $\tau'':=A''\s$, together with $u''_\Omega$ satisfy the hypothesis of Lemma~\ref{LemmaLiftOfIsog} with $B=\CO_\Omega$ and $L=\Omega$. We apply this lemma. Since every finitely generated ideal of $\CO_\Omega$ is principal, $Y=\Spec\CO_\Omega$. We obtain a local $G$-shtuka $\ul\CG''=(\CG''\!,\tau_{\CG''})$ over $\CO_\Omega$ bounded by $\hat{Z}^{-1}$ and an isomorphism $u''\colon (L\CG'',\tau_{\CG''})\isoto (LG_{\CO_\Omega},\tau_\CG)$ of $LG$-torsors over $\CO_\Omega$ satisfying $u''\circ \tau_{\CG''}=\tau_\CG\circ\s u''$. The reduction modulo $\zeta$ of $\Delta'':=\Delta\circ u''$ provides a quasi-isogeny $\ul\CG''_{\CO_\Omega/(\zeta)}\to\ul\BG_{0,\CO_\Omega/(\zeta)}$, because $\Delta\circ u''\circ\tau_{\CG''}=\Delta\cdot A\circ\s u''=b\circ\s(\Delta\circ u'')$. This yields an $\CO_\Omega$-valued point $(\ul\CG''\!,\bar\Delta'')\in\breveRZ(\CO_\Omega)$. Its image under $\breve\pi$ is computed as  $\breve\pi(\ul\CG''\!,\bar\Delta'')=b^{-1}\Delta''\cdot G\bigl(\Omega\dbl z-\zeta\dbr\bigr)$ with $b^{-1}\Delta''=b^{-1}\Delta\circ u''=\s\Delta\cdot A^{-1}\cdot u''=\bar\gamma\cdot h^{-1}u''$. Since $h,u''\in G\bigl(\Omega\dbl z-\zeta\dbr\bigr)$ this shows that $\breve\pi(\ul\CG''\!,\bar\Delta'')=\bar\gamma$ and finally \ref{MainThm_1} is proved.

\bigskip\noindent
To prove \ref{MainThm_2} fix a representation $\Darst\in\Rep_{\BF_q\dpl z\dpr}G$. By Remark~\ref{RemTateModule} the rational Tate module $\check V_{\ul\CG}(\Darst)$ is a local system of $\BF_q\dpl z\dpr$-vector spaces on ${\breve\CM}:=(\breveRZ)^\an$. In order that it descends to a local system on $\breve\pi\bigl((\breveRZ)^\an\bigr)=:\im(\breve\pi)$ it suffices by \cite[Definition~4.1]{dJ95a} to show that
\begin{enumerate}
\item[(i)]
$\breve\pi\colon{\breve\CM}\to\im(\breve\pi)$ is a covering for the \'etale topology and that
\item[(ii)]
there is a descent datum $\psi:pr_1^\ast \check V_{\ul\CG}(\Darst)\isoto pr_2^\ast \check V_{\ul\CG}(\Darst)$ over ${\breve\CM}\times_{\im(\breve\pi)}{\breve\CM}$ where $pr_i:{\breve\CM}\times_{\im(\breve\pi)}{\breve\CM}\to{\breve\CM}$ is the projection onto the $i$-th factor, such that $\psi$ satisfies the cocycle condition on ${\breve\CM}\times_{\im(\breve\pi)}{\breve\CM}\times_{\im(\breve\pi)}{\breve\CM}$.
\end{enumerate}
Statement (i) follows from Proposition~\ref{PropPeriodMEtale}. To prove (ii) let $L$ and $B$ be as introduced before Lemma~\ref{LemmaDivisibility} and let $\CX=\Spf B$ and $X=\CX^\an=\SpBerk(B[\tfrac{1}{\zeta}])$. Consider an $X$-valued point of ${\breve\CM}\times_{\im(\breve\pi)}{\breve\CM}$ and its image in ${\breve\CM}\times_{\breve E}{\breve\CM}$. By \cite[Theorem~4.1]{FRG1} this $X$-valued point is induced by two $\Spf B$-valued points $(\ul\CG,\bar\delta)$ and $(\ul\CG'\!,\bar\delta')$ of $\breveRZ$ with $\breve\pi(\ul\CG,\bar\delta)=\breve\pi(\ul\CG'\!,\bar\delta')$ in $\breve\CH_{G,\hat{Z},b}^{a}(X)$ possibly after replacing $\Spf B$ by an affine covering of an admissible formal blowing-up; see the explanations before Lemma~\ref{LemmaTrivializing} for more details. Now Proposition~\ref{PropLiftOfIsog} (together with Proposition~\ref{Prop2.13}) yields a canonical isomorphism $\psi:pr_1^\ast \check V_{\ul\CG}(\Darst)\isoto pr_2^\ast \check V_{\ul\CG}(\Darst)$ of local systems of $\BF_q\dpl z\dpr$-vector spaces over $X$ which is functorial in $\Darst$ and satisfies the cocycle condition by canonicity. Therefore $\check V_{\ul\CG}(\Darst)$ descends to a local system of $\BF_q\dpl z\dpr$-vector spaces on $\breve\pi\bigl((\breveRZ)^\an\bigr)$. Clearly $\check V_{\ul\CG}\colon\Darst\mapsto\check V_{\ul\CG}(\Darst)$ is a tensor functor. The isomorphism $\Phi_j\colon j^*\ul\CG\isoto \ul\CG$ from Remark~\ref{RemJActsOnMK} yields a canonical $J_b\bigl(\BF_q\dpl z\dpr\bigr)$-linearization $\check V_{\Phi_j}\colon j^*\check V_{\ul\CG}=\check V_{j^*\ul\CG}\isoto\check V_{\ul\CG}$ on $\check V_{\ul\CG}$ over $\breveRZ$ which descends to $\breve\pi\bigl((\breveRZ)^\an\bigr)$ because the period morphism $\breve\pi$ is $J_b\bigl(\BF_q\dpl z\dpr\bigr)$-equivariant by Remark~\ref{RemPiAndJ}.

Let $\Darst\colon G\to\GL_r$ be in $\Rep_{\BF_q\dbl z\dbr}G$. By \cite[Proposition~2.4.4]{HartlPSp} the pullback to $\breveRZ$ under $\breve\pi$ of the $\sigma$-bundle $\ulCF_b(\Darst)$ over $\breve\pi\bigl((\breveRZ)^\an\bigr)$ from Remark~\ref{RemGlobalSigmaModule} is canonically isomorphic to $\ulM_\Darst\otimes\CO_{\breveRZ}\ancon[s]$ where $\ulM_\Darst$ is the local shtuka over $\breveRZ$ associated with the local $\GL_r$-shtuka $\Darst_*\ul\CG^\univ$ obtained from the universal local $G$-shtuka $\ul\CG^\univ$ over $\breveRZ$. This isomorphism is functorial in $\Darst$ and compatible with tensor products and pullback under the action of $j\in J_b\bigl(\BF_q\dpl z\dpr\bigr)$ because the period morphism $\breve\pi$ is $J_b\bigl(\BF_q\dpl z\dpr\bigr)$-equivariant. Descending it to $\breve\pi\bigl((\breveRZ)^\an\bigr)$ and taking $\tau$-invariants yields a canonical isomorphism of tensor functors $\alpha\colon\ulCV_b\isoto\check V_{\ul\CG}$, which satisfies $\alpha\circ\phi_j=\check V_{\Phi_j}\circ j^*\alpha$ where $\phi_j\colon j^*\ulCV_b\isoto\ulCV_b$ is the linearization from Theorem~\ref{ThmLocSyst}.

\bigskip\noindent
\ref{MainThm_3}
We fix a geometric base point $\bar\gamma$ of $\breve\pi\bigl((\breveRZ)^\an\bigr)$ and consider the canonical family of morphisms $(f_K\colon{\breve\CM}^K\to\HeckeTower_K)_{K\subset G(\BF_q\dpl z\dpr)}$ that sends an $\Test$-valued triple $(\ul\CG,\bar\delta,\eta K)$ over ${\breve\CM}^K$ from Corollary~\ref{CorLevel2} with
\[
\eta K\;\in\;\Triv_{\ul\CG,\bar\gamma}\bigl(\BF_q\dpl z\dpr\bigr)/K\;=\;\Isom^{\otimes}(\omega^{\circ},{\it forget}\circ\omega_{\bar\gamma}\circ\check V_{\ul{\CG}})\bigl(\BF_q\dpl z\dpr\bigr)/K
\]
to the $\Test$-valued point of $\HeckeTower_K$ given by the $K$-orbit $\beta K\in\Isom^\otimes(\omega\open,{\it forget}\circ\omega_{\bar\gamma}\circ\ulCV_b)\bigl(\BF_q\dpl z\dpr\bigr)/K$ of tensor isomorphisms where $\beta:=({\it forget}\circ\omega_{\bar\gamma})(\alpha)^{-1}\circ\eta$; see Remarks~\ref{RemLocSyst} and \ref{Rem1.8}(a). The map $f_K$ does not depend on the chosen base point $\bar\gamma$ by \cite[Theorem 2.9]{dJ95a} and is thus defined on all connected components of the ${\breve\CM}^K$. The family $(f_K)_K$ is equivariant for the Hecke action of $G\bigl(\BF_q\dpl z\dpr\bigr)$ on both towers defined in \eqref{EqHeckeCor1} and \eqref{EqHeckeCor2}. For any algebraically closed complete extension $\Omega$ of $\breve E$ the morphism $f_K$ is bijective on $\Omega$-valued points because the fibers of ${\breve\CM}^K(\Omega)$ and $\HeckeTower_K(\Omega)$ over a fixed $\Omega$-valued point of $\breve\pi\bigl((\breveRZ)^\an\bigr)$ are both isomorphic to the quotient $G\bigl(\BF_q\dpl z\dpr\bigr)/K$ by Remark~\ref{Rem1.8}(a) and Proposition~\ref{PropFibersOfPiK}. Hence $f_K$ is quasi-finite by \cite[Proposition 3.1.4]{Berkovich2}. Since ${\breve\CM}^K$ and $\HeckeTower_K$ are \'etale over $\breve\pi\bigl((\breveRZ)^\an\bigr)$ the morphisms $f_K$ are \'etale by \cite[Corollary 3.3.9]{Berkovich2} and hence isomorphisms by \cite[Proposition~A.4]{HartlRZ}.

To prove that the $f_K$ are $J_b\bigl(\BF_q\dpl z\dpr\bigr)$-equivariant we must show that the upper ``rectangle'' in the diagram
\[
\xymatrix @R=0.5pc {
\breve\CM^K \ar[rr]^{j_{\breve\CM^K}}\ar[dd]_{f_K} \ar@{-->}[dddr]^f 
& & \breve\CM^K \ar[ddd]^{f_K} 
\\
\\
\HeckeTower_K \ar[dr] \ar@/^/[drr]^{j_{\HeckeTower_K}} \ar[ddd] \\
& j^*\HeckeTower_K \ar[dd] \ar[r]_{pr} & \HeckeTower_K \ar[dd] \\
\\
\breve\pi\bigl((\breveRZ)^\an\bigr) \ar@{=}[r] & \breve\pi\bigl((\breveRZ)^\an\bigr) \ar[r]^j & \breve\pi\bigl((\breveRZ)^\an\bigr)
}
\]
is commutative, that is $f_K\circ j_{\breve\CM^K}=j_{\HeckeTower_K}\circ f_K$ holds. Recall from Remark~\ref{RemJActsOnMK} that the action $j_{\breve\CM^K}\colon\breve\CM^K\to\breve\CM^K$ of $j\in J_b\bigl(\BF_q\dpl z\dpr\bigr)$ is defined on the universal objects by 
\[
j^*(\ul\CG^\univ,\bar\delta^\univ,\eta^\univ K)\;=\;\Bigl(j^*\ul\CG^\univ,\,j\circ\bar\delta^\univ\circ(\Phi_j\mod\zeta),\,{\it forget}(\check V_{\Phi_j,\bar x}^{-1})\circ\eta^\univ K\Bigr)
\]
This triple on the source of the morphism $j_{\breve\CM^K}\colon\breve\CM^K\to\breve\CM^K$ is mapped under the dashed arrow $f$ to $({\it forget}\circ\omega_{\bar\gamma})(j^*\alpha)^{-1}\circ{\it forget}(\check V_{\Phi_j,\bar x}^{-1})\circ\eta K\;=\;({\it forget}\circ\omega_{\bar\gamma})(\phi_j^{-1}\circ\alpha^{-1})\circ\eta K$. Likewise the image $f_K(\ul\CG^\univ,\bar\delta^\univ,\eta^\univ K)= ({\it forget}\circ\omega_{\bar\gamma})(\alpha)^{-1}\circ\eta K$ on $\HeckeTower_K$ of the universal object on $\breve\CM^K$ is mapped by \cite[Formula~(2.11)]{HartlRZ} to $({\it forget}\circ\omega_{\bar\gamma})(\phi_j^{-1})\circ({\it forget}\circ\omega_{\bar\gamma})(\alpha)^{-1}\circ\eta K$ on $j^*\HeckeTower_K$. This proves the commutativity of the upper ``rectangle'' and the $J_b\bigl(\BF_q\dpl z\dpr\bigr)$-equivariance of the isomorphism $f_K$.
\end{proof}

\begin{corollary}\label{CorQuasiAlg}
Every point $x\in\breve\CM^K$ has an affinoid neighborhood $U$ that is finite \'etale over its image $\breve\pi(U)$ in $\breve\CH_{G,\hat{Z},b}^{a}$. This image $\breve\pi(U)$ is an affinoid subspace of the projective variety $\breve\CH_{G,\hat{Z}}$. In particular, $\breve\CM^K$ is quasi-algebraic over $\breve E$; compare \cite[D\'efinition~4.1.11]{Fargues}. 
\end{corollary}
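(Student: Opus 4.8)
The plan is to deduce Corollary~\ref{CorQuasiAlg} from the isomorphism $f_K\colon\breve\CM^K\isoto\HeckeTower_K$ of Theorem~\ref{MainThm}\ref{MainThm_3} together with the fact that $\HeckeTower_K$ is a finite \'etale covering space of $\breve\pi\bigl((\breveRZ)^\an\bigr)$ and that $\breve\pi$ is \'etale (Proposition~\ref{PropPeriodMEtale}). First I would recall that by definition of \'etale covering spaces in the sense of de~Jong~\cite[\S\,2]{dJ95a}, for every analytic point $y$ of $\breve\CH_{G,\hat{Z},b}^{a}$ there is an open neighborhood $V$ of $y$ such that $\HeckeTower_K\times_{\breve\CH_{G,\hat{Z},b}^{a}}V$ is a disjoint union of analytic spaces each finite \'etale over $V$. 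Given $x\in\breve\CM^K$ with image $\gamma:=\breve\pi_K(x)$, I would pick such a $V$ around $\gamma$ and then shrink it to an affinoid subdomain $V'$ of $\breve\CH_{G,\hat{Z}}^{\an}$; this is possible because $\breve\CH_{G,\hat{Z},b}^{a}$ is open in the compact strictly $\breve E$-analytic space $\breve\CH_{G,\hat{Z}}^{\an}=(\hat Z_E\wh\otimes_E\breve E)^{\an}$ by Theorem~\ref{ThmWAOpen}, and affinoid domains form a basis of the topology by \cite[Proposition~2.2.3]{Berkovich1}. The connected component $U$ of $\breve\CM^K\times_{\breve\CH_{G,\hat{Z},b}^{a}}V'\cong\HeckeTower_K\times_{\breve\CH_{G,\hat{Z},b}^{a}}V'$ containing $x$ is then finite \'etale over $V'$, hence affinoid (being finite over an affinoid), and its image $\breve\pi(U)$ is the open-and-closed image of a finite surjective morphism, thus an affinoid subdomain of $V'$ and therefore of $\breve\CH_{G,\hat{Z}}^{\an}=(\breve\CH_{G,\hat{Z}})^{\an}$.

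Next I would address the claim that $\breve\pi(U)$ is an affinoid subspace of the \emph{projective variety} $\breve\CH_{G,\hat{Z}}$, i.e.\ that $U$ is quasi-algebraic over $\breve E$ in the sense of \cite[D\'efinition~4.1.11]{Fargues}. Here one invokes that $\breve\CH_{G,\hat{Z}}^{\an}$ is the analytification of the projective $\breve E$-scheme $\breve\CH_{G,\hat{Z}}=\CH_{G,\hat Z}\times_{E_{\hat Z}}\Spec\breve E$ from Definition~\ref{DefPeriodSp} and Proposition~\ref{PropBound}\ref{PropBound_D}; an affinoid subdomain of the analytification of a projective $\breve E$-variety is by construction (a rational localization of) an affinoid subspace in the sense of GAGA, so $\breve\pi(U)$ is affinoid subspace of $\breve\CH_{G,\hat Z}$. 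Since $U$ is finite \'etale over such an affinoid subspace of a projective variety, $U$ is quasi-algebraic. Finally, covering $\breve\CM^K$ by such neighborhoods $U$ shows that $\breve\CM^K$ admits an admissible covering by quasi-algebraic affinoids and is therefore quasi-algebraic over $\breve E$.

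The main obstacle I anticipate is the bookkeeping needed to pass from de~Jong's local triviality of the covering space $\HeckeTower_K$ over a merely \emph{open} neighborhood to a genuinely \emph{affinoid} neighborhood over which the covering splits into finite \'etale pieces, and then to ensure that the chosen connected component $U$ really is affinoid and that its image is an affinoid subdomain rather than just a locally closed analytic subset. This requires combining: compactness of $\breve\CH_{G,\hat{Z}}^{\an}$ (so that affinoid domains are cofinal among neighborhoods, using Theorem~\ref{ThmWAOpen} for openness of the admissible locus), the fact that a finite morphism to an affinoid is affinoid, and that the image of a finite \'etale surjection onto an affinoid which is open and closed is again affinoid; all of these are standard but need to be cited carefully (e.g.\ \cite[\S\,2,3]{Berkovich2} and \cite[Proposition~A.4]{HartlRZ}). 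The transition to the \emph{algebraic} projective variety $\breve\CH_{G,\hat Z}$ is then essentially formal once one remembers that $\breve\CH_{G,\hat{Z}}^{\an}$ was defined as the analytification of a projective scheme in Proposition~\ref{PropBound}\ref{PropBound_D}, so no new geometric input is needed there beyond Berkovich's GAGA.
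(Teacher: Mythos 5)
Your proof is correct and follows essentially the same route as the paper's: invoke Theorem~\ref{MainThm}\ref{MainThm_3} to identify $\breve\CM^K$ with the \'etale covering space $\HeckeTower_K$, use that affinoid neighborhoods form a basis in $\breve\CH_{G,\hat{Z},b}^{a}$ (by openness from Theorem~\ref{ThmWAOpen} inside the compact $\breve\CH_{G,\hat{Z}}^\an$), pick an affinoid $V$ over which the covering splits into finite \'etale pieces, and take $U$ to be the connected component containing $x$. The extra details you spell out --- that a space finite over an affinoid is affinoid, that the open-and-closed image of $U$ in $V$ is affinoid, and the GAGA identification with the projective scheme $\breve\CH_{G,\hat Z}$ --- are left implicit in the paper's terse proof but are exactly the right points to fill in.
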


\begin{proof}
Note that the affinoid neighborhoods of $\breve\pi(x)$ in $\breve\CH_{G,\hat{Z},b}^{a}$ form a basis of neighborhoods of $\breve\pi(x)$ by \cite[p.~48]{Berkovich1}. By Theorem~\ref{MainThm}\ref{MainThm_3} and by the definition of an \'etale covering space, the point $\breve\pi(x)$ therefore has an affinoid neighborhood $V$ in $\breve\CH_{G,\hat{Z},b}^{a}$ such that $\breve\pi^{-1}(V)$ is a disjoint union of $\breve E$-analytic spaces, each of which is mapping finitely \'etale to $V$. We can thus take $U$ as the connected component of $\breve\pi^{-1}(V)$ containing $x$.
\end{proof}

\begin{example}\label{ExampleNotATorsor}
We exhibit a case in which the tensor functor $M\colon\Rep_{\BF_q\dbl z\dbr}G\to\FMod_{\CO_{\Omega}\dbl z\dbr},\,V\mapsto M_V$ used in the proof of Theorem~\ref{MainThm}\ref{MainThm_1} does not come from a $G$-torsor over $\CO_\Omega\dbl z\dbr$. This is also the function field analog of \cite[Remark~5.2.9]{ScholzeWeinstein}. Consider the non-reductive group scheme $G=\BG_a\rtimes\BG_m$ over $\BF_q\dbl z\dbr$ and its representations $\rho\colon G\isoto\bigl\{\left(\begin{smallmatrix} 1 & 0 \\ * & * \end{smallmatrix}\right)\}\into\GL_2$ on $V=\BF_q\dbl z\dbr^2$ and $\rho'\colon G \onto\BG_m$ on $V'=\BF_q\dbl z\dbr$. They sit in an exact sequence in $\Rep_{\BF_q\dbl z\dbr}G$
\begin{equation}\label{EqExampleNotATorsor1}
\xymatrix @C+1pc {
0 \ar[r] & (V'\!,\rho') \ar[r]^{\TS  \left(\begin{smallmatrix} 0 \\ 1 \end{smallmatrix}\right)} & (V,\rho) \ar[r]^{(1,0)} & \BOne \ar[r] & 0\,,
}
\end{equation}
where $\BOne\colon G\to\{1\}\subset\BG_m$ is the trivial representation on $\BF_q\dbl z\dbr$. Let $b = \left(\begin{smallmatrix} 1 & 0 \\ 0 & -z \end{smallmatrix}\right)\in G\bigl(\BaseFld\dpl z\dpr\bigr)$ and $\bar\gamma = \left(\begin{smallmatrix} 1 & 0 \\ 0 & (z-\zeta)^{-1} \end{smallmatrix}\right)\in G\bigl(\Omega\dpl z-\zeta\dpr\bigr)\big/G\bigl(\Omega\dbl z-\zeta\dbr\bigr)$.

Consider the local $G$-shtuka $\wt{\ul\CG}=\bigl((L^+G)_{\CO_\Omega},\tau_\CG=\left(\begin{smallmatrix} 1 & 0 \\ 0 & \zeta-z \end{smallmatrix}\right)\bigr)$ over $\CO_\Omega$. Recall the functor from $\Rep_{\BF_q\dbl z\dbr}G$ to the category of local shtukas that assigns to $(V,\rho)$ the local shtuka $\ulM_V$ associated with the local $\GL(V)$-shtuka $\rho_*\wt{\ul\CG}$ from Remark~\ref{RemHPStrOfLocGSht}. The underlying $\CO_\Omega\dbl z\dbr$-module of $\ulM_V$ equals $V\otimes_{\BF_q\dbl z\dbr}\CO_\Omega\dbl z\dbr$. Applied to $\wt{\ul\CG}$ this functor yields the exact sequence of local shtukas
\[
\xymatrix @C-1pc {
0 \ar[r] & \ulM'=(\CO_\Omega\dbl z\dbr, \tau_{M'}=\zeta-z) \ar[r] & (M''\oplus M'\!,\tau={\left(\begin{smallmatrix} 1 & 0 \\ 0 & \zeta-z \end{smallmatrix}\right)}) \ar[r] & \ulM''=(\CO_\Omega\dbl z\dbr, \tau_{M''}=1) \ar[r] & 0\,.
}
\]
Let $\Trivializer:=\sqrt[q-1]{-1}\cdot\tplus$. Then the rational Tate module $\CV_{b,\bar\gamma}=\check{V}_z\wt{\ul\CG}$ is ``generated by'' $\left(\begin{smallmatrix} 1 & 0 \\ 0 & \Trivializer \end{smallmatrix}\right)=\left(\begin{smallmatrix} 1 & 0 \\ 0 & \zeta-z \end{smallmatrix}\right)\cdot\sigma^*\left(\begin{smallmatrix} 1 & 0 \\ 0 & \Trivializer \end{smallmatrix}\right)$, that is, the tensor isomorphism $\tilde\beta\colon\omega\open\isoto\omega_{b,\bar\gamma}$ is given by multiplication with
\[
\rho\left(\begin{smallmatrix} 1 & 0 \\ 0 & \Trivializer \end{smallmatrix}\right)\colon\omega\open(V,\rho)=V\isoto\omega_{b,\bar\gamma}(V,\rho)=\check{V}_z\rho_*\wt{\ul\CG}\,.
\]

Now we let $\pi\in\CO_\Omega$ satisfy $\pi^{q-1}=\zeta$ and we choose a different tensor isomorphism $\beta\colon\omega\open\isoto\omega_{b,\bar\gamma}$ that is given by multiplication with $\rho\left(\begin{smallmatrix} 1 & 0 \\ \Trivializer & z\Trivializer \end{smallmatrix}\right)=\rho(\left(\begin{smallmatrix} 1 & 0 \\ 0 & \Trivializer \end{smallmatrix}\right)\cdot\left(\begin{smallmatrix} 1 & 0 \\ 1 & z \end{smallmatrix}\right))$, where $\left(\begin{smallmatrix} 1 & 0 \\ 1 & z \end{smallmatrix}\right)\in G\bigl(\BF_q\dpl z\dpr\bigr)$. The construction in step 2 of the proof of Theorem~\ref{MainThm}\ref{MainThm_1} with $\beta$ instead of $\tilde\beta$ replaces every $\ulM_V$ by a quasi-isogenous one. We claim that for the sequence \eqref{EqExampleNotATorsor1} it yields the upper row in the commutative diagram of local shtukas
\begin{equation}\label{EqExampleNotATorsor2}
\xymatrix @C+1pc  {
0 \ar[r] & \ulM' \ar[r]^{\TS\left(\begin{smallmatrix} -\pi \\ z \end{smallmatrix}\right)\qquad\qquad\qquad} \ar@{^{ (}->}[d]_{\TS z\cdot} & \ulM = (\CO_\Omega\dbl z\dbr^2,\tau_M={\left(\begin{smallmatrix} 1 & \pi \\ 0 & \zeta-z \end{smallmatrix}\right)}) \ar[r]^{\qquad\qquad\quad(z,\pi)} \ar@{^{ (}->}[d]_{\TS\left(\begin{smallmatrix} z & \pi \\ 0 & 1 \end{smallmatrix}\right)} & \ulM'' \ar[d]^\cong_{\TS 1\cdot} \\
0 \ar[r] & \ulM' \ar[r]^{\TS  \left(\begin{smallmatrix} 0 \\ 1 \end{smallmatrix}\right)} & \ulM'' \oplus \ulM' \ar[r]^{(1,0)} & \ulM'' \ar[r] & 0\,.
}
\end{equation}
To prove our claim, we note that \eqref{EqExampleNotATorsor2} yields over $R:=\CO_\Omega/(\zeta)$ the diagram
\[
\xymatrix @C+0.2pc {
0 \ar[r] & \ulM'\mod\zeta \ar[r]^{\TS\left(\begin{smallmatrix} -\pi \\ z \end{smallmatrix}\right)} \ar[d]_{\TS \bar\delta'=z} & \ulM\mod\zeta \ar[r]^{(z,\pi)} \ar[d]_{\TS\bar\delta=\left(\begin{smallmatrix} z & \pi \\ 0 & 1 \end{smallmatrix}\right)} & \ulM''\mod\zeta \ar[d]_{\TS \bar\delta''=1} \\
0 \ar[r] & (R\dbl z\dbr, \rho'(b)=-z) \ar[r]_{\TS  \left(\begin{smallmatrix} 0 \\ 1 \end{smallmatrix}\right)\quad} & (R\dbl z\dbr^2, \rho(b)={\left(\begin{smallmatrix} 1 & 0 \\ 0 & -z \end{smallmatrix}\right)}) \ar[r]_{\quad(1,0)} & (R\dbl z\dbr, \rho''(b)=1) \ar[r] & 0\,.
}
\]
Also, when we consider the element 
\[
\Omega\dbl z\dbr\mal\;\ni\; y\;:=\;\frac{1-\pi\Trivializer}{z}\;=\;\frac{1+(z-\zeta)\pi\,\sigma(\Trivializer)}{z}\;=\;\pi\,\sigma(\Trivializer)+\frac{1-\sigma(\pi\Trivializer)}{z}\;=\;\pi\,\sigma(\Trivializer)+\sigma(y)\,,
\]
the upper row of \eqref{EqExampleNotATorsor2} is right exact on Tate modules, because it induces the following commutative diagram
\[
\xymatrix @C+3pc {
0 \ar[r] & \BF_q\dbl z\dbr \ar[r]^{\TS\left(\begin{smallmatrix} 0 \\ 1 \end{smallmatrix}\right)} \ar[d]_{\TS\Trivializer\cdot}^\cong & \BF_q\dbl z\dbr^2 \ar[r] \ar[d]^\cong_{\TS\left(\begin{smallmatrix} y & \,-\pi\Trivializer \\ \Trivializer & z\Trivializer \end{smallmatrix}\right)} \ar[r]^{(1,0)} & \BF_q\dbl z\dbr \ar[d]_\cong^{\TS 1\cdot } \ar[r] & 0\,\; \\
0 \ar[r] & \check{T}_z\ulM' \ar[r]_{\TS\left(\begin{smallmatrix} -\pi \\ z \end{smallmatrix}\right)} & \check{T}_z\ulM \ar[r]_{(z,\pi)} & \check{T}_z\ulM'' \ar[r] & 0\,.
}
\]
Finally, the vertical quasi-isogeny in the middle of \eqref{EqExampleNotATorsor2} induces the following commutative diagram on Tate modules
\[
\xymatrix @C+3pc @R+1pc{
0 \ar[r] & \BF_q\dbl z\dbr^2 \ar[r]^{\TS\left(\begin{smallmatrix} 1 & 0 \\ 1 & z \end{smallmatrix}\right)} \ar[d]^\cong_{\TS\left(\begin{smallmatrix} y & \,-\pi\Trivializer \\ \Trivializer & z\Trivializer \end{smallmatrix}\right)} \ar[dr]^{\TS\left(\begin{smallmatrix} 1 & 0 \\ \Trivializer & z\Trivializer \end{smallmatrix}\right)} & \BF_q\dbl z\dbr^2 \ar[r]^{(1,-1)\mod z} \ar[d]_\cong^{\TS\left(\begin{smallmatrix} 1 & 0 \\ 0 & \Trivializer \end{smallmatrix}\right)} & \BF_q \ar[r] & 0\,. \\
& \check{T}_z\ulM \ar[r]_{\TS\left(\begin{smallmatrix} z & \pi \\ 0 & 1 \end{smallmatrix}\right)\qquad} & \check{T}_z(\ulM'' \oplus \ulM')
}
\]
This proves the claim.

Now, in diagram \eqref{EqExampleNotATorsor2} the map $(z,\pi)$ in the upper row is not surjective and this provides an example where the tensor functor $(V,\rho)\mapsto M_V$ is not exact, and hence does not come from a $G$-torsor $\CG$ over $\CO_\Omega\dbl z\dbr$. One can also check, although 
\[
\left(\begin{matrix} 1 & 0 \\ 0 & \Trivializer \end{matrix}\right)\cdot\left(\begin{matrix} 1 & 0 \\ 1 & z \end{matrix}\right) \;=\; \left(\begin{matrix} 1 & 0 \\ \Trivializer & z\Trivializer \end{matrix}\right) \;=\; \left(\begin{matrix} z & \pi \\ 0 & 1 \end{matrix}\right) \cdot \left(\begin{matrix} y & \,-\pi\Trivializer \\ \Trivializer & z\Trivializer \end{matrix}\right) \;\in\;\GL_2\bigl(\CO_\Omega\dpl z\dpr\bigr)\cdot\GL_2(\Omega\langle\tfrac{z}{\zeta}\rangle)\,,
\]
it is not possible to write it as a product in $G\bigl(\CO_\Omega\dpl z\dpr\bigr)\cdot G(\Omega\langle\tfrac{z}{\zeta}\rangle)$. This corresponds to the fact that the quasi-isogeny $\ulM\to\ulM''\oplus\ulM'$ does not come from a quasi-isogeny $\ul\CG\to\wt{\ul\CG}$ of local $G$-shtukas over $\CO_\Omega$, because $\ul\CG$ does not exist. Note also, that in terms of the proofs of Proposition~\ref{PropLiftOfIsog} and Lemma~\ref{LemmaLiftOfIsog} it is not possible to extend the \'etale local $G$-shtuka $\ul\CG''$ over $\Omega$ to $\CO_\Omega$, because $G$ is not parahoric and $\Flag_G$ and $\ul\CM_{\wt{\ul\CG}}$ are not ind-projective.
\end{example}

\section{Cohomology} \label{SectCohom}
\setcounter{equation}{0}

In this section we provide basic properties of the cohomology of the towers of moduli spaces. This theory and all of the proofs parallel the one for Rapoport-Zink spaces in \cite{Fargues} to which we also refer for some arguments that go over to our case without modification. Note that instead of the $\breve E$-analytic space $\breve\CM^K$ in the sense of Berkovich we may work with the associated adic space in the sense of Huber by \cite[Proposition~8.2.12 and Theorem~8.3.5]{Huber96}.

Let $\ell$ be a prime different from the characteristic of $\BF_q$ and let $\olE$ be the completion of an algebraic closure of $\breve E$.
\begin{definition}
We denote by $H_c^{\bullet}({\breve\CM}^K\hat{\otimes}_{\breve E}\olE,\BQ_{\ell})$ the $\ell$-adic cohomology with compact support of the analytic space ${\breve\CM}^K$. For short we denote it by $H_c^{\bullet}({\breve\CM}^K,\BQ_{\ell})$ and write $H_c^{\bullet}({\breve\CM}^K,\overline{\BQ}_{\ell}):=H_c^{\bullet}({\breve\CM}^K,\BQ_{\ell})\otimes_{\BQ_\ell} \overline{\BQ}_{\ell}$.
\end{definition}
Because the spaces ${\breve\CM}^K$ are in general only quasi-algebraic, the definition of the cohomology needs some explanation for which we refer to Fargues, \cite[\S\,4.1]{Fargues}.

These cohomology groups are equipped with the following group actions. The action of $J:=J_b\bigl(\BF_q\dpl z\dpr\bigr)$ on ${\breve\CM}^K$ induces an action on $H_c^{\bullet}({\breve\CM}^K,\BQ_{\ell})$ for each $K$. Furthermore we obtain an action of the Weil group $W_E$ of $E$. Indeed, the inertia subgroup $\Gal(\breve E^\sep\!/\breve E)$ acts on the coefficients $\olE$ inducing an action on the cohomology. The action of Frobenius $\sigma\in W_E$ is induced by the Weil descent datum of Remark \ref{defweildes}. As in \cite[Remarque 4.4.3]{Fargues},  one can show that the induced morphism on cohomology is invertible, and thus induces an action of $W_E$. Furthermore, for varying $K$ the action by Hecke correspondences induces an action of $G\bigl(\BF_q\dpl z\dpr \bigr)$ on the cohomology groups of the whole tower.

If $\epsilon\colon G\to G'$ is a morphism of parahoric group schemes over $\BF_q\dbl z\dbr$ as in Remarks~\ref{RemFunctoriality1}, \ref{RemFunctoriality2} and \ref{RemFunctoriality3} with $\epsilon(K)\subset K'$, we obtain a morphism
\[
\epsilon^*\colon H_c^{\bullet}({\breve\CM}_{G'}^{K'},\BQ_{\ell}) \;\longto\;H_c^{\bullet}({\breve\CM}_G^K,\BQ_{\ell})
\]
that is compatible with the actions of the Weil group $W_E$, the Hecke action of $G\bigl(\BF_q\dpl z\dpr\bigr)$ which acts on the source via the morphism $G\bigl(\BF_q\dpl z\dpr\bigr)\to G'\bigl(\BF_q\dpl z\dpr\bigr)$, and the action of the group $J^G_b$ which acts on the source via the morphism $J^G_b\to J^{G'}_{\epsilon(b)}$.

\begin{lemma}
For each $K$, the $J\times W_E$-representation $H_c^{\bullet}({\breve\CM}^K,\BQ_{\ell})$ is smooth for the action of $J$ and continuous for the action of $W_E$.
\end{lemma}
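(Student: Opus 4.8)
The plan is to follow verbatim the arguments of Fargues~\cite[\S\,4.4]{Fargues} for Rapoport--Zink spaces of $p$-divisible groups, which carry over to our setting without change. The structural inputs are all available: by Corollary~\ref{CorQuasiAlg} the space ${\breve\CM}^K$ is quasi-algebraic over $\breve E$, so $H_c^\bullet({\breve\CM}^K,\BQ_\ell)$ is defined in the sense of \cite[\S\,4.1]{Fargues} and equals the filtered colimit $\varinjlim_U H_c^\bullet(U,\BQ_\ell)$ over the relatively compact open subspaces $U\subset{\breve\CM}^K$, which exhaust ${\breve\CM}^K$ because it is locally formally of finite type over $\breve E$; each such $U$ is again quasi-algebraic, so that $H_c^\bullet(U,\BQ_\ell)$ is a finitely generated $\BZ_\ell$-module by the finiteness theorem for quasi-compact quasi-algebraic spaces; the group $J=J_b\bigl(\BF_q\dpl z\dpr\bigr)$ is locally profinite and acts continuously on ${\breve\CM}^K$ by Lemma~\ref{lemjcont}; and $\breve E=\BaseFld\dpl\xi\dpr$ is complete and discretely valued.

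For the continuity of the $W_E$-action, since the inertia subgroup $I_E:=\Gal(\breve E^\sep/\breve E)$ is open in $W_E$, it suffices to prove that $I_E$ acts smoothly on $H_c^\bullet({\breve\CM}^K\hat{\otimes}_{\breve E}\olE,\BQ_\ell)$, a lift of Frobenius then acting by the endomorphism induced by the Weil descent datum of Remark~\ref{defweildes}, which is invertible on cohomology as in \cite[Remarque~4.4.3]{Fargues}. Here one uses that ${\breve\CM}^K$ is already defined over $\breve E$ and that $\ell$-adic cohomology with compact support is invariant under completed base change and commutes with filtered colimits of complete field extensions, so that $H_c^\bullet({\breve\CM}^K\hat{\otimes}_{\breve E}\olE,\BQ_\ell)=\varinjlim_{L}H_c^\bullet({\breve\CM}^K\hat{\otimes}_{\breve E}\wh L,\BQ_\ell)$, the colimit ranging over the finite subextensions $L$ of $\breve E^\sep/\breve E$. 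Every class comes from some term of this colimit and is therefore fixed by the open subgroup $\Gal(\breve E^\sep/L)\subset I_E$, which gives the smoothness of the $I_E$-action and hence the continuity of the $W_E$-action.

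For the smoothness of the $J$-action, fix $x\in H_c^\bullet({\breve\CM}^K,\BQ_\ell)$ and choose a relatively compact open $U\subset{\breve\CM}^K$ with $x$ in the image of $H_c^\bullet(U,\BQ_\ell)$. Continuity of the $J$-action (Lemma~\ref{lemjcont}) means that the homomorphism $J\to\scrG({\breve\CM}^K)$ into Berkovich's topological automorphism group, whose topology is that of uniform convergence on compact subsets, is continuous; since the closure $\bar U$ is compact and contained in some relatively compact open $U'$, the set $\{\,j\in J\colon j\bar U\subset U'\,\}$ is open and contains a compact open subgroup $K_1\subset J$. Enlarging $U'$ if necessary so that $K_1U\subset U'$, all translates $j\cdot x$ with $j\in K_1$ land in the finitely generated $\BZ_\ell$-module $H_c^\bullet(U',\BQ_\ell)$, and one concludes, exactly as in \cite[\S\,4.4]{Fargues}, that the stabilizer of $x$ in $K_1$, hence in $J$, is open. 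This last step — passing from the continuity of the $J$-action on the non-quasi-compact space ${\breve\CM}^K$ to the smoothness of the induced action on the infinite-dimensional colimit $\varinjlim_U H_c^\bullet(U,\BQ_\ell)$, controlling the interaction between the compatibility of the transition maps, the finiteness of $H_c^\bullet$ of quasi-compact opens and the $\ell$-adic (rather than torsion) coefficients — is the only delicate point; everything else is formal, and since the argument is identical to the one in Fargues we would only cite it rather than reproduce it.
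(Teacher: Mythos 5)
Your approach is the same as the paper's: both reduce to Fargues' arguments for $p$-adic Rapoport--Zink spaces, using Corollary~\ref{CorQuasiAlg} for quasi-algebraicity and Lemma~\ref{lemjcont} for continuity of the $J$-action. The paper cites \cite[Corollaire~4.4.7]{Fargues} together with the general Berkovich-space statements \cite[Corollaires~4.1.19, 4.1.20]{Fargues}, which your unpacking of the colimit/relatively-compact-exhaustion argument is essentially a sketch of.

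However, there is one genuine gap. You defer the final step of the $J$-smoothness argument (``one concludes, exactly as in Fargues, that the stabilizer of $x$ in $K_1$ is open'') to Fargues, but that step is not purely formal: Fargues' argument uses, in an essential way, that the compact open subgroup $K_1$ can be chosen pro-$p$ with $p\neq\ell$. The point is that the translates $j\cdot x$ for $j\in K_1$ lie in a finitely generated $\BZ_\ell$-module $M$, and a continuous homomorphism $K_1\to\mathrm{Aut}_{\BZ_\ell}(M)\subset\GL_n(\BZ_\ell)$ has \emph{finite} image precisely because the kernel of $\GL_n(\BZ_\ell)\to\GL_n(\BZ/\ell)$ is pro-$\ell$ and so intersects the image of a pro-$p$ group trivially; without the pro-$p$ hypothesis a compact group can act continuously but non-smoothly (e.g.\ $\BZ_\ell$ acting unipotently). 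In Fargues' $p$-adic setting this hypothesis is automatic, but here it must be checked that $J=J_b\bigl(\BF_q\dpl z\dpr\bigr)$ has an open pro-$p$ subgroup. The paper does exactly this, exhibiting the principal congruence subgroup $\{\,j\in J\cap L^+G(\BaseFld)\colon j\equiv 1\mod z\,\}$. Your proof should include this verification; as written, the deferral to Fargues silently assumes a hypothesis that is not formally available.
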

\begin{proof} 
As in the arithmetic context (compare \cite[Corollaire 4.4.7]{Fargues}) this follows from the facts that the ${\breve\CM}^K$ are quasi-algebraic by Corollary~\ref{CorQuasiAlg}, and that $J$ acts continuously on ${\breve\CM}^K$ by Lemma \ref{lemjcont}, using \cite[Corollaires~4.1.19, 4.1.20]{Fargues}, two general assertions on the cohomology of Berkovich spaces. Note for this that $J$ has an open pro-$p$ subgroup, namely $\{j\in J\cap L^+G(\BaseFld)\colon j\equiv 1\mod z\}$.
\end{proof}

Next we are interested in finiteness and vanishing properties of cohomology groups. We need the following finiteness statement for the set of irreducible components.
\begin{lemma}\label{lemfinirr}
The action of $J$ on the set of irreducible components of $\breveRZ$ has only finitely many orbits.
\end{lemma}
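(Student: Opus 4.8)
The plan is to reduce the statement to a finiteness result for affine Deligne--Lusztig varieties. By Theorem~\ref{ThmRRZSp} the underlying reduced subscheme of $\breveRZ$ is the affine Deligne--Lusztig variety $X_{Z^{-1}}(b)\subset\Flag_G$, and $\breveRZ$ and $X_{Z^{-1}}(b)$ have the same irreducible components since passing to the reduced subscheme does not change the topological space. The action of $j\in J=J_b\bigl(\BF_q\dpl z\dpr\bigr)\subset LG(\BaseFld)$ on $\breveRZ$ is induced by the action $g\mapsto jg$ of $LG(\BaseFld)$ on $\Flag_G$, which clearly preserves $X_{Z^{-1}}(b)$ because of the defining condition $g^{-1}b\s(g)\in Z^{-1}(K)$ in \eqref{defadlv}. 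So it suffices to show that $J$ acts on the set of irreducible components of $X_{Z^{-1}}(b)$ with finitely many orbits.

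First I would recall that $Z^{-1}$ is the special fiber of the bound $\hat Z^{-1}$ over $\kappa$ and is a projective scheme over $\kappa$; in particular, by Remark~\ref{DefBDLocal'}\ref{DefBDLocal_A6} the set of $L^+G$-cosets meeting $Z^{-1}$ is a finite set $N_0$ in the quotient $\bar W$ of the extended affine Weyl group, so $Z^{-1}$ is contained in a finite union of Schubert varieties $\bigcup_{w\in N_0}\ol{S}_w$ in $\Flag_G$. Hence $X_{Z^{-1}}(b)$ is a closed subscheme of the generalized affine Deligne--Lusztig variety $X_{\preceq\{w\,:\,w\in N_0\}}(b):=\{g\in\Flag_G\colon g^{-1}b\s(g)\in\bigcup_{w}\ol{S}_w(\overline{\BF}_q)\}$, and since $X_{Z^{-1}}(b)$ is a union of connected components of this larger closed subscheme it is enough to prove the finiteness of $J$-orbits on irreducible components for the latter. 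This is now a purely group-theoretic statement about affine Deligne--Lusztig varieties in the affine flag variety of a parahoric group scheme, and it is exactly the content of the main theorems on dimensions and $J_b$-orbits of top-dimensional irreducible components of affine Deligne--Lusztig varieties; concretely one can invoke the result that $J_b(\BF_q\dpl z\dpr)$ acts on the set of irreducible components of $X_w(b)$ (for a single $w$, hence for a finite union) with finitely many orbits, which follows from the fact that $X_w(b)$ is locally of finite type, its irreducible components are projective, and it is covered by $J_b\bigl(\BF_q\dpl z\dpr\bigr)$-translates of finitely many quasi-compact pieces --- the key input being that the admissibility set $Z^{-1}$ is bounded, so only finitely many relative positions $w$ occur.

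The cleanest route, which is what I would actually write, is to use the known description for $\GL_r$ or $\SL_r$ via a faithful representation: choose $\Darst\colon G\into\SL_r$ as in Definition~\ref{DefBDLocal}\ref{DefBDLocal_A4} and Proposition~\ref{PropBound}\ref{PropBound_A}, so that $X_{Z^{-1}}(b)$ maps to an affine Deligne--Lusztig variety in $\Flag_{\SL_r}$ bounded by $\wh\Flag^{(n)}_{\SL_r}$, which is a lattice-description ADLV; for lattices the finiteness of $J$-orbits on irreducible components is classical (it reduces to the finiteness of orbits on the set of vertices of a bounded region of the Bruhat--Tits building under the stabilizer $J_b\bigl(\BF_q\dpl z\dpr\bigr)$, which is cocompact modulo center in the relevant Levi). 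Alternatively, and more self-containedly, I would argue directly: fix a lattice chain / alcove $\Fa$ in the building of $G_{\breve E}$; the map sending an irreducible component $C$ of $X_{Z^{-1}}(b)$ to (a point in) its image under $g\mapsto g\cdot\Fa$, combined with the decency equation \eqref{EqDecency} which forces $b\in LG(\BF_{q^s})$ and makes $J_b\bigl(\BF_q\dpl z\dpr\bigr)$ an inner form of a Levi that acts cocompactly on the relevant part of the building, shows that any irreducible component is a $J$-translate of one lying in a fixed quasi-compact neighborhood; since $\breveRZ$ is locally of finite type there are only finitely many irreducible components meeting this neighborhood, and we are done.

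The main obstacle is making the last geometric argument precise: one needs to know that $X_{Z^{-1}}(b)$ is ``uniform'' under $J$ in the sense that it is a union of $J$-translates of a quasi-compact closed piece. This is not a formal consequence of local finite-typeness alone; it genuinely uses that $b$ is decent (Remark~\ref{RemKottwitz}(b)), that $J_b\bigl(\BF_q\dpl z\dpr\bigr)$ is the group of $\BF_q\dpl z\dpr$-points of a reductive group which is an inner form of a Levi of $G$, and a cocompactness statement for the action of this group on the associated affine Deligne--Lusztig variety --- essentially the function-field analog of the facts used in \cite{RZ} and in the theory of affine Deligne--Lusztig varieties (e.g.\ G\"ortz--He--Nie, or the original arguments of Rapoport--Zink in the EL/PEL case). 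I would cite the relevant statement from the affine Deligne--Lusztig literature rather than reprove it, since the structural fact that $J_b$ acts with finitely many orbits on irreducible components of $X_w(b)$ for parahoric $G$ is by now standard; the role of the boundedness condition $\hat Z$ is only to cut $X_{Z^{-1}}(b)$ out of a finite union of such $X_w(b)$.
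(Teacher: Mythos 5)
Your final route is essentially the paper's proof: reduce to the underlying reduced subscheme $X_{Z^{-1}}(b)$, observe that $J$ acts through the $LG(\BaseFld)$-action, and invoke a cocompactness statement to cover $X_{Z^{-1}}(b)$ by $J$-translates of a quasi-compact closed piece, so that finitely many irreducible components of that piece give representatives for all $J$-orbits via local finite-typeness. The precise reference you were reaching for is \cite[Theorem~1.4 and Subsection~2.1]{RZ2} (Rapoport--Zink, \emph{A finiteness theorem in the Bruhat--Tits building}), which is already stated for reductive groups over general local fields and therefore does not require a separate function-field translation; with that in hand, the detours through Schubert cells $X_w(b)$, G\"ortz--He--Nie, or a faithful embedding into $\SL_r$ are unnecessary.
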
 
\begin{proof}
This is a statement about the underlying reduced subscheme, i.e.~on the set of irreducible components of the affine Deligne-Lusztig variety $X_{Z^{-1}}(b)$ from \eqref{defadlv}. By \cite[Theorem 1.4 and Subsection 2.1]{RZ2} there is a closed subscheme $Y\subset \Flag_G$ of finite type such that for each $g\in X_{Z^{-1}}(b)$ there is a $j\in J$ with $j^{-1}g\in Y$. In other words, $g$ has a representative satisfying $g^{-1}b\s(g)=h^{-1}b\s(h)$ for some (representative of an element) $h=j^{-1}g\in Y\cap X_{Z^{-1}}(b)$. In particular, it is enough to show that $Y\cap X_{Z^{-1}}(b)$ has only finitely many irreducible components. This follows as  $Y\cap X_{Z^{-1}}(b)$ is of finite type.
\end{proof}

\begin{proposition}
For each compact open subgroup $K\subset G\bigl(\BF_q\dbl  z\dbr\bigr)$ the $J$-representation $H_c^{\bullet}({\breve\CM}^K,\BQ_{\ell})$ is of finite type.
\end{proposition}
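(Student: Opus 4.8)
The strategy is to reduce the finiteness of $H_c^\bullet(\breve\CM^K,\BQ_\ell)$ as a $J$-representation to the two facts at our disposal: the quasi-algebraicity of $\breve\CM^K$ (Corollary~\ref{CorQuasiAlg}), which controls the local structure, and the finiteness of $J$-orbits on the set of irreducible components of $\breveRZ$ (Lemma~\ref{lemfinirr}), which controls the global structure. First I would recall that, since $K\subset G(\BF_q\dbl z\dbr)$, the covering $\breve\CM^K\to\breve\CM^{K_0}=(\breveRZ)^\an$ is finite \'etale by Proposition~\ref{PropCM^K}, so $H_c^\bullet(\breve\CM^K,\BQ_\ell)$ is a direct summand (via a trace/transfer argument, as in the arithmetic case) of $H_c^\bullet(\breve\CM^{K'},\BQ_\ell)$ for $K'\subset K$ normal of finite index, and more importantly it is a finitely generated module over $H_c^\bullet((\breveRZ)^\an,\BQ_\ell)$ in the appropriate sense; hence it suffices to treat $K=K_0$ and then track the $J$-action through the finite covering, which is $J$-equivariant by Remark~\ref{RemJActsOnMK}. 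So the problem is to show $H_c^\bullet((\breveRZ)^\an,\BQ_\ell)$ is of finite type as a $J$-representation.

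For this I would follow Fargues~\cite[\S\,4.4]{Fargues} essentially verbatim. The key input is that $\breveRZ$ is a formal scheme locally formally of finite type over $\Spf\breve R_{\hat Z}$ whose underlying reduced scheme $X_{Z^{-1}}(b)$ is locally of finite type and separated over $\BaseFld$ with all irreducible components projective (Theorem~\ref{ThmRRZSp}), and on which $J$ acts with finitely many orbits on irreducible components (Lemma~\ref{lemfinirr}). One chooses, using Lemma~\ref{lemfinirr}, a quasi-compact open formal subscheme $\breve{\mathcal U}\subset\breveRZ$ such that the $J$-translates of $\breve{\mathcal U}$ cover $\breveRZ$; then $\breve{\mathcal U}^\an$ is a quasi-compact (hence by quasi-algebraicity quasi-algebraic) strictly $\breve E$-analytic space, and the excision/Mayer--Vietoris spectral sequence for the covering $\{j\cdot\breve{\mathcal U}^\an\}_{j\in J}$, together with the fact that for fixed degree only finitely many translates $j\cdot\breve{\mathcal U}^\an$ meet any fixed quasi-compact subset (a consequence of local finite-typeness of $\breveRZ$), expresses $H_c^\bullet((\breveRZ)^\an,\BQ_\ell)$ as built from $\bigoplus_{j\in J}H_c^\bullet(j\cdot\breve{\mathcal U}^\an,\BQ_\ell)$ modulo the action of finitely many ``gluing'' pieces. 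Concretely, $H_c^\bullet((\breveRZ)^\an,\BQ_\ell)$ is generated as a $\BQ_\ell[J]$-module by the image of $H_c^\bullet(\breve{\mathcal U}^\an,\BQ_\ell)$, which is finite-dimensional over $\BQ_\ell$ because $\breve{\mathcal U}^\an$ is quasi-compact and quasi-algebraic (so its compactly supported cohomology is finite by \cite[\S\,4.1]{Fargues}); this gives the finite-type statement.

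The main obstacle is making the geometric input precise: one needs a quasi-compact open $\breve{\mathcal U}$ whose $J$-orbit covers $\breveRZ$ and such that the ``combinatorics of overlaps'' is locally finite, i.e.\ that $\{j\in J\colon j\breve{\mathcal U}\cap \breve{\mathcal U}\ne\emptyset\}$-type sets behave well enough to run the spectral sequence and to see that each cohomology degree involves only finitely many orbit representatives. In the arithmetic setting this is handled by Fargues using the Bruhat--Tits building and the action of $J$ on it; in our function field setting the analogous structure is provided precisely by the results cited in the proof of Lemma~\ref{lemfinirr}, namely \cite[Theorem~1.4 and Subsection~2.1]{RZ2}, which produce a finite-type closed subscheme $Y\subset\Flag_G$ with $J\cdot(Y\cap X_{Z^{-1}}(b))=X_{Z^{-1}}(b)$. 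Taking $\breve{\mathcal U}$ to be (a quasi-compact open formal neighborhood in $\breveRZ$ of) the preimage of $Y$, one obtains the desired covering, and local finiteness of overlaps follows from separatedness and local finite-typeness of $\breveRZ$ exactly as in \cite[Proposition~4.4.13 and its proof]{Fargues}. The remaining steps — checking $J$-equivariance of all maps, verifying that the transfer argument for the finite covering $\breve\CM^K\to(\breveRZ)^\an$ is compatible with the $J$-action, and assembling the spectral sequence — are routine and I would only indicate them, citing \cite{Fargues} for the parallel arguments.
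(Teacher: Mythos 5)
Your covering-by-$J$-translates argument is essentially the one the paper uses, and the key inputs you identify (quasi-algebraicity of $\breve\CM^K$, hence finite-dimensional compactly supported cohomology of quasi-compact pieces; finiteness of $J$-orbits on irreducible components from Lemma~\ref{lemfinirr}; the \v Cech/Mayer--Vietoris spectral sequence for the covering) are exactly the ones the paper assembles. The paper also records, explicitly, the finiteness of $J$-orbits of index sets $\alpha$ with $U_K(\alpha)\ne\emptyset$, rewrites $E_1^{pq}$ as a finite direct sum of compact inductions $\text{c-Ind}_{J'_\alpha}^J H_c^q(U_K(\alpha),\BQ_\ell)$ of finite-dimensional representations, and then invokes local noetherianity of the category of smooth $J$-modules (\cite[Remarque~3.12]{Bernstein}) to pass from finite-type $E_1$-terms through the finite filtration to the abutment. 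You should make this last point explicit: it is not literally true that $H_c^\bullet$ is generated by the image of $H_c^\bullet(\breve{\mathcal U}^\an)$ alone, because the intersections $U_K(\alpha)$ enter on equal footing in the $E_1$-page; what is true is that each $E_1^{pq}$ is of finite type, and then the noetherianity is what lets you conclude.

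The genuine gap is your reduction to $K=K_0$. The transfer/trace argument for a finite \'etale cover $f\colon X'\to X$ shows that $H_c^\bullet(X,\BQ_\ell)$ is a direct summand of $H_c^\bullet(X',\BQ_\ell)$, i.e.\ the cohomology of the \emph{base} is a summand of that of the \emph{cover}. Applied to $\breve\CM^{K'}\to\breve\CM^K$ for $K'\subset K$ normal of finite index, this shows $H_c^\bullet(\breve\CM^K)$ is a summand of $H_c^\bullet(\breve\CM^{K'})$, which moves you toward \emph{smaller} level subgroups, not toward $K_0$. In the other direction, $H_c^\bullet(\breve\CM^{K_0})$ is a summand of $H_c^\bullet(\breve\CM^K)$, so knowing $H_c^\bullet(\breve\CM^{K_0})$ is of finite type does not give the statement for $K$. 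The phrase ``finitely generated module over $H_c^\bullet((\breveRZ)^\an,\BQ_\ell)$'' also does not give you a usable module structure here, since compactly supported cohomology does not pull back along a finite map in the way the claim needs. To make your reduction work you would have to prove the finite-type statement for $H_c^\bullet(\breve\CM^{K_0},\CL)$ with arbitrary lisse $\overline\BQ_\ell$-sheaves $\CL$ of finite rank (so that $\CL=f_*\BQ_\ell$ can be plugged in). The paper avoids this entirely: it sets $U_K:=\breve\pi_{K_0,K}^{-1}(U)$ with $U$ the tube over the chosen finite union of irreducible components, and runs the whole spectral-sequence argument directly on $\breve\CM^K$, which handles all levels $K\subset G(\BF_q\dbl z\dbr)$ uniformly. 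I would recommend dropping the reduction and instead running your covering argument at level $K$ as the paper does.
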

\begin{proof}
Let $X_1,\dotsc,X_t$ be representatives of the finitely many orbits of the action of $J$ on the set of irreducible components of $\breveRZ$ (compare Lemma \ref{lemfinirr}). Let $K_0=G(\BF_q\dbl z\dbr)$ and let $U\subset (\breveRZ)^\an=\breve\CM={\breve\CM}^{K_0}$ be the tube over $X:=X_1\cup\ldots\cup X_t$, that is the preimage of $X$ under the specialization map $sp$ from $(\breveRZ)^\an$ to the underlying topological space of $\breveRZ$; see \cite[\S\,1]{Berkovich4}. If $V_0$ is a quasi-compact open subset of $\breveRZ$ containing $X$, then the complement $Y:=V_0\setminus X$ is open and quasi-compact, because $V_0$ is noetherian. Therefore $V:=sp^{-1}(V_0)\subset\breve\CM^{K_0}$ is a compact neighborhood of $U$ and $V\setminus U=sp^{-1}(Y)$ is compact, whence $U$ is open in ${\breve\CM}^{K_0}$. Let $U_K:=\breve\pi_{K_0,K}^{-1}(U)\subset {\breve\CM}^K$. 

Under the fully faithful functor \cite[\S\,1.6]{Berkovich2} from strictly $\breve E$-analytic spaces to rigid analytic spaces, $U$ and $U_K$ correspond to $U^\rig=sp^{-1}(X)$ and $U_K^\rig=(\breve\pi_{K_0,K}^\rig)^{-1}(U^\rig)$, where $sp\colon(\breveRZ)^\rig\to\breveRZ$ is the specialization map \cite[(0.2.2.1)]{Berthelot96}. These are admissible open subspaces of $(\breveRZ)^\rig$ and $(\breve\CM^K)^\rig$. Under the fully faithful functor \cite[(1.1.11)]{Huber96} from rigid analytic spaces over $\breve E$ to Huber's adic spaces, $U^\rig$ and $U_K^\rig$ correspond to $U^\ad=sp^{-1}(X)\open$ and $U_K^\ad=(\breve\pi_{K_0,K}^\ad)^{-1}(U^\ad)$, where $sp\colon(\breveRZ)^\ad\to\breveRZ$ is the specialization map \cite[Proposition~1.9.1]{Huber96} and $sp^{-1}(X)\open$ denotes the open interior. These are open subspaces of $(\breveRZ)^\ad$ and $(\breve\CM^K)^\ad$. By definition \cite[formula (*) on p.~315]{Huber96}
\[
H^q_c(U_K^\rig,\BQ_{\ell}) \;:=\;H^q_c(U_K^\ad,\BQ_{\ell})
\]
and this is a finite dimensional $\BQ_\ell$-vector space by \cite[Corollaries~5.8 and 5.4]{Huber07}. Since $X$ is proper over $\BaseFld$ by Theorem~\ref{ThmRRZSp}, $U_K^\rig$ is partially proper over $\breve E$ by \cite[Remark~1.3.18]{Huber96}. So $H^q_c(U_K,\BQ_{\ell})=H^q_c(U_K^\rig,\BQ_{\ell})$ by  \cite[Proposition~1.5]{Huber98}. Note that the proof of loc.\ cit.\ only uses that $U_K^\rig$ is partially proper over $\breve E$ and not the stated assumption that $U_K$ is closed. (We thank Roland Huber for explaining these arguments to us.)

Let $J'\subset J$ be the stabilizer of $U_K$, a compact open subgroup. Then the $g\cdot U_K$ for $\bar g\in J/J'$ are a covering of ${\breve\CM}^K$. We consider the associated spectral sequence for Cech cohomology of \cite[Proposition~4.2.2]{Fargues},
$$E_1^{pq}=\bigoplus_{\alpha\subset J/J'}H^q_c(U_K(\alpha),\BQ_{\ell})\;\Longrightarrow\; H_c^{p+q}({\breve\CM}^K,\BQ_{\ell})$$ 
with the sum being over subsets $\alpha$ with $-p+1$ elements and where $U_K(\alpha)=\bigcap_{\bar g\in\alpha}g\cdot U_K$. It is concentrated in degrees $p\leq 0$ and $0\leq q\leq \dim (\breveRZ)^\an$. Furthermore, it is $J$-equivariant where $g\in J$ acts via
$$g_{!}\colon H_c^q(U_K(\alpha),\BQ_{\ell})\rightarrow H_c^q(g\cdot U_K(\alpha),\BQ_{\ell}).$$
For $\alpha\subset J/J'$ let $J'_{\alpha}=\bigcap_{\bar g\in \alpha}gJ'g^{-1}$. By Lemma \ref{lemjcont} $J'_{\alpha}\subset J$ acts continuously on $U_K(\alpha)$. Hence the $H_c^q(U_K(\alpha),\BQ_{\ell})$ are smooth $J'_{\alpha}$-modules. We can rewrite $E_1^{pq}$ as compact induction
$$E_1^{pq}=\bigoplus \text{c-Ind}_{J'_{\alpha}}^JH^q_c(U_K(\alpha),\BQ_{\ell})$$
where the sum is now over equivalence classes $\bar\alpha$ of subsets $\alpha\subset J/J'$ with $-p+1$ elements up to the action of $J$ diagonally on $(J/J')^{-p+1}$. We claim that there are only finitely many such $\bar\alpha$ with $U_K(\alpha)\neq\emptyset$.

To show this claim note that if $A$ is a finite union of irreducible components of $\breveRZ$, then the set $\{g\in J\colon g\cdot A\cap A\neq \emptyset\}$ is compact. In particular $J''=\{g\in J\colon g\cdot U_K\cap U_K\neq\emptyset\}$ is compact and contains $J'$. Thus if $\bar\alpha=\{g_1,\dotsc,g_{-p+1}\}$ is as above with $U_K(\alpha)\neq\emptyset$ then for $i\neq j$ we have $g_i^{-1}g_j\in J''$. Modulo the left action of $J$ on the index set we may assume that $g_{-p+1}\in J''/J'$, hence all $g_i$ are in $J''/J'$, a finite set. In particular the index set is finite.

Altogether we obtain that $E_1^{pq}$ is a finite sum of compact inductions of finite-dimensional representations, and hence a representation of $J$ of finite type. By \cite[Remarque~3.12]{Bernstein} the category of smooth $J$-modules is locally noetherian. Because $H_c^{p+q}({\breve\CM}^K,\BQ_{\ell})$ has a finite filtration with all subquotients of finite type, it is itself of finite type. 
\end{proof}

\begin{corollary}
Let $\Pi$ be an admissible representation of $J$. Then for all $K$, $p$ and $q$
$$\dim_{\overline{\BQ}_{\ell}}\Ext^p_{J\text{\rm-smooth}}(H_c^q({\breve\CM}^K,\overline{\BQ}_{\ell}),\Pi)<\infty.$$
\end{corollary}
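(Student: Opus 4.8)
The plan is to deduce this from the preceding Proposition, which states that for each $K$ the $J$-representation $H_c^q({\breve\CM}^K,\BQ_\ell)$ (hence also $H_c^q({\breve\CM}^K,\overline{\BQ}_\ell)$ after tensoring with $\overline{\BQ}_\ell$) is of finite type, i.e.\ finitely generated as a smooth $J$-module. So it suffices to prove the following purely representation-theoretic statement: if $J$ is a locally profinite group having an open pro-$p$ subgroup, $M$ is a finitely generated smooth $J$-module over $\overline{\BQ}_\ell$, and $\pi$ is an admissible smooth $J$-representation over $\overline{\BQ}_\ell$, then $\dim_{\overline{\BQ}_\ell}\Ext^p_{J\text{-smooth}}(M,\pi)<\infty$ for all $p\ge0$.

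The key steps are as follows. First I would reduce to the case $M=\text{c-Ind}_{K'}^J\mathbf 1$ for a compact open subgroup $K'\subset J$: since $M$ is finitely generated and smooth, there is a surjection from a finite direct sum of modules of the form $\text{c-Ind}_{K'}^J\mathbf 1$ onto $M$; its kernel is again a finitely generated smooth $J$-module (the category of smooth $J$-modules over a field of characteristic $0$ is noetherian for such $J$ by \cite[Remarque~3.12]{Bernstein}, as already invoked in the proof of the preceding proposition), so by dimension shifting and the long exact sequence of $\Ext$, finiteness for all compactly induced modules in all degrees implies finiteness for $M$ in all degrees by descending induction on $p$ — more precisely one runs the standard argument building a resolution of $M$ by finite sums of compact inductions, which exists by noetherianity. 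Second, for $M=\text{c-Ind}_{K'}^J\mathbf 1$ one uses Frobenius reciprocity (Bernstein's version for smooth representations, which also computes the derived functors) to get $\Ext^p_{J\text{-smooth}}(\text{c-Ind}_{K'}^J\mathbf 1,\pi)\cong H^p(K',\pi)$, the smooth (continuous) cohomology of the profinite group $K'$ with coefficients in $\pi|_{K'}$. Third, since $K'$ has an open pro-$p$ subgroup $K''$ with $p\ne\ell$, and $\pi$ is a smooth $\overline{\BQ}_\ell$-module, $H^p(K'',\pi)=0$ for $p>0$ (cohomology of a pro-$p$ group with coefficients in a $\overline{\BQ}_\ell[1/p]$-module where every element has an open fixed-point subgroup vanishes in positive degree — this is the standard fact that $\overline{\BQ}_\ell$ has no nontrivial higher cohomology for pro-$p$ groups when $\ell\neq p$), and the Hochschild–Serre spectral sequence for $K''\triangleleft K'$ collapses to give $H^p(K',\pi)=H^p(K'/K'',\pi^{K''})$, cohomology of a finite group, which vanishes for $p>0$ and equals $(\pi^{K''})^{K'/K'}=\pi^{K'}$ for $p=0$. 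Finally, $\dim_{\overline{\BQ}_\ell}\pi^{K'}<\infty$ because $\pi$ is admissible. This gives the claim, in fact showing $\Ext^p=0$ for $p>0$ once $M$ is a single compact induction, and hence finite-dimensionality for general finitely generated $M$.

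I should note that the paper's statement allows general $p$ and $q$, so the role of $q$ is only that $H_c^q({\breve\CM}^K,\overline{\BQ}_\ell)$ is the finitely generated smooth $J$-module to which the above applies; there is nothing to prove about $q$ beyond citing the previous proposition. The main obstacle — or rather the point where care is needed — is the reduction step: one must be sure that the category of smooth $\overline{\BQ}_\ell$-representations of $J$ has enough projectives among finite sums of compact inductions and is locally noetherian, so that a finitely generated $M$ admits a resolution $\cdots\to P_1\to P_0\to M\to 0$ by finitely generated modules each of which is a finite sum of $\text{c-Ind}_{K_i}^J\mathbf 1$'s. This is exactly the input provided by \cite[Remarque~3.12]{Bernstein} together with the existence of the open pro-$p$ subgroup of $J$, both of which are already used in the proof of the preceding proposition, so no new geometric input is required. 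As in the rest of Section~\ref{SectCohom}, this argument runs entirely parallel to the arithmetic case treated by Fargues \cite{Fargues}, and one may simply refer there for the detailed verifications.
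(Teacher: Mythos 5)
Your proposal is correct and follows essentially the same route the paper takes: the paper reduces the corollary to the representation-theoretic fact that $\Ext^p$ from a finite-type smooth $J$-module to an admissible one is finite-dimensional, citing Bernstein for noetherianity and referring to Fargues (Lemme~4.4.15) for the standard $p$-adic group argument, and your write-up simply spells out that argument — a resolution by finite sums of compact inductions $\text{c-Ind}_{K'}^J\mathbf 1$, Frobenius reciprocity, vanishing of smooth cohomology of compact opens over $\overline{\BQ}_\ell$, and admissibility to bound $\pi^{K'}$.
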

\begin{proof}
This follows from the preceding proposition together with the following fact. Let $H$ be a reductive group over $\BF_q\dpl z\dpr$, let $\Pi_1$ be a smooth representation of $H$ of finite type and $\Pi_2$ an admissible representation. Then $\dim({\rm Ext}_{H\text{\rm-smooth}}^i(\Pi_1,\Pi_2))<\infty$. This fact can be shown in the same way as for $p$-adic groups, compare \cite[Lemme~4.4.15]{Fargues}.
\end{proof}

%
%

Competing interests: The authors declare none.

{\small

}

\vfill

\begin{minipage}[t]{0.5\linewidth}
\noindent
Urs Hartl\\
Universit\"at M\"unster\\
Mathematisches Institut \\
Einsteinstr.~62\\
D -- 48149 M\"unster
\\ Germany
\\[1mm]
\href{http://www.math.uni-muenster.de/u/urs.hartl/index.html.en}{www.math.uni-muenster.de/u/urs.hartl/}
\end{minipage}
\begin{minipage}[t]{0.45\linewidth}
\noindent
Eva Viehmann\\
Technische Universit\"at M\"unchen\\
Fakult\"at f\"ur Mathematik -- M11\\
Boltzmannstr. 3 \\
D -- 85748 Garching b.~M\"unchen\\
Germany
\\[1mm]
\end{minipage}

\end{document}